\newcommand{\Oc}{\mathcal{O}}
\DeclareMathOperator{\Proj}{Proj}
\newtheorem{assumption}{Assumption}
\newtheorem{observation}{Observation}
\newcommand{\leqnomode}{\tagsleft@true}
\newcommand{\reqnomode}{\tagsleft@false}
\newcounter{mycounter}
\definecolor{pythonblue}{rgb}{0.1216, 0.4667, 0.7059} 
\definecolor{pythonorange}{rgb}{1.0, 0.4980, 0.0549}
\definecolor{pythongreen}{rgb}{0.1625, 0.6175, 0.1625}  
\definecolor{lightgray}{RGB}{245,245,245}
\definecolor{lightergray}{RGB}{255,255,255}
\definecolor{lightpurple}{RGB}{237,230,250}
\definecolor{mediumlightpurple}{RGB}{225,210,245}
\definecolor{ao}{rgb}{0.0, 0.5, 0.0}
\newcommand{\cmark}{\textcolor{ao}{\ding{51}}}
\newcommand{\xmark}{\textcolor{purple}{\ding{55}}}
\definecolor{LightCyan}{rgb}{0.88,1,1} 
\definecolor{Lightpurple}{rgb}{0.9,0.9,1}
\title{
Efficient Penalty-Based Bilevel Methods: Improved Analysis, Novel Updates, and Flatness Condition
}
\titlerunning{Theory-Inspired Remedies for Efficient Bilevel Optimization via Penalty Methods}        
\author{
        Liuyuan Jiang\and
        Quan Xiao \and
        Lisha Chen \and
 Tianyi Chen
}
\authorrunning{Liuyuan Jiang, Quan Xiao,  Lisha Chen, Tianyi Chen} 
\institute{Liuyuan Jiang$^*$, Quan Xiao$^\dagger$,  Lisha Chen$^*$ \and Tianyi Chen$^\dagger$ \at
University of Rochester$^*$, Cornell University$^\dagger$\\
\email{ljiang24@ur.rochester.edu; quanx1808@gmail.com; 
lishachen9577@gmail.com;
chentianyi19@gmail.com}           
}
\date{}
\begin{document}

\maketitle
\begin{abstract}
Penalty-based methods have become popular for solving bilevel optimization (BLO) problems with or without constraints, thanks to their effective first-order nature. However, they often require inner-loop iterations to solve the lower-level problem and small step sizes to handle the increased smoothness induced by large penalty terms, leading to suboptimal complexity. 
This work considers the general BLO problem with coupled constraints (CCs) and leverages a novel penalty reformulation that decouples the upper- and lower-level variables. This reformulation yields an improved analysis of the smoothness constant, enabling larger step sizes and reduced iteration complexity for a Penalty-Based Gradient Descent algorithm in ALTernating fashion (ALT-PBGD).
Building on the insight of reduced smoothness, we further propose PBGD-Free, an efficient algorithm that avoids solving the lower-level problem iteratively to its optimality. It is a fully single-loop algorithm for the uncoupled constraint case, requiring only one update of the lower-level variable per iteration. For the coupled constraint case, it still employs inner-loop updates but with substantially reduced iteration complexity.
Furthermore, we propose a curvature condition, which describes the "flatness" of the upper-level objective with respect to the lower-level optimal variable without relying on the small Lipschitz modulus assumption. This condition relaxes the traditional upper-level Lipschitz requirement, enables smaller penalty constant choices, and results in a negligible penalty gradient term during upper-level variable updates. 
We provide rigorous convergence analysis and validate the method's efficacy through hyperparameter optimization for support vector machines and fine-tuning of large language models (LLMs).

\keywords{Bilevel optimization \and First-order methods \and Convergence   analysis }
\subclass{90C26 \and 90C15 \and 90C06 \and 90C60 \and 49M37 \and 68Q25}
\end{abstract}

\section{Introduction}

Bi-Level Optimization (BLO) has garnered significant attention owing to its wide-ranging applications in distributed learning \cite{qin2025duet,gao2023convergence}, meta-learning \cite{franceschi2018bilevel,chen2023_fnt_meta}, Large Language Models (LLMs) \cite{shen2024seal,zangrando2025debora}, reinforcement learning \cite{tan2023bi}, financial pricing \cite{fernandez2015bilevel,wei2022bi}, transportation \cite{santos2021bilevel}, and beyond.
These applications involve both constrained and unconstrained settings, motivating us to study the BLO problem with coupled constraints (CCs):
\begin{subequations}
\label{eq: original problem 1}
    \begin{align}
     \min_{x\in \mathcal{X}} ~  & \phi(x) := \min_{ y\in S_g(x)}f(x,y)  \label{eq: original upper-level problem}
    \\
    \text{where} & \quad  
    S_g(x) := \arg \min_{y\in \mathcal{Y}(x)} g(x,y) \quad \text{and}\quad  \mathcal{Y}(x)= \{ y\in \mathcal{Y}: c(x,y)\leq 0\}.  \label{eq: lower-level problem}
\end{align}
\end{subequations}
Here, $f:\mathbb{R}^{d_x}\times \mathbb{R}^{d_y}\to\mathbb{R}$ and $g:\mathbb{R}^{d_x}\times \mathbb{R}^{d_y}\to\mathbb{R}$ are the UL and LL objectives, respectively. The UL variable $x$ is restricted to the feasible set $\mathcal{X}\subseteq\mathbb{R}^{d_x}$, onto which is typically simple and easy to project. The LL variable $y$ is constrained to $\mathcal{Y}(x)=\{y\in\mathcal{Y}:c(x,y)\le0\}$, where $\mathcal{Y}\subseteq\mathbb{R}^{d_y}$ is an easy-to-project set, such as an Euclidean ball, and $c:\mathbb{R}^{d_x}\times\mathbb{R}^{d_y}\to\mathbb{R}^{d_c}$ encodes coupled inequality constraints.

The goal of the BLO problem \eqref{eq: original problem 1} is to find an optimal $x^*$ minimizing $\phi(x)$ and the corresponding LL optimal solution $y_g^*(x^*)\in S_g(x^*)$. In the unconstrained case, i.e., $\mathcal{Y}=\mathbb{R}^{d_y},~c(x,y)=0$, Implicit Gradient Descent (IGD) methods established the differentiability of $\phi(x)$ and derived
\begin{align} \label{eq: implicit gradient}
     \nabla \phi(x)= \nabla_x f(x,y_g^*(x)) + \nabla_y f(x,y_g^*(x))\nabla_{yy}g(x,y_g^*(x))^{-1}\nabla_{yx}g(x,y_g^*(x)) 
\end{align}
under the assumption that $g(x,\cdot)$ is strongly convex, in which case the LL solution set reduces to the singleton $S_g(x)=\{y_g^*(x)\}$ \cite{chen2022single,chen2021closing,ghadimi2018approximation,hong2020two,ji2020provably,khanduri2021near,shen2022single,Li2022fully,sow2022convergence}. More recently, 
\begin{wrapfigure}{r}{0.45\textwidth}
    \centering
    \vspace{-.4cm}
    \includegraphics[width=0.45\textwidth]{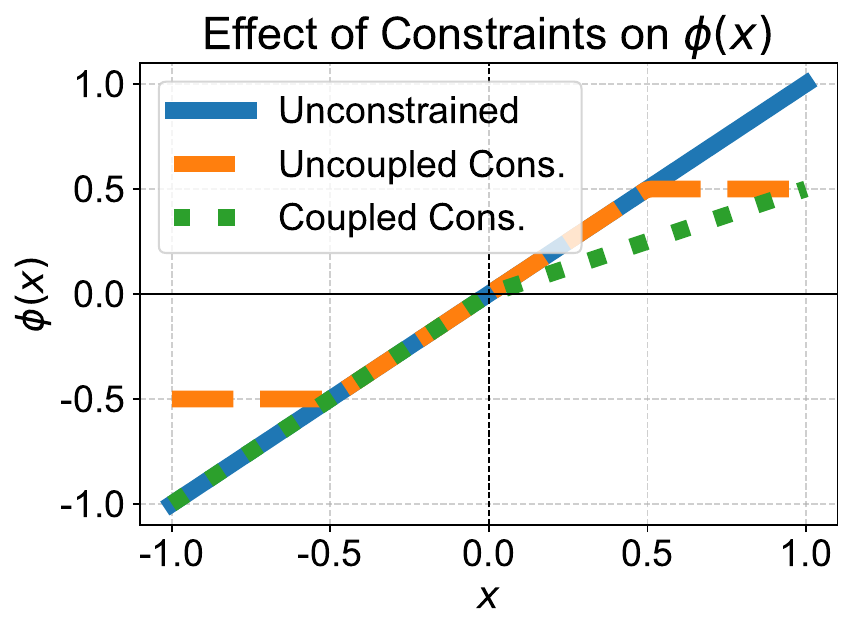}
            \vspace{-.4cm}
        \caption{Example of non-differentiable $\phi(x)$ due to the constraint set $\mathcal{Y}(x)$. Consider $f(x,y)=y$, $g(x,y)=(x-y)^2$. For \textcolor{pythonblue}{unconstrained} $\mathcal{Y}(x)=\mathbb{R}$, $\phi(x)=x$ is differentiable. For \textcolor{pythonorange}{uncoupled constrained} $\mathcal{Y}(x)=[-1/2,1/2]$, $\phi(x)$ is not differentiable at $x=\pm 1/2$; For \textcolor{pythongreen}{coupled constrained} $\mathcal{Y}(x)=\{y: y \le x/2\}$, $\phi(x)$ is not differentiable at $x=0$.}
        \label{fig:differentiability_of_phi}
    \vspace{-.5cm}
\end{wrapfigure}
\citet{kwon2023penalty} showed that similar expression \eqref{eq: implicit gradient} remains valid for any  $y_g^*(x)\in S_g(x)$ under the LL Polyak–Łojasiewicz (PL) condition. Nevertheless, IGD methods remain computationally expensive and are ill-suited for handling the non-smoothness induced by constraints.

For the uncoupled constrained BLO, the LL feasible set can be characterized by $\tilde{\mathcal{Y}} = \{y \in \mathcal{Y}: c(y)\le 0\}$, independent of $x$. Differentiability of $\phi(x)$ is generally hard to guarantee unless one assumes $\mathcal{Y} = \mathbb{R}^{d_y}$ and imposes additional restrictive conditions on $\tilde{\mathcal{Y}}$ \cite{kwon2023penalty,khanduri2023linearly}. The more challenging case is BLO with coupled constraints (CCs), where $\mathcal{Y}(x)$ itself depends on $x$ through $c(x,y)\leq 0$. In this setting, the solution mapping $x\mapsto S_g(x)$, and thus $\phi(x)$, can behave irregularly due to changing active constraints. Consequently, $\phi(x)$ may lose differentiability even when $g(x,\cdot)$ is strongly convex (cf. Fig.~\ref{fig:differentiability_of_phi}). 
This makes it difficult to implement IGD or to analyze the convergence of BLO algorithms via $\nabla \phi(x)$, as $\phi(x)$ may fail to be differentiable in the presence of constraints.

Penalty-based methods \cite{ye2022bome,liu2021value,shen2022single,kwon2023fully,kwon2023penalty,jiang2024primal,tsp2025,yao2025overcoming,mehra2021penalty,lu2024first} offer an alternative by leveraging penalty formulations. Building upon equilibrium backpropagation \cite{scellier2017equilibrium,zucchet2022beyond}, one effective branch, the value-function-based penalty methods \cite{shen2023penalty}, considers jointly minimizing $(x,y)\in \mathcal{X}\times\mathcal{Y}(x)$ over 
\begin{align}
     \tilde{F}_\gamma(x,y) := & f(x,y)+\gamma(g(x,y)-v(x)), \quad\text{where}  \quad v(x) := \min_{y_g\in \mathcal{Y}(x)} g(x,y_g). \label{eq: joint penalty problem} 
\end{align}
Under LL proximal PL condition, \citet{shen2023penalty} established that when the penalty hyper-parameter is chosen as $\gamma = \Theta(\epsilon^{-0.5})$, the local solutions to \eqref{eq: joint penalty problem} lie within $\Oc(\epsilon)$-squared-distance of those to an $\epsilon$-approximation of \eqref{eq: original problem 1}: 
\begin{align} 
    \min_{x\in \mathcal{X}, y\in \mathcal{Y}(x) } f(x,y) \quad \text{s.t.}\quad  d_{S_g(x)}(y)^2 \leq \epsilon. \label{eq: epsilon app prob} 
\end{align}
This enables the convergence analysis of BLO through the penalized objective $\tilde{F}_\gamma$ in \eqref{eq: epsilon app prob}. 
Additionally, \citet{shen2023penalty} established the closed form of the gradient of the value function $v(x)$ and found that $\tilde{F}_\gamma(x,y)$ is $l_{\tilde{F}_\gamma,1}=\Theta(\gamma)$-smooth. This enables the development of a series of fully-first-order Penalty-Based Gradient Descent (PBGD) methods \cite{kwon2023penalty,shen2023penalty,jiang2024primal,chen2024finding}, which are more efficient than IGD methods. 

However, for large-scale problems, the joint projection onto $\mathcal{X} \times \mathcal{Y}(x)$ required by projected gradient-based updates can be costly per iteration \cite{jiang2024primal}. In addition, PBGD requires an inner loop to find $y_g^*(x) \in S_g(x)$ for estimating $\nabla v(x)$. Although this inner loop converges linearly under lower-level strong convexity, it still imposes a non-negligible computational burden in large-scale settings. Moreover, the outer-loop convergence is slowed by the need for a small step size, $\eta = \Oc(\gamma^{-1})$, to satisfy $\eta \le l_{\tilde{F}_\gamma,1}^{-1}$ as required by gradient descent, since the smoothness constant $l_{\tilde{F}_\gamma,1}$ scales with $\gamma = \Theta(\epsilon^{-0.5})$. Taken together, these factors significantly increase the overall complexity, limiting the practical efficiency of standard penalty-based methods.

Consequently, this paper contributes by presenting an improved theoretical analysis of the smoothness of a variant of value-function-based penalty reformulations, developing an efficient algorithm for BLO with CCs in \eqref{eq: original problem 1}, and establishing convergence guarantees under new, mild conditions.

\subsection{Main contributions}

We highlight key contributions in developing efficient BLO algorithms as follows.

\textsf{\textbf{C1)} Improved outer-loop iteration complexity for solving BLO with CCs that matches gradient descent.} This is enabled by writing $\tilde{F}_\gamma(x,y)$ into a decoupled form:
\begin{align}
& F_\gamma(x) := \min_{y\in \mathcal{Y}(x)}  \tilde{F}_\gamma(x,y) = \gamma \min_{y_\gamma\in \mathcal{Y}(x)} \left(\gamma^{-1}f(x,y_\gamma)+ g(x,y_\gamma)\right) - \gamma \min_{y_g\in \mathcal{Y}(x)} g(x,y_g)
\label{eq: F gam function} 
\end{align}
which leads to an \emph{alternating PBGD scheme} that first optimizes $y$, then updates $x$. While the scheme avoids the expensive joint projection onto $\mathcal{X}\times\mathcal{Y}(x)$, its iteration complexity under CCs is suboptimal \cite{jiang2024primal}.
Inspired by the enhanced analysis for the unconstrained BLO problem \cite{chen2024finding}, we show that $F_\gamma(x)$ is smooth with modulus $l_{F_\gamma,1}=\Oc(1)$ independent of $\gamma$ for both non-coupled constrained setting in Sec.~\ref{sec: Improved Convergence Rate under Uncoupled Constraints} and coupled constrained setting in Sec.~\ref{sec: improved convergence under CCs}, which is the key to obtain the improved convergence rate.

\textsf{\textbf{C2)} We propose PBGD-Free, a value-function-free algorithm.} Building on the improved outer-loop complexity of the alternating scheme, PBGD-Free can avoid the inner loop for computing $y_g^*(x)\in S_g(x)$ by estimating $\nabla F_\gamma(x)$ using $\nabla_x f(x,y^\gamma)$, with $y^\gamma \approx \arg\min_{y\in \mathcal{Y}(x)}F_\gamma(x,y)$ (see Fig.~\ref{fig:PBGD flow chart}). In Sec.~\ref{sec: Developing Value-Function-Free Algorithms: PBGD-Free}, we start by demonstrating its empirical effectiveness on a LLM parameter-efficient fine-tuning (PEFT) problem. However, we also show that without additional assumptions, PBGD-Free does not necessarily converge for the worst-case performance (cf. Sec.~\ref{sec: Negative theoretical observations of PBGD-Free}).

\textsf{\textbf{C3)} We introduce a mild sufficient condition for PBGD-Free to converge.}
In Sec.~\ref{sec: Developing Value-Function-Free Algorithms: PBGD-Free}, we propose the \emph{$(\delta,\alpha)$-flatness} condition (Def.~\ref{def: flatness}), which relaxes $\alpha$-Hölder continuity and characterizes the flatness of $f(x,y)$ at $y = y_g^*(x)$. This condition serves as an alternative to the classical Lipschitz continuity of $f(x,\cdot)$ and is validated on the PEFT problem in Sec.~\ref{section: flatness of the representation learning PEFT}.
In Sec.~\ref{sec:PBGD-Free}, we show that under flatness, the update bias of PBGD-Free becomes negligible. For uncoupled problems, the fully \emph{single-loop version} of PBGD-Free converges to a stationary point of BLO, achieving the same rate as gradient descent. For BLO with coupled constraints, Sec.~\ref{sec: BLOCC-VaFF} shows that PBGD-Free with a $\Oc(\ln(\epsilon^{-1}))$-inner loop is similarly effective and requires one fewer inner loop compared with the existing alternating-update method \cite{jiang2024primal}. A detailed comparison with other algorithms is given in Table~\ref{tab:table1-comparison}.

\textsf{\textbf{C4)} We empirically validate our theoretical results.}
We first confirm the $\mathcal{O}(1)$-smoothness of $F_\gamma(x)$ on toy examples (cf. Sec.~\ref{sec: toy example for O1 smoothness}). We then evaluate PBGD-Free on real-world tasks, including LLM PEFT in Sec.~\ref{sec: representation PEFT} and SVM hyperparameter optimization in Sec.~\ref{sec: SVM exp}, which showcase the computational efficiency of our algorithm.

\begin{figure}
\centering
\includegraphics[width=0.8\linewidth]{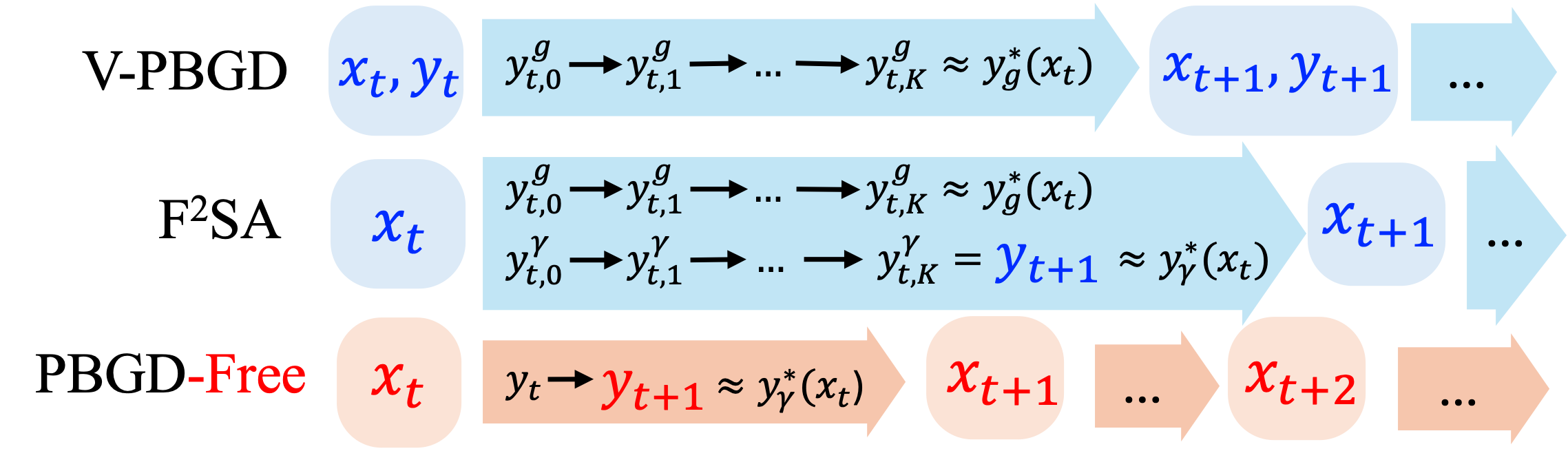}
    \vspace{0.3cm}
\caption{Comparison of V-PBGD, F$^2$SA, and PBGD-Free updates in the uncoupled constraint setting. V-PBGD \cite{shen2023penalty} (\textbf{top}) is a joint-PBGD method for \eqref{eq: joint penalty problem}, and F$^2$SA \cite{kwon2023penalty} (\textbf{middle}) is an alternating-PBGD method for \eqref{eq: F gam function}. Both refine the LL variable through multiple inner iterations before updating $x_t$. In contrast, PBGD-Free (\textbf{bottom}) performs \textbf{only a single-step update} from $y_t$ to $y_{t+1}$, providing a more efficient but potentially less accurate approximation $\nabla_x f(x_t,y_{t+1})\approx\nabla F_\gamma(x_t)$.}
\label{fig:PBGD flow chart}
\end{figure}

{\fontsize{9pt}{11pt}\selectfont
\begin{table}[t]
\centering
\begin{adjustbox}{width=0.99\textwidth}
    \begin{tabular}{l | l | l | l | l}
        \hline \hline
        \textbf{Method} & \textbf{Assumptions} & \textbf{$\Oc(1)$-smooth} & \textbf{Outer Loop}  & \textbf{Inner Loop}  \\
        \hline \hline
        \multicolumn{5}{c}{\textbf{Unconstrained $\mathcal{Y}(x)=\mathbb{R}^{d_y}$}} \\
        \hline
        F$^2$SA & $\tilde{F}_\gamma(x,\cdot)$ PL & \cmark & $\Oc(\epsilon^{-1})$ & $\Oc(2\log \epsilon^{-1})$\\
        \rowcolor{gray!20}\textbf{PBGD-Free} & $\tilde{F}_\gamma(x,\cdot)$ PL & \cmark & $\Oc(\epsilon^{-1})$ & \xmark \\
        \hline\hline
        \multicolumn{5}{c}{\textbf{Uncoupled constrained $\mathcal{Y}(x)=\tilde{\mathcal{Y}}=\{y\in \mathcal{Y}: c(y)\leq 0\}$}} \\
        \hline
        V-PBGD & $g(x,\cdot)$ PL & \xmark  & $\Oc(\epsilon^{-1.5})$  & $\Oc(\log \epsilon^{-1})$\\
        F$^2$FA & $\tilde{F}_\gamma(x,\cdot)$ PL; $\mathcal{Y}=\mathbb{R}^{d_y}$; $\tilde{Y}$ compact & \xmark & $\Oc(\epsilon^{-1.5})$  & $\Oc(2\log \epsilon^{-1})$\\
        \rowcolor{gray!10}\textbf{ALT-PBGD} & $g(x,\cdot)$ S.C. & \cmark & $\Oc(\epsilon^{-1})$ & $\Oc(2\log \epsilon^{-1})$\\
        \rowcolor{gray!20}\textbf{PBGD-Free} & $f(x,\cdot)$ flat; $g(x,\cdot)$ S.C. & \cmark & $\Oc(\epsilon^{-1})$ & \xmark\\
        \hline\hline
        \multicolumn{5}{c}{\textbf{Coupled constrained $\mathcal{Y}(x)=\{y\in \mathcal{Y}: c(x,y)\leq 0\}$}} \\
        \hline
        \multirow{2}{*}{BLOCC} & $g(x,\cdot)$ S.C.; & \multirow{2}{*}{\xmark} & \multirow{2}{*}{$\Oc(\epsilon^{-1.5})$} & $\Oc(2\epsilon^{-1})$; \\
        & $\mathcal{Y}=\mathbb{R}^{d_y}$; $c(x,y)=A(x)y+B(x)$& & &  $\Oc(2\log \epsilon^{-1})$ \\
        BiC-GAFFA & $g(x,\cdot)$ Conv. & \xmark & $\Oc(\epsilon^{-3})$ & \xmark \\
        \rowcolor{gray!10} & $g(x,\cdot)$ S.C.; &  &  & $\Oc(2\epsilon^{-1})$; \\
        \rowcolor{gray!10} \multirow{-2}{*}{\textbf{PBGD-BLOCC}}& $\mathcal{Y}=\mathbb{R}^{d_y}$; $c(x,y)=A(x)y+B(x)$ & \multirow{-2}{*}{\cmark}&\multirow{-2}{*}{$\Oc(\epsilon^{-1})$} &  $\Oc(2\log \epsilon^{-1})$ \\
        \rowcolor{gray!20}\textbf{PBGD-Free} & $f(x,\cdot)$ flat; $g(x,\cdot)$ S.C. & \cmark & $\Oc(\epsilon^{-1})$ & $\Oc(\log \epsilon^{-1})$\\
        \hline \hline
    \end{tabular}
    \end{adjustbox}
    \vspace{0.3cm}
    \caption{Comparison with V-PBGD \cite{shen2023penalty} (PBGD-JNT), F$^2$SA \cite{kwon2023penalty} (PBGD-ALT) and its improved analysis in uncoupled case~\cite{chen2023near,chen2024finding}, BLOCC \cite{jiang2024primal}, and BiC-GAFFA \cite{yao2025overcoming}.
    The convergence metric is the squared (generalized) gradient norm of the penalty reformulation whose $\epsilon$-local minimizers correspond to the $\epsilon$-approximation problem \eqref{eq: epsilon app prob}. Here, "PL", "Conv.", "S.C.", and "flat" respectively stand for (Proximal-)Polyak--Łojasiewicz condition, convex, strongly convex, and flatness under Definition~\ref{def: flatness}.}
    \label{tab:table1-comparison}
\end{table}
}

\subsection{Prior art}

\noindent\textbf{BLO without coupled constraint. } Unconstrained BLO has a long history, starting with \citet{bracken1973mathematical}. Recent advances focus on efficient gradient-based methods with finite-time guarantees. Implicit gradient descent (IGD), introduced by \citet{pedregosa2016hyperparameter}, approximates the hypergradient "$\frac{\partial}{\partial x} y_g^*(x)$" using the implicit function theorem including Hessian inverse. Several studies have explored finite-time convergence for unconstrained BLO \citep{hong2020two,ji2020provably,chen2021closing,khanduri2021near,ji2021bilevel,shen2022single, Li2022fully,sow2022convergence,chen2022single,yang2023accelerating,ghadimi2018approximation}, 
mostly focus on the strongly convex lower-level problem.
Another line of work, penalty-based methods, reformulate BLO as a single-level problem with penalty terms, which avoids using Hessian, and are able to tackle non-strongly-convex lower-level problem and are fully first-order. This approach can be dated back to \citet{ye1997exact} and has been gaining popularity recently \citep{liu2021value,mehra2021penalty,ye2022bome,kwon2023fully,kwon2023penalty,shen2023penalty,gao2022value,lu2024slm} due to its efficiency. This approach can handle the feasible-domain constraint $\mathcal{Y}$, but it cannot be applied directly to coupled constraints. 

\noindent\textbf{BLO with coupled constraint}. However, BLO with lower-level constraints has been less explored until recent years. IGD methods have been developed to address lower-level constraints that can be characterized by $c(x,y)$, progressing from linear constraints  \citep{khanduri2023linearly,xiao2022alternating} to coupled inequality constraints $c(x,y) \leq 0$ \citep{xu2023efficient}. In comparison, penalty-based methods can effectively handle the nonsmoothness introduced by the general uncoupled feasible constraints $\mathcal{Y}$ \citep{shen2023penalty}. Built upon this, recent works, such as LV-HBA \citep{yao2024constrained}, BLOCC \citep{jiang2024primal}, and BiC-CAFFA \citep{yao2025overcoming} consider both feasible constraint $\mathcal{Y}$ and coupled inequality $c(x, y) \leq 0$. However, none of them match the convergence rate of gradient descent in the unconstrained setting. 

\noindent\textbf{BLO algorithms with high complexity.} Since most BLO algorithms rely on inner loops for hyper-gradient estimation and lower-level optimization, another line of research focuses on improving the per-iteration complexity of BLO algorithms, specifically by reducing the inner loop. \citet{Li2022fully} proposed an IGD-based fully single-loop method for unconstrained BLO. For stochastic BLO with uncoupled lower-level inequality constraints, \citet{kwon2023fully} introduced a penalty-based fully single-loop method with momentum, achieving $\Oc(\epsilon^{-1.5})$ complexity for non-stochastic problems.
For coupled constraints, BiC-CAFFA \citep{yao2025overcoming} 
and BLOCC \citep{jiang2024primal} achieve $\Oc(\epsilon^{-3})$ and $\Oc(\epsilon^{-2.5})$ iteration complexity. However, these methods are still of high complexity and may be less feasible for large-scale problems. 

\noindent\textbf{Landscape-aware optimization.} Landscape-aware optimization leverages structural properties of objective functions into algorithm design to accelerate the convergence or improve the generalization. Newton-type methods, which use second-order curvature information to rescale gradients, have been utilized in BLO \citep{fang2025qnbo,ramzi2021shine,dong2025efficient} for efficient Hessian-vector calculation in IGD-based BLO methods. Sharpness-aware minimization \citep{foretsharpness}, which seeks solutions robust to local perturbations and promotes convergence to flat minima, has also been incorporated into BLO \citep{abbas2022sharp}  for improved generalization. Other landscape conditions in single-level optimization, such as relaxed smoothness \citep{zhanggradient,li2023convergence} and Hessian spectrum \citep{zhang2024transformers,ghorbani2019investigation}, are key to explaining the theoretical benefits of empirically effective algorithms like gradient clipping and Adam \citep{kingma2014adam}. However, most existing works focus on second-order BLO algorithms, and none have explored BLO tailored landscape conditions. 

\subsection{Notations and general preliminaries}

We use $\|\cdot\|$ to denote the $\ell_2$-norm. For a set $\mathcal{S} \subseteq \mathbb{R}^n$, $\text{int}(\mathcal{S})$ and $\operatorname{bd}(\mathcal{S})$ denote the interior and boundary of $\mathcal{S}$, respectively, and $d_\mathcal{S}(q)$ denotes the distance of $q\in \mathbb{R}^n$ to $\mathcal{S}$. For a square matrix $A\in \mathbb{R}^{n\times n}$, $A^{-1}$ denotes its inverse and $A^{\dagger}$ its Moore–Penrose pseudoinverse. Standard asymptotic notations are used: $\mathcal{O}(\cdot)$, $\Omega(\cdot)$, and $\Theta(\cdot)$ denote upper, lower, and tight bounds up to constant factors, and $\tilde{\mathcal{O}}(\cdot)$ suppresses logarithmic factors.

In the following, we provide some critical definitions useful in this paper.

\begin{definition}[Lipschitz Continuity and Smoothness]
Let $h:\mathcal{Q}\subseteq \mathbb{R}^n\rightarrow \mathbb{R}^m$. We say $h$ is \textit{$l_{h,0}$-Lipschitz} continuous on $\mathcal{S} \subseteq \mathcal{Q}$ if 
\begin{align*}
    \| h(q_1)-h(q_2)\| \leq l_{h,0} \|q_1-q_2\|, \quad \forall q_1, q_2 \in \mathcal{S}.
\end{align*}
If $h$ is differentiable on $\mathcal{S}$, we say $h$ is \textit{$l_{h,1}$-smooth} on $\mathcal{S}$ if its gradient $\nabla h$ is $l_{h,1}$-Lipschitz continuous on $\mathcal{S}$, or equivalently, if
\begin{align*}
    h(q_2) \leq h(q_1) + \nabla h(q_1)^\top (q_2 - q_1) + \frac{l_{h,1}}{2} \|q_2 - q_1\|^2, \quad \forall q_1, q_2 \in \mathcal{S}.
\end{align*}
Moreover, we say $h$ is \emph{locally Lipschitz} on $\mathcal{S}$ if for all $q_1 \in \mathcal{S}$, there exist constants $l_{h,1} > 0$ and $\delta > 0$ such that
\begin{align*}
    \| h(q_1) - h(q_2) \| \leq M \|q_1 - q_2\|, \quad \forall q_2 \in \mathcal{S} \text{ with } \| q_1 - q_2 \| < \delta.
\end{align*}
\end{definition}

\begin{definition}[Convexity and Strong Convexity]
Let $h:\mathcal{Q} \rightarrow \mathbb{R}$ be a function and $\mathcal{S}\subseteq \mathcal{Q}$ be a convex set.
We say $h$ is \emph{convex} on $\mathcal{S}$ if for all $q_1, q_2 \in \mathcal{S}$ and all $\lambda \in [0,1]$, 
\begin{align*}
    h(\lambda q_1 + (1 - \lambda) q_2) \leq \lambda h(q_1) + (1 - \lambda) h(q_2).
\end{align*}
We say $h$ is \emph{$\mu_g$-strongly convex} on $\mathcal{S}$ for some constant $\mu_g > 0$ if for all $q_1, q_2 \in \mathcal{S}$ and all $\lambda \in [0,1]$, 
\begin{align*}
    h(\lambda q_1 + (1 - \lambda) q_2) \leq \lambda h(q_1) + (1 - \lambda) h(q_2) - \frac{\mu_g}{2} \lambda(1 - \lambda)\|q_1 - q_2\|^2.
\end{align*}
If $h$ is differentiable on $\mathcal{S}$, then $h$ is $\mu_g$-strongly convex on $\mathcal{S}$ if and only if
\begin{align*}
    h(q_2) \geq h(q_1) + \nabla h(q_1)^\top (q_2 - q_1) + \frac{\mu_g}{2} \|q_2 - q_1\|^2, \quad \forall q_1, q_2 \in \mathcal{S}.
\end{align*}
\end{definition}

\begin{definition}[Projection]
\label{def: projection}
For any convex set $\mathcal{S} \subseteq \mathbb{R}^n$, the projection of $q \in \mathbb{R}^n$ onto $\mathcal{S}$ under a well-defined $A$-norm is defined by
\begin{align*}
    \Proj_{\mathcal{S}}^A(q) := \arg\min_{s \in \mathcal{S}} \|s - q\|_A.
\end{align*}
When $A \in \mathbb{R}^{n \times n}$ is a positive definite matrix, the $A$-norm of a vector $x \in \mathbb{R}^n$ is defined as $\|x\|_A := \sqrt{x^\top A x}$.
When $A = I$, the $A$-norm reduces to the standard $\ell_2$-norm, and we write $\Proj_{\mathcal{S}}(q) := \Proj_{\mathcal{S}}^I(q)$.
\end{definition}

\begin{definition}[Directional Derivative \citep{bonnans2013perturbation}]
\label{def: directional derivative}
Let $h:  \mathcal{Q} \subseteq \mathbb{R}^n \rightarrow \mathbb{R}$ and let $d \in \mathbb{R}^n$. The first-order \emph{directional derivative} of $h$ at $q\in \mathcal{Q}$ along $d$ is 
\begin{align*}
    D_d h(q) = \lim_{r \downarrow 0} \frac{h(q + r d) - h(q)}{r}.
\end{align*}
If $D_d h(q)$ exists and is finite for all $d$, we say $h$ is \emph{directional differential} in direction $d$ at $q$. Additionally, the \emph{second-order directional derivative} of $h$ at $q\in \mathcal{Q}$ along $d$ is
\begin{align*}
    D_{dd}^2 h(q) = \lim_{r \downarrow 0} \frac{D_d(h(q+rd))-D_d(h(q))}{r}.
\end{align*}
\end{definition}

\begin{definition}[Tangent Cone \& Critical Cone]
\label{def:tangent_critical_cone}
Let $q \in \mathcal{S}\subseteq \mathbb{R}^n$. The \emph{tangent cone} at $q$ with respect to $\mathcal{S}$ is 
\begin{align*}
    \mathcal{T}_{\mathcal{S}}(q) := \big\{ d \in \mathbb{R}^n | \exists\, t_k \downarrow 0,\, \exists\, d_k \to d \text{ such that } q + t_k d_k \in \mathcal{S}, \forall k \big\}.
\end{align*}
Let $h: \mathcal{Q}\subseteq \mathbb{R}^n \rightarrow \mathbb{R}$ be differentiable on $\mathcal{S} \subseteq \mathcal{Q}$, the \emph{critical cone} of $h$ at $q$ with respect to $\mathcal{S}$ is
\begin{align*}
    \mathcal{C}_{\mathcal{S}}(q) := \big\{ d \in \mathcal{T}_{\mathcal{S}}(q) | \langle \nabla h(q), d \rangle = 0 \big\}.
\end{align*}
\end{definition}
\begin{remark}
If $q \in \mathrm{int}(\mathcal{S})$, then $\mathcal{T}_{\mathcal{S}}(q) = \mathbb{R}^n$.
\end{remark}

\begin{definition}[Generalized (Proximal) Gradient]
\label{def: generalized gradient}
Let $h:\mathcal{Q} \rightarrow \mathbb{R}$ be differentiable on $\mathcal{S}\subseteq \mathcal{Q}$, and $\eta>0$ be some small scalar. We say the proximal gradient of $h(q)$ on $\mathcal{S}$ is
\begin{align*}
    G_{h,\mathcal{S}}(q)=\frac{1}{\eta}\Big(q-\Proj_{\mathcal{S}}(q-\eta \nabla h(q))\Big),\quad \forall q\in \mathcal{S}.
\end{align*}
\end{definition}

\begin{definition}[Proximal PL, EB, and QG]
Let $h:\mathcal{Q} \rightarrow \mathbb{R}$ be differentiable on $\mathcal{S}\subseteq \mathcal{Q}$. We say $h(q)$ satisfies \textit{proximal $\mu_g$-Polyak-Łojasiewicz (PL) condition on $\mathcal{S}$} if
\begin{align*}
    h(q)-\min_{q\in \mathcal{S}}h(q) \leq \frac{1}{2\mu_g } G_{h,\mathcal{S}}^2(q),\quad \forall q \in \mathcal{S}.
\end{align*}
Denote $S_h^* =\arg\min_{q\in \mathcal{S}}h(q)$ as the solution set. We say $h(q)$ satisfies \textit{proximal $\mu_g$-Error Bound (EB) condition on $\mathcal{S}$} if
\begin{align*}
    d_{S_h^*}(q) \leq \frac{1}{\mu_g} G_{h,\mathcal{S}}(q) ,\quad \forall q \in \mathcal{S}.
\end{align*}
We say $h(q)$ satisfies \textit{$\mu_g$-Quadratic-Growth (QG) condition on $\mathcal{S}$} if
\begin{align*}
    d_{S_h^*}(q)^2 \leq \frac{2}{\mu_g} (h(q)-\min_{q\in \mathcal{S}}h(q)),\quad \forall q \in \mathcal{S}.
\end{align*}
\end{definition}

\begin{lemma}[{Strongly Convexity and Proximal PL \cite[Appendix E]{karimi2016linear}}]
\label{lemma: SC PL}
If $h:\mathcal{Q} \rightarrow \mathbb{R}$ is $\mu_g$-strongly convex on $\mathcal{S}\subseteq \mathcal{Q}$ and $\mathcal{S}$ is a convex set, then $h$ satisfies proximal $\mu_g$-PL condition on $\mathcal{S}$.
\end{lemma}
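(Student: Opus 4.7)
The plan is to follow the short derivation of Karimi, Nutini, and Schmidt \cite[Appendix E]{karimi2016linear}, where the proximal PL inequality is obtained as a direct consequence of strong convexity. Fix $q \in \mathcal{S}$ arbitrarily. First, I would invoke $\mu_g$-strong convexity of $h$ on the convex set $\mathcal{S}$ to write the quadratic lower bound
\begin{align*}
h(y) \geq h(q) + \langle \nabla h(q), y - q\rangle + \tfrac{\mu_g}{2}\|y - q\|^2, \quad \forall y \in \mathcal{S}.
\end{align*}

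Next, I would take the infimum of both sides over $y \in \mathcal{S}$. The left-hand side reduces to $h^* := \min_{y\in\mathcal{S}} h(y)$, while the right-hand side is a strongly convex quadratic in $y$ constrained to $\mathcal{S}$. By inspection, its unique minimizer is $\bar{y} := \Proj_{\mathcal{S}}(q - \nabla h(q)/\mu_g)$, which is precisely the proximal-gradient step of $h$ at step size $\eta = 1/\mu_g$, so that $\mu_g(q - \bar{y}) = G_{h,\mathcal{S}}(q)$ in the notation of Definition~\ref{def: generalized gradient}.

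Substituting $\bar{y}$ back into the inequality and simplifying by completing the square should give
\begin{align*}
h^* \geq h(q) - \tfrac{1}{2\mu_g}\|G_{h,\mathcal{S}}(q)\|^2,
\end{align*}
which, upon rearrangement, delivers the proximal $\mu_g$-PL condition at the canonical step $\eta = 1/\mu_g$. For the other admissible values of $\eta$ in Definition~\ref{def: generalized gradient}, I would transfer the inequality via the standard non-increasing monotonicity of $\|G_{h,\mathcal{S}}(q)\|^2$ in $\eta$: for $\eta$ in the admissible range, the right-hand side is at least as large as at the canonical step, so the inequality is preserved.

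The main technical obstacle is the completion-of-the-square step, which must correctly handle the case in which $\bar{y}$ lies on the boundary of $\mathcal{S}$. Here one uses the variational characterization of the projection, namely the first-order inequality $\langle \nabla h(q) - G_{h,\mathcal{S}}(q), y - \bar{y}\rangle \geq 0$ for all $y \in \mathcal{S}$, to cancel the residual linear term against the quadratic term through a Pythagoras-type identity for the projection. Once this identity is in place, the lemma follows with the stated constant $\mu_g$, exactly matching the strong-convexity modulus.
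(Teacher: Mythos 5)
The paper offers no proof of this lemma at all; it is stated purely by citation to \cite[Appendix~E]{karimi2016linear}, where the implication is established for the proximal-PL inequality phrased in terms of the forward--backward decrease
$\mathcal{D}(q,\alpha) := -2\alpha\min_{y\in\mathcal{S}}\bigl[\langle\nabla h(q),y-q\rangle+\tfrac{\alpha}{2}\|y-q\|^2\bigr]$,
not in terms of $\|G_{h,\mathcal{S}}(q)\|^2$. Your reconstruction silently identifies these two quantities, and that is exactly where it breaks.

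Concretely, set $u:=q-\nabla h(q)/\mu_g$ and $\bar y:=\Proj_{\mathcal{S}}(u)$. Minimizing the strong-convexity lower bound over $y\in\mathcal{S}$ and substituting $\bar y$ gives, after the algebra you describe,
\begin{align*}
h^* \;\ge\; h(q) \;-\; \frac{\mu_g}{2}\|q-\bar y\|^2 \;-\; \mu_g\,\langle \bar y-u,\; q-\bar y\rangle .
\end{align*}
The projection inequality you invoke, $\langle\bar y-u,\,y-\bar y\rangle\ge 0$ for all $y\in\mathcal{S}$, applied at $y=q$ shows that the residual cross term $\mu_g\langle\bar y-u,\,q-\bar y\rangle$ is \emph{nonnegative}, i.e.\ it is subtracted from the right-hand side rather than cancelled; equivalently it yields $\langle\nabla h(q),G_{h,\mathcal{S}}(q)\rangle\ge\|G_{h,\mathcal{S}}(q)\|^2$, the reverse of the inequality your ``Pythagoras-type'' cancellation needs. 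So the substitution only produces $h(q)-h^*\le\frac{1}{2\mu_g}\mathcal{D}(q,\mu_g)$ with $\mathcal{D}(q,\mu_g)\ge\|G_{h,\mathcal{S}}(q)\|^2$, which does not imply the stated bound. Indeed the base case at the canonical step $\eta=1/\mu_g$ is false: for $\mathcal{S}=[0,\infty)$, $h(y)=\tfrac12(y+1)^2$ ($\mu_g=1$) and $q=1$, one has $\bar y=\Proj_{[0,\infty)}(-1)=0$, $\|G_{h,\mathcal{S}}(q)\|^2=1$, yet $h(q)-h^*=\tfrac32>\tfrac12$. Since the canonical-step inequality fails, your monotonicity transfer to smaller $\eta$ has nothing to transfer from. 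A correct argument must either remain with $\mathcal{D}$ throughout (Karimi et al.'s route) or prove the $\|G_\eta\|^2$ version directly for sufficiently small $\eta$, which requires smoothness of $h$ in addition to strong convexity and a different chain of inequalities.
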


\begin{lemma}[{Equivalence of PL, EB, and QG condition \cite[Theorem 3.1]{liao2024error}}]
\label{lemma: equiv of PL EB QG}
    Suppose $h:\mathcal{Q} \rightarrow \mathbb{R}$ is differentiable and $l_{h,1}$-smooth on convex domain $\mathcal{S}\subseteq \mathcal{Q}$. Then if $h(q)$ satisfies proximal $\mu_g$-PL condition on $\mathcal{S}$, $h(q)$ satisfies proximal $\mu_g$-EB and $\mu_g/2$-QG condition on $\mathcal{S}$.
\end{lemma}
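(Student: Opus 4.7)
The plan is to derive the QG and EB conditions as successive consequences of the proximal PL condition, using a descent-and-summation argument on the proximal gradient iterates generated by $h$.

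First, I would set up the proximal gradient iteration. Pick any $q_0 \in \mathcal{S}$, fix $\eta = 1/l_{h,1}$, and define $q_{k+1} = \Proj_{\mathcal{S}}(q_k - \eta \nabla h(q_k))$, so that $q_k - q_{k+1} = \eta G_{h,\mathcal{S}}(q_k)$. Using $l_{h,1}$-smoothness of $h$ together with the first-order optimality of the projection onto the convex set $\mathcal{S}$, I would establish the standard descent inequality
\begin{equation*}
    h(q_{k+1}) \le h(q_k) - \tfrac{\eta}{2}\,\|G_{h,\mathcal{S}}(q_k)\|^2.
\end{equation*}
Combining this with the proximal $\mu_g$-PL inequality $\|G_{h,\mathcal{S}}(q_k)\|^2 \ge 2\mu_g(h(q_k)-h^*)$ yields linear convergence of the function-value gap, $h(q_k)-h^* \le (1-\mu_g\eta)^k(h(q_0)-h^*)$, and hence convergence of the iterates to some $q_\infty \in S_h^*$.

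Second, I would convert this into a QG-type iterate bound. From the descent inequality, $\|q_{k+1}-q_k\|^2 = \eta^2\|G_{h,\mathcal{S}}(q_k)\|^2 \le 2\eta\bigl(h(q_k)-h(q_{k+1})\bigr)$, so each step length is controlled by the geometric decay of $h(q_k)-h^*$. Summing $\|q_{k+1}-q_k\|$ as a geometric series and applying the triangle inequality yields
\begin{equation*}
    d_{S_h^*}(q_0) \le \sum_{k\ge 0}\|q_{k+1}-q_k\| \le C\,\sqrt{h(q_0)-h^*},
\end{equation*}
which, after identifying the constant $C$ with $\sqrt{2/\mu_g}$, produces the stated $\mu_g/2$-QG condition. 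Then EB follows immediately by chaining QG with PL: from $d_{S_h^*}(q)^2 \le \tfrac{4}{\mu_g}(h(q)-h^*)$ and $h(q)-h^* \le \tfrac{1}{2\mu_g}\|G_{h,\mathcal{S}}(q)\|^2$, one obtains $d_{S_h^*}(q) \le \tfrac{1}{\mu_g}\|G_{h,\mathcal{S}}(q)\|$ (up to the stated constants).

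The main obstacle is recovering the \emph{tight} constant $\sqrt{2/\mu_g}$ purely from the discrete geometric summation, because a naive bound gives a constant that also depends on $l_{h,1}$ through $(1-\sqrt{1-\mu_g/l_{h,1}})^{-1}$. A sharper route, which I would pursue, is a continuous-time proximal gradient flow $\dot{q}(t) = -G_{h,\mathcal{S}}(q(t))$: with the change of variable $u = h(q(t))-h^*$, the path-length integral reduces to $\int_0^{h(q_0)-h^*} du/\|G_{h,\mathcal{S}}\|$, and PL then gives $d_{S_h^*}(q_0) \le \int_0^{h(q_0)-h^*} du/\sqrt{2\mu_g u} = \sqrt{2(h(q_0)-h^*)/\mu_g}$, matching the claimed QG constant exactly. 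Justifying this flow in the constrained setting, where $G_{h,\mathcal{S}}$ is only a projected-gradient mapping rather than a true gradient, and verifying the chain rule along the trajectory, is the delicate technical step; this is precisely the argument carried out in \cite{liao2024error} which we invoke.
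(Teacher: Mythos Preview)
The paper does not prove this lemma itself; it is stated purely as a citation of \cite[Theorem~3.1]{liao2024error}, so there is no in-paper argument to compare against, and your proposal ultimately defers to the same reference. Your sketch of the underlying mechanism (proximal-gradient descent, linear rate from PL, path-length summation for QG, then QG\,$+$\,PL $\Rightarrow$ EB) is sound and matches the standard route.

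One technical point worth making explicit in your continuous-time step: in the constrained case the equality $\tfrac{d}{dt}h(q(t)) = -\|G_{h,\mathcal{S}}(q(t))\|^2$ does \emph{not} hold, since $G_{h,\mathcal{S}}\neq\nabla h$. What does hold is the inequality $\langle \nabla h(q), G_{h,\mathcal{S}}(q)\rangle \ge \|G_{h,\mathcal{S}}(q)\|^2$, which follows from the projection property (Lemma~\ref{lemma: projection gradient update inquality} with $q_2=q$). This gives $-\tfrac{d}{dt}h(q(t)) \ge \|G_{h,\mathcal{S}}(q(t))\|^2$, and that inequality (rather than equality) is exactly what your path-length bound needs: writing $u=h(q)-h^*$, one gets $\|G\|\le(-\dot u)/\|G\|\le(-\dot u)/\sqrt{2\mu_g u}$ and hence $\int\|G\|\,dt\le\sqrt{2(h(q_0)-h^*)/\mu_g}$, yielding $\mu_g$-QG (a fortiori the stated $\mu_g/2$-QG) and, after chaining with PL, the $\mu_g$-EB with the exact constant.
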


\begin{definition}[LICQ]
    For any fixed $x$, consider the lower-level feasible set $\mathcal{Y}(x)=\{ y\in \mathcal{Y}:c(x,y)\leq 0\}$ where $\mathcal{Y}$ can be locally characterized by $\phi(y) \le 0$ near $y_g^*(x)$ and $c$ is differentiable.
    We say that $\mathcal{Y}(x)$ satisfies the \emph{Linear Independence Constraint Qualification (LICQ)} at $y_g^*(x)$ if the set of gradients of all active constraints
    \[
    \{ \nabla_y \phi_j(y_g^*(x)) : j \in \mathcal{I}_{\mathcal{Y}}^0(x) \} \cup 
    \{ \nabla_y c_i(x, y_g^*(x)) : i \in \mathcal{I}_c^0(x) \}
    \]
    is linearly independent, where 
    \[
    \mathcal{I}_{\mathcal{Y}}^0(x) := \{ j : \phi_j(y_g^*(x)) = 0 \} \quad \text{and} \quad
    \mathcal{I}_c^0(x) := \{ i : c_i(x, y_g^*(x)) = 0 \}
    \]
    denote the active constraints.
\end{definition}

\begin{lemma}[First-Order Variational Inequality \citep{rockafellar1998variational}]
\label{lem:first_order_VI}
Consider the constrained optimization problem $\min_{q \in \mathcal{S}} h(q)$,
where $h: \mathcal{Q} \rightarrow \mathbb{R}$ is differentiable on $\mathcal{S} \subseteq \mathcal{Q}$. 
A point $q^* \in \mathcal{S}$ is a first-order stationary point of this problem if and only if it satisfies the first-order variational inequality condition
\begin{align*}
    \langle \nabla h(q^*), q - q^* \rangle \geq 0, \quad \forall q \in \mathcal{S},
\end{align*}
or equivalently,
\begin{align*}
    \langle \nabla h(q^*), d \rangle \geq 0, \quad \forall d \in \mathcal{T}_{\mathcal{S}}(q^*).
\end{align*}
\end{lemma}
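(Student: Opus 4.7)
The plan is to establish the two equivalences in tandem: the characterization of first-order stationarity via directional derivatives, and the equivalence between the two variational inequality (VI) formulations stated in the lemma.

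First, I would recall that in the constrained setting, a first-order stationary point of $\min_{q \in \mathcal{S}} h(q)$ is, by the standard definition in \citet{rockafellar1998variational}, a point $q^* \in \mathcal{S}$ at which the first-order directional derivative is nonnegative along every feasible direction, i.e., $D_d h(q^*) \geq 0$ for all $d \in \mathcal{T}_{\mathcal{S}}(q^*)$. Since $h$ is differentiable on $\mathcal{S}$, the directional derivative admits the chain-rule representation $D_d h(q^*) = \langle \nabla h(q^*), d \rangle$ for every direction $d$. Substituting this identity into the stationarity condition immediately yields the tangent-cone VI $\langle \nabla h(q^*), d \rangle \geq 0$ for all $d \in \mathcal{T}_{\mathcal{S}}(q^*)$, which handles the equivalence between stationarity and the second displayed inequality.

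Next, I would establish the equivalence between the VI over $\mathcal{S}$ and the VI over $\mathcal{T}_{\mathcal{S}}(q^*)$. For the tangent-cone-to-$\mathcal{S}$ direction, I would observe (under convexity of $\mathcal{S}$, which is the implicit setting throughout the paper) that for any $q \in \mathcal{S}$ the vector $q - q^*$ lies in $\mathcal{T}_{\mathcal{S}}(q^*)$: take $t_k = 1/k$ and $d_k = q - q^*$, so that $q^* + t_k d_k = (1 - t_k) q^* + t_k q \in \mathcal{S}$ by convexity, matching Definition~\ref{def:tangent_critical_cone}. Applying the tangent-cone VI at this specific direction yields the $\mathcal{S}$-VI. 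For the reverse implication, pick any $d \in \mathcal{T}_{\mathcal{S}}(q^*)$; by definition there exist $t_k \downarrow 0$ and $d_k \to d$ with $q^* + t_k d_k \in \mathcal{S}$. Substituting $q = q^* + t_k d_k$ into the $\mathcal{S}$-VI gives $t_k \langle \nabla h(q^*), d_k \rangle \geq 0$, and dividing by $t_k > 0$ and passing to the limit $k \to \infty$ yields $\langle \nabla h(q^*), d \rangle \geq 0$, as desired.

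The main obstacle is somewhat subtle: the equivalence between stationarity and the $\mathcal{S}$-form of the VI depends on the geometry of $\mathcal{S}$, and in particular on whether every element of $\mathcal{T}_{\mathcal{S}}(q^*)$ can be recovered from feasible directions of the form $q - q^*$. Convexity of $\mathcal{S}$ is what makes this clean, and I would flag this as the standing assumption; without it, the tangent-cone formulation should be taken as the primary definition and the $\mathcal{S}$-form is in general only a sufficient condition. Once the differentiability-based chain rule $D_d h(q^*) = \langle \nabla h(q^*), d \rangle$ and the sequential definition of $\mathcal{T}_{\mathcal{S}}(q^*)$ are in place, both implications reduce to a brief limit argument, and no further analytic machinery is required.
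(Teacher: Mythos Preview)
Your proposal is correct and gives the standard argument; the paper itself does not prove this lemma but simply cites it from \citet{rockafellar1998variational} as a preliminary fact, so there is no paper proof to compare against. Your explicit flagging of the convexity assumption on $\mathcal{S}$ (needed for the equivalence of the two VI forms) is appropriate and matches the paper's standing assumptions in Assumption~\ref{assumption: constraint}.
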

\begin{remark}
If $q^* \in \mathrm{int}(\mathcal{S})$, then Lemma~\ref{lem:first_order_VI} reduces to the unconstrained first-order condition $\nabla h(q^*) = 0$. Additionally, there is $\mathcal{T}_{\mathcal{S}}(q^*)=\mathcal{C}_{\mathcal{S}}(q^*)=\mathbb{R}^n$.
\end{remark}

\begin{definition}[Hausdorff-Lipschitz Continuity]
\label{def:hausdorff_lipschitz}
Let $K: \mathcal{Q} \rightrightarrows \mathbb{R}^n$ be a set-valued mapping with nonempty images. We say that $K$ is \emph{Hausdorff-Lipschitz continuous} on $\mathcal{Q}$ if there exists a constant $L \geq 0$ such that for any $q_1, q_2 \in \mathcal{X}$,
\begin{align*}
    d_H\big(S(q_1), S(q_2)\big) \leq L \|q_1 - q_2\|,
\end{align*}
where $d_H(\cdot,\cdot)$ denotes the Hausdorff distance:
\begin{align*}
    d_H(A,B) := \max\Bigg\{
        \sup_{a \in A} \inf_{b \in B} \|a-b\|, 
        \sup_{b \in B} \inf_{a \in A} \|a-b\| 
    \Bigg\}, \quad A,B \subset \mathbb{R}^n.
\end{align*}
\end{definition}

\begin{definition}[Smooth Boundary \citep{lee2003smooth}]
\label{def: smooth boundary}
Let $\mathcal{S} \subseteq \mathbb{R}^{n}$ be a closed set. We say that $\mathcal{S}$ has a \textit{smooth boundary} at $q \in \operatorname{bd}(\mathcal{S})$ if there exists a neighborhood $U \subseteq \mathbb{R}^n$ of $q$ and a continuously differentiable function 
$\varphi: U \to \mathbb{R}$ such that
\begin{align*}
    \mathcal{S} \cap U &= \{ z \in U : \varphi(z) \leq 0 \},\quad
    \operatorname{bd}(\mathcal{S}) \cap U = \{ z \in U : \varphi(z) = 0 \},\quad \text{and}\quad 
    \nabla \varphi(q) \neq 0.
\end{align*}
In this case, the tangent space with respect to $\text{bd}(\mathcal{S})$ at $q\in \text{bd}(\mathcal{S})$ is
\begin{align*}
    \mathcal{T}_{\text{bd}(\mathcal{S})}(q) := \{ d \in \mathbb{R}^n : \nabla \varphi(q)^\top d = 0 \}.
\end{align*}
\end{definition}

\begin{lemma} [{\citep[Lemma 3.1]{bubeck2015convex}}] 
\label{lemma: projection gradient update inquality}
Suppose $\mathcal{S}\subseteq \mathbb{R}^n$ is convex, closed, and nonempty. For any $q_1 \in \mathbb{R}^n$ and any $q_2 \in \mathcal{S}$, it follows that 
\begin{align*}
    \langle \Proj_{\mathcal{S}}(q_1)-q_2, \Proj_{\mathcal{S}}(q_1)- q_1 \rangle  \leq 0.
\end{align*}
In this way, take $q_1 = q_3-\eta g$ for any $q_3\in \mathcal{S}$, and denote $q_3^{\eta,g} = \Proj_{\mathcal{S}}(q_3-\eta g)$ as a projected gradient update in direction $g$ with stepsize $\eta$, we have,
\begin{align*}
    \langle g, q_3^{\eta,g}  -q_2 \rangle \leq -\frac{1}{\eta} \langle q_3^{\eta,g} -q_2, q_3^{\eta,g} -q_3\rangle, \quad \forall q_2,q_3 \in \mathcal{S}.
\end{align*}
\end{lemma}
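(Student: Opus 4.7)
The statement is the standard projection-contraction inequality for convex sets, which follows directly from the first-order optimality condition for the projection operator, so the plan is short.

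The plan is to first derive the inner-product inequality $\langle \Proj_{\mathcal{S}}(q_1)-q_2, \Proj_{\mathcal{S}}(q_1)-q_1\rangle\le 0$ from the variational characterization of the projection, and then obtain the second claim by a purely algebraic substitution. For the first part, I would view $\Proj_{\mathcal{S}}(q_1)$ as the unique minimizer of the strongly convex objective $s\mapsto \tfrac{1}{2}\|s-q_1\|^2$ over the convex set $\mathcal{S}$ (uniqueness uses that $\mathcal{S}$ is closed, convex, nonempty). Applying Lemma~\ref{lem:first_order_VI} with $h(s)=\tfrac12\|s-q_1\|^2$ and $\nabla h(s)=s-q_1$, the first-order variational inequality at the minimizer $q^\star:=\Proj_{\mathcal{S}}(q_1)$ gives $\langle q^\star-q_1,\, q_2-q^\star\rangle\ge 0$ for every $q_2\in\mathcal{S}$, which is exactly the desired statement after a sign flip.

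For the second part I would substitute $q_1=q_3-\eta g$ with $q_3\in\mathcal{S}$ into the first inequality. Writing $q_3^{\eta,g}=\Proj_{\mathcal{S}}(q_3-\eta g)$, the first inequality becomes
\begin{align*}
\langle q_3^{\eta,g}-q_2,\; q_3^{\eta,g}-q_3+\eta g\rangle \le 0,
\end{align*}
which, after expanding the inner product linearly and dividing by $\eta>0$, rearranges to $\langle g,\, q_3^{\eta,g}-q_2\rangle\le -\tfrac{1}{\eta}\langle q_3^{\eta,g}-q_2,\, q_3^{\eta,g}-q_3\rangle$, as claimed. Note that $q_3\in\mathcal{S}$ is only used so that the substitution $q_1=q_3-\eta g$ makes sense with $q_3$ lying in the domain of interest; the inequality itself holds for any $q_2\in\mathcal{S}$.

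There is no real obstacle here: the only subtlety is making sure that the objective $\tfrac12\|s-q_1\|^2$ is continuously differentiable on an open set containing $\mathcal{S}$ so that Lemma~\ref{lem:first_order_VI} applies verbatim, which is immediate since the squared distance is smooth on all of $\mathbb{R}^n$. Everything else is one line of linearity of the inner product.
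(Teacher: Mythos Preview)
Your proposal is correct and is precisely the standard argument: the first inequality is the first-order optimality condition for $\min_{s\in\mathcal{S}}\tfrac12\|s-q_1\|^2$, and the second follows by substituting $q_1=q_3-\eta g$ and rearranging. The paper does not give its own proof of this lemma---it simply cites \cite[Lemma 3.1]{bubeck2015convex}---so there is nothing to compare against beyond noting that your derivation is the canonical one found in that reference.
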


\section{Preliminaries of Penalty-based Bilevel Methods}

In this section, we introduce preliminaries related to the penalty reformulation $F_\gamma(x)$ for BLO. 
Before proceeding, we outline the assumptions of focus in this paper. 

\begin{assumption}[Upper level]
\label{assumption: UL} Assume differentiable $f: \mathbb{R}^{d_x}\times \mathbb{R}^{d_y}\rightarrow \mathbb{R}$ is 
\refstepcounter{mycounter}
\label{ass: UL y Lipschitz}
(1) $l_{f,0}$-Lipschitz in $y\in \mathcal{Y}$, 
\refstepcounter{mycounter}
\label{ass: UL smooth}
(2) $l_{F_\gamma,1}$-smooth in $(x,y)\in \mathcal{X}\times \mathcal{Y}$, and
\refstepcounter{mycounter}
\label{ass: UL x local Lipchitz}
(3) Lipschitz in $x\in \mathcal{X}$.
\end{assumption}

\begin{assumption}[Lower level]
\label{assumption: LL}
Assume differentiable $g: \mathbb{R}^{d_x}\times \mathbb{R}^{d_y}\rightarrow \mathbb{R}$ is 
\setcounter{mycounter}{0}
\refstepcounter{mycounter}
\label{ass: LL SC}
(1) $\mu_g$-strongly-convex in $y\in \mathcal{Y}$, 
\refstepcounter{mycounter}
\label{ass: LL smooth}
and (2) $l_{g,1}$-smooth in $(x,y)\in \mathcal{X}\times \mathcal{Y}$, and
\refstepcounter{mycounter}
\label{ass: LL x local Lipchitz}
(3) Lipschitz in $x\in \mathcal{X}$.
\end{assumption}

\begin{assumption}[Constraints] 
\label{assumption: constraint}
Assume 
\setcounter{mycounter}{0}
\refstepcounter{mycounter}
\label{ass: constraint domain}
(1) $\mathcal{X}\subseteq \mathbb{R}^{d_x}$ and $\mathcal{Y}(x) \subseteq \mathbb{R}^{d_y}$ are non-empty, closed, and convex, 
\refstepcounter{mycounter}
\label{ass: constraint inequality}
(2) differentiable $c: \mathbb{R}^{d_x}\times \mathbb{R}^{d_y}\rightarrow \mathbb{R}^{d_c}$ is convex in $y\in \mathcal{Y}$, $l_{c,1}$-smooth in $(x,y)\in \mathcal{X}\times \mathcal{Y}$, 
\refstepcounter{mycounter}
\label{ass: constraint x local Lipschitz}
(3) and Lipschitz, and
\refstepcounter{mycounter}
\label{ass: licq}
(4) For any $x\in \mathcal{X}$, there exists $d_c'<\infty$ and differentiable $\varphi: \mathbb{R}^{d_y}\rightarrow \mathbb{R}^{d_c'}$ such that $\mathcal{Y} \cap U_x = \{ y \in U_x : \varphi(y) \le 0\}$ for a neighborhood $U_x$ of $y_g^*(x)$, and
$\mathcal{Y}(x)$ satisfies the LICQ in the neighborhood of optimal points $y_g^*(x)$.
\end{assumption}

The Lipschitz continuity and smoothness conditions for $f$, $g$, and $c$ in Assumptions \ref{assumption: UL}, \ref{assumption: LL}, and \ref{assumption: constraint} are standard \citep{ghadimi2018approximation,hong2020two,kwon2023fully,ji2021bilevel,chen2021closing,jiang2024primal}. The strong convexity of the lower-level problem is conventional \citep{ghadimi2018approximation,chen2023near,jiang2024primal,chen2021closing} and still presents challenges due to the imposed constraints. 
Moreover, assuming $c(x,y)$ convex in $y$ is mild and traditional for coupled constrained BLO problems \citep{yao2024constrained,khanduri2023linearly,xiao2022alternating,jiang2024primal}. The convexity and closure of $\mathcal{X}$ and $\mathcal{Y}(x)$ are standard. The $\mathcal{Y}$ local characterization condition is mild and can be satisfied by polyhedral sets, boxes, simplices, or other closed convex sets, even when $y_g^*(x)$ lies on edges or corners where the boundary may be nonsmooth, and the LICQ is a common assumption in coupled constrained BLO \citep{kwon2023penalty,xu2023efficient,jiang2024primal}.


With these conditions, it can be proved that $F_\gamma(x)$ is a good approximation to $\phi(x)$ in \eqref{eq: original problem 1} with distance controlled by $\gamma^{-1}$ so that solving $F_\gamma(x)$ is equivalent to solving to find $\epsilon$-suboptimal $\phi(x)$.
\begin{lemma} 
\label{lemma: distance of yg ygam} 
Suppose Assumption \ref{assumption: UL}.\ref{ass: UL y Lipschitz}--\ref{ass: UL smooth}, \ref{assumption: LL}.\ref{ass: LL SC}, and \ref{assumption: constraint}.\ref{ass: constraint domain}--\ref{ass: constraint inequality} hold. The $\epsilon$-suboptimal solutions in distance square metric for $\min_{x\in \mathcal{X}}F_\gamma(x)$ in \eqref{eq: F gam function} are $\epsilon$-suboptimal local solutions for the 
$\epsilon$-approximation problem \eqref{eq: epsilon app prob}
with $\gamma=\Oc(\epsilon^{-0.5})$ and $\gamma \geq \frac{l_{f,1}}{\mu_g}$. Additionally, \begin{align*} 
\|\phi (x)-F_\gamma(x)\| = \Oc(l_{f,0}^2\mu_g^{-1} \gamma^{-1}),\quad \|y_g^*(x)-y_\gamma^*(x) \| = \Theta(l_{f,0}\mu_g^{-1} \gamma^{-1}), 
\end{align*} 
where $S_g(x)=\{y_g^*(x)\}$ and $S_\gamma(x)=\{y_\gamma^*(x)\}$ are singletons with $S_g(x)$ defined in \eqref{eq: lower-level problem} and
\begin{align}
    S_\gamma(x):= \arg\min_{y\in \mathcal{Y}(x)} \gamma^{-1}f(x,y)+g(x,y). \label{eq: S gamma}
\end{align}
\end{lemma}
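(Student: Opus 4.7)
The plan is to establish the lemma in four linked steps, with the $y$-distance bound as the main technical driver.

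First, I would verify that both lower-level problems have unique solutions so that $y_g^*(x)$ and $y_\gamma^*(x)$ are well-defined singletons. The set $\mathcal{Y}(x)$ is convex because $\mathcal{Y}$ is convex (Assumption~\ref{assumption: constraint}.\ref{ass: constraint domain}) and $c(x,\cdot)$ is convex in $y$ (Assumption~\ref{assumption: constraint}.\ref{ass: constraint inequality}). On this convex set, $g(x,\cdot)$ is $\mu_g$-strongly convex, so $S_g(x)$ is a singleton. Since $f(x,\cdot)$ is $l_{f,1}$-smooth, it is $(-l_{f,1})$-weakly convex; hence $\gamma^{-1}f(x,\cdot)+g(x,\cdot)$ is $(\mu_g-\gamma^{-1}l_{f,1})$-strongly convex, which is $\geq \mu_g/2$ whenever $\gamma\geq 2l_{f,1}/\mu_g$ (and strictly positive under the milder hypothesis $\gamma\geq l_{f,1}/\mu_g$), giving $S_\gamma(x)=\{y_\gamma^*(x)\}$.

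Next, I would bound $\|y_g^*(x)-y_\gamma^*(x)\|$ by exploiting the two optimality-plus-strong-convexity inequalities:
\begin{align*}
\gamma^{-1}f(x,y_g^*)+g(x,y_g^*)&\geq \gamma^{-1}f(x,y_\gamma^*)+g(x,y_\gamma^*)+\tfrac{\mu_g-\gamma^{-1}l_{f,1}}{2}\|y_g^*-y_\gamma^*\|^2,\\
g(x,y_\gamma^*)&\geq g(x,y_g^*)+\tfrac{\mu_g}{2}\|y_g^*-y_\gamma^*\|^2.
\end{align*}
Summing these and using $|f(x,y_g^*)-f(x,y_\gamma^*)|\leq l_{f,0}\|y_g^*-y_\gamma^*\|$ from Assumption~\ref{assumption: UL}.\ref{ass: UL y Lipschitz} yields $\|y_g^*(x)-y_\gamma^*(x)\|\leq \tfrac{l_{f,0}\gamma^{-1}}{\mu_g-\gamma^{-1}l_{f,1}/2}=\mathcal{O}(l_{f,0}\mu_g^{-1}\gamma^{-1})$. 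For the matching $\Omega$ lower bound needed for $\Theta$, I would invoke first-order optimality of $y_\gamma^*$ for $\gamma^{-1}f+g$ versus that of $y_g^*$ for $g$: the gap between their stationarity conditions equals $\gamma^{-1}\nabla_y f(x,y_\gamma^*)$, and under strong convexity of $g$ (combined with the variational inequality of Lemma~\ref{lem:first_order_VI} on the convex set $\mathcal{Y}(x)$) this perturbation propagates linearly to a displacement of order $\gamma^{-1}l_{f,0}\mu_g^{-1}$ in $y$, giving the tight rate.

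Third, for the value-gap bound, I would observe the natural sandwich $F_\gamma(x)\leq \tilde F_\gamma(x,y_g^*(x))=f(x,y_g^*(x))=\phi(x)$ (since $g(x,y_g^*(x))=v(x)$), and from the other side
\begin{align*}
\phi(x)-F_\gamma(x) &= f(x,y_g^*(x))-f(x,y_\gamma^*(x))-\gamma\bigl(g(x,y_\gamma^*(x))-g(x,y_g^*(x))\bigr)\\
&\leq l_{f,0}\|y_g^*(x)-y_\gamma^*(x)\|=\mathcal{O}(l_{f,0}^2\mu_g^{-1}\gamma^{-1}),
\end{align*}
where the penalty term is dropped since it is non-negative by definition of $y_g^*(x)$.

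Finally, for the $\epsilon$-suboptimality correspondence, I would set $\gamma=\Theta(\epsilon^{-0.5})$ so that $\|y_g^*(x)-y_\gamma^*(x)\|^2=\mathcal{O}(\epsilon)$, making $(x,y_\gamma^*(x))$ automatically feasible for the constraint $d_{S_g(x)}(y)^2\leq \epsilon$ of \eqref{eq: epsilon app prob}. Given any $x^*$ that is $\epsilon$-suboptimal for $F_\gamma$ in the distance-square sense, the sandwich $|\phi-F_\gamma|=\mathcal{O}(\gamma^{-1})=\mathcal{O}(\epsilon^{0.5})$ transfers this suboptimality to $\phi$, and on an $\mathcal{O}(\sqrt{\epsilon})$-neighborhood the resulting pair is an $\epsilon$-suboptimal local solution to \eqref{eq: epsilon app prob}. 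The main obstacle I anticipate is the $\Theta$ (as opposed to $\mathcal{O}$) claim on $\|y_g^*-y_\gamma^*\|$: the clean variational/strong-convexity argument only supplies one side, and the matching lower bound must handle the possibility that $\nabla_y f(x,y_\gamma^*(x))$ projects to zero on the critical cone of the active constraints, so I would need to appeal to the LICQ in Assumption~\ref{assumption: constraint}.\ref{ass: licq} to guarantee a non-degenerate first-order perturbation and hence a strictly linear displacement in $\gamma^{-1}$.
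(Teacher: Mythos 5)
Your proposal is essentially the argument behind the result: the paper itself gives no proof of this lemma, deferring entirely to \citep[Theorem 2]{shen2023penalty} and \citep[Theorem 1]{jiang2024primal}, and the core of those proofs is exactly your second step --- summing the two optimality-plus-strong-convexity inequalities for $y_\gamma^*$ and $y_g^*$ over the common convex feasible set $\mathcal{Y}(x)$, then invoking $l_{f,0}$-Lipschitzness of $f(x,\cdot)$ to cancel one power of $\|y_g^*-y_\gamma^*\|$. Your uniqueness argument, the one-sided sandwich $F_\gamma(x)\le\phi(x)$ obtained by evaluating $\tilde F_\gamma$ at $y_g^*(x)$, and the reverse bound obtained by dropping the non-negative penalty term are all correct and match the cited derivations.

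The one point to settle is the $\Theta$ claim, which you rightly flag as the obstacle. A per-instance two-sided bound of the form $\|y_g^*(x)-y_\gamma^*(x)\|=\Omega(l_{f,0}\mu_g^{-1}\gamma^{-1})$ is simply false in general --- if $\nabla_y f(x,y_g^*(x))$ vanishes (or projects to zero on the critical cone) while $f$ still has Lipschitz modulus $l_{f,0}$ elsewhere, the displacement is $o(\gamma^{-1})$ --- and LICQ does not exclude this degeneracy, so your proposed rescue via Assumption~\ref{assumption: constraint}.\ref{ass: licq} would not close that gap. The $\Theta$ here is to be read as worst-case tightness of the rate: it suffices to exhibit a single instance (e.g.\ $g(x,y)=\tfrac{\mu_g}{2}\|y-x\|^2$, $f(x,y)=l_{f,0}\langle e,y\rangle$ with $\mathcal{Y}(x)=\mathbb{R}^{d_y}$, for which $y_\gamma^*(x)-y_g^*(x)=-\gamma^{-1}\mu_g^{-1}l_{f,0}e$ exactly) rather than to prove a universal lower bound. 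With that reading your proof is complete; the final transfer step to the $\epsilon$-approximation problem \eqref{eq: epsilon app prob} is, as in the cited theorems, a feasibility statement in the squared-distance metric ($\|y_\gamma^*(x)-y_g^*(x)\|^2=\Oc(\gamma^{-2})=\Oc(\epsilon)$) rather than an $\Oc(\epsilon)$ bound on the function-value gap, so be careful not to claim the latter from $|\phi-F_\gamma|=\Oc(\epsilon^{0.5})$.
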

\begin{remark}[{Lipschitz-Continuity of $S_g(x)$ \cite[Lemma 6]{shen2023penalty}}]
\label{remark: Lipschitz of S(x)}
Suppose $\mathcal{Y}(x)=\tilde{\mathcal{Y}}$ is closed, convex and non-empty uncoupled LL constraint, $g(x,y)$ satisfies proximal $\mu$-PL condition in $y\in \mathcal{Y}$, and $\nabla_y g(x,y)$ is Lipschitz in $x$. Then there exist a constant $ L_y^g \geq 0$ such that for any $x_1,x_2\in \mathcal{X}$ and $y_1 \in S_g(x_1)=\arg\min_{y\in \mathcal{Y}}g(x_1,y)$, there exists $y_2 \in S_g(x_1)=\arg\min_{y\in \mathcal{Y}}g(x_2,y)$ such that
\begin{align*}
\| y_1-y_2\|\leq L_y^g\| x_1-x_2\|.
\end{align*}
In other words, $S_g(x)$ is Hausdorff-Lipschitz. 
\end{remark}
\begin{remark}
\label{remark: generalizaed dist}
Suppose $\mathcal{Y}(x)=\tilde{\mathcal{Y}}$ is closed, convex and non-empty uncoupled LL constraint, $f$ is $l_{f,0}$-Lipschitz and $g$ is continuous in $y\in \tilde{\mathcal{Y}}$, and there exists a finite $\gamma^*>0$ such that, for all $c\in [0,1/\gamma^*]$, the function $cf(x,y)+g(x,y)$ satisfies the proximal $\mu$-PL condition in $y\in \mathcal{Y}(x)$ for some $\mu>0$ for all $x\in \mathcal{X}$.
Then, there is
\begin{align}
    \|\phi(x)-F_\gamma(x)\|\leq &\Oc(l_{f,0}^2 \mu^{-1}\gamma^{-1}),\quad d_{S_\gamma(x)}(y_g^*(x)) ,d_{S_g(x)}(y_\gamma^*(x)) \leq  \Theta(l_{f,0}\mu^{-1}\gamma^{-1}),
\end{align}
and the $\epsilon$-suboptimal solutions equivalence between \eqref{eq: epsilon app prob} and \eqref{eq: joint penalty problem} also holds.
\end{remark}
Lemma~\ref{lemma: distance of yg ygam} and Remark~\ref{remark: generalizaed dist} directly follows \citep[Theorem 2]{shen2023penalty} and \citep[Theorem 1]{jiang2024primal}. It inspires a choice of $\gamma=\Oc(\epsilon^{-0.5}) \geq \frac{l_{f,1}}{\mu_g}$.
Here, $\gamma^{-1} f + g$ is strongly convex in $y$ when $\gamma \geq \frac{l_{f,1}}{\mu_g}$, since $l_{F_\gamma,1}$-smoothness provides a lower bound on the negative curvature of $f$. In the uncoupled constrained setting where $\mathcal{Y}(x)=\tilde{\mathcal{Y}}$, the proximal PL condition is also commonly assumed in the first-order BLO literature \citep{ghadimi2018approximation,hong2020two,ji2021bilevel,chen2021closing,dagreou2022framework,kwon2023penalty,shen2023penalty,chen2024finding}. It is a weaker condition and is implied by the smoothness of $f$ and the strongly convexity of $g(x,\cdot)$.

Moreover, $F_\gamma(x) = \gamma (v_\gamma(x)-v(x))$ features favorable properties such as differentiability and smoothness, so does the value functions. 

\begin{lemma}[Derivative of $v(x)$ {\citep[Lemma 2]{jiang2024primal}}]
\label{lemma: gradient of v}
Suppose Assumptions \ref{assumption: UL}, \ref{assumption: LL}, \ref{assumption: constraint} hold.
The  Lagrangian multiplier $\lambda_g^*(x) =\arg\max_{\lambda\in \mathbb{R}^{d_c}_+}\{\min_{y_g\in \mathcal{Y}}g(x,y_g)+\langle \lambda, c(x,y_g)\rangle\}$ is unique.
And for $\mathcal{Y}(x)=\{y\in \mathcal{Y}:c(x,y)\leq 0\}$, the value function $v(x)$ defined in \eqref{eq: joint penalty problem} is $l_{v,1}$-smooth and
\begin{align}
\nabla v(x) =  \nabla_x g(x,y_g^*(x)) + \langle \lambda_g^*(x),\nabla_x c(x,y_g^*(x)) \rangle. \label{eq: value function gradient generalized form}
\end{align}
Similarly, define
\begin{align}
    v_\gamma(x)=\min_{y_\gamma\in \mathcal{Y}(x)}\gamma^{-1}f(x,y_\gamma)+g(x,y_\gamma) \label{eq: v gamma}
\end{align}
The function $v_\gamma(x)$ is $l_{v_\gamma,1}$-smooth and
\begin{align}
    \nabla v_\gamma(x) = \gamma^{-1} \nabla_x f(x,y_\gamma^*(x))+g(x,y_\gamma^*(x))+ \langle \lambda_\gamma^*(x),\nabla_x c(x,y_\gamma^*(x)) \rangle,
\end{align}
where $(\lambda_\gamma^*(x),y_\gamma^*(x)) =\arg\max_{\lambda\in \mathbb{R}^{d_c}_+}\min_{y_\gamma\in \mathcal{Y}}\{\gamma^{-1} f(x,y_\gamma)+g(x,y_\gamma)+\langle \lambda, c(x,y_\gamma)\rangle\}$ are unique. Moreover, we have 
\begin{align}
    \nabla F_\gamma(x) = \gamma(\nabla v_\gamma(x) - \nabla v(x) ). \label{eq: nabla F gam}
\end{align}
\end{lemma}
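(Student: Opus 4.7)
The plan is to establish the result in four stages. First, I would prove uniqueness of the Lagrange multipliers by invoking LICQ (Assumption~\ref{assumption: constraint}.\ref{ass: licq}) together with strong convexity of $g(x,\cdot)$. Strong convexity makes $y_g^*(x)$ unique, and once $y_g^*(x)$ is fixed, the stationarity condition $\nabla_y g(x,y_g^*(x)) + \nabla_y c(x,y_g^*(x))^\top \lambda + \nabla_y \varphi(y_g^*(x))^\top \mu = 0$ is a linear system in the multipliers whose coefficient matrix has full column rank by LICQ on the active set. The same argument applied to the strongly convex function $\gamma^{-1}f + g$ (which is strongly convex when $\gamma \geq l_{f,1}/\mu_g$) yields uniqueness of $(\lambda_\gamma^*(x), y_\gamma^*(x))$.

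Second, I would derive the gradient formula by a Danskin/envelope-type argument. Introducing the Lagrangian $\mathcal{L}_g(x,y,\lambda) := g(x,y) + \langle \lambda, c(x,y)\rangle$, the value function satisfies $v(x) = \mathcal{L}_g(x,y_g^*(x),\lambda_g^*(x))$ along KKT-optimal trajectories. The total derivative in $x$ has three terms: the explicit $\nabla_x \mathcal{L}_g$ piece and two pieces involving $\nabla y_g^*(x)$ and $\nabla \lambda_g^*(x)$. The latter two vanish by stationarity in $y$ and by complementary slackness combined with primal feasibility, leaving the claimed expression \eqref{eq: value function gradient generalized form}. The derivation for $v_\gamma(x)$ is identical with $g$ replaced by $\gamma^{-1}f + g$, producing the extra $\gamma^{-1}\nabla_x f$ term.

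Third, for $l_{v,1}$-smoothness I would show that $x \mapsto (y_g^*(x),\lambda_g^*(x))$ is Lipschitz on $\mathcal{X}$. Strong convexity of $g(x,\cdot)$ gives a uniform positive lower bound on the reduced Hessian, and LICQ gives a uniformly bounded pseudoinverse for the active-constraint Jacobian; applying the implicit function theorem (or Robinson's strong regularity theorem) to the KKT system, together with Lipschitz continuity of $\nabla g$, $\nabla c$, and the Lipschitz/smoothness bounds from Assumptions~\ref{assumption: LL} and~\ref{assumption: constraint}, yields a Lipschitz constant for both $y_g^*(x)$ and $\lambda_g^*(x)$. Composing this with the $l_{g,1}$- and $l_{c,1}$-smoothness of $g$ and $c$ in $(x,y)$ then shows $\nabla v(x)$ is Lipschitz, giving $l_{v,1}$-smoothness; the same reasoning with $\gamma^{-1}f+g$ handles $v_\gamma$. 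Finally, \eqref{eq: nabla F gam} follows immediately from $F_\gamma(x) = \gamma(v_\gamma(x) - v(x))$ by linearity.

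The main technical obstacle is controlling the sensitivity of the primal-dual optimal pair under the coupled constraint $c(x,y)\leq 0$, where the active set can change with $x$ and the boundary of $\mathcal{Y}$ contributes its own multipliers. LICQ and the strong second-order sufficient condition implied by strong convexity of $g(x,\cdot)$ are exactly what is needed to ensure strong regularity of the KKT map, so that despite active-set changes the combined multiplier vector remains a locally Lipschitz function of $x$; once this stability is secured, the smoothness conclusion follows by routine chain-rule bookkeeping on the Lagrangian.
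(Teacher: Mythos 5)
The paper does not prove this lemma; it imports it verbatim from \citet[Lemma~2]{jiang2024primal}. Your outline is correct and follows the same standard route as that reference: uniqueness of the multiplier from LICQ plus strong convexity, a Danskin/envelope argument at the unique saddle point for the gradient formula, strong regularity of the KKT map for Lipschitz continuity of $(y_g^*(x),\lambda_g^*(x))$ and hence $l_{v,1}$-smoothness, and linearity for \eqref{eq: nabla F gam}.

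One point in your second stage deserves more care. You claim the term involving $\nabla y_g^*(x)$ vanishes ``by stationarity in $y$,'' but with the Lagrangian $\mathcal{L}_g(x,y,\lambda)=g(x,y)+\langle\lambda,c(x,y)\rangle$ that only dualizes the coupled constraints, stationarity over the residual set $\mathcal{Y}$ is a variational inequality, not $\nabla_y\mathcal{L}_g=0$: when $y_g^*(x)\in\operatorname{bd}(\mathcal{Y})$, the gradient $\nabla_y\mathcal{L}_g(x,y_g^*(x),\lambda_g^*(x))$ is generally nonzero, and the inner product $\langle\nabla_y\mathcal{L}_g,\,D_d y_g^*(x)\rangle$ vanishes only because the directional derivative of $y_g^*$ lies in the critical cone of $\mathcal{L}_g(x,\cdot,\lambda_g^*(x))$ with respect to $\mathcal{Y}$. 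Establishing that membership is exactly the content of the paper's Lemma~\ref{lemma: gradient equals zero generalization CC} (proved in Appendix~\ref{appendix: proof of Lemma Hessian of v CC} using boundary smoothness of $\mathcal{Y}$ and a perturbed-domain argument), so it is a real step rather than an immediate consequence of the KKT conditions; either supply that argument or dualize the $\mathcal{Y}$-constraints as well so that full stationarity $\nabla_y g+\nabla_y c^\top\lambda+\nabla_y\varphi^\top\mu=0$ holds, and then handle the extra $\mu$-terms by the same complementary-slackness reasoning you use for $\lambda$. With that repair the proof is complete.
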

\begin{remark}[{\citep[Proposition 4]{shen2023penalty}}]
\label{remark: gradient of v}
    When $\mathcal{Y}(x)=\tilde{\mathcal{Y}}$ is uncoupled with $x$, assumptions can be relaxed to $\tilde{\mathcal{Y}}$ being closed and convex, and $\gamma^{-1}f(x,y)+g(x,y)$ being jointly-smooth and proximal PL in $\tilde{\mathcal{Y}}$. In this case, the Lagrangian multiplier is no longer involved and $\nabla v_\gamma(x) =  \gamma^{-1}\nabla_x f(x,y_\gamma^*(x))+\nabla_x  g(x,y_\gamma^*(x))$ for any $y_\gamma \in S_\gamma(x)$.
\end{remark}

The lemma \ref{lemma: gradient of v} is the cornerstone of the implementation of a gradient-based algorithm to solve the reformulation $F_\gamma(x)$ or $\tilde{F}_\gamma(x,y)$, such as in \citep{kwon2023fully,shen2023penalty,jiang2024primal}.

\section{Improved Convergence Rate under Uncoupled Constraints}
\label{sec: Improved Convergence Rate under Uncoupled Constraints}

In this section, we start by considering the uncoupled constraint $\mathcal{Y}(x)=\tilde{\mathcal{Y}}$ independent from $x$. After revisiting the analysis of the unconstrained case (Sec.~\ref{sec: Existing work smoothness of F gam}), we will provide a dedicated analysis of the smoothness of $F_\gamma(x)$ for the uncoupled constraint in Sec.~\ref{sec: smoothness of F gam}. In Sec.~\ref{sec: Improved PBGD method}, we revisited ALT-PBGD and demonstrated that it is an optimal algorithm that matches the convergence complexity of the gradient descent algorithm. We will discuss the general coupled constraint case in Sec.~\ref{sec: BLOCC}.

\subsection{Existing work: tighter smoothness estimate of 
            \texorpdfstring{$F_\gamma(x)$}{Fγ(x)} for unconstrained BLO}
\label{sec: Existing work smoothness of F gam}

\begin{wrapfigure}{r}{0.4\textwidth}
    \centering
    \vspace{-0.6cm}
    \includegraphics[width=0.38\textwidth]{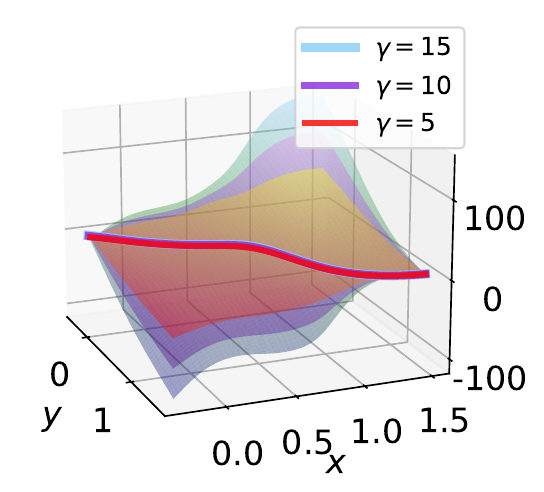}
    \caption{$\nabla_x \tilde{F}_\gamma(x, y)$ (spheres) vs. $\nabla F_\gamma(x)$ (lines) with different $\gamma$ for the problem in Examlpe~\ref{example:toy_example_1}.}
    \label{fig:gradient-plot}
    \vspace{-0.8cm}
\end{wrapfigure}

Existing literature \citep{shen2023penalty,kwon2023penalty} investigates the joint minimization of $(x, y)$ for $\tilde{F}_\gamma(x, y)$ in \eqref{eq: joint penalty problem}, whose smoothness modulus is of order $\Theta(\gamma)$. This leads to a prior estimate of the smoothness modulus for $F_\gamma(x)$ as $l_{F_\gamma,1} = \Oc(\gamma)$.
However, empirical evidence, e.g. Fig.~\ref{fig:gradient-plot}, shows that although $\nabla_x \tilde{F}_\gamma (x, y)$ will be scaled up by $\gamma$, $\nabla F_\gamma(x)$ remains at a constant value. 
This motivates a re-examination of the smoothness modulus of $F_\gamma(x)$. 

For the \textit{unconstrained} case, this has been achieved by deriving a closed-form expression for $\nabla^2 F_\gamma(x)$ and relating it to the unscaled $\nabla^2 F(x)$ \citep{chen2024finding}. Specifically, in the unconstrained case where $\mathcal{X}=\mathbb{R}^{d_x},\mathcal{Y}(x)=\mathbb{R}^{d_y}$ in \eqref{eq: original problem 1}, the lower-level stationarity $\nabla_y g(x,y_g^*(x))=0$ gives
\begin{align}
0=&\lim_{r\downarrow 0}\frac{1}{r} \big( \nabla_y g(x+rd,y_g^*(x+rd)) - \nabla_y g(x,y_g^*(x))) \big)\nonumber\\
= & \nabla_{xy}g(x,y_g^*(x))^\top d + \nabla_{yy}g(x,y_g^*(x))\frac{\partial}{\partial x} y_g^*(x) d,
\label{eq:stationarity_diff_equals_zero}
\end{align}
where the second equality is followed by Taylor's expansion and $y_g^*(x)$ being Lipschitz in $x$ \citep{shen2023penalty,kwon2023penalty,jiang2024primal,chen2025foops}.
Therefore, prior arts \citep{arbel2022nonconvex,kwon2023penalty,xiao2023} obtain
\begin{align}
\frac{\partial}{\partial x} y_g^*(x) \in -\nabla_{yy}g(x,y_g^*(x))^{\dagger}\nabla_{yx}g(x,y_g^*(x)) + \text{Ker}(\nabla_{yy}g(x,y_g^*(x))). \label{eq: partial yg(x)}
\end{align}
Moreover, the expression of $\nabla v(x)$ in Lemma~\ref{lemma: gradient of v} enables finding $\nabla^2 v(x)$: 
\begin{align}
    \nabla^2 v(x)= &\nabla_{xx} g(x,y_g^*(x)) -\nabla_{xy} g(x,y_g^*(x))\nabla_{yy}g(x,y_g^*(x))^{\dagger}\nabla_{yx}g(x,y_g^*(x)), \label{eq: hassien of v}
\end{align}
where the kernel term in \eqref{eq: partial yg(x)} is canceled out as $\nabla_{xy} g(x,y_g^*(x)) \subseteq \text{Ker}(\nabla_{yy}g(x,y_g^*(x)))$ \citep{kwon2023penalty,chen2024finding}.

Similarly, the Hessian of $v_\gamma(x)$ in \eqref{eq: v gamma} 
can be established. According to \eqref{eq: nabla F gam}, $\nabla^2 F_\gamma(x)=\gamma(\nabla^2 v_\gamma(x)-\nabla^2 v(x))$ and since $\|\nabla^2 v_\gamma(x)-\nabla^2 v(x)\|=\Oc(\|y_g^*(x)-y_\gamma^*(x)\|)=\Oc(\gamma^{-1})$ as per Lemma~\ref{lemma: distance of yg ygam}, the smoothness constant for $F_\gamma(x)$ is $l_{F_\gamma,1}=\max_x\|\nabla^2 F_{\gamma}(x)\|=\Oc(1)$.

\subsection{Tighter smoothness estimate of \texorpdfstring{$F_\gamma(x)$}{Fγ(x)} for uncoupled constrained BLO}

\label{sec: smoothness of F gam}

\begin{wrapfigure}{r}{0.45\textwidth}
    \centering
    \vspace{-0.2cm}
    \includegraphics[width=0.4\textwidth]{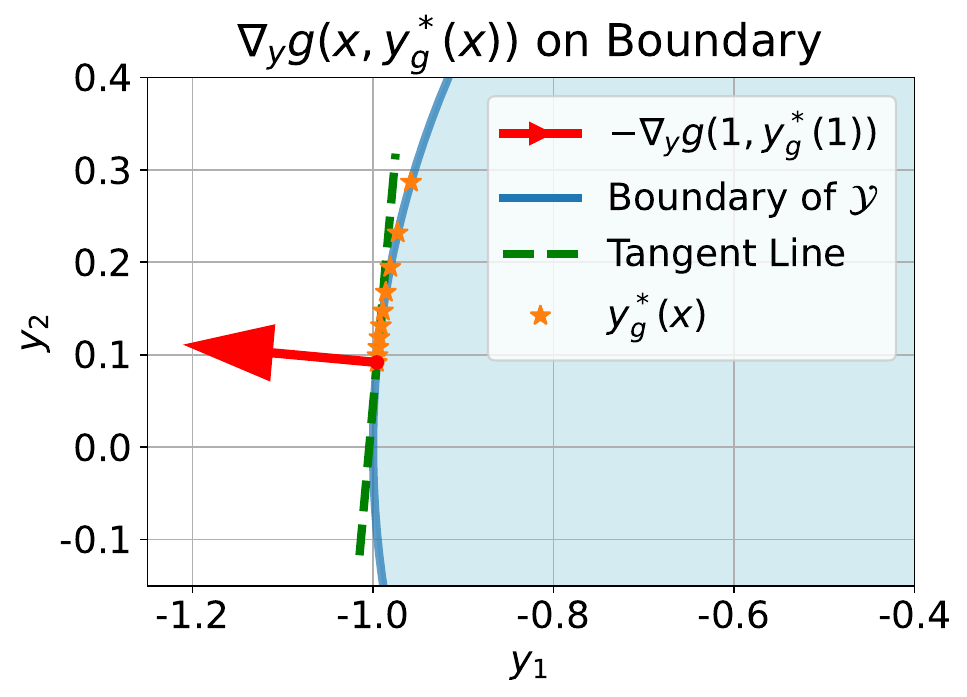}
        \caption{Implicit gradient on the boundary. The LL objective is given by $g(x,y)=xy_1+(1+x)y_2^2-y_2$, with LL domain $\mathcal{Y}$ being the unit ball. The orange stars depict the trajectory of $y_g^*(x)$ as $x$ varies from $1$ to $3$.}
        \label{fig:geographic_y_gradient}
    \vspace{-0.8cm}
\end{wrapfigure}

\textbf{Technical challenges for constrained BLO.}
For unconstrained BLO, a key step in establishing the tight $\Oc(1)$ smoothness modulus estimate for $F_\gamma(x)$ is deriving $\nabla^2 v(x)$ based on finding implicit gradient $\frac{\partial}{\partial x} y_g^*(x)$ in \eqref{eq: partial yg(x)}. In the unconstrained case, the implicit gradient can be characterized using the stationary condition $\nabla_y g(x, y_g^*(x)) = 0$. In the constrained setting, however, this condition typically does not hold due to the non-smoothness induced by the constraints, making it significantly more difficult to derive a closed-form expression for $\frac{\partial}{\partial x} y_g^*(x)$. Moreover, $y_g^*(x)$ may not admit a closed-form gradient and $F_\gamma(x)$ or $v(x)$ may fail to be twice differentiable.

To address this, we alternatively look into $D_d(y_g^*(x))=\lim_{r\downarrow 0} \frac{y_{g}^*(x+rd)-y_{g}^*(x)}{r}$.
\begin{lemma}
\label{lemma: gradient equals zero generalization}
Suppose $\mathcal{Y}(x)=\tilde{\mathcal{Y}}$ is closed, convex and non-empty uncoupled LL constraint, and Assumption~\ref{assumption: UL}.\ref{ass: UL y Lipschitz}--\ref{ass: UL smooth}, \ref{assumption: LL}.\ref{ass: LL SC}--\ref{ass: LL smooth} hold.
Then,
\begin{align*}
    \Big\langle \nabla_y g(x,y_g^*(x)),\lim_{r\downarrow 0} \frac{y_g^*(x+rd)-y_g^*(x)}{r} \Big\rangle = 0.
\end{align*}
\end{lemma}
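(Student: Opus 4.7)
The plan is to bypass the missing unconstrained stationarity condition $\nabla_y g(x,y_g^*(x))=0$ by working directly with the first-order variational inequality (Lemma~\ref{lem:first_order_VI}) at the two optima $y_g^*(x)$ and $y_g^*(x+rd)$, then sandwich the quantity of interest between two estimates that both vanish as $r\downarrow 0$.

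More precisely, first I would apply the variational inequality at $y_g^*(x)$ with test point $y=y_g^*(x+rd)\in\tilde{\mathcal{Y}}$ to obtain
\begin{align*}
\langle \nabla_y g(x,y_g^*(x)),\; y_g^*(x+rd)-y_g^*(x)\rangle \;\geq\; 0.
\end{align*}
Symmetrically, applying the variational inequality at $y_g^*(x+rd)$ with test point $y=y_g^*(x)$ gives
\begin{align*}
\langle \nabla_y g(x+rd,y_g^*(x+rd)),\; y_g^*(x+rd)-y_g^*(x)\rangle \;\leq\; 0.
\end{align*}
Subtracting the second from the first and rearranging, I obtain the two-sided bound
\begin{align*}
0 \leq \langle \nabla_y g(x,y_g^*(x)),\, y_g^*(x+rd)-y_g^*(x)\rangle
\leq \langle \nabla_y g(x,y_g^*(x))-\nabla_y g(x+rd,y_g^*(x+rd)),\, y_g^*(x+rd)-y_g^*(x)\rangle.
\end{align*}

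Next I would control the right-hand side. Using joint $l_{g,1}$-smoothness of $g$ from Assumption~\ref{assumption: LL}.\ref{ass: LL smooth}, together with the Lipschitz continuity of $x\mapsto y_g^*(x)$ (which follows from $\mu_g$-strong convexity of $g(x,\cdot)$ via Lemma~\ref{lemma: SC PL} and the Hausdorff-Lipschitz result in Remark~\ref{remark: Lipschitz of S(x)}, since $S_g(x)=\{y_g^*(x)\}$ is a singleton), I get
\begin{align*}
\|\nabla_y g(x,y_g^*(x))-\nabla_y g(x+rd,y_g^*(x+rd))\|\leq l_{g,1}\bigl(r\|d\|+L_y^g r\|d\|\bigr)=\mathcal{O}(r),
\end{align*}
and likewise $\|y_g^*(x+rd)-y_g^*(x)\|\leq L_y^g r\|d\|=\mathcal{O}(r)$. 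Hence the right-hand side above is $\mathcal{O}(r^2)$. Dividing the whole chain by $r>0$ and letting $r\downarrow 0$, the middle term is trapped between $0$ and $\mathcal{O}(r)\to 0$, which yields the claim
\begin{align*}
\Big\langle \nabla_y g(x,y_g^*(x)),\lim_{r\downarrow 0}\tfrac{y_g^*(x+rd)-y_g^*(x)}{r}\Big\rangle=0.
\end{align*}

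The main obstacle, as I see it, is not the variational-inequality argument itself, which is short, but rather justifying that the directional limit $D_d y_g^*(x)$ exists (so that the inner product with a limit is meaningful), and that the Lipschitz-type bound on the distance $\|y_g^*(x+rd)-y_g^*(x)\|$ is available in the uncoupled setting assumed here. Both of these rely on importing the strong-convexity-implies-proximal-PL (Lemma~\ref{lemma: SC PL}) and the Hausdorff-Lipschitz singleton result (Remark~\ref{remark: Lipschitz of S(x)}), and I would want to cite these explicitly rather than re-derive them. Once Lipschitz continuity of $y_g^*(\cdot)$ is in hand, the sandwich closes directly and no second-order information on $g$ is needed, which is precisely what enables the subsequent derivation of the $\mathcal{O}(1)$ smoothness modulus of $F_\gamma(x)$ in the constrained setting.
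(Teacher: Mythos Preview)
Your proof is correct, and it takes a genuinely different route from the paper. The paper argues via the value function: it invokes the formula $\nabla v(x)=\nabla_x g(x,y_g^*(x))$ from Lemma~\ref{lemma: gradient of v} and Remark~\ref{remark: gradient of v}, then computes the directional derivative $D_d v(x)$ two ways---once as $\langle\nabla v(x),d\rangle$ and once by Taylor-expanding $g(x+rd,y_g^*(x+rd))$---and the extra term $\langle\nabla_y g(x,y_g^*(x)),D_d y_g^*(x)\rangle$ is forced to vanish by equating the two expressions. Your argument instead exploits the variational inequality directly at the two optima and sandwiches the quantity of interest between $0$ and an $\mathcal{O}(r^2)$ term, requiring only smoothness of $g$ and Lipschitz continuity of $y_g^*(\cdot)$ (the latter via Remark~\ref{remark: Lipschitz of S(x)}). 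This is more elementary and self-contained: it does not import the differentiability of $v$ or its gradient formula, and in fact it proves the slightly stronger statement that $\lim_{r\downarrow 0}\tfrac{1}{r}\langle\nabla_y g(x,y_g^*(x)),y_g^*(x+rd)-y_g^*(x)\rangle=0$ regardless of whether $D_d y_g^*(x)$ itself exists. The paper's approach, on the other hand, ties the lemma to the envelope-theorem structure that underlies the whole penalty reformulation, which is thematically consistent with the rest of the analysis. Regarding your stated obstacle on existence of $D_d y_g^*(x)$: the paper does not address it in this lemma either---existence is established separately in Lemma~\ref{lemma: implicit gradient}---so you are on equal footing there.
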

\begin{proof}
    The gradient of the value function $\nabla v(x) =  \nabla_x g(x,y_g^*(x))$
    is an established result as per Lemma~\ref{lemma: gradient of v} and Remark~\ref{remark: gradient of v}. In this way, for any unit directions $d\in \mathbb{R}^{d_x}$ and $r>0$, there exists $y_g^*(x+rd)$ uniquely defined such that the directional derivative $v(x)$ in the unit direction $d$ is
    \begin{align*}
        D_d(v(x))\stackrel{(a)}{=}& \langle \nabla v(x), d\rangle = \langle \nabla_x g(x,y_g^*(x)), d\rangle \nonumber \\
        \stackrel{(b)}{=} & \lim_{r\downarrow 0} \frac{1}{r}(g(x+rd,y_g^*(x+rd)) -g(x,y_g^*(x)))  \nonumber \\
        \stackrel{(c)}{=} & \langle \nabla_x g(x,y_g^*(x)), d\rangle +  \langle \nabla_y g(x,y_g^*(x)),\lim_{r\downarrow 0} \frac{y_g^*(x+rd)-y_g^*(x)}{r} \rangle
    \end{align*}
    where (a) follows the $\nabla v(x) =\nabla_x g(x,y_g^*(x))$; (b) is the definition of directional derivative; and (c) applies Taylor expansion and $\| y_g^*(x+rd)-y_g^*(x)\|= \Oc(rd)$:
    \begin{align*}
       g(x+rd,y_g^*(x+rd))
       =& g(x,y_g^*(x)) + \langle\nabla_x g(x,y_g^*(x)), rd\rangle\\
       & + \langle\nabla_y g(x,y_g^*(x)),\, y_g^*(x+rd)-y_g^*(x)\rangle + \mathcal{O}(r^2).
    \end{align*}
    In this way, we can conclude the proof.
\end{proof}

Building on this, we analyze the directional derivative of the implicit gradient $y_g^*(x)$ using the first-order variational inequality condition. As a preview, we summarize the differentiability properties of solution mapping and value functions in Table \ref{tab:differentiability-summary}. 

\begin{lemma}
\label{lemma: implicit gradient}
Suppose $\mathcal{Y}(x)=\tilde{\mathcal{Y}}$ is closed, convex and non-empty uncoupled LL constraint, and Assumption \ref{assumption: UL}.\ref{ass: UL y Lipschitz}--\ref{ass: UL smooth}, \ref{assumption: LL}.\ref{ass: LL SC}--\ref{ass: LL smooth} hold. The directional derivative of $y^*_g(x)$ in direction $d$ is the projection of $-(\nabla_{yy} g(x,y_g^*(x)))^{-1} \nabla_{yx} g(x,y_g^*(x)) d$ onto the critical cone $C_\mathcal{Y}(y^*_g(x))$ in the $\nabla_{yy} g(x,y_g^*(x))$-norm, i.e. 
\begin{align}
    D_d(y^*_g(x)) 
    = & \Proj_{C_\mathcal{Y}(y^*_g(x))}^{\nabla_{yy} g(x,y^*_g(x))} \Big( - (\nabla_{yy} g(x,y^*_g(x)))^{-1} \nabla_{yx}g(x,y^*_g(x)) d \Big). \label{eq: implicit directional derivative}
\end{align}
\end{lemma}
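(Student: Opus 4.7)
The plan is to exploit the first-order variational inequality (VI) characterization of $y_g^*(\cdot)$: since $g(z,\cdot)$ is $\mu_g$-strongly convex on the closed convex set $\tilde{\mathcal{Y}}$, $y_g^*(z)$ is the unique point satisfying $\langle \nabla_y g(z, y_g^*(z)),\, y - y_g^*(z) \rangle \geq 0$ for all $y \in \tilde{\mathcal{Y}}$. My strategy is a classical sensitivity argument in three stages: (i) show that $w_r := r^{-1}(y_g^*(x+rd) - y_g^*(x))$ is bounded along $r \downarrow 0$; (ii) characterize every subsequential limit $w^\star$ by a linearized VI on the critical cone $\mathcal{C}_{\tilde{\mathcal{Y}}}(y_g^*(x))$; (iii) recognize that linearized VI as the optimality condition of a strongly convex quadratic program whose unique solution is exactly the claimed $\nabla_{yy}g$-projection.

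For step (i), subtract the VIs at $x$ and at $x+rd$ against the test points $y_g^*(x+rd)$ and $y_g^*(x)$ respectively; combining with strong convexity of $g(z,\cdot)$ in $y$ and joint smoothness of $\nabla_y g$ yields $\|y_g^*(x+rd) - y_g^*(x)\| = \mathcal{O}(r)$, so $\{w_r\}$ is bounded and admits subsequential limits $w^\star$. Because $y_g^*(x) + r w_r = y_g^*(x+rd) \in \tilde{\mathcal{Y}}$, the tangent-cone definition yields $w^\star \in \mathcal{T}_{\tilde{\mathcal{Y}}}(y_g^*(x))$, while $\langle \nabla_y g(x, y_g^*(x)), w^\star \rangle = 0$ is exactly the conclusion of Lemma~\ref{lemma: gradient equals zero generalization}. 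Hence $w^\star \in \mathcal{C}_{\tilde{\mathcal{Y}}}(y_g^*(x))$.

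The decisive step (ii) is to establish, for every $u \in \mathcal{C}_{\tilde{\mathcal{Y}}}(y_g^*(x))$,
\[
\bigl\langle \nabla_{yy} g(x, y_g^*(x))\, w^\star + \nabla_{yx} g(x, y_g^*(x))\, d,\ u - w^\star\bigr\rangle \geq 0.
\]
To prove this, apply the VI at $x+rd$ with a recovery sequence $y = y_g^*(x) + r u_r$ satisfying $y_g^*(x) + r u_r \in \tilde{\mathcal{Y}}$ and $u_r \to u$; Taylor-expand $\nabla_y g(x+rd, y_g^*(x+rd)) = \nabla_y g(x, y_g^*(x)) + r\bigl(\nabla_{yx} g\, d + \nabla_{yy} g\, w_r\bigr) + o(r)$; the leading-order cross term $\langle \nabla_y g(x, y_g^*(x)),\, u_r - w_r\rangle$ is itself $o(r)$ because both $u$ and $w^\star$ are critical, so dividing by $r^2$ and passing to the limit yields the displayed inequality. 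A completion-of-squares argument, valid because $\nabla_{yy} g(x, y_g^*(x)) \succeq \mu_g I$, then recognizes this VI as the optimality condition for the minimizer of $\tfrac12 v^\top \nabla_{yy} g\, v + v^\top \nabla_{yx} g\, d$ over $v \in \mathcal{C}_{\tilde{\mathcal{Y}}}(y_g^*(x))$, which coincides with the projection on the right-hand side of \eqref{eq: implicit directional derivative}. Uniqueness of this projection forces $w_r \to w^\star$ along the whole net, giving $D_d y_g^*(x) = w^\star$.

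The main obstacle is constructing the recovery sequence $u_r \to u$ with $y_g^*(x) + r u_r \in \tilde{\mathcal{Y}}$ for every critical direction $u$, i.e., the polyhedricity of $\tilde{\mathcal{Y}}$ at $y_g^*(x)$. This is automatic for polyhedral $\tilde{\mathcal{Y}}$ and, more generally, follows from the smooth-boundary local representation of $\mathcal{Y}$ in Assumption~\ref{assumption: constraint}.\ref{ass: licq} combined with LICQ at $y_g^*(x)$: linearizing each active constraint and bending it back into the feasible set costs only $o(r)$, which is precisely the regularity exploited in the Bonnans--Shapiro sensitivity theory. Once this recovery sequence is in hand, the rest of the argument is algebraic.
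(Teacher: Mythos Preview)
Your proposal is correct and follows essentially the same route as the paper: apply the first-order VI at the perturbed point $x+rd$ with a recovery test direction, Taylor-expand $\nabla_y g$, use criticality (via Lemma~\ref{lemma: gradient equals zero generalization}) to cancel the zeroth-order term, obtain the linearized VI on $\mathcal{C}_{\tilde{\mathcal{Y}}}(y_g^*(x))$, and identify it with the $\nabla_{yy}g$-projection. You are more careful than the paper in establishing existence via subsequential limits and in explicitly flagging polyhedricity as the obstacle to the recovery sequence; the paper's proof hides the same issue by silently replacing the recovery direction $w'$ with its limit $w$ when it asserts $\langle \nabla_y g(x,y_g^*(x)), w\rangle = 0$.
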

\begin{proof}
By Taylor's expansion, there is
\begin{align*}
    y^*_g(x+rd)= &y^*_g(x)+rD_d(y^*_g(x)) +\Oc(r^2),\quad \text{and}\\
    \nabla_y g(x+rd,y^*_g(x+rd)) = & \nabla_y g(x,y^*_g(x))+r\nabla_{yy} g(x,y^*_g(x))D_d(y^*_g(x)\\
    & + r\nabla_{yx} g(x,y^*_g(x))d + \Oc(r^2),
\end{align*}

According to the first-order variational inequality in Lemma~\ref{lem:first_order_VI}, there is
\begin{align*}
\langle \nabla_y g(x,y^*_g(x)), y- y^*_g(x)\rangle \geq 0, \quad \forall y \in \tilde{\mathcal{Y}}.
\end{align*}
For arbitrary $w\in \mathcal{C}_{\tilde{\mathcal{Y}}}(y_g^*(x))$ where $\mathcal{C}_{\tilde{\mathcal{Y}}}(y_g^*(x))$ is the critical cone, choose $y =y_g^*(x)+r w'$, where $w'\rightarrow w$ as $r\downarrow 0$. There exists sufficiently small $r$ such that $y =y_g^*(x)+r w'\in \tilde{\mathcal{Y}}$ following the definition of tangent cone and the fact that $\mathcal{C}_{\mathcal{Y}}(y_g^*(x))\subseteq \mathcal{T}_{\mathcal{T}}(y_g^*(x))$. In this way, for sufficiently small $r$, there is 
\begin{align*}
    \langle \nabla_y g(x+rd,y^*_g(x)+rD_d(y_g^*(x))), (y_g^*(x)+rw)- (y^*_g(x)+rD_d(y_g^*(x)))\rangle & \geq 0, \\
    \Leftrightarrow \langle \nabla_y g(x,y^*_g(x))+r\nabla_{yy} g(x,y^*_g(x))D_d(y^*_g(x)+r\nabla_{yx} g(x,y^*_g(x))d, w- D_d(y_g^*(x))\rangle& \geq 0
\end{align*}
Moreover, we know that $\langle \nabla_y g(x,y^*_g(x)), D_d(y_g^*(x)) \rangle =0$ from Lemma~\ref{lemma: gradient equals zero generalization} and that $\langle \nabla_y g(x,y^*_g(x)), w \rangle = 0$ for all $w \in \mathcal{C}_{\tilde{\mathcal{Y}}}(y_g^*(x))$, following the definition of critical cone of $g(x,\cdot)$ with respect to $\tilde{\mathcal{Y}}$.
In this way, directional derivative $D_d(y^*_g(x))$ is characterized by the variational inequality:
\begin{align*}
    \langle \nabla_{yy} g(x,y_g^*(x)) D_d(y^*_g(x)) + \nabla_{yx} g(x,y_g^*(x))d, ~ w- D_d(y^*_g(x)) \rangle \geq 0 \quad\forall w\in \mathcal{C}_{\tilde{\mathcal{Y}}}(y^*_g(x)).
\end{align*}
As $D_d(y^*_g(x))\in  \mathcal{C}_{\tilde{\mathcal{Y}}}(y^*_g(x))$ according to Lemma~\ref{lemma: gradient equals zero generalization}, the variational inequality is equivalent to
\begin{align}
    D_d(y_g^*(x)) = \arg\min_{\eta \in \mathcal{C}_{\tilde{\mathcal{Y}}}(y_g^*(x))} \frac{1}{2} \eta^\top \nabla_{yy} g(x,y_g^*(x)) \eta + (\nabla_{yx} g(x,y_g^*(x)) d)^\top \eta. \label{eq: minimization directional derivative}
\end{align}
This gives a unique solution as the objective is strongly convex in $\eta$ and the constraint $\mathcal{C}_{\tilde{\mathcal{Y}}}(y_g^*(x))$ is closed and convex:
\begin{align*}
    D_d(y^*_g(x)) = \Proj_{\mathcal{C}_{\tilde{\mathcal{Y}}}(y^*_g(x))}^{\nabla_{yy} g(x,y^*_g(x))} \Big( - (\nabla_{yy} g(x,y^*_g(x)))^{-1} \nabla_{yx}g(x,y^*_g(x)) d \Big).
\end{align*}
In other words, $D_d(y^*_g(x)) $ is the projection of $-(\nabla_{yy} g(x,y_g^*(x)))^{-1} \nabla_{yx} g(x,y_g^*(x)) d$ onto the critical cone $\mathcal{C}_{\tilde{\mathcal{Y}}}(y_g^*(x))$ in the $\nabla_{yy} g(x,y_g^*(x))$-norm, which is given by $\|v\|_{\nabla_{yy} g(x,y_g^*(x))}^2 := v^\top \nabla_{yy} g(x,y_g^*(x)) v$. Here, $\nabla_{yy} g(x,y_g^*(x))$-norm is well defined as $\nabla_{yy} g(x,y_g^*(x))$ is positive definite, as guaranteed by strong convexity.

\end{proof}
Lemma~\ref{lemma: implicit gradient} prepares to bridge the connection of $v_\gamma(x)$ and $v(x)$ for the constrained LL setting. Building on this, we seek to provide a tighter estimate for $l_{F_\gamma,1}$ with the following conventional assumption \citep{chen2024finding,kwon2023fully}.
\begin{assumption}
\label{assumption: Hessian Lipschitz} 
    Assume $f$, $g$ are twice differentiable on $\mathcal{X}\times \mathcal{Y}$, and $\nabla^2 f$, $\nabla^2 g$ are respectively $l_{f,2}$, $l_{g,2}$-Lipschitz in $y\in \mathcal{Y}$.
\end{assumption}

\begin{theorem}[$\Oc(1)$-smoothness of $F_\gamma(x)$]
\label{theorem: smoothness}
    Suppose $\mathcal{Y}(x)=\tilde{\mathcal{Y}}$ is closed, convex and non-empty uncoupled LL constraint, and Assumption~\ref{assumption: UL}.\ref{ass: UL y Lipschitz}--\ref{ass: UL smooth}, \ref{assumption: LL}.\ref{ass: LL SC}--\ref{ass: LL smooth}, \ref{assumption: Hessian Lipschitz} hold. For all unit direction $d\in \mathbb{R}^{d_x}$, we have $ \| D_{dd}^2( F_\gamma(x))\| \leq l_{F_\gamma,1}= \Oc(1)$ non-scalable with $\gamma \geq \frac{l_{f,1}}{\mu_g}$; i.e., $F_\gamma(x)$ is $\Oc(1)$-smooth.
\end{theorem}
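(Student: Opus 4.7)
The plan is to exploit the decomposition $F_\gamma(x)=\gamma(v_\gamma(x)-v(x))$ from \eqref{eq: F gam function} and \eqref{eq: v gamma} and to reduce the claim to an $\mathcal{O}(\gamma^{-1})$ estimate on the gap of the second directional derivatives of the two value functions. Concretely, I would aim to prove that
\begin{align*}
    \bigl|D_{dd}^2 v_\gamma(x) - D_{dd}^2 v(x)\bigr| = \mathcal{O}(\gamma^{-1})
\end{align*}
uniformly in $x\in\mathcal{X}$ and in unit directions $d\in\mathbb{R}^{d_x}$, so that multiplying by $\gamma$ yields $\|D_{dd}^2 F_\gamma(x)\|=\mathcal{O}(1)$ and hence $l_{F_\gamma,1}=\mathcal{O}(1)$ independently of $\gamma$.

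The second-order directional derivatives themselves I would obtain by differentiating the first-order formulas of Lemma~\ref{lemma: gradient of v} and Remark~\ref{remark: gradient of v} along $d$. For $v(x)$, the chain rule gives
\begin{align*}
    D_{dd}^2 v(x) = d^\top \nabla_{xx} g(x,y_g^*(x)) d + d^\top \nabla_{xy} g(x,y_g^*(x))\, D_d(y_g^*(x)),
\end{align*}
where $D_d(y_g^*(x))$ is the critical-cone projection identified in Lemma~\ref{lemma: implicit gradient}. The analogous calculation applied to $h_\gamma:=\gamma^{-1}f+g$, which is $\mu_g/2$-strongly convex in $y$ once $\gamma\ge l_{f,1}/\mu_g$, produces a parallel expression for $D_{dd}^2 v_\gamma(x)$ with $g$ replaced by $h_\gamma$, $y_g^*(x)$ replaced by $y_\gamma^*(x)$, and the $\nabla_{yy}g$-norm projection replaced by a $\nabla_{yy}h_\gamma$-norm projection onto $\mathcal{C}_{\tilde{\mathcal{Y}}}(y_\gamma^*(x))$.

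The core of the argument is then to compare these two expressions and show that every term changes by $\mathcal{O}(\gamma^{-1})$. This splits into (i) the $\gamma^{-1}$-weighted $f$-contributions inside $h_\gamma$, which are trivially $\mathcal{O}(\gamma^{-1})$; (ii) the perturbations of $\nabla_{xx}g$, $\nabla_{xy}g$, $\nabla_{yy}g$, and $\nabla_{yx}g$ evaluated at the two nearby base points $y_g^*(x)$ and $y_\gamma^*(x)$, which are $\mathcal{O}(\|y_g^*(x)-y_\gamma^*(x)\|)=\mathcal{O}(\gamma^{-1})$ by Assumption~\ref{assumption: Hessian Lipschitz} together with Lemma~\ref{lemma: distance of yg ygam}; and (iii) the difference of the two critical-cone projections $D_d(y_\gamma^*(x))-D_d(y_g^*(x))$. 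Step (iii) will be the main obstacle, since the two projections are a priori onto distinct cones $\mathcal{C}_{\tilde{\mathcal{Y}}}(y_\gamma^*(x))$ and $\mathcal{C}_{\tilde{\mathcal{Y}}}(y_g^*(x))$ under different metric matrices, and the cone-valued map $y\mapsto\mathcal{C}_{\tilde{\mathcal{Y}}}(y)$ can jump when the active set changes. I would tackle this by combining the LICQ in Assumption~\ref{assumption: constraint}.\ref{ass: licq} with $\|y_\gamma^*(x)-y_g^*(x)\|=\Theta(\gamma^{-1})$ to argue that, for $\gamma$ large enough, the active sets at $y_g^*(x)$ and $y_\gamma^*(x)$ coincide, so the two cones are identical; a standard Lipschitz-stability estimate for projections onto a fixed convex cone under simultaneous $\mathcal{O}(\gamma^{-1})$ perturbations of the projected vector and the weighting metric then closes the gap. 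Combining (i)--(iii) delivers $|D_{dd}^2 v_\gamma(x)-D_{dd}^2 v(x)|=\mathcal{O}(\gamma^{-1})$, and consequently $\|D_{dd}^2 F_\gamma(x)\|=\mathcal{O}(1)$, i.e.\ $l_{F_\gamma,1}=\mathcal{O}(1)$ as claimed.
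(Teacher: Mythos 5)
Your overall skeleton is exactly the paper's: decompose $D_{dd}^2 F_\gamma(x)=\gamma\bigl(D_{dd}^2 v_\gamma(x)-D_{dd}^2 v(x)\bigr)$, write each second directional derivative as $d^\top\nabla_{xx}(\cdot)d+d^\top\nabla_{xy}(\cdot)D_d(y^*(x))$ via the chain rule on Lemma~\ref{lemma: gradient of v}, and then bound the three sources of discrepancy — the $\gamma^{-1}f$ contribution, the Hessian perturbation at the two nearby base points (Assumption~\ref{assumption: Hessian Lipschitz} plus Lemma~\ref{lemma: distance of yg ygam}), and the gap $\|D_d(y_\gamma^*(x))-D_d(y_g^*(x))\|$ — each by $\mathcal{O}(\gamma^{-1})$. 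Items (i) and (ii) and the final assembly (including the bound $\|D_d(y_g^*(x))\|\le l_{g,1}/\mu_g$, which you implicitly need and the paper isolates as Lemma~\ref{lemma: directional derivative bound}) match the paper's Appendix~\ref{appendix: Theorem smoothness} essentially line for line.

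The genuine gap is in your step (iii). You propose to show the two critical cones $\mathcal{C}_{\tilde{\mathcal{Y}}}(y_g^*(x))$ and $\mathcal{C}_{\tilde{\mathcal{Y}}}(y_\gamma^*(x))$ coincide for large $\gamma$ by arguing that the active sets coincide once $\|y_\gamma^*(x)-y_g^*(x)\|=\Theta(\gamma^{-1})$ is small, using LICQ. That implication fails: LICQ and proximity do not prevent a constraint that is active at $y_g^*(x)$ with a \emph{zero} multiplier (weakly active) from becoming strictly inactive at $y_\gamma^*(x)$, however large $\gamma$ is, in which case the critical cone genuinely jumps. Ruling this out requires strict complementarity, which the theorem does not assume in the uncoupled setting (the paper only invokes it in Assumption~\ref{assumption: LL boundary assumption} for the coupled case). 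The paper instead treats $D_d(y^*)$ as the solution of a parametric strongly convex QP over the critical cone and invokes set-valued sensitivity theory: graphical regularity of the critical-cone mapping under Robinson's CQ \citep[Theorem 2.87]{bonnans2013perturbation}, its Hausdorff-Lipschitz continuity \citep[Theorem 9.40]{rockafellar1998variational}, and Lipschitz dependence of the QP solution on the perturbation parameter $\delta=\gamma^{-1}$ \citep[Theorem 5.4.4]{facchinei2003finite} (this is Lemma~\ref{lemma: implicit directional derivative lipschitz}). Your "projection onto a fixed cone under perturbed vector and metric" estimate is fine once the cone is fixed, but you need the machinery above (or an added strict-complementarity hypothesis) to justify treating the cone as stable.
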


The key idea of the proof of Theorem~\ref{theorem: smoothness} is to observe that 
\begin{align}
    D_{dd}^2(F_\gamma(x)) = \gamma \left(D_{dd}^2(v_\gamma(x)) - D_{dd}^2(v(x))\right), \label{eq: 2nd order directional derivative of F}
\end{align}
following \eqref{eq: F gam function}, and that $\|y_g^*(x) - y_\gamma^*(x)\| = \Oc(\gamma^{-1})$ in Lemma~\ref{lemma: distance of yg ygam}. Therefore, using the Lipschitz continuity of the gradient and Hessian of both $f$ and $g$, we can bound the difference in norm of directional derivatives by $\Oc(\gamma^{-1})$ and achieve that the smoothness parameter $l_{F_\gamma,1} = \mathcal{O}(1)$ non-scalable with $\gamma$. This aligns with the empirical observation in Fig.~\ref{fig:gradient-plot} and also showcases the benefits of using alternating updates. The full proof of Theorem~\ref{theorem: smoothness} is available at Appendix~\ref{appendix: Theorem smoothness}.

\textbf{Technical insight.} 
Theorem~\ref{theorem: smoothness} reveals a crucial distinction: \textsf{while the joint update objective $\tilde{F}_\gamma(x,y)$ becomes \textsl{less smooth} as $\gamma$ grows ($l_{\tilde{F}_\gamma,1} = \Oc(\gamma)$), the alternating formulation $F_\gamma(x)$ remains uniformly smooth with $l_{F_\gamma,1} = \Oc(1)$, independent of $\gamma$}. This independence allows us to use a constant step size $\eta =\Oc(1)$ to satisfy $\eta \leq l_{F_\gamma,1}^{-1}$, the gradient descent condition.
Additionally, \textsf{directional derivatives are only needed for the smoothness analysis and not for algorithm design}. Although $y_g^*(x)$ and the second-order derivatives of $F_\gamma(x)$ are in general only directionally differentiable and may be nonsmooth, with directional derivatives difficult to compute, $F_\gamma(x)$ is differentiable with a closed-form gradient according to Lemma~\ref{eq: value function gradient generalized form}. 
We provide the summary of differentiability $y_g^*(x)$, $F_\gamma(x)$, and $\nabla F_\gamma(x)$ in Table~\ref{tab:differentiability-summary}.

\begin{table}[t]
    \centering
    \begin{adjustbox}{width=0.99\textwidth}
    \begin{tabular}{l | l}
        \hline \hline
        \textbf{Properties} & \textbf{Assumptions} \\
        \hline \hline
        \multicolumn{2}{c}{\textbf{Uncoupled constrained case} $\tilde{\mathcal{Y}}=\{y\in \mathcal{Y}: c(y)\leq 0\}$} \\
        \hline
        $F_\gamma(x)$ diff.; $\nabla F_\gamma(x)$ depends on $y_g^*(x)$;
        & $f,g$ smooth; $\frac{1}{\gamma}(f+g)$ proximal-PL; $\tilde{\mathcal{Y}}$ closed and \\
        $y_g^*(x)$ direct.-diff.; $F_\gamma(x)$ 2nd-order direct.-diff.
        & convex. (weaker than Assumption~\ref{assumption: UL}--\ref{assumption: constraint}) \\
        \hline
        $F_\gamma(x)$ is $\Oc(1)$-smooth.
        & above and $f,g$ Hessian Lipschitz; $f(x,\cdot)$ Lipschitz. \\
        \hline \hline
        \multicolumn{2}{c}{\textbf{Coupled constrained case} $\mathcal{Y}(x)=\{y\in \mathcal{Y}: c(x,y)\leq 0\}$} \\
        \hline
        $F_\gamma(x)$ diff; $\nabla F_\gamma(x)$ depends on $y_g^*(x)$ and $\lambda_g^*(x)$.
        & Assumption~\ref{assumption: UL}--\ref{assumption: constraint} \\
        \hline
        $y_g^*(x), \lambda_g^*(x)$ diff.; gradient unavail.; $y_g^*(x)$ direct.-diff. avail.;
        & above, and $\mathcal{Y}$ smooth on boundary;  \\
        $F_\gamma^\lambda(x)$ diff.; $\nabla F_\gamma^\lambda(x)$ depends on $\nabla \lambda_g^*(x)$ (unavail.).
        & strict complementarity holds \\
        \hline
        $F_\gamma(x)$ is $\Oc(1)$-smooth.
        & above and $f,g$ Hessian Lipschitz; $f(x,\cdot)$ Lipschitz.  \\
        \hline \hline
    \end{tabular}
    \end{adjustbox}
    \vspace{0.2cm}
    \caption{Summary of differentiability of $F_\gamma(x)$, $F_\gamma^\lambda(x)$, and the solution mappings $y_g^*(x)$, $\lambda_g^*(x)$ under uncoupled and coupled constraints. Here, ``diff.", ``direct.-diff.", ``avail.", and ``unavail." respectively stand for ``differentiable", ``directional differentiable", ``available", ``unavailable". The unavailability comes from the unknown local characterization of $\mathcal{Y}$ by some $\varphi(y)\leq 0$.} 
    \label{tab:differentiability-summary}
\end{table}

\subsection{ALT-PBGD: an improved PBGD method}
\label{sec: Improved PBGD method}

\begin{algorithm}[t]
\caption{JNT-PBGD}
\label{alg: JNT-PBGD}
\begin{algorithmic}[1]
\State\textbf{Inputs:} initial point $x_0$; stepsize $\eta^{\text{JNT}}$; number of iterations $T$; inner \textsf{Min Solver}
\For{$t = 0$ \textbf{to} $T-1$}
    \State Update $y_{t}^g$ as \eqref{eq: yg update} using \textsf{Min Solver}
    \State Update $[x_{t+1};y_{t+1}]$ via \eqref{eq: joint update}
\EndFor
\State\textbf{Outputs:} $(x_T, y_{T})$
\end{algorithmic}
\end{algorithm}

In this section, we first revisit two variants of the PBGD method: the Joint update version (JNT-PBGD) \citep{shen2023penalty} and the Alternating update version (ALT-PBGD) \citep{kwon2023penalty,chen2024finding,xiao2024unlocking}. We then show that ALT-PBGD has an advantage over JNT-PBGD in terms of iterational convexity. At each iteration $t$, JNT-PBGD solves
\begin{align}
y_{t}^g \approx & \arg\min_{y_g \in \mathcal{Y}(x)} g(x_t, y_g) \label{eq: yg update}
\end{align}
up to an $\epsilon$-suboptimal point with respect to a distance metric. Based on Lemma~\ref{lemma: gradient of v}, where the Lagrangian term is not involved in this setting, we estimate $\nabla v(x)$ and jointly update
\begin{align}
\begin{bmatrix} x_{t+1}\\ y_{t+1} \end{bmatrix} = \Proj_{\mathcal{X} \times \mathcal{Y}} \left(\begin{bmatrix} x_{t}\\ y_{t} \end{bmatrix}  - \eta^{\text{JNT}}
\begin{bmatrix}
\nabla_x f(x_t, y_t) + \gamma(\nabla_x g(x_t, y_t) - \nabla_x g(x, y_{t}^g))  \\
\nabla_y f(x_t, y_t) + \gamma \nabla_y g(x_t, y_t)
\end{bmatrix}
\right), \label{eq: joint update}
\end{align}
where the step size $\eta^{\text{JNT}} \leq l_{\tilde{F}_\gamma,1}^{-1}$, and $l_{\tilde{F}_\gamma,1}=\Theta(\gamma)$ denotes the smoothness constant of $\tilde{F}_\gamma(x,y)$ in \eqref{eq: joint penalty problem}.

\begin{algorithm}[t]
\caption{ALT-PBGD}
\label{alg: ALT-PBGD}
\begin{algorithmic}[1]  
\State\textbf{Inputs:} initial point $x_0$; stepsize $\eta^{\text{ALT}}$; number of iterations $T$; inner \textsf{Min Solver}
\For{$t = 0$ \textbf{to} $T-1$}
    \State Update $y_{t}^g$ as \eqref{eq: yg update} and $y_{t}^\gamma$ as \eqref{eq: y gam update} using \textsf{Min Solver}
    \State Update $x_{t+1} = \Proj_{\mathcal{X}}\big(x_t -\eta g_t\big)$, where $g_t$ is in \eqref{eq: x update gt}
\EndFor
\State\textbf{Outputs:} $(x_T, y_{T}^\gamma)$
\end{algorithmic}
\end{algorithm}

For ALT-PBGD, at each iteration $t$, in addition to computing $y_t^g$ from \eqref{eq: yg update}, it updates $y_{t}^\gamma$ via
\begin{align}
y_{t}^\gamma \approx & \arg\min_{y_\gamma \in \mathcal{Y}(x_t)} \gamma^{-1} f(x_t, y_\gamma) + g(x_t, y_\gamma)
\label{eq: y gam update}
\end{align}
to an $\epsilon$-suboptimal point to find $\nabla v_\gamma(x)$ similarly according to Lemma~\ref{lemma: gradient of v}. In this way, following Lemma~\ref{lemma: gradient of v}, we estimate $\nabla F_\gamma(x_t) = \gamma(\nabla v_\gamma(x) - \nabla v(x))$ via
\begin{align}
g_t = \nabla_x f(x, y_{t}^\gamma) + \gamma \left(\nabla_x g(x, y_{t}^\gamma) - \nabla_x g(x, y_{t}^g)\right),
\label{eq: x update gt}
\end{align}
and update $x_{t+1} = \Proj_{\mathcal{X}} \left(x_t - \eta^{\text{ALT}} g_t \right)$ with $\eta^{\text{ALT}} \leq l_{F_\gamma,1}^{-1}$, where $l_{F_\gamma,1}=\Oc(1)$ according to Theorem~\ref{theorem: smoothness}.
We outline the oracles for ALT-PBGD and JNT-PBGD in Algorithm \ref{alg: ALT-PBGD} and \ref{alg: JNT-PBGD} and present their complexity analysis in Proposition \ref{prop: ALT-PBGD}.

\begin{proposition}
\label{prop: ALT-PBGD}
    Suppose $\mathcal{Y}(x)=\tilde{\mathcal{Y}}$ is closed, convex and non-empty uncoupled LL constraint, and Assumption~\ref{assumption: UL}.\ref{ass: UL y Lipschitz}--\ref{ass: UL smooth}, \ref{assumption: LL}.\ref{ass: LL SC}--\ref{ass: LL smooth} hold.
    For the choice of $\gamma =\Oc(\epsilon^{-0.5}) \geq \frac{l_{f,1}}{\mu_g}$,
    ALT-PBGD in Algorithm \ref{alg: ALT-PBGD} with $\eta^{\text{ALT}} =\Oc(1) \leq l_{F_\gamma,1}^{-1}$ is achieved for $T={\Oc}(\epsilon^{-1})$  complexity, and JNT-PBGD in Algorithm \ref{alg: JNT-PBGD} with $\eta^{\text{JNT}} =\Oc(\gamma^{-1}) \leq l_{\tilde{g},1}^{-1}$ is achieved for $T=\Oc(\epsilon^{-1.5})$ outer-loop complexity for $\|G_{F_\gamma, \mathcal{X}}(x)\|^2<\epsilon$. 
\end{proposition}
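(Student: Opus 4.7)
The plan is to run a standard projected-gradient descent analysis on the outer-loop objective, with the smoothness modulus being the only ingredient that distinguishes the two algorithms. For ALT-PBGD I would view the outer update $x_{t+1}=\Proj_\Xc(x_t-\eta^{\text{ALT}} g_t)$ as a projected gradient step on $F_\gamma(x)$ with an inexact gradient $g_t\approx \nabla F_\gamma(x_t)$ that arises from using the inner approximations $y_t^g\approx y_g^*(x_t)$ and $y_t^\gamma\approx y_\gamma^*(x_t)$ in the expression of Lemma~\ref{lemma: gradient of v}. For JNT-PBGD I would instead view the joint update \eqref{eq: joint update} as a projected gradient step on $\tilde F_\gamma(x,y)$ over $\Xc\times\tilde{\Yc}$, with only the single inexact piece $y_t^g\approx y_g^*(x_t)$ appearing in the $x$-block.

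For ALT-PBGD, I would invoke Theorem~\ref{theorem: smoothness} to get $l_{F_\gamma,1}=\Oc(1)$, pick $\eta^{\text{ALT}}=\Theta(1)\le l_{F_\gamma,1}^{-1}$, and apply the descent lemma for projected gradient on smooth functions, yielding an inequality of the form
\begin{align*}
F_\gamma(x_{t+1})\le F_\gamma(x_t) - \tfrac{\eta^{\text{ALT}}}{2}\|G_{F_\gamma,\Xc}(x_t)\|^2 + C\,\|g_t-\nabla F_\gamma(x_t)\|^2.
\end{align*}
The gradient-bias term is controlled by using \eqref{eq: x update gt}, the Lipschitz continuity of $\nabla_x f$ and $\nabla_x g$, and the linear convergence of the inner solver under the strong convexity of $g(x,\cdot)$ and of $\gamma^{-1}f(x,\cdot)+g(x,\cdot)$ (recall $\gamma\ge l_{f,1}/\mu_g$); a logarithmic number of inner steps suffices to drive the bias below the target accuracy without inflating the outer complexity. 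Telescoping and using that $F_\gamma(x)-\inf_\Xc F_\gamma$ is bounded (which follows from Lemma~\ref{lemma: distance of yg ygam} together with boundedness of $\phi$ on the relevant level sets) then gives
\begin{align*}
\min_{t<T}\|G_{F_\gamma,\Xc}(x_t)\|^2=\Oc(T^{-1}),
\end{align*}
so $T=\Oc(\epsilon^{-1})$ suffices.

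For JNT-PBGD, the same template runs on $\tilde F_\gamma$, but Lemma~\ref{lemma: distance of yg ygam}'s analysis forces $l_{\tilde F_\gamma,1}=\Theta(\gamma)$, so the admissible step size shrinks to $\eta^{\text{JNT}}=\Oc(\gamma^{-1})$. The descent lemma on the joint variable then yields
\begin{align*}
\min_{t<T}\|G_{\tilde F_\gamma,\Xc\times\tilde\Yc}(x_t,y_t)\|^2=\Oc\!\left(\gamma\,T^{-1}\right),
\end{align*}
and translating the joint stationarity back to $\|G_{F_\gamma,\Xc}(x_t)\|^2$ via the strong convexity of $\tilde F_\gamma(x,\cdot)$ (again from $\gamma\ge l_{f,1}/\mu_g$) preserves the rate up to constants. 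Setting this below $\epsilon$ with $\gamma=\Theta(\epsilon^{-0.5})$ produces $T=\Oc(\epsilon^{-1.5})$.

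The main obstacle I anticipate is the bias-control step in ALT-PBGD: because $g_t$ depends on both $y_t^g$ and $y_t^\gamma$, and the $\nabla F_\gamma$-expression carries a $\gamma$-scaling, a naive bound on $\|g_t-\nabla F_\gamma(x_t)\|$ would contain a $\gamma$ factor that would cancel the benefit of $\Oc(1)$-smoothness. Avoiding this requires using the Lipschitz continuity of $\nabla_x g$ together with the $\Oc(\gamma^{-1})$ gap $\|y_g^*(x)-y_\gamma^*(x)\|$ from Lemma~\ref{lemma: distance of yg ygam} to see that the two inner gradient terms partially cancel, leaving a bias of $\Oc(1)$ times the inner-solver residuals; a $\Theta(\log\epsilon^{-1})$-step inner loop then drives these residuals low enough to preserve the outer rate, giving the inner-loop count reported in Table~\ref{tab:table1-comparison}.
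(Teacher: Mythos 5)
Your overall strategy coincides with the paper's: for ALT-PBGD, treat the outer update as inexact projected gradient descent on $F_\gamma$, invoke Theorem~\ref{theorem: smoothness} to take $\eta^{\text{ALT}}=\Theta(1)$, bound the gradient bias using the inner-solver accuracy, and telescope; for JNT-PBGD, the $\Theta(\gamma)$ smoothness of $\tilde F_\gamma$ forces $\eta^{\text{JNT}}=\Oc(\gamma^{-1})$ and hence $T=\Oc(\epsilon^{-1.5})$. The one substantive issue is your description of how the bias is controlled in ALT-PBGD. You claim the two inner gradient terms "partially cancel, leaving a bias of $\Oc(1)$ times the inner-solver residuals." This is not what happens, and no such cancellation is available: writing $b(x_t)=\nabla F_\gamma(x_t)-g_t$ and grouping by residual, one gets terms of the form $\gamma\bigl(\nabla_x g(x_t,y_t^\gamma)-\nabla_x g(x_t,y_\gamma^*(x_t))\bigr)$ and $\gamma\bigl(\nabla_x g(x_t,y_t^g)-\nabla_x g(x_t,y_g^*(x_t))\bigr)$, each of which is genuinely $\Oc(\gamma)$ times its own residual $\|y_t^\gamma-y_\gamma^*(x_t)\|$ or $\|y_t^g-y_g^*(x_t)\|$. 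The $\Oc(\gamma^{-1})$ gap between $y_\gamma^*(x)$ and $y_g^*(x)$ from Lemma~\ref{lemma: distance of yg ygam} controls the size of $\nabla F_\gamma$ itself, not the inexactness error. The paper's resolution is instead to require the inner solver to reach accuracy $\|y_t^\cdot-y^{\cdot,*}(x_t)\|\le\epsilon$ (achievable in $\Oc(\log\epsilon^{-1})$ linearly convergent steps), so that $\|b(x_t)\|^2=\Oc(\gamma^2\epsilon^2)=\Oc(\epsilon)$ under $\gamma=\Oc(\epsilon^{-0.5})$; the $\gamma$-scaling of the bias is absorbed by the smallness of the residual, not cancelled. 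Your final complexity claims survive because $\Oc(\log\epsilon^{-1})$ inner steps suffice either way, but the mechanism you invoke would not withstand being written out. A minor secondary remark: for JNT-PBGD you route through joint stationarity of $\tilde F_\gamma$ and then translate back to $\|G_{F_\gamma,\mathcal{X}}\|^2$; the paper instead bounds $\|G_{F_\gamma,\mathcal{X}}(x_t)\|^2$ directly along the joint iterates with the same bias-plus-telescoping template, which avoids having to justify the translation step you only sketch.
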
 
\begin{remark}
\label{remark: PGD complexity}
    When naive projected gradient descent is used as the \textsf{Min Solver}, the inner problem converges linearly, resulting in an overall complexity of $\tilde{\mathcal{O}}(\epsilon^{-1})$ for ALT-PBGD, and $\tilde{\mathcal{O}}(\epsilon^{-1.5})$ for JNT-PBGD. 
\end{remark}
\begin{proof}[Proof of Proposition~\ref{prop: ALT-PBGD} and Remark~\ref{remark: PGD complexity}]
\textbf{ALT-PBGD.}
According to \citep{karimi2016linear}, to achieve 
\begin{align}
    \|y_{t}-y_g^*(x_t) \|^2,\|y_{t}-y_g^*(x_t) \|^2<\epsilon^{2}, \label{eq: LL optimality}
\end{align}
we can apply PGD on 
\begin{align*}
    & \min_{y\in \tilde{\mathcal{Y}}}~g(x,y),\quad \text{and}\quad \min_{y\in \tilde{\mathcal{Y}}}~\gamma^{-1}f(x,y)+g(x,y)
\end{align*}
with algorithm complexity $\Oc(2\ln({\epsilon}^{-1}))$. In this way, the update bias $b(x_t)=\nabla F_\gamma(x_t)-g_t$ has its norm square bounded by
\begin{align*}
    \|b(x_t) \|^2= &\|\nabla F_\gamma(x_t) -g_t\|^2 \nonumber \nonumber \\
    \leq &\gamma \| \gamma^{-1}\nabla_x f(x_t,y_\gamma^*(x_t)) + \nabla_x g(x_t,y_\gamma^*(x_t))-\nabla_x g(x_t,y_g^*(x_t))  \nonumber  \\
    & ~~~ -\big(\gamma^{-1}\nabla_x f(x_t,y_t^\gamma) + \nabla_x g(x_t,y_t^\gamma)-\nabla_x g(x_t,y_t^g) \big)\|^2 \nonumber \\
    \leq & 2 \| \nabla_x f(x_t,y_\gamma^*(x_t)) + \gamma \nabla_x g(x_t,y_\gamma^*(x_t)) - \big( \nabla_x f(x_t,y_t^\gamma) + \gamma \nabla_x g(x_t,y_t^\gamma)
    \big) \|^2 \nonumber \\
    & + 2 \|\gamma\nabla_x g(x_t,y_\gamma^*(x_t))- \gamma \nabla_x g(x_t,y_t^g)\|^2 \nonumber \\
    \leq & 2(l_{F_\gamma,1}+\gamma l_{g,1} )^2 \| y_\gamma^*(x_t)-y_t^\gamma \|^2 + 2 \gamma^2 l_{g,1}^2\| y_g^*(x)-y_t^g\|^2\nonumber\\
    \leq & \Oc(\gamma^2 \epsilon^2) = \Oc(\epsilon).
\end{align*}
where the second inequality is by Young's inequality; the third is from the smoothness of $f$ and $g$; and the last is from \eqref{eq: LL optimality} and the choice of $\gamma = \Oc(\epsilon^{-0.5})$.

According to smoothness of $F_\gamma(x)$, there is
    \begin{align}
        F_\gamma(x_{t+1}) \leq & F_\gamma(x_t)+\langle \nabla F_\gamma(x_t), x_{t+1}-x_t \rangle + \frac{l_{F_\gamma,1}}{2}\| x_{t+1}-x_t  \|^2 \nonumber \\
        \leq & F_\gamma(x_t) + \langle  g_t, x_{t+1}-x_t \rangle + \frac{1}{2 \eta^{\text{ALT}} }\| x_{t+1}-x_t  \|^2 + \langle  b(x_t), x_{t+1}-x_t \rangle,
        \label{eq: smooth step 1}
    \end{align}
    where the second inequality is by $\eta^{\text{ALT}} \leq \frac{1}{l_{F_\gamma,1}}$. The projection guarantees that $x_{t+1}$ and $x_t$ are in $\mathcal{X}$. In this way, 
    Following lemma \ref{lemma: projection gradient update inquality}, we know that 
    \begin{align*}
        \langle g_t , x_{t+1}-x_t \rangle \leq -\frac{1}{\eta^{\text{ALT}}} \| x_{t+1}-x_t\|^2.
    \end{align*}
    Plugging this back to \eqref{eq: smooth step 1},
    \begin{align}
        F_\gamma(x_{t+1}) \leq & F_\gamma(x_t) - \frac{1}{2 \eta^{\text{ALT}} }\| x_{t+1}-x_t  \|^2  + \langle  b(x_t), x_{t+1}-x_t \rangle  \nonumber \\
         \leq & F_\gamma(x_t) - \frac{1}{2 \eta^{\text{ALT}} }\| x_{t+1}-x_t  \|^2  + \eta^{\text{ALT}} \|  b(x_t)\|^2 +\frac{1}{4\eta^{\text{ALT}}}\|x_{t+1}-x_t \|^2 \nonumber \\
         = & F_\gamma(x_t) - \frac{1}{4 \eta^{\text{ALT}} }\| x_{t+1}-x_t  \|^2  + \eta^{\text{ALT}} \|  b(x_t)\|^2, \label{eq: descent intermediate step 1}
    \end{align}
    where the second inequality is from Young's inequality. In this way, 
    \begin{align}
        \| G_{F_\gamma,\mathcal{X}}(x_t)\|^2 = &\Big\| \frac{x_t-\Proj_{\mathcal{X}}(x_t-\eta^{\text{ALT}} \nabla F_\gamma(x_t))}{\eta^{\text{ALT}}}\Big\|^2  \nonumber \\
        = & \Big\| \frac{x_t-\Proj_{\mathcal{X}}(x_t-\eta^{\text{ALT}} g_t -\eta^{\text{ALT}} b_t)}{\eta^{\text{ALT}}}\Big\|^2  \nonumber \\
        \leq & \Big\| \frac{x_t-\Proj_{\mathcal{X}}(x_t-\eta^{\text{ALT}} g_t ) }{\eta^{\text{ALT}}}\Big\|^2 + \| b_t\|^2 \nonumber \\
        =& \Big\|\frac{x_t-x_{t+1}}{\eta^{\text{ALT}}} \Big\|^2 + \| b_t\|^2 \nonumber \\
        \leq & \frac{4}{\eta^{\text{ALT}}}(F_\gamma(x_t)-F_\gamma(x_{t+1})) + 5\| b_t\|^2. \label{eq: G eta and x update}
    \end{align}
    Here, the first inequality follows the non-expansiveness of projection, and the second inequality is by plugging in \eqref{eq: descent intermediate step 1}.
    Telescoping therefore gives
    \begin{align}
        \frac{1}{T} \sum_{t=0}^{T-1} \| G_{F_\gamma,\mathcal{X}}(x_t)\|^2  \leq & \frac{4}{\eta^{\text{ALT}} T} (F_\gamma(x_0)-F_\gamma(x_t) ) + \frac{5}{T} \sum_{t=0}^{T-1}\|  b(x_t)\|^2 \nonumber \\
        =& \Oc({\eta^{\text{ALT}}}^{-1} T^{-1} + \epsilon) \label{eq: G eta upper bound}
    \end{align}
    As $l_{F_\gamma,1}=
    \Oc(1)$ (cf. Theorem~\ref{theorem: smoothness}), we can choose $\eta^{\text{ALT}} = O (1) \leq \Oc(l_{F_\gamma,1}^{-1})$, to achieve $\frac{1}{T} \sum_{t=0}^{T-1} \|G_{F_\gamma,\mathcal{X}}(x_t)\|^2\leq \epsilon$, we require complexity
    \begin{align*}
        T = \Oc(\epsilon^{-1}).
    \end{align*}
    Additionally, the inner complexity is of $\Oc(2\ln(\epsilon^{-1}))$ from implementing PBG ~\citep{karimi2016linear}. In this way, the overall complexity is $\tilde{O}(\epsilon^{-1})$.

\textbf{JNT-PBGD~\citep{shen2023penalty}.} Similarly, applying PGD on $\min_{y\in \{y\in \mathcal{Y}:c(y)\leq 0 \}}g(x,y)$ can achieve $\|y_{t}-y_g^*(x_t) \|^2<\epsilon^{2}$ with algorithm complexity $\Oc(\ln({\epsilon}^{-1}))$. 
Following the similar analysis as in the one for ALT-PBGD, the update bias has its norm square bounded by
\begin{align*}
    \|b(x_t) \|^2 
    = &\bigg\|\begin{bmatrix}
    \nabla_x f(x_t, y_t) + \gamma(\nabla_x g(x_t, y_t) - \nabla_x g(x, y_g^*(x_t)))  \\
    \nabla_y f(x_t, y_t) + \gamma \nabla_y g(x_t, y_t)
    \end{bmatrix} \nonumber\\
    & ~~~~ ~~~~ -\begin{bmatrix}
    \nabla_x f(x_t, y_t) + \gamma(\nabla_x g(x_t, y_t) - \nabla_x g(x, y_{t}^g))  \\
    \nabla_y f(x_t, y_t) + \gamma \nabla_y g(x_t, y_t)
    \end{bmatrix} \bigg\|^2 \nonumber \\
    =& \gamma^2 \| \nabla_x g(x_t,y_g^*(x_t))-\nabla_x g(x_t,y_t^g)\|^2 \nonumber\\
    \leq & \gamma^2 l_{g,1}^2\| y_g^*(x)-y_t^g\|^2\nonumber\\
    \leq & \Oc(\gamma^2 \epsilon^2) = \Oc(\epsilon),
\end{align*}
and choosing $\eta^{\text{JNT}}\leq l_{\tilde{F}_\gamma,1}^{-1} = \Oc(\gamma^{-1})$ gives
\begin{align*}
    \frac{1}{T} \sum_{t=0}^{T-1} \| G_{F_\gamma,\mathcal{X}}(x_t)\|^2  \leq & \Oc({\eta^{\text{JNT}}}^{-1} T^{-1} + \epsilon) = \Oc(\epsilon^{0.5} T^{-1} + \epsilon).
\end{align*}

In this way, to achieve $\frac{1}{T} \sum_{t=0}^{T-1} \|G_{F_\gamma,\mathcal{X}}(x_t)\|^2\leq \epsilon$, we require complexity $T = \Oc(\epsilon^{-1.5})$.
Taking the inner $\Oc(\ln(\epsilon^{-1}))$ complexity into account, the total iteration complexity for JNT-PBGD is $\Oc(\epsilon^{-1.5}\ln(\epsilon^{-1}))=\tilde{\Oc}(\epsilon^{-1.5})$.

\end{proof}
The definition of the convergence metric $G_{F_\gamma, \mathcal{X}}(x)$ is provided in Definition~\ref{def: generalized gradient}. It is commonly used as a convergence metric in constrained optimization \citep{ghadimi2016mini, chen2021closing, kwon2023penalty}. Notably, ALT-PBGD improves upon the $\mathcal{O}(\epsilon^{-1.5})$ complexity of JNT-PBGD, achieving $\mathcal{O}(\epsilon^{-1})$, which matches the rate of Projected Gradient Descent for non-convex smooth objectives. This gain arises from minimizing $F_\gamma(x)$ directly, rather than the joint penalty $\tilde{F}_\gamma(x, y)$ in JNT-PBGD \citep{shen2023penalty}. In the latter, the penalty objective has smoothness $l_{\tilde{g},1} = \mathcal{O}(\gamma)$, requiring a stepsize $\eta = \mathcal{O}(\epsilon^{0.5})$ and resulting in the slower $\tilde{\mathcal{O}}(\epsilon^{-1.5})$ complexity.

\section{Developing Value-Function-Free Algorithms: PBGD-Free}
\label{sec: Developing Value-Function-Free Algorithms: PBGD-Free}

In this section, we continue to consider the non-coupled constraint $\mathcal{Y}(x)=\tilde{\mathcal{Y}}$ independent from $x$. 
From the earlier section, we observe that one limitation of ALT-PBGD compared to JNT-PBGD is that it requires two inner loops to compute both $y_t^g$ in \eqref{eq: yg update} and $y_t^\gamma$ in \eqref{eq: y gam update}, whereas JNT-PBGD only requires $y_t^g$. Nevertheless, empirical evidence, for example, the instance in \eqref{eq: bilevel repre LLM}, shows that directly updating
\begin{align}
x_{t+1} = \Proj_\mathcal{X}\big(x_t - \eta \nabla_x f(x_t, y_t^\gamma)\big) \label{eq: x update VaFF}
\end{align}
can sometimes yield satisfactory results. This observation motivates the development of PBGD-Free, presented in Algorithm~\ref{alg: PBGD-Free}.

\begin{figure}

\begin{minipage}[t]{0.3\linewidth}
    \centering
\includegraphics[width=\textwidth]{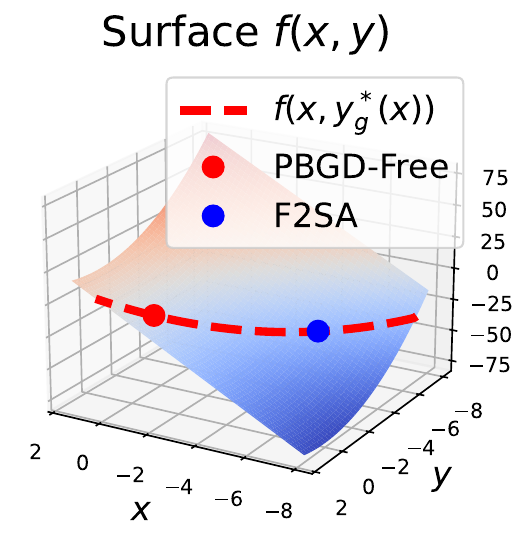}
\end{minipage}
\hfill
\begin{minipage}[t]{0.39\linewidth}
\centering
\includegraphics[width=0.95\linewidth]{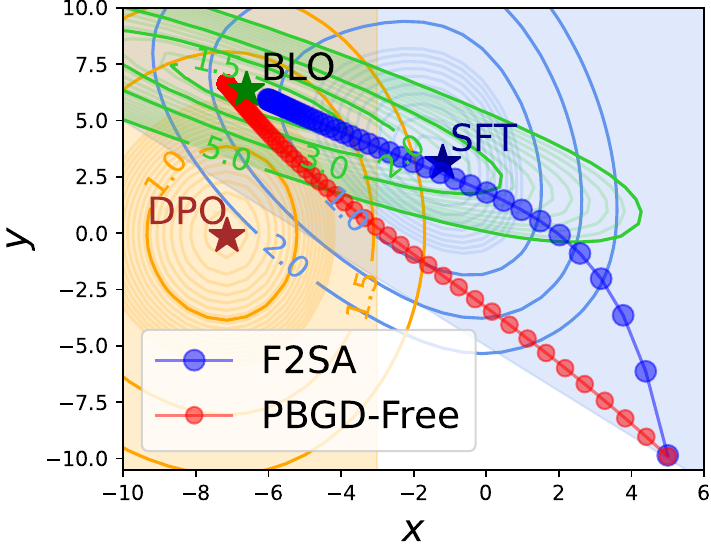}
\end{minipage}
\begin{minipage}[t]{0.29\linewidth}
    \centering
    \includegraphics[width=\linewidth]{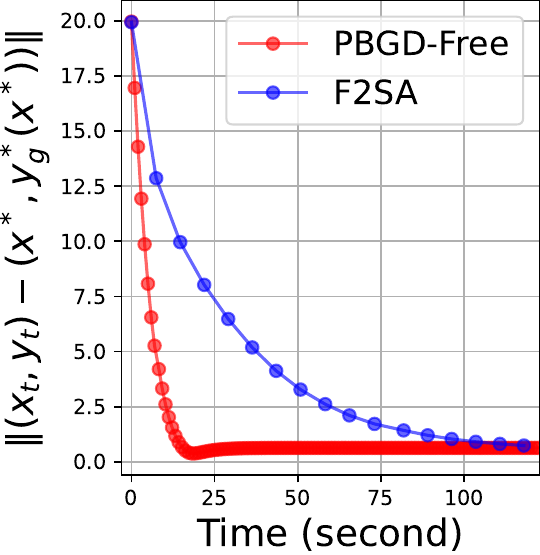}
\end{minipage}
\caption{\textbf{An Illustration to show PBGD-Free does not work in Example~\ref{example:bias}, but works well in PEFT. 
}  
The \textbf{left} plot shows the $f(x,y)$ and $f(x, y^*_g(x))$ in Example~\ref{example:bias}, with \textcolor{red}{red} and \textcolor{blue}{blue} dots as the converged points using PBGD-Free and F$^2$SA method. 
The \textbf{middle} plot shows the trajectory of updates in PEFT. The \textcolor{pythonorange}{orange}, \textcolor{pythonblue}{blue}, and \textcolor{pythongreen}{green} contours are the landscapes of $f_{\text{DPO}}(x,y)$, $g_{\text{SFT}}(x,y)$, and $\tilde{F}_\gamma (x,y)$, respectively. 
The \textbf{right} plot presents the convergence vs. time in PEFT, showing faster convergence of PBGD-Free.
(See Sec.~\ref{app:toy_example} for details.)
}
\label{fig:toy_example_convergence_results}
\end{figure}
\textbf{Positive empirical observations.} \emph{PBGD-Free largely reduces computation and memory cost while preserving the accuracy in LLM parameter efficient fine-tuning (PEFT) \citep{ren2023prepare,aghajanyan2021muppet}.} See Fig.~\ref{fig:toy_example_convergence_results} and experiments in Sec.~\ref{sec: exp}.
We consider a model parameterized by $(x,y)$ and apply a bilevel formulation for PEFT:
\begin{align} 
\min_{x,y}~ & f_{\text{DPO}}(x,y;\mathcal{D}_{\text{DPO}}) \quad 
\text{s.t.} \quad y \in \arg\min_{y'} g_{\text{SFT}}(x,y';\mathcal{D}_{\text{SFT}}), \label{eq: bilevel repre LLM}
\end{align} 
where $x$ denotes the backbone parameters (typically a pretrained LLM) and $y$ is a lightweight, easily fine-tuned head. We prioritize supervised fine-tuning (SFT) loss at the LL to ensure a capable LLM model, while we keep direct preference optimization (DPO) loss~\citep{rafailov2023direct} in the UL to keep alignment with human preferences. Details of this setting are provided in Sec.~\ref{sec: problem setting and its flatness}.

\textbf{Negative theoretical observations.}
\emph{There are some counterexamples where PBGD-Free does not converge.}
{\bf c1)} When the UL objective solely depends on the LL variable $f(x,y)=f(y)$, as in data hyper-cleaning \citep{shaban2019truncated,hong2020two,shen2024seal} and meta-learning \citep{franceschi2018bilevel,chen2023_fnt_meta,franceschi2018bilevel}, the LL penalty gradient term contains all the gradient information about the UL variable $x$, so it cannot be omitted; and, {\bf c2)} When $f(x,y)$  jointly depends on both variables, omission of the penalty gradient term can lead to a different update direction;  
see more details in Example~\ref{example:bias}
and Proposition~\ref{prop: PBGD}.

\begin{algorithm}[t]
\caption{PBGD-Free}
\label{alg: PBGD-Free}
\begin{algorithmic}[1]
\State\textbf{Inputs:} initial points $x_0$, $y_{\gamma,0}$; stepsize $\eta$; number of iterations $T$
\Comment{\textcolor{blue}{Blue} part is the naive version; \textcolor{red}{red} part is the fully single-loop version}
\For{$t = 0$ \textbf{to} $T-1$}
    \State Update $y_{t+1}^\gamma$ \textcolor{blue}{by \eqref{eq: y gam update}} \textbf{or} by \textcolor{red}{$y_{t+1}^\gamma = \Proj_{\mathcal{Y}(x)}\big(y_t^\gamma-\eta^\gamma (\nabla_y \gamma^{-1}f(x_t,y_t^\gamma) + \nabla_y g\big(x_t,y_t^\gamma)\big)\big)$}
    \State Update $x_{t+1} = \Proj_{\mathcal{X}}\Big(x_t - \eta \nabla_x f(x_t,y_t^\gamma)\Big)$
\EndFor
\State\textbf{Outputs:} $(x_T, y_T^\gamma)$
\end{algorithmic}
\end{algorithm}

\begin{example}\label{example:bias}
For the BLO problem in \eqref{eq: original problem 1} with $f(x,y) = x^2 + 10y$ and $g(x,y) = (y - x + 1)^2$, the gradients $\langle\nabla F_\gamma(x),\nabla_x f(x, y_\gamma^*(x))\rangle <0$ exhibit \textbf{opposite directions} for $x \in (-5, 0)$. As a result, $\nabla F_\gamma(x) = 2x + 10$ converges to $x = -5$ while $\nabla_x f(x, y_\gamma^*(x)) = 2x$ converges to $x=0$. 
\end{example}

In the following subsections, we first explore the negative theoretical observations and show that they do not always converge under the traditional Lipschitz condition. Then, we will introduce a new mild condition as a surrogate to the widely used Lipschitz condition of the UL objective, discuss its use cases, and establish the convergence rate of the PBGD-Free algorithm under this condition.

\subsection{Negative theoretical observations of PBGD-Free}
\label{sec: Negative theoretical observations of PBGD-Free}
Recall that Assumption \ref{assumption: UL}.\ref{ass: UL y Lipschitz} assumes $f$ being $l_{f,0}$-Lipschitz in $y \in \mathcal{Y}$ for all $x\in \mathcal{X}$. In other words, it assumes that there exists constant $l_{f,0}\geq 0$ such that for all fixed $x\in \mathcal{X}$,
\begin{align}
    |f(x,y_1)-f(x,y_2)|\leq l_{f,0}\|y_1-y_2\|,\quad \forall y_1,y_2\in \mathcal{Y}. \label{eq: lipschitzness of f in y}
\end{align}
For differentiable $f$, Assumption \ref{assumption: UL}.\ref{ass: UL y Lipschitz} implies $\|\nabla_y f(x,y_g^*(x))\|\leq l_{f,0}$. This assumption is crucial for the key results in BLO literature \citep{shen2023penalty,kwon2023penalty,chen2024finding,chen2021closing,ji2020provably,hong2020two,ji2021bilevel,ye2022bome,huang2021enhanced} and it is critical for establishing the result $\|y_g^*(x)-y_\gamma^*(x) \| = \Theta(l_{f,0}\mu_g^{-1} \gamma^{-1})$ in Lemma~\ref{lemma: distance of yg ygam}.

This insight has practical consequences for the design and analysis of PBGD algorithms. In particular, although PBGD-Free is computationally efficient by eliminating the inner loop estimates of $y_g^*(x)$, the removal of the value function part $b(x_t):=\gamma (\nabla_x g(x,y_\gamma^*(x))-\nabla_x g(x,y_g^*(x)))$ in PBGD-Free introduces a non-negligible bias shown in Example \ref{example:bias}. To see this, by Taylor's expansion, the omitted value function part $b(x_t)$ is in the order of 
\begin{align}
    & \|b(x_t)\| = \gamma \|\nabla_{xy}g(x,y_g^*(x))\big(y_\gamma^*(x) -y_g^*(x) \big) \| + \Oc(\gamma\|y_g^*(x) -y_\gamma^*(x) \|^2). \label{bias_value}
\end{align}
Here, the second term $\Oc(\gamma\|y_g^*(x) -y_\gamma^*(x) \|^2=\Oc(l_{f,0}^2 \gamma^{-1})$ can be small enough with enlarging $\gamma$ following Proposition \ref{lemma: distance of yg ygam}. For general settings where $\nabla_{xy}g(x,y_g^*(x))\neq 0$, due to the first term and according to Proposition \ref{lemma: distance of yg ygam}, the bias in \eqref{bias_value} is tight as  $\Theta(1)$. 
Therefore, in the general case where $\nabla_{xy}g(x,y_g^*(x))\neq 0$, the PBGD-Free algorithm only drives the iterates to a neighborhood of the stationary point, which we will formally quantify as follows.

\begin{proposition}[Lower bound on asymptotic error]
\label{prop: PBGD}
Suppose $\mathcal{Y}(x)=\tilde{\mathcal{Y}}$ is closed, convex and non-empty uncoupled LL constraint, and Assumption~\ref{assumption: UL}.\ref{ass: UL y Lipschitz}--\ref{ass: UL smooth}, \ref{assumption: LL}.\ref{ass: LL SC}--\ref{ass: LL smooth}, \ref{assumption: Hessian Lipschitz} hold.
There exists a BLO problem where the iterates generated by the \textcolor{blue}{naive version} of PBGD-Free (Algorithm~\ref{alg: PBGD-Free}) converge to a neighborhood of a stationary point with a non-vanishing residual even when choosing step sizes appropriately, i.e., 
\begin{equation}
\lim_{T\rightarrow \infty}\frac{1}{T}\sum_{t=0}^{T-1} \|G_{F_{\gamma, \mathcal{X}}}(x_t)\|^2 = \lim_{T\rightarrow \infty}\Theta\Bigg(\frac{1}{T}\sum_{t=0}^{T-1} \|  \nabla_y f(x_t,y_g^*(x_t))\|^2\Bigg)=\Theta\left(l_{f,0}^2\right).
\label{eq: negative result of PBGD-Free}
\end{equation}
\end{proposition}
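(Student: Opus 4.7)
The plan is to prove the proposition by exhibiting an explicit witness rather than arguing about a general mechanism, since the statement only requires existence. The natural candidate is Example~\ref{example:bias} itself, with $f(x,y) = x^2 + 10y$, $g(x,y) = (y-x+1)^2$, and $\mathcal{X} = \mathcal{Y} = \mathbb{R}$. First I would verify that this pair meets Assumptions~\ref{assumption: UL}.\ref{ass: UL y Lipschitz}--\ref{ass: UL smooth}, \ref{assumption: LL}.\ref{ass: LL SC}--\ref{ass: LL smooth}, and \ref{assumption: Hessian Lipschitz}: $f$ is $l_{f,0}=10$--Lipschitz in $y$ (with constant $\nabla_y f \equiv 10$), $g$ is $\mu_g=2$--strongly convex, and both are quadratic, so Hessian--Lipschitz is immediate.

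Next, I would compute the solution maps in closed form. From $\nabla_y g(x,y) = 2(y-x+1) = 0$ we get $y_g^*(x) = x-1$, and from $\nabla_y(\gamma^{-1}f + g) = 10/\gamma + 2(y-x+1) = 0$ we get $y_\gamma^*(x) = x - 1 - 5/\gamma$. Substituting into \eqref{eq: F gam function} yields $F_\gamma(x) = x^2 + 10x + C(\gamma)$, so $\nabla F_\gamma(x) = 2x + 10$, with unique stationary point $x^\star = -5$; note this also confirms $l_{F_\gamma,1}=2 = \mathcal{O}(1)$, consistent with Theorem~\ref{theorem: smoothness}. The crucial structural feature is that $\nabla_x f(x, y) \equiv 2x$ does not depend on $y$, so the PBGD-Free update \eqref{eq: x update VaFF} reduces to the deterministic recursion $x_{t+1} = (1 - 2\eta)x_t$, \emph{regardless of the accuracy} of the inner iterate $y_t^\gamma$. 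For any $\eta \in (0, 1/l_{F_\gamma,1}) = (0, 1/2)$, this contracts $x_t \to 0$.

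At this limit point, $\nabla F_\gamma(0) = 10$ and $\nabla_y f(0, y_g^*(0)) = 10$, so in the unconstrained case $G_{F_\gamma, \mathcal{X}} = \nabla F_\gamma$ and
\begin{equation*}
\lim_{T\to\infty}\frac{1}{T}\sum_{t=0}^{T-1} \|G_{F_\gamma, \mathcal{X}}(x_t)\|^2 = 100 = l_{f,0}^2 = \lim_{T\to\infty}\frac{1}{T}\sum_{t=0}^{T-1} \|\nabla_y f(x_t, y_g^*(x_t))\|^2,
\end{equation*}
which is exactly the $\Theta(l_{f,0}^2)$ lower bound claimed in \eqref{eq: negative result of PBGD-Free}. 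To match the \emph{upper} bound in the $\Theta$--notation, I would note that the omitted bias $b(x_t)$ in \eqref{bias_value} admits the Taylor expansion $b(x_t) = -\nabla_{xy}g\,\nabla_{yy}g^{-1}\nabla_y f(x_t, y_g^*(x_t)) + \mathcal{O}(\gamma^{-1})$, so $\|G_{F_\gamma, \mathcal{X}}(x_t)\|$ is generically bounded above by a constant multiple of $\|\nabla_y f(x_t, y_g^*(x_t))\|$ up to vanishing terms.

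The main obstacle I expect is largely cosmetic: ensuring the asymptotic equality claim is stated cleanly for a \emph{fixed} step size $\eta \in (0, 1/l_{F_\gamma,1})$ and for inner solutions computed only to tolerance $\epsilon$ in \eqref{eq: y gam update}. Both issues are sidestepped in the chosen example because the $x$--dynamics are independent of $y_t^\gamma$, so no care is needed in propagating inner-loop error into the outer recursion. The only remaining subtlety is that the running-average formulation in \eqref{eq: negative result of PBGD-Free} demands showing $\|\nabla F_\gamma(x_t)\|^2 \to 100$ pointwise (which follows from $x_t \to 0$ and continuity of $\nabla F_\gamma$), after which the Cesàro limit identity follows by standard arguments.
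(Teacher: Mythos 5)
Your proposal is correct and follows essentially the same route as the paper: the paper's own proof establishes the general $\mathcal{O}(l_{f,0}^2)$ upper bound via the bias decomposition and then exhibits exactly this counterexample ($f(x,y)=x^2+l_{f,0}y$, $g(x,y)=(y-x+1)^2$) to show tightness, which is your Example~\ref{example:bias} with $l_{f,0}=10$. Since the proposition is existential, your exact computation of the limit on the witness already delivers the full $\Theta(l_{f,0}^2)$ claim, so the looser "generically bounded above" remark in your last step is not even needed.
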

\begin{proof}
Algorithm \ref{alg: PBGD-Free} can be viewed as a biased ALT-PBGD (Algorithm \ref{alg: ALT-PBGD}) oracle with bias:
\begin{align*}
    \| b_t\|  = & \| \nabla F_\gamma(x_t)-\nabla_x f(x_t,y_t^\gamma) \| \\
    \stackrel{(a)}{=} & \| \nabla_x f(x_t,y_\gamma^*(x)) + \gamma (\nabla_x g(x_t,y_\gamma^*(x_t))-\nabla_x g(x_t,y_g^*(x_t)))-\nabla_x f(x_t,y_t^\gamma)\| \\
    \stackrel{(b)}{\leq} & \| \nabla_x f(x_t,y_\gamma^*(x)) - \nabla_x f(x_t,y_t^\gamma)\| + \gamma\|\nabla_x g(x_t,y_\gamma^*(x_t))-\nabla_x g(x_t,y_g^*(x_t))\| \\
    \stackrel{(c)}{\leq} & l_{F_\gamma,1} \|y_\gamma^*(x_t)- y_t^\gamma \| + \gamma l_{g,1}\| y_\gamma^*(x_t)-y_g^*(x_t)\| \\
    \stackrel{(d)}{\leq} & l_{F_\gamma,1} \sqrt{\gamma^{-1}f(x_t,y_t^\gamma)-v_\gamma(x_t)}+ \Oc(\|  \nabla_y f(x,y_g^*(x_t))\|)\\
    \stackrel{(e)}{\leq} & l_{F_\gamma,1} \sqrt{(1-\eta^\gamma \mu)^K (\gamma^{-1}f(x_t,y_{t-1}^\gamma)-v_\gamma(x_t)) }+ \Oc(\|  \nabla_y f(x,y_g^*(x_t))\|)
\end{align*}
where (a) is by plugging in $\nabla F_\gamma(x_t)$ in \eqref{eq: nabla F gam}, this holds for arbitrary $y_g^*(x)$, $y_\gamma(x)$ as solutions to problems in \eqref{eq: nabla F gam}; (b) follows triangle-inequality; (c) uses the smoothness of $f$ and $g$; the first term in (d) is obtained by the QG property as ensured by proximal-PL condition and smoothness as per Lemma~\ref{lemma: SC PL}, \ref{lemma: equiv of PL EB QG}, 
and the second term follows Lemma~\ref{lemma: distance of yg ygam}; 
and (e) follows the linear convergence result of PL function \citep{karimi2016linear} as $y_t^\gamma$ is the results from $K$-step inner update starting at $y_{t-1}^\gamma$. In this way, when taking $K$ sufficiently large, there is 
\begin{align*}
    \| b_t\|\leq \Oc(\| \nabla_y f(x,y_g^*(x_t))\| = \Oc(l_{f,0}).
\end{align*}

Moreover, $F_\gamma(x)$ is $l_{F_\gamma,1}=\Oc(1)$-smooth. Therefore, following \eqref{eq: G eta upper bound},
\begin{align*}
    \frac{1}{T}\sum_{t=0}^{T-1} \|G_{F_\gamma, \mathcal{X}}(x_t)\|^2 &\leq  \Oc (\eta^{-1} T^{-1})+ \Oc\Big(  \frac{1}{T} \sum_{t=0}^{T-1}\|b_t\|^2 \Big) \nonumber\\
    &\leq \Oc(T^{-1}) + \Oc\Big(\frac{1}{T}\sum_{t=0}^T \|  \nabla_y f(x,y_g^*(x_t))\|^2\Big)=\Oc (T^{-1})+\Oc(l_{f,0}^2). 
\end{align*}
where $\Oc(1)$-smooth allows choosing a $\eta = \Oc(1)$ that satisfies $\eta\leq l_{F_\gamma,1}^{-1}$
    In this way, for sufficiently large $T$, $\frac{1}{T}\sum_{t=0}^{T-1} \|\nabla G_{F_\gamma, \mathcal{X}}(x_t)\|^2 \leq \Oc(l_{f,0}^2)$. 
    
Next, we will prove the lower bound of Algorithm \ref{alg: PBGD-Free} by constructing a counterexample, and show that the upper bound is tight.
Consider $f(x,y) = x^2+l_{f,0}y$ and $g(x,y)=(y-x+1)^2$, $\mathcal{X}=\mathbb{R}^{d_x}$ and $\mathcal{Y}(x)=\mathbb{R}^{d_y}$. In this problem, $\nabla G_{F_\gamma, \mathcal{X}}(x)=\nabla F_\gamma(x)=2x+l_{f,0}$ while $\nabla_x f(x,y_\gamma^*(x))=2x$. Using fixed stepsize $\eta$, $x_{t+1}=x_t-\eta x_t = (1-\eta)x_t$, implying $\| \nabla_x f(x_{t+1},y_\gamma^*(x_{t+1}))\|=\| 2x_{t+1}\|=2(1-\eta)\|x_t\|=2(1-\eta)^{t+1}\|x_0\|$.
Therefore, for arbitrary small $\epsilon>0$, there exists some $T_0=\Oc(\ln(\epsilon^{-1}))$ such that Algorithm \ref{alg: PBGD-Free} converges to $\|x_t\| < \epsilon$ for all $t\geq T_0$, whereas $\nabla F_\gamma(x_t) = l_{f,0}$. In this way, we have
\begin{align*}
    \frac{1}{T-T_0}\sum_{t=T_0}^T \|\nabla F_\gamma(x_t)\|^2  = \Oc(\epsilon)+l_{f,0}^2. 
\end{align*}
In this case, $\nabla G_{F_\gamma, \mathcal{X}}(x_t)=\nabla F_\gamma(x_t)$. Therefore, it indicates that $\Theta(l_{f,0}^2)$ is a tight bound.
\end{proof}
Proposition \ref{prop: PBGD} illustrates that
PBGD-Free converges to the $\epsilon$ stationary point only when the bound for $\| \nabla_y f(x, y_g^*(x_t)) \|$ (a.k.a $\ell_{f,0}$) is small. However, this is difficult to guarantee even in scenarios where PBGD‑Free is effective, such as in representation learning based PEFT \eqref{eq: bilevel repre LLM}. This motivates us to explore a weaker condition than the small Lipschitz assumption on  $f(x,\cdot)$, one that is more likely to hold in practice.

\subsection{A relaxed condition: UL \texorpdfstring{$(\delta,\alpha)$}{(delta,alpha)}-flatness}
\label{sec: improved analysis under flatness cond}

In this section, We introduce a relaxed condition that is less restrictive, and therefore more general, than the uniform small-Lipschitz-constant assumption on $f(x,\cdot)$. 

\begin{definition}[{$(\delta,\alpha)$-flatness}]
\label{def: flatness}
Function $f(x,\cdot):\mathcal{Y}\rightarrow \mathbb{R}$ is called $(\delta,\alpha)$-flat with modulus $c\geq 0$ at $y_g^*(x) \in S_g(x)$ with $\delta\geq 0 , \alpha \geq 1$ if 
\begin{equation}
    |f(x,y_g^*(x))-f(x,y)| \leq c \|y_g^*(x) -y \|^\alpha+\delta,~~~\forall y\in \mathcal{Y}.
\end{equation}
\end{definition}
When $\delta = 0$ and $y_g^*(x)$ is replaced by an arbitrary $y'$, Definition~\ref{def: flatness} reduces to the standard Hölder condition. Under the Lipschitzness condition in 
Assumption~\ref{assumption: UL}.\ref{ass: UL y Lipschitz}, the function $f(x,\cdot)$ satisfies $(0,1)$-flatness with modulus $c = l_{f,0}$. However, setting $\delta = 0$ naturally imposes the constraint $\alpha \leq 1$ whenever $\nabla_y f(x, y_g^*(x)) \neq 0$. Unless otherwise specified, we assume $\delta>0$ and $\alpha >1$ when referring to flatness afterwards.

In the following, we discuss the relations of Lipschitz condition in Assumption \ref{assumption: UL}.\ref{ass: UL y Lipschitz} and the new flatness condition in Definition \ref{def: flatness} through several observations. 

\begin{observation}
\label{obs:small_grad_gives_flat}
Under the $l_{F_\gamma,1}$-smoothness condition of $f(x, \cdot)$, if $\| \nabla_y f(x, y_g^*(x))\| = \delta^{\frac{1}{\alpha}}$, then $f(x,\cdot)$ is $(\delta, \alpha)$-flat with some modulus $0\leq c \leq \Oc(l_{F_\gamma,1})$. 
\end{observation}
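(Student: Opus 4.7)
The plan is to combine the descent lemma for smooth functions with Young's inequality. First, I would invoke the $l_{F_\gamma,1}$-smoothness of $f(x,\cdot)$ to obtain the standard quadratic upper bound
\begin{align*}
    \bigl| f(x,y) - f(x,y_g^*(x)) - \langle \nabla_y f(x, y_g^*(x)),\, y - y_g^*(x)\rangle \bigr| \leq \frac{l_{F_\gamma,1}}{2}\,\|y - y_g^*(x)\|^2
\end{align*}
for every $y \in \mathcal{Y}$. Combining this with Cauchy--Schwarz and the hypothesis $\|\nabla_y f(x, y_g^*(x))\| = \delta^{1/\alpha}$ gives, with $r := \|y-y_g^*(x)\|$, the two-term bound
\begin{align*}
    |f(x,y_g^*(x)) - f(x,y)| \;\leq\; \delta^{1/\alpha}\, r \;+\; \tfrac{l_{F_\gamma,1}}{2}\, r^2.
\end{align*}

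The next step is to recast this as a $(\delta,\alpha)$-flatness bound $c\,r^\alpha + \delta$. For the linear term, I would apply Young's inequality with the conjugate exponents $p = \alpha/(\alpha-1)$ and $q = \alpha$, which yields
\begin{align*}
    \delta^{1/\alpha}\, r \;\leq\; \tfrac{\alpha-1}{\alpha}\,\delta^{1/(\alpha-1)} \;+\; \tfrac{1}{\alpha}\, r^\alpha.
\end{align*}
In the canonical case $\alpha = 2$ one has $\delta^{1/(\alpha-1)} = \delta$ and $r^2 = r^\alpha$, so the Young remainder merges directly with $\delta$ and the smoothness remainder merges directly into the $r^\alpha$ term. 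This immediately gives a flatness modulus
$c = \tfrac{1}{2} + \tfrac{l_{F_\gamma,1}}{2} = \Oc(l_{F_\gamma,1})$,
which is precisely the desired conclusion.

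The main obstacle is extending the argument for general $\alpha \neq 2$, since two exponent mismatches appear: Young's inequality produces $\delta^{1/(\alpha-1)}$ in place of $\delta$, and the smoothness remainder $\tfrac{l_{F_\gamma,1}}{2} r^2$ has exponent $2$ rather than $\alpha$. I expect to resolve this either by exploiting that $\mathcal{Y}$ is assumed to be an easy-to-project set (e.g.\ a Euclidean ball), giving a diameter bound $r \leq D_\mathcal{Y}$ that makes $r^2$ and $r^\alpha$ comparable up to a constant absorbed into $c$, or by a case split around the threshold $r_0 \sim \delta^{1/\alpha}$, absorbing $r^2$ into $\delta$ for $r \leq r_0$ and into $r^\alpha$ for $r > r_0$, with an analogous split handling the $\delta^{1/(\alpha-1)}$ remainder. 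Either route collects all shifted-exponent pieces into a single constant of order $l_{F_\gamma,1}$, producing $|f(x,y_g^*(x)) - f(x,y)| \leq c r^\alpha + \delta$ with $c = \Oc(l_{F_\gamma,1})$ as claimed.
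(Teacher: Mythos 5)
Your small-radius argument is essentially the paper's: on $\|y-y_g^*(x)\|<1$ the paper also Taylor-expands around $y_g^*(x)$, bounds the remainder by $c\,r^\alpha$ with $c\le l_{F_\gamma,1}/2$ (via the H\"older-continuous-gradient consequence of smoothness, valid for $\alpha\in(1,2)$), and splits the cross term $\delta^{1/\alpha}r$ into $\delta + r^\alpha$ using the elementary inequality $ab\le a^\alpha+b^\alpha$ for $a,b\in(0,1)$ — your Young's-inequality variant is fine there, since $\delta^{1/(\alpha-1)}\le\delta$ for $\delta\le 1$ and $\alpha\in(1,2)$, and $r^2\le r^\alpha$ once $r\le 1$.

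The genuine gap is the large-radius regime, and neither of your two proposed escapes closes it. A diameter bound on $\mathcal{Y}$ is not available under the stated hypotheses ($\mathcal{Y}$ is only assumed easy to project onto, e.g.\ all of $\mathbb{R}^{d_y}$), so you cannot assume $r\le D_\mathcal{Y}$. And the case split around $r_0\sim\delta^{1/\alpha}$ fails on its large-$r$ branch: absorbing $\tfrac{l_{F_\gamma,1}}{2}r^2$ into $c\,r^\alpha$ requires $c\ge \tfrac{l_{F_\gamma,1}}{2}r^{2-\alpha}$, which is unbounded as $r\to\infty$ whenever $\alpha<2$ — and $\alpha<2$ is exactly the regime the observation lives in. The descent lemma plus the gradient norm at the single point $y_g^*(x)$ simply cannot control $f$ far from $y_g^*(x)$. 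What the paper uses instead is the \emph{global} Lipschitz continuity of $f(x,\cdot)$ in $y$ (Assumption~\ref{assumption: UL}.\ref{ass: UL y Lipschitz}), reading the hypothesis as $l_{f,0}=\delta^{1/\alpha}$: for $r\ge 1$ one then has $|f(x,y)-f(x,y_g^*(x))|\le \delta^{1/\alpha}r\le \delta^{1/\alpha}r^\alpha$ since $r\le r^\alpha$ when $r\ge1$ and $\alpha\ge1$. Your proof needs this Lipschitz ingredient (or an explicit boundedness hypothesis) to cover $\|y-y_g^*(x)\|\ge 1$; without it the claimed uniform modulus $c=\Oc(l_{F_\gamma,1})$ over all of $\mathcal{Y}$ does not follow.
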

\begin{proof}
    For $\|y-y_g^*(x)\|\geq 1$, by the $l_{f,0}=\delta^{\frac{1}{\alpha}}$-Lipschitzness of $f(x,y)$ in $y$, there is 
\begin{align*}
    \| f(x,y)-f(x,y_g^*(x))\|\leq l_{f,0}\|y-y_g^*(x)\| \leq \delta^{\frac{1}{\alpha}}\|y-y_g^*(x)\| ^\alpha. 
\end{align*}
For small $\|y-y_g^*(x)\|< 1$, Taylor's expansion gives
\begin{align*}
    f(x,y)-f(x,y_g^*(x))= \langle \nabla_y f(x,y_g^*(x)), y-y_g^*(x)\rangle+ R(x,y). 
\end{align*}
Here, $R(x,y)$ is a remainder. By Hölder-Continuous Gradient Condition \citep{berger2020quality}, \citep[Section 2]{nesterov2013introductory}, which is implied by the smoothness, there exists some $0\leq c\leq l_{F_\gamma,1}/2$, $1<\alpha < 2$ such that $\| R(x,y)\|\leq c\|y-y_g^*(x)\|^{\alpha}$. By Cauchy-Schwarz's inequality, there is 
\begin{align*}
    \|\langle \nabla_y f(x,y_g^*(x)), y-y_g^*(x)\rangle\|\leq & \| \nabla_y f(x,y_g^*(x))\| \| y-y_g^*(x)\| \nonumber \\
    \leq & \delta^{\frac{1}{\alpha}}\|y-y_g^*(x)\| \nonumber\\
    \leq & \delta +\|y-y_g^*(x)\|^\alpha
\end{align*}
where the last inequality holds as for $a,b\in (0,1)$ and $\alpha\in(1,2)$, there is $ab\leq \max\{a,b\}^2\leq \max\{a,b\}^\alpha \leq a^\alpha +b^\alpha$ and here $a=\delta^{1/\alpha}$ and $b=\| y-y_g^*(x)\|$.
\end{proof}
Observation~\ref{obs:small_grad_gives_flat} demonstrates that assuming small $\|\nabla_y f(x, y_g^*(x))\|$ is stronger than assuming flatness since $\ell_{f,0}=\delta^{\frac{1}{\alpha}}>\delta$ when $\alpha>1$. Below, we will show that the flatness condition automatically holds near the LL optimal solution $y_g^*(x)$. 

\begin{observation}
\label{obs: local abrupt change}
Under the $l_{F_\gamma,1}$-smoothness condition of $f$, the $(\delta, \alpha)$-flatness condition holds automatically for all $y \in \{ y: |f(x,y) - f(x, y_g^*(x))| \leq \delta \}$.
\end{observation}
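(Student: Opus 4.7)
The plan is to observe that the statement is essentially tautological once the defining inequality of $(\delta,\alpha)$-flatness is restricted to the specified set; the smoothness hypothesis enters only to guarantee that the set is nontrivial. First I would recall Definition~\ref{def: flatness}: at $y_g^*(x)$ one needs
\[
|f(x,y_g^*(x))-f(x,y)|\le c\|y_g^*(x)-y\|^\alpha+\delta
\]
for admissible parameters $c\ge 0$ and $\alpha\ge 1$. Second, I would observe that whenever $y$ lies in $\{y:|f(x,y)-f(x,y_g^*(x))|\le\delta\}$, the left-hand side is already at most $\delta$, so the inequality holds with any $c\ge 0$ (in particular $c=0$) and any $\alpha\ge 1$, since the extra term $c\|y_g^*(x)-y\|^\alpha$ is non-negative. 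This immediately establishes the claim on the restricted set.

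To make the observation informative rather than vacuous, I would then use the $l_{F_\gamma,1}$-smoothness to verify that the set in question actually contains a nontrivial ball around $y_g^*(x)$. Combining Taylor expansion with smoothness and Cauchy--Schwarz yields
\[
|f(x,y)-f(x,y_g^*(x))|\le \|\nabla_y f(x,y_g^*(x))\|\,\|y-y_g^*(x)\|+\tfrac{l_{F_\gamma,1}}{2}\|y-y_g^*(x)\|^2,
\]
from which a quadratic estimate delivers an explicit positive radius $r(\delta)$ such that $\|y-y_g^*(x)\|\le r(\delta)$ forces membership in the set. This is the same mechanism used in the proof of Observation~\ref{obs:small_grad_gives_flat}, and I would cite it rather than redoing the computation.

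The main obstacle is expository rather than technical: the restricted flatness inequality established here must be clearly distinguished from the global one in Definition~\ref{def: flatness}, which demands the bound for every $y\in\mathcal{Y}$. Emphasizing this distinction, and pointing out that the trivial modulus $c=0$ already suffices on the restricted set, is all the care required to make the argument transparent.
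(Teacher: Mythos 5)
Your proposal is correct and matches the paper's reasoning: the paper states this observation without proof precisely because, on the set $\{y:|f(x,y)-f(x,y_g^*(x))|\le\delta\}$, the left-hand side of the flatness inequality is already at most $\delta$ while the right-hand side is at least $\delta$, so the bound holds for any $c\ge 0$ and $\alpha\ge 1$. Your additional remark that smoothness guarantees the set contains a nontrivial neighborhood of $y_g^*(x)$ is a sensible clarification of why the hypothesis is mentioned, but it is not part of (or needed for) the paper's argument.
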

Since $f$ is continuous and smooth, this observation implies that the flatness condition permits abrupt, unstable changes in the $\Oc(\delta)$-neighborhood of $y_g^*(x)$. This demonstrates that the flatness condition is relatively mild and further confirms that it is strictly weaker than the small Lipschitz condition, which explicitly requires $\| \nabla_y f(x,y_g^*(x))\|$ to be small. 
Fig.~\ref{fig:flatness_ex2} visualizes Observation \ref{obs: local abrupt change} through the following example.

\begin{example}
\label{example:toy_example_3}
We consider the LL objective $g(x,y)=(y-x)^2$ and the UL objective 
\vspace{-0.1cm}
\begin{align*}
f(x, y) = &  
\left( \sin(y - x) + 2 \right) |y - x|^2 + 10 \exp\left( -\frac{(y - x)^2}{2 (0.005)^2} \right) \sin\left(100(y - x)\right).
\end{align*}
The LL problem $g(x,y)$ is strongly convex in $y$, with $y_g^*(x) = x$. 
Therefore, $\nabla_y f(x, y_g^*(x))=1000$ is extremely large, implying $l_{f,0}\geq 1000$ and leading to a loose upper bound for $\|y_g^*(x)-y_\gamma^*(x)\|$ or $\|\phi(x)-F_\gamma(x)\|$ following Lemma~\ref{lemma: distance of yg ygam}. However,
this problem is $(3e^{-3}, 1.1)$-flat with $c = 5$ at $y_g^*(x)=x$ for $x\in[-10,10]$. 
As shown in Fig.~\ref{fig:flatness_ex2}, $f(x, \cdot)$ exhibits fluctuations around $y_g^*(x)$ while remaining relatively stable elsewhere. This shows that flatness is weaker than requiring small $\|\nabla_y f(x, y)\|$.
\end{example}

Additionally, owing to the Hölder-alike nature, the following observation shows that outside of the $\Oc(\delta)$ neighborhood, the curvature of the flatness condition is also milder than the Lipschitz condition. 

\begin{figure}[t]
\centering
\begin{minipage}{0.57\textwidth}
\centering
\includegraphics[width=1\linewidth]{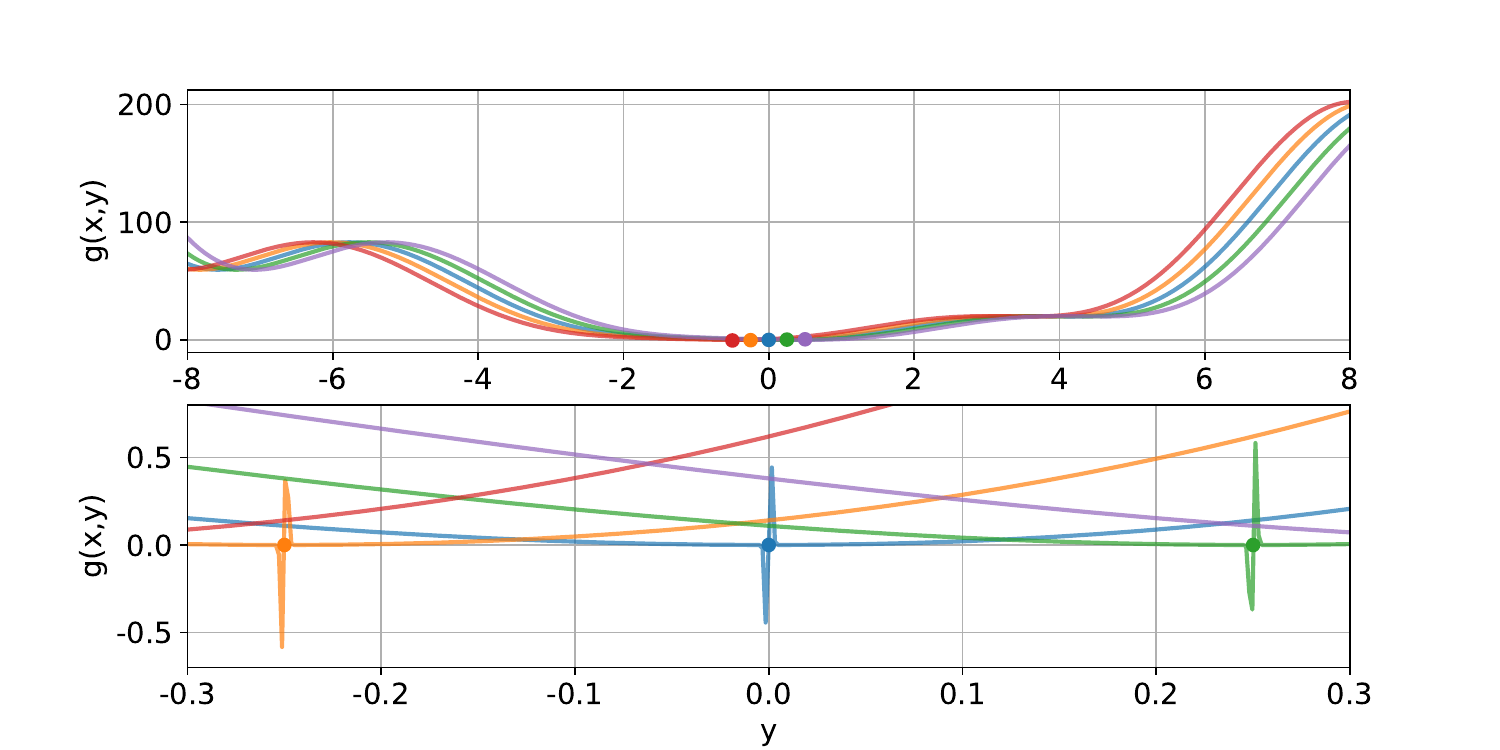}
\caption{Visualization of $f(x, y)$ in Example~\ref{example:toy_example_3}. Colored curves represent $f(x, \cdot)$ for different $x$; dots show $(y_g^*(x), f(x, y_g^*(x)))$. The \textbf{upper} plot shows $f(x,\cdot)$ on a larger scale, and the \textbf{lower} one illustrates the fluctuation around $y_g^*(x)$. }
\label{fig:flatness_ex2}
\end{minipage}%
\hfill
\begin{minipage}{0.4\textwidth}
\centering
\includegraphics[width=\linewidth]{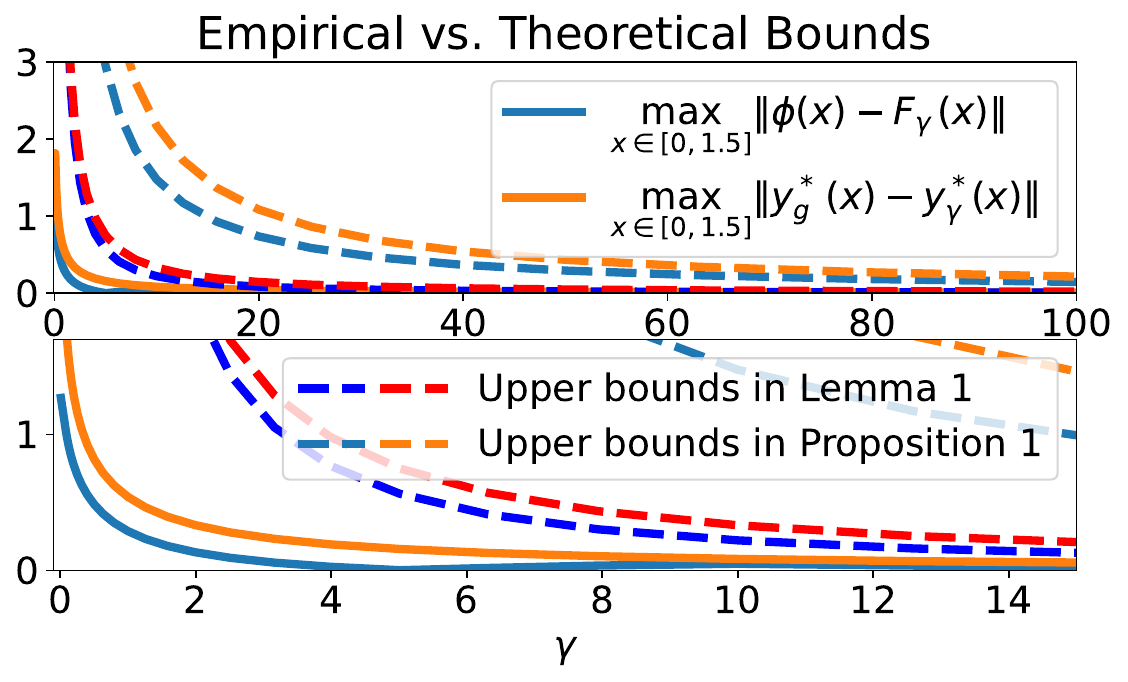}
\caption{Empirical bounds for $\|\phi(x) - F_\gamma(x)\|$ and $\|y_\gamma^*(x) - y_g^*(x)\|$ versus theoretical upper bounds in Proposition~\ref{lemma: distance of yg ygam} and Lemma~\ref{lemma: tighter bound for y gam yg} for the illustration of  representation learning PEFT \eqref{eq: bilevel repre LLM}. 
The lower plot shows a smaller scale. 
}
\label{fig:toy_example_bounds}
\end{minipage}
\end{figure}

\begin{observation}
Under $(\delta,\alpha)$-flatness, the growth rate of $f(x,\cdot)$ outside the $\Oc(\delta)$ neighborhood is
\begin{align}
\frac{|f(x, y) - f(x, y_g^*(x))|}{\| y - y_g^*(x)\|} \leq 
\begin{cases}
\Oc(1), & \text{if } \mathcal{O}(\delta) \leq \| y - y_g^*(x) \| \leq \Oc(1), \\
\mathcal{O}\left(\| y - y_g^*(x) \|^{\alpha - 1}\right), & \text{if } \| y - y_g^*(x) \| > \Oc(1).
\end{cases}
\end{align}
\end{observation}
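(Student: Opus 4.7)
The plan is to prove this by a direct manipulation of the flatness inequality in Definition~\ref{def: flatness}, dividing both sides by $\|y - y_g^*(x)\|$ and analyzing the two resulting terms across the two distance regimes. Specifically, by $(\delta,\alpha)$-flatness with modulus $c$,
\begin{align*}
\frac{|f(x,y) - f(x,y_g^*(x))|}{\|y - y_g^*(x)\|} \;\leq\; c\,\|y - y_g^*(x)\|^{\alpha-1} \;+\; \frac{\delta}{\|y - y_g^*(x)\|},
\end{align*}
so the claim reduces to showing that the right-hand side is $\Oc(1)$ in the first regime and $\Oc(\|y-y_g^*(x)\|^{\alpha-1})$ in the second.

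First I would handle the regime $\Oc(\delta) \leq \|y - y_g^*(x)\| \leq \Oc(1)$. Here $c\|y-y_g^*(x)\|^{\alpha-1} \leq c\cdot\Oc(1)^{\alpha-1} = \Oc(1)$ since $\alpha \geq 1$ and $c$ is a constant, while $\delta/\|y-y_g^*(x)\| \leq \delta/\Oc(\delta) = \Oc(1)$ by the lower bound on the distance. Summing the two bounds gives the desired $\Oc(1)$ rate.

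Next I would handle the regime $\|y - y_g^*(x)\| > \Oc(1)$. In this case $c\|y-y_g^*(x)\|^{\alpha-1}$ is exactly the target order. For the residual term, since $\|y-y_g^*(x)\| > \Oc(1)$ implies $1/\|y-y_g^*(x)\| = \Oc(1)$, we have $\delta/\|y-y_g^*(x)\| = \Oc(\delta) = \Oc(1) \leq \Oc(\|y-y_g^*(x)\|^{\alpha-1})$, using again that $\alpha \geq 1$ and $\|y-y_g^*(x)\|$ is bounded below by a constant. Adding the two pieces yields the stated bound.

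There is no substantive obstacle here: both cases are direct algebraic consequences of the flatness inequality, and the argument does not rely on any auxiliary structure of $f$ beyond Definition~\ref{def: flatness}. The only delicate point, and the one I would be most careful about, is making explicit in each regime how the hidden constants in the $\Oc(\cdot)$ notation depend on $c$, $\alpha$, and the lower/upper distance thresholds, so that the two bounds match seamlessly at the boundary $\|y-y_g^*(x)\| = \Theta(1)$ and so that the absorption $\Oc(\delta) \leq \Oc(\|y-y_g^*(x)\|^{\alpha-1})$ in the second regime is unambiguous.
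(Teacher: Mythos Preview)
Your proposal is correct and follows exactly the paper's approach: divide the flatness inequality by $\|y-y_g^*(x)\|$ and bound the two resulting terms in each distance regime. The paper's justification is in fact terser than yours, simply noting that for small distances the $\delta/\|y-y_g^*(x)\|$ term dominates and is $\Oc(1)$, while for large distances the $c\|y-y_g^*(x)\|^{\alpha-1}$ term dominates.
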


This is obtained by dividing both sides of the flatness inequality by $\| y_g^*(x)-y\|$. For small $\| y_g^*(x)-y\|$, the second term dominates and leads to a ${\cal O}(1)$ bound, which is the same as the Lipschitz condition. However, for
large $\| y_g^*(x)-y\|$, since $\alpha >1$, the bound $\Oc(\| y_g^*(x) - y \|^{\alpha-1})$ can be larger than ${\cal O}(1)$.
This observation further demonstrates that the flatness condition relaxes the Lipschitzness of $f(x, \cdot)$ in Assumption \ref{assumption: UL}.\ref{ass: UL y Lipschitz}. Specifically, while Lipschitz continuity would require a uniform bound on the gradient, flatness allows for a higher growth rate of $\Oc(\| y - y_g^*(x) \|^{\alpha-1})$. 
For UL objective $f(x,\cdot)$ with fixed $x$, given a \textit{pre-determined $\alpha$ and modulus $c$}, the $\delta$ constant for flatness condition in Definition \ref{def: flatness} can be calculated via
\begin{align}
    \delta(x):= \max\{0,|f(x,y_g^*(x))-f(x,y_\gamma^*(x))|-c\| y_g^*(x)-y_\gamma^*(x)\|^\alpha\}. \label{eq: delta x}
\end{align}
When $\| y_\gamma^*(x) - y_g^*(x) \| > {\cal O}(1)$, the last term in \eqref{eq: delta x} dominates and $\delta(x)$ can effectively be $0$. Therefore, together with Observation \ref{obs: local abrupt change}, the flatness condition with small $\delta$ not only encompasses a broader function class than small Lipschitz continuous functions, but is easier to hold in practice. For example, modern loss functions used in deep learning, such as cross-entropy, squared error, or exponential losses, are nonlinear and locally curved. Around $y_g^*(x)$, we can write
$f(x,y') \approx f(x,y_g^*(x)) + c \|y' - y_g^*(x)\|^\alpha$ for some $\alpha > 1$ and constant $c>0$.
In such cases, the additive term in \eqref{eq: delta x} vanishes and $\delta(x)$ is effectively zero. 
This implies that the flatness condition can hold even when no Lipschitz bound on $f(x,\cdot)$ is available, particularly for locally curved objectives.
We next illustrate this behavior concretely through a parameter-efficient fine-tuning (PEFT) problem in representation learning.

\subsection{The flatness of the representation learning PEFT problem} 
\label{section: flatness of the representation learning PEFT}
In our PEFT framework in \eqref{eq: bilevel repre LLM}, the model, which can be any structure (e.g. CNN), is parameterized with $(x,y)$ by:
\begin{align}
    \pi_{x,y}(r | z) := \text{softmax}(\text{model}_{x,y}(z))_r.
\end{align}
It gives the model’s predicted probability for response $r$ given input question $z$. The DPO loss \citep{rafailov2023direct} over preference data $\mathcal{D}_{\text{DPO}}$, compares outputs $\pi_{x,y}$ against a reference $\pi_{\text{ref}}$ via 
\begin{equation}
    f_{\text{DPO}}(x, y) := -\frac{1}{|\mathcal{D}_{\text{DPO}}|} \sum_{(z, r_w, r_\ell) \in \mathcal{D}_{\text{DPO}}} 
\log \left( \sigma \left( 
q_\beta(x, y; z, r_w, r_\ell) \right) \right),
\end{equation}
where 
\begin{align}
    q_\beta(x, y; z, r_w, r_\ell) :=  
\beta \log \frac{\pi_{x,y}(r_w \mid z)}{\pi_{\text{ref}}(r_w \mid z)} 
- \beta \log \frac{\pi_{x,y}(r_\ell \mid z)}{\pi_{\text{ref}}(r_\ell \mid z)},
\end{align}
and $r_w$, $r_\ell$ are the preferred and rejected responses to input $z$.

The SFT loss operates on supervised dataset $\mathcal{D}_{\text{SFT}}$ through
\begin{align}
    g_{\text{SFT}}(x,y):= -\frac{1}{|\mathcal{D}_{\text{SFT}}|} \sum_{(z,r_{\text{SFT}}) \in \mathcal{D}_{\text{SFT}}} 
\log \left( \pi_{x,y}(r_{\text{SFT}}|z)\right) .
\end{align}
\vspace{-0.2cm}
Both objectives are differentiable with the following gradients 
\begin{align}
\nabla f_{\text{DPO}} &= -(1-\sigma(q_\beta))\nabla q_\beta,\quad  \nabla g_{\text{SFT}} = -\nabla\pi/\pi,
\end{align}
where $\nabla$ is short for gradient with respect to model parameters $x,y$.

Although the Lipschitz constant $\ell_{f,0}$ for this problem is not negligible, the objective still satisfies the flatness condition with a small $\delta$. To illustrate this point, we revisit the example summarized in Fig.~\ref{fig:toy_example_convergence_results} and detailed in Sec.~\ref{app:toy_example}. In this setting, for $c=0.5$ and $\alpha=1.5$, the values of $\delta(x)$ plotted over the iterations of $x$ (shown in Figure~\ref{fig:toy_example_lipschitz}) remain below $0.0003$ throughout the optimization, even though the Lipschitz constant of $f(x,\cdot)$ is large. 

\begin{wrapfigure}{r}{0.4\textwidth}  
\centering  
\vspace{-0.4cm}
\includegraphics[width=0.95\linewidth]{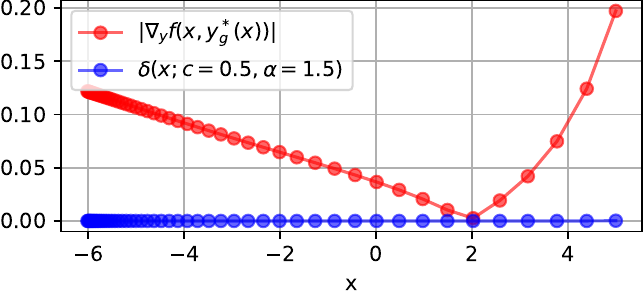}  
\caption{Comparisons of $\delta(x)$ and $\nabla_y f(x,y_g^*(x))$ during PBGD-Free updates. The Lipschitz constant $\ell_{f,0}=\max_x \|\nabla_y f(x,y_g^*(x))\|$ is large but $\delta(x)$ is small. }  
\label{fig:toy_example_lipschitz}
\vspace{-0.8cm}
\end{wrapfigure}
This shows that an analysis based solely on Lipschitz continuity can be overly conservative: the constant $l_{f,0}$ stays non-negligible even in local neighborhoods of the converged $\lim_{t\rightarrow \infty}x_t$, whereas the flatness condition provides a tighter characterization. The small $\delta(x)$ values observed along the PBGD-Free trajectory confirm that \textit{the PEFT problem \eqref{eq: bilevel repre LLM} satisfies the flatness condition}, which in turn motivates our enhanced analysis of the PBGD-Free algorithm under this condition. In practical PEFT problems, such as those reported in Sec.~\ref{sec: exp}, $\delta(x)$ in \eqref{eq: delta x} is typically small because $y_g^*(x)$ and $y_\gamma^*(x)$ are often more than one unit apart, while their effect on $f_{\text{DPO}}$ is marginal.

\subsection{Convergence analysis for PBGD-Free under flatness}
\label{sec:PBGD-Free}

As shown in \eqref{bias_value}, the first term is the major bottleneck of the divergence issue of the PBGD-Free algorithm under the Lipschitz condition in Proposition \ref{prop: PBGD}. The key to establishing the convergence guarantee for the PBGD-Free algorithm is the tighter bound of $\|y_g^*(x) - y_\gamma^*(x)\|$ and $\| \phi(x)-F_\gamma(x)\|$ under the flatness condition, compared to the results in Lemma~\ref{lemma: distance of yg ygam}. We highlight the results as follows. 

\begin{lemma}[\textbf{Tighter analysis on function value gap}]
\label{lemma: tighter bound for y gam yg} 
Suppose $\mathcal{Y}(x)=\tilde{\mathcal{Y}}$ is a closed, convex, and non-empty uncoupled LL constraint, and  Assumption \ref{assumption: UL}.\ref{ass: UL smooth}, \ref{assumption: LL}.\ref{ass: LL SC} hold. For fixed $x$, suppose $f(x,\cdot)$ is $(\delta,\alpha)$-flat at any $y_g^*(x) \in S_g(x)$ with $\alpha\in (1,1.5]$. Then, for $\gamma \geq \frac{l_{F_\gamma,1}}{\mu_g}$,
\begin{align}
\| \phi(x)-F_\gamma(x) \| =& \Oc(\gamma^{-\frac{\alpha}{2-\alpha}}+\delta), \quad  \text{and}\quad  
\|y_g^*(x)-y_\gamma^*(x)\| =  \Oc\big(\gamma^{-\frac{1}{2-\alpha}}+\delta^{\frac{1}{2}}\gamma^{-\frac{1}{2}}\big).
\label{eq: yg ygam tighter bound}
\end{align}
\end{lemma}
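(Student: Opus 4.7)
The plan is to first derive a sharp bound on the solution gap $r := \|y_g^*(x) - y_\gamma^*(x)\|$ via a self-referential scalar inequality, and then transfer this bound to the function-value gap $|\phi(x) - F_\gamma(x)|$ through a one-line algebraic identity. The two structural ingredients are strong convexity of $g(x,\cdot)$ and $(\delta,\alpha)$-flatness of $f(x,\cdot)$ at $y_g^*(x)$; compared to Lemma~\ref{lemma: distance of yg ygam}, the only new ingredient is that flatness replaces the Lipschitz assumption on $f(x,\cdot)$ used there.

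The key inequality on $r$ will be obtained by combining three facts. First, optimality of $y_\gamma^*(x)$ for problem~\eqref{eq: S gamma} yields
\[
\gamma^{-1} f(x, y_\gamma^*(x)) + g(x, y_\gamma^*(x)) \;\le\; \gamma^{-1} f(x, y_g^*(x)) + g(x, y_g^*(x)).
\]
Second, $\mu_g$-strong convexity of $g(x,\cdot)$ with minimizer $y_g^*(x)$ over the convex set $\tilde{\mathcal{Y}}$ gives $g(x, y_\gamma^*(x)) - g(x, y_g^*(x)) \ge \tfrac{\mu_g}{2} r^2$. Subtracting, then applying the flatness bound $|f(x, y_g^*(x)) - f(x, y_\gamma^*(x))| \le c\, r^\alpha + \delta$, produces the self-referential inequality
\[
\tfrac{\mu_g}{2}\, r^2 \;\le\; c\,\gamma^{-1}\, r^\alpha \;+\; \delta\,\gamma^{-1}.
\]
I would then solve this by splitting into two regimes: if $c\, r^\alpha \ge \delta$ dominates the right-hand side, then $r^{2-\alpha} = \mathcal{O}(\gamma^{-1})$, giving $r = \mathcal{O}(\gamma^{-1/(2-\alpha)})$; if instead $\delta$ dominates, then $r^2 = \mathcal{O}(\delta\gamma^{-1})$, giving $r = \mathcal{O}(\delta^{1/2}\gamma^{-1/2})$. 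The maximum of the two regime-wise bounds yields the advertised estimate for $r$; the assumption $\alpha < 2$ is essential because otherwise the $r^\alpha$-term cannot be absorbed into the quadratic.

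For the function-value gap, I would use the identity $F_\gamma(x) = f(x, y_\gamma^*(x)) + \gamma\bigl(g(x, y_\gamma^*(x)) - g(x, y_g^*(x))\bigr)$ so that
\[
\phi(x) - F_\gamma(x) = \bigl[f(x, y_g^*(x)) - f(x, y_\gamma^*(x))\bigr] - \gamma\bigl[g(x, y_\gamma^*(x)) - g(x, y_g^*(x))\bigr].
\]
The $g$-gap is nonnegative since $y_g^*(x)$ minimizes $g(x,\cdot)$, and flatness bounds the $f$-gap by $c\, r^\alpha + \delta$; thus $0 \le \phi(x) - F_\gamma(x) \le c\, r^\alpha + \delta$. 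Substituting the previously obtained bound on $r$ and running one last case analysis on whether $\delta \le \gamma^{-\alpha/(2-\alpha)}$ (so that I can verify $\delta^{\alpha/2}\gamma^{-\alpha/2} = \mathcal{O}(\delta)$ in the $\delta$-dominated regime, and $r^\alpha = \mathcal{O}(\gamma^{-\alpha/(2-\alpha)})$ in the other) delivers the claimed $\mathcal{O}(\gamma^{-\alpha/(2-\alpha)} + \delta)$ bound.

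The main obstacle will be the scalar implicit inequality in Step~2: both bounds on $r$ and $\phi - F_\gamma$ rely on a careful two-regime splitting rather than a closed-form solution, and correctly tracking the interaction between the $r^\alpha$-term and the constant $\delta$-term is what delivers the sharp joint dependence on $\gamma$ and $\delta$ (in particular recovering the original Lemma~\ref{lemma: distance of yg ygam} rates at $\alpha = 1$, $\delta = 0$). Everything else reduces to direct substitution and the one-line identity for $\phi - F_\gamma$.
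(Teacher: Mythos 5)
Your proposal is correct, and the core machinery is the same as the paper's: optimality of $y_\gamma^*(x)$ for problem~\eqref{eq: S gamma}, a quadratic lower bound on the $g$-gap from strong convexity (the paper uses the QG consequence of PL, which is the same estimate), and the flatness bound on the $f$-gap, combined into a self-referential scalar inequality in $r=\|y_g^*(x)-y_\gamma^*(x)\|$. The genuine difference is in how the two conclusions are extracted and in which order. The paper proves the function-value gap \emph{first} and independently of the distance bound: after assembling $|\phi(x)-F_\gamma(x)|\le c\,r^\alpha+\delta-\gamma\tfrac{\mu}{2}r^2$, it simply maximizes $cz^\alpha-\gamma\tfrac{\mu}{2}z^2$ over all $z\ge 0$ in closed form, which yields $\Oc(\gamma^{-\alpha/(2-\alpha)})+\delta$ in one step with explicit constants and no case analysis. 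You instead derive the distance bound first and then substitute $r^\alpha$ into $c\,r^\alpha+\delta$, which forces the extra verification that $\delta^{\alpha/2}\gamma^{-\alpha/2}=\Oc(\gamma^{-\alpha/(2-\alpha)}+\delta)$; your flagged case split on $\delta\lessgtr\gamma^{-\alpha/(2-\alpha)}$ does handle this correctly (it is exactly the statement that a geometric mean is dominated by the maximum), but the paper's route avoids it entirely. For the distance bound itself, your two-regime split of $\tfrac{\mu_g}{2}r^2\le c\gamma^{-1}r^\alpha+\delta\gamma^{-1}$ is cleaner than the paper's argument (which restricts to $r\ge\sqrt{\delta/\gamma}$, substitutes $r^{-\alpha}\le(\delta/\gamma)^{-\alpha/2}$, and then raises to the power $1/(2-\alpha)$), and both give the same $\Oc(\gamma^{-1/(2-\alpha)}+\delta^{1/2}\gamma^{-1/2})$. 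In short: same ingredients, opposite order, with the paper trading your final case analysis for a one-line polynomial maximization.
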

\begin{remark}
    The assumption Assumption \ref{assumption: UL}.\ref{ass: UL smooth}, \ref{assumption: LL}.\ref{ass: LL SC} and $\gamma \geq \frac{l_{F_\gamma,1}}{\mu_g}$ can be relaxed to $\gamma^{-1}f(x,y)+g(x,y)$ being proximal $\mu$-PL in $y\in \tilde{\mathcal{Y}}$.
\end{remark}
\begin{proof} 
For any $y_\gamma^*(x)\in S_\gamma(x)$, $y_g^*(x)\in S_g(x)$, there is
\begin{align}
    \gamma^{-1}f(x,y_\gamma^*(x))+g(x,y_\gamma^*(x)) \leq & \gamma^{-1}f(x,y_g^*(x))+g(x,y_g^*(x)) \nonumber \\
    \Rightarrow \quad \gamma^{-1}f(x,y_\gamma^*(x))+g(x,y_\gamma^*(x))-v(x) \leq & \gamma^{-1}f(x,y_g^*(x))+g(x,y_g^*(x))-v(x) \nonumber \\
    \Rightarrow \quad \gamma^{-1}f(x,y_\gamma^*(x))+g(x,y_\gamma^*(x))-v(x) \leq & \gamma^{-1}f(x,y_g^*(x)) \nonumber \\
    \Rightarrow \quad f(x,y_\gamma^*(x))\leq & f(x,y_g^*(x)). \label{eq: f x y gam small}
\end{align}
In this way, according to the definition of $\phi(x)$ and $F_\gamma(x)$, we have 
\begin{align}
    \|\phi (x)-F_\gamma(x)\|= & \min_{z\in S_g(x)}f(x,z)-\left(f(x,y_\gamma ^*(x))+\gamma(g(x,y_\gamma ^*(x))-v(x))\right) \nonumber \\
     \stackrel{(a)}{\leq} & f(x,y_g^*(x))-\left(f(x,y_\gamma ^*(x))+\gamma(g(x,y_\gamma ^*(x))-v(x))\right) \nonumber \\
    \stackrel{(b)}{\leq} & f(x,y_g^*(x))-\left(f(x,y_\gamma ^*(x))+\gamma\frac{\mu_g}{2}\|y_g^*(x) -y_\gamma^*(x) \|^2\right) \nonumber \\
    \stackrel{(c)}{=}  &\|f(x,y_g^*(x))-f(x,y_\gamma ^*(x))\|-\gamma\frac{\mu_g}{2}\|y_g^*(x) -y_\gamma^*(x) \|^2 \nonumber\\
    \stackrel{(d)}{\leq} & c\| y_g^*(x) -y_\gamma^*(x) \|^\alpha +\delta-\gamma\frac{\mu}{2}\|y_g^*(x) -y_\gamma^*(x) \|^2 \nonumber \\
    \stackrel{(e)}{\leq} & \max_{z:z\geq0} c z^\alpha -\gamma\frac{\mu}{2} z^2 \nonumber +\delta \\
    \stackrel{(f)}{=} & c^{\frac{2}{2-\alpha}}(2\alpha)^{\frac{\alpha}{2-\alpha}}(1-\alpha/2)(\mu\gamma)^{-\frac{\alpha}{2-\alpha}}+\delta = \Oc(\gamma^{-\frac{\alpha}{2-\alpha}}+\delta) \label{eq: first part of tighter bound proof}
\end{align}
Here, (a) holds for arbitrary $y_g^*(x)\in S_g(x), y_\gamma^*(x)\in S_\gamma(x)$ by \eqref{eq: f x y gam small}; 
(b) is from the $\mu_g$-QG condition of $g(x,\cdot)$ which is implied by $\mu_g$-PL according to Lemma \ref{lemma: equiv of PL EB QG}, via choosing $y_g^*(x)=\arg\min_{z\in S_g(x)}\|z-y_\gamma^*(x)\|$; (c) again uses \eqref{eq: f x y gam small}; (d) follows the flatness of $f(x,y)$ at $y_g^*(x)$, (e) is by formulating the problem as a maximization problem over $z=\| y_g^*(x) -y_\gamma^*(x) \|$; and (f) is the solution to this polynomial problem. Therefore, the first part is proved.

For the second part, as $\frac{1}{\gamma}f(x,\cdot)+g(x,\cdot)$ being $\mu$-PL for $\gamma\geq \frac{l_{F_\gamma,1}}{\mu_g}$, it is also $\mu$-QG by Lemma \ref{lemma: equiv of PL EB QG}. In this way, fixed any $\gamma\geq \frac{l_{F_\gamma,1}}{\mu_g}$, for any $y_\gamma^*(x) \in S_\gamma(x)$ and any $y_g^*(x) \in S_g(x)$, there is
\begin{align}
     &\gamma\left(  \left( \frac{1}{\gamma}f(x,y_g^*(x))+g(x,y_g^*(x)) \right) -\left(\frac{1}{\gamma}f(x,y_\gamma ^*(x))+g(x,y_\gamma ^*(x))\right)\right)
    \geq  \gamma \frac{\mu}{2} d_{S_\gamma(x)}^2(y_g^*(x)). \label{eq: QG of H}
\end{align}
Moreover, following steps (a)-(d) as in \eqref{eq: first part of tighter bound proof}, there is
\begin{align*}
    \text{left of \eqref{eq: QG of H}}= & \left( f(x,y_g^*(x))+\gamma g(x,y_g^*(x)) - \gamma v(x) \right) - \left( f(x,y_\gamma ^*(x))+\gamma g(x,y_\gamma ^*(x)) - \gamma v(x) \right) \nonumber \\
     =& f(x,y_g^*(x))-f(x,y_\gamma ^*(x))-\gamma \big(g(x,y_\gamma ^*(x))-v(x)\big) \nonumber \nonumber\\
     \leq & c\| y_g^*(x) -y_\gamma^*(x) \|^\alpha +\delta-\gamma\frac{\mu}{2} d_{S_\gamma(x)}^2(y_\gamma^*(x)) 
\end{align*}
Combining \eqref{eq: QG of H} and the above, there is
\begin{align*}
    c\| y_g^*(x) -y_\gamma^*(x) \|^\alpha +\delta-\gamma\frac{\mu}{2} d_{S_g(x)}^2(y_\gamma^*(x)) \geq \gamma \frac{\mu}{2} d_{S_\gamma(x)}^2(y_g^*(x))
\end{align*}
for any $y_g^*(x)\in S_g(x)$ and $y_\gamma^*(x)\in S_\gamma(x)$. In this way, for any $y_g^*(x)\in S_g(x)$, choose $y_\gamma^*(x)=\arg\min_{y\in S_\gamma(x)} \|y-y_g^*(x)\|$, there is
\begin{align*}
    cd_{S_\gamma(x)}^\alpha (y_g^*(x)) +\delta \geq & \gamma \frac{\mu}{2} d_{S_\gamma(x)}^2(y_g^*(x))
\end{align*}
Similarly, for any $y_\gamma^*(x)\in S_\gamma(x)$, choose $y_g^*(x)=\arg\min_{z\in S_g(x)} \|z-y_g^*(x)\|$, there is
\begin{align*}
    cd_{S_g(x)}^\alpha (y_\gamma^*(x)) +\delta \geq & \gamma \frac{\mu}{2} d_{S_g(x)}^2 (y_\gamma^*(x)).
\end{align*}
For simplicity, denote $x=d_{S_g(x)} (y_\gamma^*(x))$ (or $x=d_{S_\gamma(x)} (y_g^*(x))$), there is
\begin{align*}
    \quad x^{2-\alpha} \leq & 2c {\mu}^{-1}\gamma^{-1} + 2\delta {\mu}^{-1}\gamma^{-1} x^{-\alpha}.
\end{align*}

As $\alpha\in(1,1.5]$, for $x \geq \sqrt{\frac{\delta}{\gamma}}$,
\begin{align*}
x^{2-\alpha} \leq & 2c {\mu}^{-1}\gamma^{-1} + 2\delta {\mu}^{-1}\gamma^{-1} \left(\frac{\delta}{\gamma} \right)^{-\frac{\alpha}{2}}.
\end{align*}

Since $| a+b|^p \leq 2^{p-1}(|a|^p+|b|^p)$ for all $p\geq 1$ (as $|\cdot|^p$ is convex), there is
\begin{align*}
x =&  (x^{2-\alpha})^{\frac{1}{2-\alpha}}\leq  \left(2c {\mu}^{-1}\gamma^{-1} + 2\delta {\mu}^{-1}\gamma^{-1} \left(\frac{\delta}{\gamma} \right)^{-\frac{\alpha}{2}} \right)^{\frac{1}{2-\alpha}} \nonumber\\
\leq & 2^{\frac{1}{2-\alpha}-1} \left((2 c\mu^{-1})^{\frac{1}{2-\alpha}} \gamma^{-\frac{1}{2-\alpha}} + (2\mu^{-1})^{\frac{1}{2-\alpha}}\delta^{\frac{1}{2}}\gamma^{-\frac{1}{2}}\right)=Oc(\gamma^{-\frac{1}{2-\alpha}}+\delta^{\frac{1}{2}}\gamma^{-\frac{1}{2}})
\end{align*}

In this way, we can conclude the following to include the scenario that $x \leq \sqrt{\frac{\delta}{\gamma}}$.
\begin{align*}
    x =\Oc(\gamma^{-\frac{1}{2-\alpha}}+\delta^{\frac{1}{2}}\gamma^{-\frac{1}{2}}).
\end{align*}
\end{proof}

When $\delta$ is smaller than target accuracy $\epsilon$, achieving $\| \phi(x)-F_\gamma(x)\|,\|y_g^*(x)- y_\gamma^*(x) \|^2 =\Oc( \epsilon)$ only requires $\gamma = \Oc(\epsilon^{-\frac{2-\alpha}{2}})$, which is strictly smaller than the choice of $\gamma = \Oc(\epsilon^{-\frac{1}{2}})$ in previous literature \citep{shen2023penalty,chen2023bilevel,kwon2023penalty,kwon2023fully}. This also aligns with common practice, where the penalty constant $\gamma$ does not need to be excessively large. For instance, the UL objective in Example \ref{example:toy_example_3} is $(10^{-3},1.1)$-flat and therefore choosing $\gamma = 15$
gives desired accuracy, supporting the rule of thumb: $\gamma \approx 15$ is a reasonable choice. In Fig.~\ref{fig:toy_example_bounds}, we also show that our bound under the flatness condition in Lemma~\ref{lemma: tighter bound for y gam yg} is tighter than the one under the Lipschitz condition in Proposition \ref{lemma: distance of yg ygam} for the representation learning PEFT \eqref{eq: bilevel repre LLM}.

Since Lemma~\ref{lemma: tighter bound for y gam yg} provides a per-iterate bound with fixed $x$, the next step is to analyze the Lipschitz continuity of the flatness constant $\delta (x)$ with respect to $x$, enabling a uniform bound across iterations. 

\begin{lemma}[\textbf{Lipschitz continuity of flatness constant $\delta(x)$}]
\label{lemma: Lipschitz delta x}
Suppose $\mathcal{Y}(x)=\tilde{\mathcal{Y}}$ is closed, convex and non-empty uncoupled LL constraint, and Assumption \ref{assumption: UL}.\ref{ass: UL y Lipschitz}--\ref{ass: UL smooth}, \ref{assumption: LL}.\ref{ass: LL SC}--\ref{ass: LL smooth}, \ref{assumption: Hessian Lipschitz} hold. Then fixing some $c\geq 0$ and $\alpha\in (1,2)$, there exists some trajectory of $y_g^*(x)$, $y_\gamma^*(x)$ such that the flatness constant of $f(x,\cdot)$, 
$\delta(x)$ defined in \eqref{eq: delta x}, is $\Oc(c\gamma^{-(\alpha-1)})$-Lipschitz-continuous.
\end{lemma}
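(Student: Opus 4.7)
The plan decomposes $\delta(x)=\max\{0,\,A(x)-B(x)\}$ with $A(x):=|f(x,y_g^*(x))-f(x,y_\gamma^*(x))|$ and $B(x):=c\,\|y_g^*(x)-y_\gamma^*(x)\|^{\alpha}$. Since $\max\{0,\cdot\}$ and the absolute value are each $1$-Lipschitz, it suffices to bound the Lipschitz moduli of $A$ and $B$ separately. Two ingredients drive the entire analysis: (i) each of the solution maps $x\mapsto y_g^*(x)$ and $x\mapsto y_\gamma^*(x)$ is $\Oc(1)$-Lipschitz in $x$ by Remark~\ref{remark: Lipschitz of S(x)} and its analogue applied to $\gamma^{-1}f+g$; and (ii) $\|y_g^*(x)-y_\gamma^*(x)\|=\Oc(\gamma^{-1})$ uniformly in $x$ by Lemma~\ref{lemma: distance of yg ygam}.

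For $B(x)$, I would apply the elementary power inequality $|u^{\alpha}-v^{\alpha}|\le \alpha\max(u,v)^{\alpha-1}|u-v|$ (which is a mean-value consequence of $\alpha>1$) with $u=\|y_g^*(x_1)-y_\gamma^*(x_1)\|$ and $v=\|y_g^*(x_2)-y_\gamma^*(x_2)\|$. Input (ii) gives $\max(u,v)^{\alpha-1}=\Oc(\gamma^{-(\alpha-1)})$, and (i) together with the reverse triangle inequality gives $|u-v|\le (L_y^g+L_y^\gamma)\|x_1-x_2\|$. Combining, $|B(x_1)-B(x_2)|\le \Oc(c\gamma^{-(\alpha-1)})\|x_1-x_2\|$, which already matches the target Lipschitz modulus.

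For $A(x)$, I would pass to the signed function $\tilde A(x):=f(x,y_g^*(x))-f(x,y_\gamma^*(x))$ and Taylor-expand $f(x_1,y_g^*(x_1))-f(x_2,y_g^*(x_2))$ around $(x_2,y_g^*(x_2))$ and $f(x_1,y_\gamma^*(x_1))-f(x_2,y_\gamma^*(x_2))$ around $(x_2,y_\gamma^*(x_2))$. The resulting identity for $\tilde A(x_1)-\tilde A(x_2)$ splits into three contributions: (a) a mixed term $[\nabla_x f(x_2,y_g^*(x_2))-\nabla_x f(x_2,y_\gamma^*(x_2))]^{\top}(x_1-x_2)$ bounded by $l_{F_\gamma,1}\|y_g^*(x_2)-y_\gamma^*(x_2)\|\cdot\|x_1-x_2\|=\Oc(\gamma^{-1})\|x_1-x_2\|$ via Assumption~\ref{assumption: UL}.\ref{ass: UL smooth} and (ii); (b) a gradient-difference term $[\nabla_y f(x_2,y_g^*(x_2))-\nabla_y f(x_2,y_\gamma^*(x_2))]^{\top}J_g(x_2-x_1)$ of the same order via (ii); and (c) a Jacobian-mismatch term $\nabla_y f(x_2,y_\gamma^*(x_2))^{\top}[J_g(x)-J_\gamma(x)](x_1-x_2)$, where $J_g$ and $J_\gamma$ denote the (directional) Jacobians of the two solution maps. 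The pivotal step is to show $\|J_g-J_\gamma\|=\Oc(\gamma^{-1})$: in the interior case this follows from the implicit-function formula $J_g=-[\nabla_{yy}^2 g]^{-1}\nabla_{yx}^2 g$ together with its $\gamma^{-1}f$-perturbed counterpart for $J_\gamma$, using Hessian Lipschitzness (Assumption~\ref{assumption: Hessian Lipschitz}), matrix-perturbation bounds on $[\nabla_{yy}^2 g+\gamma^{-1}\nabla_{yy}^2 f]^{-1}$, and the evaluation-point shift from (ii); at boundary points the projected-variational representation in Lemma~\ref{lemma: implicit gradient} plays the analogous role. All three contributions are $\Oc(\gamma^{-1})\|x_1-x_2\|$, so $A(x)$ is $\Oc(\gamma^{-1})$-Lipschitz.

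Since $\alpha\in(1,2)$ implies $\gamma^{-1}\le \gamma^{-(\alpha-1)}$ for $\gamma\ge 1$, the Lipschitz moduli of $A$ and $B$ combine into the desired $\Oc(c\gamma^{-(\alpha-1)})$ rate, with the $A$-contribution absorbed into the $c$-dependent bound. The main technical obstacle is the Jacobian-difference estimate $\|J_g-J_\gamma\|=\Oc(\gamma^{-1})$: on the boundary of $\tilde{\mathcal{Y}}$, $y_g^*$ and $y_\gamma^*$ may in principle activate different faces of the constraint, so one must leverage the $\Oc(\gamma^{-1})$-proximity (ii) together with local smoothness of $\partial\tilde{\mathcal{Y}}$ to argue that the critical cones appearing in Lemma~\ref{lemma: implicit gradient} coincide for sufficiently large $\gamma$. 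The ``there exists a trajectory'' clause in the statement is then operative precisely as the selection of a continuous branch of $(y_g^*(x),y_\gamma^*(x))$ with aligned active sets, which is automatic under strong convexity of $g(x,\cdot)$ and $\gamma^{-1}f+g$ for $\gamma\ge l_{f,1}/\mu_g$.
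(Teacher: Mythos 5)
Your proposal is correct and follows essentially the same route as the paper: the same split into the function-value gap and the $c\|y_g^*-y_\gamma^*\|^{\alpha}$ term, the same mean-value/power inequality with $\max(u,v)^{\alpha-1}=\Oc(\gamma^{-(\alpha-1)})$ for the latter, and the same Taylor-expansion-plus-directional-derivative argument for the former, with the key $\Oc(\gamma^{-1})$ bound on the difference of solution-map (directional) Jacobians supplied in the paper by Lemma~\ref{lemma: implicit directional derivative lipschitz} exactly as you anticipate.
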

\begin{proof}
    
We firstly show that $f(x,y_g^*(x))-f(x,y_\gamma^*(x))$ and $c\| y_g^*(x)-y_\gamma^*(x)\|^\alpha$ are both Lipschitz continuous.

For any $x$ and unit direction $d\in \mathbb{R}^{d_x}$, the directional derivative of the first term bounded
\begin{align*}
    & \Big \| \lim_{r\downarrow 0}\frac{f(x+rd,y_g^*(x+rd))-f(x,y_g^*(x))}{r} - \lim_{r\downarrow 0}\frac{f(x+rd,y_\gamma^*(x+rd))-f(x,y_\gamma ^*(x))}{r}\Big \| \nonumber \\
    \stackrel{(a)}{=} & \big \| \nabla_x f(x,y_g^*(x)) d  + \nabla_y f(x,y_g^*(x)) D_d(y_g^*(x)) \\
    & ~ - \left( \nabla_x f(x,y_\gamma^*(x)) d  + \nabla_y f(x,y_\gamma^*(x)) D_d(y_\gamma^*(x)) \right)\big \| \nonumber \\
    \stackrel{(b)}{\leq} & \big \| \nabla_x f(x,y_g^*(x)) d   - \nabla_x f(x,y_\gamma^*(x)) d \big\| + \big \| \nabla_y f(x,y_g^*(x)) \| \| D_d(y_g^*(x))-   D_d(y_\gamma^*(x)) \big \| \nonumber\\
    &+ \|D_d(y_g^*(x)) \|\|\nabla_y f(x,y_g^*(x))-\nabla_y f(x,y_\gamma^*(x))\|\nonumber \\
    \stackrel{(c)}{\leq} & \Oc(\gamma^{-1}).
\end{align*}
Here, (a) follows Taylor's expansion; (b) uses the triangle inequality and Cauchy-Schwartz inequality; and (c) applies smoothness of $f$, Lipschitzness of $f(x,\cdot)$, and Lemma \ref{lemma: distance of yg ygam}, \ref{lemma: implicit directional derivative lipschitz}, \ref{lemma: directional derivative bound}.

For any $x,x'\in \mathcal{X}$,
\begin{align*}
    & \big| c\| y_g^*(x)-y_\gamma^*(x)\|^\alpha - c\| y_g^*(x')-y_\gamma^*(x')\|^\alpha \big| \nonumber \\ 
    \stackrel{(a)}{\leq} & c \max_{z \in [ \| y_g^*(x)-y_\gamma^*(x)\|, \| y_g^*(x')-y_\gamma^*(x')\|]} z^{\alpha-1} \big|  \| y_g^*(x)-y_\gamma^*(x)\| - \| y_g^*(x')-y_\gamma^*(x')\| \big| \nonumber \\
    \stackrel{(b)}{\leq} & \Oc(c\gamma^{-(\alpha-1)}) \big \| \big(y_g^*(x)-y_\gamma^*(x) \big) -  \big(y_g^*(x')-y_\gamma^*(x') \big) \big\| \nonumber \\
    \stackrel{(c)}{\leq} & \Oc(c\gamma^{-(\alpha-1)}) \big( \| y_g^*(x)-y_g^*(x')\| +  \|y_\gamma^*(x)-y_\gamma^*(x') \| \big) \nonumber \\
    \stackrel{(d)}{=} & \Oc(c\gamma^{-(\alpha-1)}) \| x-x'\|
\end{align*}
where (a) follows the mean value theorem, as $|\cdot|^\alpha$ is continuous; (b) is from $\| y_g^*(x)-y_\gamma^*(x)\|=\Oc(\gamma^{-1})$ for all $x$, and the $1$-Lipschitzness of the norm function; (c) uses triangle-inequality; and (d)
is achieved by knowing that $y_g^*(x)$ and $y_\gamma^*(x)$ are, respectively, $L_g$, $L_\gamma$-Lipschitz for some constant $L_g$, $L_\gamma$ \citep{shen2023penalty}. 

In this way, we can conclude that 
\[
\delta'(x)=f(x,y_g^*(x))-f(x,y_\gamma^*(x))-c\| y_g^*(x)-y_\gamma^*(x)\|^\alpha
\]
is $\Oc(c\gamma^{-(\alpha-1)})$-Lipschitz-continuous. As $\delta(x)$ is a ReLu function on $\delta'(x)$, it is also $\Oc(c\gamma^{-(\alpha-1)})$-Lipschitz-continuous.
\end{proof}
In this way, Lemma~\ref{lemma: tighter bound for y gam yg} and Lemma~\ref{lemma: Lipschitz delta x} enable the convergence of PBGD-Free to the stationary point of the penalized objective $F_\gamma(x)$ if the flatness condition of $f$ holds around the stationary point of $F_\gamma(x)$ and its corresponding LL optimal value.

\begin{theorem}[\textbf{Convergence of PBGD-Free}]
\label{thm: no value algorithm convergence}
Suppose $\mathcal{Y}(x)=\{y\in \mathcal{Y}:c(y)\leq 0 \}$, and Assumption \ref{assumption: UL}.\ref{ass: UL y Lipschitz}--\ref{ass: UL smooth}, \ref{assumption: LL}.\ref{ass: LL SC}--\ref{ass: LL smooth}, \ref{assumption: constraint}, \ref{assumption: Hessian Lipschitz} hold, and for all $x_t$ on the trajectory, $f(x_t,\cdot)$ is $(\delta(x_t),\alpha)$-flat at all $y_g(x_t)\in S_g(x_t)$ with the same $\alpha \in (1,1.5]$ and modulus $c=\Oc(1)$. For iterations generated by the \textcolor{blue}{fully single-loop version} of PBGD-Free (Algorithm~\ref{alg: PBGD-Free}) with step size $\eta \leq l_{F_\gamma,1}^{-1}$, 
suppose for target $\epsilon$, there exists $\delta$ such that $\frac{1}{T}\sum_{t=0}^{T-1}  \delta(x_t) \leq \delta$, it holds with $\gamma = \Oc(\delta^{-\frac{2-\alpha}{2}})$ that
\vspace{-0.1cm}
\begin{equation}
\frac{1}{T}\sum_{t = 0}^{T-1} \| G_{F_\gamma,\mathcal{X}}(x_t)\|^2 \leq \Oc(T^{-1} + \delta^{\frac{2(\alpha-1)}{\alpha}}).
\end{equation}
\end{theorem}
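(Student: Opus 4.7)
The plan is to treat the fully single-loop PBGD-Free as a biased projected gradient method for $\min_{x\in\mathcal{X}}F_\gamma(x)$ and mirror the descent-lemma analysis of Proposition \ref{prop: ALT-PBGD}, but now exploiting the two improvements specific to this setting: the dimension- and $\gamma$-independent smoothness $l_{F_\gamma,1}=\Oc(1)$ from Theorem \ref{theorem: smoothness}, and the sharpened flatness-based bound on $\|y_g^*(x)-y_\gamma^*(x)\|$ from Lemma \ref{lemma: tighter bound for y gam yg}. Concretely, writing $g_t := \nabla_x f(x_t,y_t^\gamma)$ and $b_t := \nabla F_\gamma(x_t)-g_t$, I would reuse the smoothness/projection chain that yields \eqref{eq: descent intermediate step 1}--\eqref{eq: G eta and x update} to obtain
\begin{equation*}
\|G_{F_\gamma,\mathcal{X}}(x_t)\|^2 \le \frac{4}{\eta}\bigl(F_\gamma(x_t)-F_\gamma(x_{t+1})\bigr)+5\|b_t\|^2.
\end{equation*}
Telescoping then reduces everything to controlling the averaged squared bias $\tfrac1T\sum_{t=0}^{T-1}\|b_t\|^2$.

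Next I would decompose $b_t$ using the closed form for $\nabla F_\gamma(x_t)$ from Lemma \ref{lemma: gradient of v} and Remark \ref{remark: gradient of v} (no Lagrangian in the uncoupled case), giving
\begin{equation*}
\|b_t\|\le l_{F_\gamma,1}\|y_\gamma^*(x_t)-y_t^\gamma\|+\gamma l_{g,1}\|y_\gamma^*(x_t)-y_g^*(x_t)\|.
\end{equation*}
The second term is the one that produced the $\Theta(l_{f,0}^2)$ obstruction in Proposition \ref{prop: PBGD}; here, Lemma \ref{lemma: tighter bound for y gam yg} upgrades it to $\gamma^2\|y_\gamma^*(x_t)-y_g^*(x_t)\|^2=\Oc\bigl(\gamma^{2-2/(2-\alpha)}+\delta(x_t)\gamma\bigr)$. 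Averaging with the hypothesis $\tfrac1T\sum_t\delta(x_t)\le\delta$ and plugging in the prescribed $\gamma$ (which balances the two summands above) yields $\tfrac1T\sum_t \gamma^2\|y_\gamma^*(x_t)-y_g^*(x_t)\|^2=\Oc\bigl(\delta^{2(\alpha-1)/\alpha}\bigr)$, matching the target rate. Crucially, because $l_{F_\gamma,1}=\Oc(1)$ (Theorem \ref{theorem: smoothness}), this is compatible with a constant step size $\eta=\Oc(1)$, and the prefactor on the inner error does not blow up with $\gamma$.

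The main obstacle is the first term, the inner tracking error $\|y_\gamma^*(x_t)-y_t^\gamma\|$, because the fully single-loop variant only applies one projected gradient step to the $\mu$-strongly convex surrogate $\gamma^{-1}f(x_t,\cdot)+g(x_t,\cdot)$ (strong convexity being inherited via $\gamma\ge l_{f,1}/\mu_g$). To handle it I would set up a two-timescale Lyapunov argument: one-step contraction at frozen $x_t$ gives $\|y_{t+1}^\gamma-y_\gamma^*(x_t)\|^2\le(1-\eta^\gamma\mu)\|y_t^\gamma-y_\gamma^*(x_t)\|^2$, and the drift of the solution map is absorbed via Young's inequality together with the Lipschitz continuity of $y_\gamma^*(\cdot)$ from Remark \ref{remark: Lipschitz of S(x)}, producing
\begin{equation*}
\|y_{t+1}^\gamma-y_\gamma^*(x_{t+1})\|^2\le\Bigl(1-\tfrac{\eta^\gamma\mu}{2}\Bigr)\|y_t^\gamma-y_\gamma^*(x_t)\|^2+C\|x_{t+1}-x_t\|^2.
\end{equation*}
Summing, bounding $\|x_{t+1}-x_t\|^2\le \eta^2(\|g_t\|^2+\|b_t\|^2)$, and choosing $\eta$ small enough relative to $\eta^\gamma\mu$ lets the UL progress absorb the tracking error into the $F_\gamma$ potential, so that the inner error contributes only an $\Oc(T^{-1})$ term. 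Combining this with the flatness-based bound on the penalty piece and plugging into the telescoped descent inequality gives the claimed $\Oc(T^{-1}+\delta^{2(\alpha-1)/\alpha})$ rate. The delicate aspect is the simultaneous tuning of $\eta$, $\eta^\gamma$, and $\gamma$: it is exactly the $\gamma$-free smoothness of $F_\gamma$ (rather than the $\Oc(\gamma)$-smoothness of $\tilde F_\gamma$) that makes this absorption close without shrinking $\eta$ with $\gamma$.
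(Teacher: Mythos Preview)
Your proposal is correct and follows essentially the same strategy as the paper: a biased projected-gradient analysis on $F_\gamma$, the flatness bound of Lemma \ref{lemma: tighter bound for y gam yg} to control the penalty piece $\gamma\|y_\gamma^*(x_t)-y_g^*(x_t)\|$ of the bias, and a coupled Lyapunov argument to absorb the single-step inner tracking error into the outer potential using the $\Oc(1)$-smoothness of $F_\gamma$. The only cosmetic difference is that the paper uses the function-value gap $\tilde g_\gamma(x_t,y_t^\gamma)-v_\gamma(x_t)$ as the inner potential (with drift handled via smoothness of $\tilde g_\gamma$ and $v_\gamma$, cf.\ \eqref{eq: h -vh update}), whereas you track $\|y_t^\gamma-y_\gamma^*(x_t)\|^2$ directly with drift via the Lipschitz continuity of $y_\gamma^*(\cdot)$; under the assumed lower-level strong convexity these two potentials are equivalent.
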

\begin{remark}
    The strong convexity assumption in Theorem~\ref{thm: no value algorithm convergence} is required to establish the $\Oc(1)$-smoothness of $F_\gamma(x)$ in the constrained setting. In the unconstrained case, this assumption can be relaxed: the same result holds under the weaker condition that there exists some $\gamma^*>0$ such that, for any $x\in\mathcal{X}$, the function $\gamma^{-1}f(x,\cdot)+g(x,\cdot)$ satisfies the $\mu_g$-PL condition for all $\gamma \geq \gamma^*$ \citep{chen2024finding}, as discussed in Sec.~\ref{sec: Existing work smoothness of F gam}.
\end{remark}
\begin{proof}
    
Denote $g_t = \nabla_x f(x_t,y_{t+1}^\gamma)$. According to $l_{F_\gamma,1}$-smoothness of $F_\gamma(x)$, we have 
\begin{align}
    F_\gamma(x_{t+1})- F_\gamma(x_t)\leq 
    -\frac{1}{4\eta}\| x_{t+1}-x_t\|^2 + \eta  \| \nabla F_\gamma(x_{t})-g_t\|^2 \label{eq: VaFF fully single-loop intermediate step}
\end{align}
following a similar analysis as in \eqref{eq: descent intermediate step 1}.

Denote $\tilde{g}_\gamma(x,y)=\gamma^{-1}f(x,y)+g(x,y)$, $y_t^{\gamma,*} = \arg \min_{y\in \tilde{\mathcal{Y}}}\tilde{g}_\gamma(x_t,y)$ and $y_t^{g,*} = \arg \min_{y\in \tilde{\mathcal{Y}}}g(x_t,y)$, and the update bias $b(x_t) = \nabla F_\gamma(x_t) - g_t$. In this way,
\begin{align}
    \|b(x_t)\|^2 = & \| \nabla_x f(x_t,y_{t}^{\gamma,*}) + \gamma(\nabla_x g(x_t,y_{t}^{\gamma,*})-\nabla_x g(x_t,y_{t}^{g,*}))- \nabla_x f(x_t,y_{t+1}^{\gamma}) \|^2 \nonumber\\
    \stackrel{(a)}{\leq} & 2\| \nabla_x f(x_t,y_{t}^{\gamma,*}) - \nabla_x f(x_t,y_{t+1}^{\gamma}) \| +2\gamma^2\|\nabla_x g(x_t,y_{t}^{\gamma,*})-\nabla_x g(x_t,y_{t}^{g,*})\|^2 \nonumber\\
    \stackrel{(b)}{\leq}  & 2l_{F_\gamma,1}^2\|y_{t+1}-y_t^{\gamma} \|^2 + \Oc\big(\gamma^{-\frac{2(\alpha-1)}{2-\alpha}}+\delta \gamma \big)\nonumber\\
    \stackrel{(c)}{\leq}  & \frac{4}{\mu_\gamma^*}l_{F_\gamma,1}^2(\tilde{g}_\gamma(x_t,y_{t+1}^\gamma)-v_\gamma(x_t)) + \Oc\big(\gamma^{-\frac{2(\alpha-1)}{2-\alpha}}+\delta \gamma \big)\nonumber\\
    \stackrel{(d)}{\leq}  & \frac{4}{\mu_\gamma^*}l_{F_\gamma,1}^2 (1-\eta^\gamma \mu_g)(\tilde{g}_\gamma(x_t,y_{t}^\gamma)-v_\gamma(x_t))+ \Oc\big(\gamma^{-\frac{2(\alpha-1)}{2-\alpha}}+\delta \gamma \big) \label{eq: VaFF bias bound intermediate step}
\end{align}
where (a) applies the Young's inequality; (b) follows the smoothness of $f$ and Lemma~\ref{lemma: tighter bound for y gam yg}; (c) employs the property of strong convexity in Assumption \ref{assumption: LL}; and (d) is by the descent theory for applying projected gradient descent on problems satisfying PL condition, see e.g. \citep[Theorem 5]{karimi2016linear}.

Plugging \eqref{eq: VaFF bias bound intermediate step} in \eqref{eq: VaFF fully single-loop intermediate step}, there is
\begin{align*}
    & F_\gamma(x_{t+1})- F_\gamma(x_t)\nonumber\\
    \leq & -\frac{\eta}{4}  \Big \|\frac{x_{t+1}-x_t}{\eta} \Big \|^2 + \eta\frac{4}{\mu_\gamma^*}l_{F_\gamma,1}^2 (1-\eta^\gamma \mu_g)(\tilde{g}_\gamma(x_t,y_{t}^\gamma)-v_\gamma(x_t))+ \eta \Oc\big(\gamma^{-\frac{2(\alpha-1)}{2-\alpha}}+\delta \gamma \big). 
\end{align*}

Moreover, as $\tilde{g}_\gamma(x,y)$ is $l_{\tilde{g},1}$-smooth and $v_\gamma(x)$ is $l_{v_\gamma,1}$-smooth, we obtain
\begin{align}
    &\tilde{g}_\gamma(x_{t+1},y_{t+1}^\gamma)-v_\gamma(x_{t+1}) \nonumber \\
    \stackrel{(a)}{\leq} & \tilde{g}_\gamma(x_t,y_{t+1}^\gamma)-v_\gamma(x_t) + \langle \nabla_x \tilde{g}_\gamma(x_t,y_{t+1}^\gamma)-\nabla v_\gamma(x_t),x_{t+1}-x_t\rangle  \nonumber\\
    & +\frac{\eta^2(l_{\tilde{g},1}+l_{v_\gamma,1})}{2}\|\frac{ x_{t+1}-x_t}{\eta}\|^2 \nonumber \\
    \stackrel{(b)}{\leq} & \tilde{g}_\gamma(x_t,y_{t+1}^\gamma)-v_\gamma(x_t) + \eta l_{\tilde{g},1}\|y_{t+1}^\gamma-y_t^{\gamma,*}\| \|\frac{ x_{t+1}-x_t}{\eta}\| +\frac{\eta^2(l_{\tilde{g},1}+l_{v_\gamma,1})}{2}\|\frac{ x_{t+1}-x_t}{\eta}\|^2 \nonumber \\
    \stackrel{(c)}{\leq} & \tilde{g}_\gamma(x_t,y_{t+1}^\gamma)-v_\gamma(x_t) + \eta l_{\tilde{g},1}\frac{z}{2}\|y_{t+1}^\gamma-y_t^{\gamma,*}\|^2+ \frac{\eta l_{\tilde{g},1}}{2z}\|\frac{ x_{t+1}-x_t}{\eta}\|^2  \nonumber\\
    & +\frac{\eta^2(l_{\tilde{g},1}+l_{v_\gamma,1})}{2}\|\frac{ x_{t+1}-x_t}{\eta}\|^2 \nonumber \\
    \stackrel{(d)}{\leq} & (1+\frac{\eta l_{\tilde{g},1}z}{2})(\tilde{g}_\gamma(x_t,y_{t+1}^\gamma)-v_\gamma(x_t)) + (\frac{\eta l_{\tilde{g},1}}{2z}+ \frac{\eta^2(l_{\tilde{g},1}+l_{v_\gamma,1})}{2})\|\frac{ x_{t+1}-x_t}{\eta}\|^2 \nonumber \\
    \stackrel{(e)}{\leq} & (1+\frac{\eta l_{\tilde{g},1}z}{2})(1-\eta^\gamma \mu_g)(\tilde{g}_\gamma(x_t,y_t^\gamma)-v_\gamma(x_t))  \nonumber\\
    & + (\frac{\eta l_{\tilde{g},1}}{2z}+ \frac{\eta^2(l_{\tilde{g},1}+l_{v_\gamma,1})}{2})\|\frac{ x_{t+1}-x_t}{\eta}\|^2, \quad \forall z>0. \label{eq: h -vh update} 
\end{align}
Here, (a) follows the smoothness of $\tilde{g}_\gamma(x,y)+v_\gamma(x)$ in $x$; (b) applies Cauchy-Schwartz inequality and the smoothness of $h$ in $y$; (c) uses Young's inequality for any $z>0$; (d) is from the PL condition of $\tilde{g}_\gamma(x,y)$ in $y$, as guaranteed by the strong convexity of $\tilde{g}_\gamma(x,\cdot)$ when $\gamma \geq\frac{l_{F_\gamma,1}}{\mu_g}$ on closed convex set $\mathcal{Y}(x)$; (e) is similarly by the descent theory for applying projected gradient descent on $\tilde{g}_\gamma(x,\cdot)$ satisfying PL condition \citep[Theorem 5]{karimi2016linear}.

In this way, adding $c (\tilde{g}_\gamma(x_{t+1},y_{t+1}^\gamma)-v_\gamma(x_{t+1}))$
to both side of \eqref{eq: VaFF bias bound intermediate step}, there is
\begin{align*}
    & F_\gamma(x_{t+1}) + c(\tilde{g}_\gamma(x_{t+1},y_{t+1}^\gamma)-v_\gamma(x_{t+1})) \\
    \leq & F_\gamma(x_t) + \left( -\frac{\eta}{4} +c \Big(\frac{\eta l_{\tilde{g},1}}{2z}+ \frac{\eta^2(l_{\tilde{g},1}+l_{v_\gamma,1})}{2} \Big) \right)\|\frac{x_{t+1}-x_t}{\eta}\|^2 \\
    & + c\Big( \big(1+\eta \big(\frac{ l_{\tilde{g},1}z}{2} + l_{F_\gamma,1}^2 \frac{4}{\mu_\gamma^* c} \big) \big) (1-\eta^\gamma \mu_g)(\tilde{g}_\gamma(x_t,y_{t}^\gamma)-v_\gamma(x_t))  
    \Big) \\
    & +\eta \Oc\big(\gamma^{-\frac{2(\alpha-1)}{2-\alpha}}+\delta \gamma \big).
\end{align*}

In this way, choose the following hyper-parameter,
\begin{align}
    \begin{cases}
        c = \mu_g^{-\frac{1}{2}}\\
        z  = 8 c l_{\tilde{g},1} \\
        \eta^\gamma \leq l_{\tilde{g},1}^{-1}\\
        \eta \leq \min \left\{\frac{1}{8  c(l_{\tilde{g},1}+l_{v_\gamma,1})},\frac{\eta^\gamma \mu_g/(1-\eta^\gamma \mu_g)}{\frac{l_{\tilde{g},1}z}{2}+ \frac{4\ l_{F_\gamma,1}^2}{\mu_g c}} \right\}
    \end{cases} \label{eq: hyper-parameter choice for FSL VaFF}
\end{align}
i.e. $c  = \Oc(1)$, $\eta= \Oc(1)$, there is
\begin{align*}
    & F_\gamma(x_{t+1})- F_\gamma(x_t) + c (\tilde{g}_\gamma(x_{t+1},y_{t+1}^\gamma)-v_\gamma(x_{t+1}))\\
    \leq &  -\frac{\eta}{8} \|\frac{x_{t+1}-x_t}{\eta}\|^2 +c (\tilde{g}_\gamma(x_t,y_{t}^\gamma)-v_\gamma(x_t))  
    +\eta \Oc\big(\gamma^{-\frac{2(\alpha-1)}{2-\alpha}}+\delta \gamma \big).
\end{align*}
Denote $D_1 = F_\gamma(x_0)- F_\gamma(x_T)$, $D_2 = (\tilde{g}_\gamma(x_0,y_0^\gamma)-v_\gamma(x_0)) - (\tilde{g}_\gamma(x_T,y_T^\gamma)-v_\gamma(x_T))$. Rearranging and telescoping gives 
\begin{align}
    \frac{1}{T}\sum_{t=0}^{T-1} \Big\|\frac{x_{t+1}-x_t}{\eta} \Big\|^2 \leq &  \frac{8 \left(  D_1 + c D_2 \right)}{\eta T} + \Oc\big(\gamma^{-\frac{2(\alpha-1)}{2-\alpha}}+\delta \gamma \big) \nonumber \\
    = & \Oc( T^{-1} + \delta^{\frac{2(\alpha-1)}{\alpha}}) \label{eq: x update bound}
\end{align}
where the last equality is achieved as $c  = \Oc(1)$ and $\eta= \Oc(1)$ and by setting $\gamma = \Theta(\delta^{-\frac{2-\alpha}{\alpha}})$. Moreover, the hyper-parameter choices in \eqref{eq: hyper-parameter choice for FSL VaFF} reformulate \eqref{eq: h -vh update}, which can be plugged in \eqref{eq: VaFF bias bound intermediate step} to obtain
\begin{align}
    \frac{1}{T}\sum_{t=0}^{T-1} \| b_t\|^2  \leq & \frac{4}{\mu_\gamma^*}l_{F_\gamma,1}^2 (1-\eta^\gamma \mu_g)\frac{1}{T}\sum_{t=0}^{T-1} (\tilde{g}_\gamma(x_t,y_{t}^\gamma)-v_\gamma(x_t))+ \Oc\big( \delta^{\frac{2(\alpha-1)}{\alpha}} \big) \nonumber \\
    \leq &  \Oc\big(T^{-1} + \delta^{\frac{2(\alpha-1)}{\alpha}} \big). \label{eq: bias update bound}
\end{align}
where the last inequality follows by applying Abel's summation formula on series $\sum_{k=1}^K a_k b_k$ where $a_k=\Oc((1-\eta^\gamma \mu_g/2)^k)$ and $K^{-1}\sum_{k=0}^K b_k = \Oc( T^{-1} + \delta^{\frac{2(\alpha-1)}{\alpha}})$.

In this way, following a similar analysis as in \eqref{eq: G eta and x update}, there is
\begin{align*}
   \frac{1}{T}\sum_{t=0}^{T-1}  \| G_{F_\gamma,\mathcal{X}}(x_t)\|^2 \leq & \frac{1}{T}\sum_{t=0}^{T-1}  \Big\|\frac{x_t-x_{t+1}}{\eta} \Big\|^2 + \frac{1}{T}\sum_{t=0}^{T-1} \| b_t\|^2 \nonumber \\
   \leq &  \Oc\big(T^{-1} + \delta^{\frac{2(\alpha-1)}{\alpha}} \big)
\end{align*}
where the last inequality is obtained by plugging in \eqref{eq: x update bound} and \eqref{eq: bias update bound}.
\end{proof}

Theorem~\ref{thm: no value algorithm convergence} establishes the convergence rate of the fully single-loop version of PBGD-Free in Algorithm \ref{alg: PBGD-Free}. The result shows that the algorithm converges to the neighborhood of a stationary point for $F_\gamma (x)$, where the stationary gap is controlled by the flatness parameter $(\delta,\alpha)$. 

Specifically, for a $(\delta, \alpha)$-flat function with $\alpha \in (1,1.5)$, the convergence error scales as $\mathcal{O}(\delta^{\frac{2(\alpha-1)}{\alpha}})$, ensuring that the suboptimality gap remains small. For instance, when $(\delta, \alpha) = (0.0003, 1.5)$ in the representation learning PEFT problem, the resulting solution is $\mathcal{O}(0.004)$-optimal. Moreover, the method follows a single-loop update scheme, which is computationally more efficient than other fully first-order methods \citep{kwon2023fully,kwon2023penalty,ye2022bome,shen2023penalty,chen2023bilevel}, and their fully single-loop variant, e.g. ALT-PBGD with one step (projected) gradient descent update as \textsf{Min Solver}.

\begin{theorem}[Fully single-loop version of ALT-PBGD]
\label{thm: ALT-PBGD}
    Suppose all assumptions in Proposition \ref{prop: ALT-PBGD} hold. For iterations using the fully single-loop version of Algorithm \ref{alg: ALT-PBGD} with \textsf{Min Solver} taking one-step projected gradient descent with initial point $y_{t-1}^g$ for \eqref{eq: yg update} and $y_{t-1}^\gamma$ for \eqref{eq: y gam update}, we have
\begin{align}
    \frac{1}{T}\sum_{t = 0}^{T-1} \Big\|G_{F_\gamma,\mathcal{X}}(x_t) \Big\|^2 =\Oc(\gamma^2 T^{-1}).
\end{align}
With $\gamma = \Oc(\epsilon^{-2})$, the complexity for the fully single-loop version of Algorithm \ref{alg: ALT-PBGD} is $\Oc(\epsilon^{-2})$.
\end{theorem}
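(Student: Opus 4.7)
The strategy is to mirror the Lyapunov-function construction in the proof of Theorem~\ref{thm: no value algorithm convergence}, but without invoking the flatness condition. Because the fully single-loop variant of Algorithm~\ref{alg: ALT-PBGD} computes both $y_t^g$ and $y_t^\gamma$ via one-step projected gradient descent warm-started from the previous iterates, the update bias $b_t := \nabla F_\gamma(x_t) - g_t$ can be controlled entirely through the two lower-level suboptimality gaps $(g(x_t,y_t^g) - v(x_t))$ and $(\tilde{g}_\gamma(x_t,y_t^\gamma) - v_\gamma(x_t))$, at the cost of a $\gamma^2$ prefactor.

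First, using the closed-form expression of $\nabla F_\gamma$ in Lemma~\ref{lemma: gradient of v}, the smoothness of $f$ and $g$, and the $\mu_g$-quadratic growth of both $g(x_t,\cdot)$ and $\tilde{g}_\gamma(x_t,\cdot)$ (valid for $\gamma \geq l_{f,1}/\mu_g$), I decompose
\begin{equation*}
\|b_t\|^2 \leq \Oc(\gamma^2)\bigl(g(x_t,y_t^g) - v(x_t)\bigr) + \Oc(\gamma^2)\bigl(\tilde{g}_\gamma(x_t,y_t^\gamma) - v_\gamma(x_t)\bigr).
\end{equation*}
Combining this with the $\Oc(1)$-smoothness of $F_\gamma(x)$ from Theorem~\ref{theorem: smoothness} and the projection inequality (Lemma~\ref{lemma: projection gradient update inquality}) yields a descent inequality $F_\gamma(x_{t+1}) \leq F_\gamma(x_t) - \tfrac{1}{4\eta}\|x_{t+1}-x_t\|^2 + \eta \|b_t\|^2$ analogous to \eqref{eq: descent intermediate step 1}.

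Second, I derive descent inequalities for the two tracking quantities following the template of \eqref{eq: h -vh update}. Using the smoothness of $g$, $v$, $\tilde{g}_\gamma$, $v_\gamma$ in $x$ together with the linear contraction of one projected gradient step under the PL condition in $y$, I obtain, for some constant $C=\Oc(1)$,
\begin{equation*}
g(x_{t+1},y_{t+1}^g) - v(x_{t+1}) \leq (1+\Oc(\eta))(1-\eta^g\mu_g)\bigl(g(x_t,y_t^g) - v(x_t)\bigr) + C\|x_{t+1}-x_t\|^2,
\end{equation*}
and an analogous bound for $(\tilde{g}_\gamma - v_\gamma)_t$. I then assemble the Lyapunov function $V_t := F_\gamma(x_t) + c_1\bigl(g(x_t,y_t^g) - v(x_t)\bigr) + c_2\bigl(\tilde{g}_\gamma(x_t,y_t^\gamma) - v_\gamma(x_t)\bigr)$ and sum the three descent inequalities to obtain
\begin{equation*}
V_{t+1} - V_t \leq -\Bigl(\tfrac{1}{4\eta} - (c_1+c_2)C\Bigr)\|x_{t+1}-x_t\|^2 - (c_1\eta^g\mu_g - \eta K_1)(g-v)_t - (c_2\eta^\gamma\mu_g - \eta K_2)(\tilde{g}_\gamma-v_\gamma)_t,
\end{equation*}
with $K_1,K_2 = \Theta(\gamma^2)$ coming from the bias bound.

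The main obstacle is parameter balancing. Absorbing the bias terms requires $c_i \gtrsim \eta\gamma^2/(\eta^g\mu_g)$, while absorbing the drift $(c_1+c_2)C\|x_{t+1}-x_t\|^2$ introduced by the LL updates requires $\eta(c_1+c_2) \lesssim 1$. These competing constraints force the choice $\eta = \Theta(\gamma^{-1})$ and $c_1,c_2 = \Theta(\gamma)$, sharply smaller than the $\eta=\Oc(1)$ of Theorem~\ref{thm: no value algorithm convergence}. With these choices, $V_0 - V_T = \Oc(1 + c_1 + c_2) = \Oc(\gamma)$, and telescoping yields $\sum_{t=0}^{T-1}\|(x_{t+1}-x_t)/\eta\|^2 \leq \Oc(\gamma^2)$ together with $\sum_{t=0}^{T-1}\bigl[(g-v)_t + (\tilde{g}_\gamma-v_\gamma)_t\bigr] \leq \Oc(1)$, hence $\sum_{t=0}^{T-1}\|b_t\|^2 \leq \Oc(\gamma^2)$. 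Finally, the inequality $\|G_{F_\gamma,\mathcal{X}}(x_t)\|^2 \leq \|(x_{t+1}-x_t)/\eta\|^2 + \|b_t\|^2$ from \eqref{eq: G eta and x update} concludes $\frac{1}{T}\sum_{t=0}^{T-1}\|G_{F_\gamma,\mathcal{X}}(x_t)\|^2 = \Oc(\gamma^2/T)$, and the stated complexity follows from the standard penalty scaling of Lemma~\ref{lemma: distance of yg ygam}.
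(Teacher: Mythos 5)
Your proposal is correct and follows essentially the same route as the paper's proof in Appendix~B: the same quadratic-growth bias decomposition with a $\gamma^2$ prefactor, the same Lyapunov function combining $F_\gamma$ with the two lower-level suboptimality gaps, the same parameter balancing forcing $\eta=\Theta(\gamma^{-1})$ and $c_1,c_2=\Theta(\gamma)$, and the same telescoping argument yielding $\Oc(\gamma^2 T^{-1})$. No substantive differences to report.
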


The proof structure for Theorem~\ref{thm: ALT-PBGD} shares great similarity to the one for Theorem~\ref{sec: Developing Value-Function-Free Algorithms: PBGD-Free}, thus deferred to Appendix~\ref{appendix: proof of ALT-PBGD FSL}.
Theorem~\ref{thm: ALT-PBGD} highlights that the convergence of the fully single-loop ALT-PBGD is hindered by a larger $\gamma$, which regulates the lower-level violation rate. This is due to the early inexact estimate bias of the penalty gradient term $\gamma(\nabla_x f(x_t,y_t^\gamma)-\nabla_x g(x_t,y_t^g))$ is scaled by $\gamma$. In contrast, PBGD-Free does not endure such scaled bias as it does not involve the penalty gradient term. An empirical verification for this is provided in Sec.~\ref{sec: Representation learning problem on NLSY dataset}.
A comparison of the proposed algorithm with state-of-the-art fully first-order BLO methods is provided in Table \ref{tab:table1-comparison}.

\section{Extension to General Coupled Constraints Setting}
\label{sec: BLOCC}

In this section, we study the general BLO problem with coupled inequality constraints, where the lower-level feasible set $\mathcal{Y}(x) = \{y \in \mathcal{Y} : c(x, y) \leq 0\}$ is defined as in \eqref{eq: original problem 1}. In Sec.~\ref{sec: improved convergence under CCs}, we extend the smoothness analysis to this coupled constraint setting and establish an improved complexity bound for the PBGD method tailored to such problems. In Sec.~\ref{sec: BLOCC-VaFF}, we further show that Algorithm~\ref{alg: PBGD-Free} remains efficient in solving BLO problems with coupled constraints.

\subsection{Improved convergence rate under coupled inequality constraint}
\label{sec: improved convergence under CCs}

As illustrated in Lemma~\ref{lemma: gradient of v}, in the coupled-constraint setting, $\nabla v(x)$ can be characterized via the lower-level solution $y_g^*(x)$ and its corresponding Lagrange multiplier $\lambda_g^*(x)$. In convex optimization, strong duality is guaranteed when Slater’s condition holds \citep{ito2008lagrange,ruszczynski2006nonlinear}. In this setting, Mangasarian–Fromovitz Constraint Qualification (MFCQ) is equivalent to Slater’s condition, and LICQ, being stronger, implies MFCQ \citep{wachsmuth2013licq}. Hence, under LICQ, strong duality holds and the Lagrange multiplier is uniquely defined. We summarize this fact as follows. 
\begin{lemma}
\label{lemma: equiv LL objective}
Suppose Assumption \ref{assumption: LL}.\ref{ass: LL SC} and \ref{assumption: constraint} hold. Then, for any $x \in \mathcal{X}$, strong duality holds for $\min_{y\in\mathcal{Y}(x)}g(x,y)$. In particular, there exists a unique multiplier $\lambda_g^*(x) = \arg\max_{\lambda\in \mathbb{R}^{d_x}_+}\{\min_{y\in \mathcal{Y}}g(x,y)+\langle \lambda, c(x,y)\rangle\}$
which is the unique Lagrange multiplier associated with the optimal solution $y_g^*(x)$ of the lower-level problem. Equivalently, the LL solution can be written as
\begin{align}
    y_g^*(x) = \arg\min_{y \in \mathcal{Y}} \Big\{ g^\lambda(x,y) := \underbrace{g(x,y)+\langle \lambda_g^*(x), c(x,y)}_{=:L_g(x,y,\lambda_g^*(x))}\rangle \Big\}. \label{eq: g lambda x y}
\end{align}
Here, the pair $(\lambda_g^*(x), y_g^*(x)) = \max_{\lambda \in \mathbb{R}_+^{d_c}}  \min_{y \in \mathcal{Y}}  L_g(x,y,\lambda)$ forms the unique saddle-point solution. Additionally, suppose Assumption~\ref{assumption: UL}.\ref{ass: UL y Lipschitz}--\ref{ass: UL smooth} hold. Then, $\gamma^{-1}f(x,\cdot)+g(x,\cdot)$ is strongly convex for $\gamma\geq\frac{l_{F_\gamma,1}}{\mu_g}$, and LICQ will also hold as $y_\gamma^*(x)$ lies in the neighborhood of $y_g^*(x)$ for large $\gamma$. In this way, the pair $(\lambda_\gamma^*(x), y_\gamma^*(x))$ forms the unique saddle-point solution of 
     \begin{align}
         \max_{\lambda \in \mathbb{R}_+^{d_c}}  \min_{y \in \mathcal{Y}} ~\{ L_\gamma(x,y,\lambda):= \gamma^{-f}f(x,y)+g(x,y)+\langle \lambda,c(x,y)\rangle\}.
     \end{align}
\end{lemma}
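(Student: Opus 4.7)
The plan is to invoke standard convex duality theory: for each fixed $x$, the LL problem is convex (strongly convex in $y$ with convex inequalities $c(x,y)\le 0$ and convex domain $\mathcal{Y}$), so the whole lemma reduces to four standard steps, which I would execute in order: (i) establish strong duality and existence of a KKT multiplier, (ii) upgrade to uniqueness of the multiplier via LICQ, (iii) rewrite $y_g^*(x)$ as the unique minimizer of the Lagrangian over $\mathcal{Y}$, and (iv) transport (i)--(iii) to the penalized objective $\gamma^{-1}f+g$.

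For (i)--(iii), the first observation is that LICQ at $y_g^*(x)$ implies MFCQ, which in the convex setting is the precise condition under which Slater-based strong duality applies \citep{ruszczynski2006nonlinear,wachsmuth2013licq}. Strong duality then gives that any primal-dual optimal pair is a saddle point of $L_g(x,y,\lambda)$ on $\mathcal{Y}\times\mathbb{R}_+^{d_c}$, so the outer $\max$--inner $\min$ problem in the statement is attained. Uniqueness of $\lambda_g^*(x)$ is the standard consequence of LICQ at the primal optimum: the KKT stationarity condition forces $\lambda_g^*(x)$ to solve a full-column-rank linear system in the active-constraint gradients, hence is unique. Finally, because $g(x,\cdot)$ is $\mu_g$-strongly convex and $c(x,\cdot)$ is convex, the Lagrangian $L_g(x,\cdot,\lambda_g^*(x))$ is $\mu_g$-strongly convex on $\mathcal{Y}$, so its minimizer is unique and, by saddle-point stationarity, equals $y_g^*(x)$; this delivers \eqref{eq: g lambda x y}.

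For (iv), I would simply rerun the same argument with $g$ replaced by $\gamma^{-1}f+g$, after checking two points. First, strong convexity of $\gamma^{-1}f(x,\cdot)+g(x,\cdot)$ follows because $f(x,\cdot)$ is $l_{F_\gamma,1}$-smooth, so $\gamma^{-1}f$ contributes curvature at least $-\gamma^{-1}l_{F_\gamma,1}$, leaving a net modulus $\mu_g-\gamma^{-1}l_{F_\gamma,1}>0$ whenever $\gamma\ge l_{F_\gamma,1}/\mu_g$. Second, LICQ at $y_\gamma^*(x)$ is inherited from Assumption~\ref{assumption: constraint}.\ref{ass: licq}: by Lemma~\ref{lemma: distance of yg ygam}, $\|y_\gamma^*(x)-y_g^*(x)\|=\Theta(l_{f,0}\mu_g^{-1}\gamma^{-1})$, so for $\gamma$ sufficiently large $y_\gamma^*(x)$ lies in the LICQ neighborhood of $y_g^*(x)$. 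The same duality chain then yields strong duality, unique $\lambda_\gamma^*(x)$, the Lagrangian characterization of $y_\gamma^*(x)$, and the unique saddle-point property of $(\lambda_\gamma^*(x),y_\gamma^*(x))$ for $L_\gamma$.

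The step I expect to be the main obstacle is the LICQ inheritance in (iv). Assumption~\ref{assumption: constraint}.\ref{ass: licq} is phrased as LICQ \emph{in a neighborhood} of $y_g^*(x)$, so I need to argue carefully that as $\gamma$ grows, $y_\gamma^*(x)$ not only enters that neighborhood but that the relevant active-constraint gradients remain those guaranteed to be linearly independent. Under the stated assumptions this follows from the $O(\gamma^{-1})$ bound in Lemma~\ref{lemma: distance of yg ygam} together with continuity of $\nabla_y c$ and $\nabla_y\varphi$, but the argument should explicitly handle the case where the active set at $y_\gamma^*(x)$ is a subset of that at $y_g^*(x)$ (which still leaves the gradients linearly independent and so keeps LICQ valid).
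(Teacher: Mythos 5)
Your proposal is correct and follows essentially the same route as the paper, which justifies this lemma only by the preceding remark that LICQ implies MFCQ, MFCQ is equivalent to Slater's condition in the convex setting, and hence strong duality and multiplier uniqueness follow from standard convex duality; the Lagrangian characterization and the transport to $\gamma^{-1}f+g$ are treated as immediate consequences of strong convexity and the $\mathcal{O}(\gamma^{-1})$ proximity of $y_\gamma^*(x)$ to $y_g^*(x)$. Your additional care about the LICQ inheritance at $y_\gamma^*(x)$ (including the shrinking-active-set case) is more explicit than what the paper provides, but it is the same argument.
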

Therefore, the PBGD algorithm for solving BLO with Coupled Constraints, PBGD-BLOCC \citep{jiang2024primal}, was developed. This method follows an alternative update scheme to avoid the computationally expensive joint projection onto $\mathcal{X} \times \mathcal{Y}(x)$. 
Similar to ALT-PBGD for uncoupled constrained BLO, at each iteration $t$, PBGD-BLOCC finds the $\epsilon$-suboptimal solutions and their corresponding Lagrange multipliers via
\begin{align}
    (\lambda_{t}^g, y_{t}^g) \approx & \arg\max_{\lambda\in \mathbb{R}^{d_c}_+} \min_{y\in \mathcal{Y}} L_g(x_t,y,\lambda),  \quad \text{and} 
    \label{eq: lambda y g}\\
    (\lambda_{t}^\gamma, y_{t}^\gamma) \approx &\arg\max_{\lambda\in \mathbb{R}^{d_c}_+} \min_{y\in \mathcal{Y}} L_\gamma(x_t,y,\lambda). \label{eq: lambda y gam}
\end{align}

By \eqref{eq: F gam function} and Lemma~\ref{lemma: gradient of v}, we can estimate $\nabla F_\gamma(x_t)$ by
\begin{align} 
g_t = \gamma \nabla_x L_\gamma (x_t, y_{t+1}^\gamma, \lambda_{t+1}^\gamma) - \gamma \nabla_x L_g(x_t, y_{t+1}^g, \lambda_{t+1}^g). \label{eq: gt CC}
\end{align}
The update for $x$ is then given by $x_{t+1} = \Proj_{\mathcal{X}}(x_t - \eta g_t)$, where the step size satisfies $\eta \leq l_{F_\gamma,1}^{-1}$. The full algorithm is summarized in Algorithm \ref{alg: PBGD-BLOCC}.

\begin{algorithm}[t]
    \caption{PBGD-BLOCC \citep{jiang2024primal}}
    \label{alg: PBGD-BLOCC}
    \begin{algorithmic}[1]
        \State\textbf{Inputs:} initial point $x_0$; stepsize $\eta$; counters $T$; inner \textsf{MaxMin Solver}.
        \For{$t = 0, 1, \ldots, T-1$}
            \State Update $(\lambda_{t+1}^g, y_{t+1}^g)$ as in \eqref{eq: lambda y g} via \textsf{MaxMin Solver}.
            \State Update $(\lambda_{t+1}^\gamma, y_{t+1}^\gamma)$ as in \eqref{eq: lambda y gam} via \textsf{MaxMin Solver}.
            \State Update $x_{t+1} = \Proj_{\mathcal{X}}\!\big(x_t - \eta g_t\big)$ where $g_t$ is defined in \eqref{eq: gt CC}.
        \EndFor
        \State\textbf{Outputs:} $(x_T, y_{T}^\gamma)$
    \end{algorithmic}
\end{algorithm}

In earlier work \citep{jiang2024primal}, the smoothness modulus of $F_\gamma(x)$ was estimated as $l_{F_\gamma,1} = \mathcal{O}(\gamma)$. However, this estimate is not tight, similar to the case of non-coupled constraints. To improve upon this, we extend Lemma~\ref{lemma: implicit gradient} to the coupled-constrained setting, $\mathcal{Y}(x) = \{ y \in \mathcal{Y} : c(x,y) \leq 0 \}$, which is formalized in Lemma~\ref{lemma: Hessian of v CC} under the following mild assumptions.

\begin{assumption}
\label{assumption: LL boundary assumption}
Suppose for any $x\in \mathcal{X}$, $\|\lambda_g^*(x) \|<B_\lambda$, and strict complementarity holds at the solution $y_g^*(x)$. i.e., for $\lambda_g^*(x)$ being the Lagrangian multiplier,
$[\lambda_g^*(x)]_i>0$ for all $i\in \mathcal{I}^0$, where $\mathcal{I}^0=\{i\in \mathbb{R}^{d_c}:c_i(x,y^*(x)) = 0\}$ is the active constraints index set.
Suppose the domain $\mathcal{Y}$ is smooth on the boundary, and the constraint function $c$ is twice differentiable with $\nabla^2 c$ being $l_{c,1}$-Lipschitz in $y$. 
\end{assumption}

The assumption of the existence of the upper bound for the Lagrange multiplier is a consequence of the LICQ condition \cite[Theorem 1]{wachsmuth2013licq} and is conventional \citep{jiang2024primal}.
The strict complementarity condition is traditionally assumed in BLO literature \citep{kwon2023penalty}.
The $\mathcal{Y}$ boundary smoothness and $\nabla^2 c$ Lipschitzness are often satisfied in practice, for example, in transportation and network flow problems with linear constraints \citep{santos2021bilevel}, energy systems with smooth constraints \citep{ran2024bi}, and portfolio optimization problems with smooth budget or risk limits \citep{fernandez2015bilevel,wei2022bi}.  
Under Assumption~\ref{assumption: LL boundary assumption}, Lemma~\ref{lemma: Hessian of v CC} presents a generalization to Lemma~\ref{lemma: implicit gradient}.

\begin{lemma}[Generalization to Lemma~\ref{lemma: implicit gradient}]
\label{lemma: Hessian of v CC}
Consider $\mathcal{Y}(x)=\{y\in \mathcal{Y}:c(x,y)\leq 0 \}$. Suppose Assumption \ref{assumption: UL}, \ref{assumption: LL}, \ref{assumption: constraint}, \ref{assumption: Hessian Lipschitz}, and \ref{assumption: LL boundary assumption} hold. 
For a given direction $d \in \mathbb{R}^{d_x}$, the directional derivative of $y_g^*(x)$ along $d$ is given by the projection
\begin{align}
    & D_d(y^*_g(x))\label{eq: implicit directional derivative CC}\\
    = & \Proj_{\tilde{\mathcal{C}}_\mathcal{Y}(y^*_g(x))}^{\nabla_{yy} L_g(x,y^*_g(x),\lambda_g^*(x))} \Big( - (\nabla_{yy} L_g(x,y^*_g(x),\lambda_g^*(x)))^{-1} \nabla_{yx}L_g(x,y^*_g(x),\lambda_g^*(x)) d \Big).  \nonumber
\end{align}
where the directional critical cone is defined as
\begin{align}
    \tilde{\mathcal{C}}_{\mathcal{Y}}(y_g^*(x))=\{ w\in \mathcal{C}_{\mathcal{Y}}(y_g^*(x)): \langle \nabla_y c_i(x,y_g^*(x)),w\rangle  +\langle  \nabla_x c_i(x,y_g^*(x)),d\rangle=0, \forall i \in \mathcal{I}^0\}, \label{eq: directional critical cone}
\end{align}
Here, $\mathcal{C}_{\mathcal{Y}}(y_g^*(x))$ is the standard critical cone of $L_g(x,\cdot,\lambda_g^*(x))$, and the projection is taken with respect to the $\nabla_{yy} L(x,y^*_g(x),\lambda_g^*(x))$-norm given by $\|v\|_{\nabla_{yy} L(x,y^*_g(x),\lambda_g^*(x))}^2 := v^\top \nabla_{yy} L(x,y^*_g(x),\lambda_g^*(x)) v$.
\end{lemma}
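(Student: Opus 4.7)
The plan is to reduce the coupled-constrained case to the uncoupled-style argument by working with the Lagrangian reformulation $y_g^*(x) = \arg\min_{y\in\mathcal{Y}} L_g(x, y, \lambda_g^*(x))$ from Lemma~\ref{lemma: equiv LL objective}, and then mimicking the three pillars of the proof of Lemma~\ref{lemma: implicit gradient}: (i) a coupled analog of Lemma~\ref{lemma: gradient equals zero generalization} asserting $\langle \nabla_y L_g(x, y_g^*(x), \lambda_g^*(x)), D_d(y_g^*(x))\rangle = 0$; (ii) a first-order variational inequality obtained by perturbing $x \mapsto x+rd$ and Taylor-expanding; and (iii) reading that variational inequality as a projection onto the directional critical cone in the $\nabla_{yy} L_g$-norm. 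Strict complementarity and LICQ (Assumption~\ref{assumption: LL boundary assumption} and Assumption~\ref{assumption: constraint}.\ref{ass: licq}) are what make this sensitivity calculus legitimate, since they freeze the active set $\mathcal{I}^0$ along $d$ for all small $r$.

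For Step (i), I would differentiate the identity $v(x) = g(x, y_g^*(x))$ along $d$ to get $\langle \nabla v(x), d\rangle = \langle \nabla_x g, d\rangle + \langle \nabla_y g, D_d(y_g^*(x))\rangle$, and match it with Lemma~\ref{lemma: gradient of v}'s expression $\nabla v(x) = \nabla_x L_g$. Strict complementarity then lets me differentiate the active-constraint equalities $c_i(x+rd, y_g^*(x+rd)) = 0$ for $i \in \mathcal{I}^0$, yielding $\langle \nabla_x c_i, d\rangle = -\langle \nabla_y c_i, D_d(y_g^*(x))\rangle$; for inactive $i$, complementary slackness forces $[\lambda_g^*(x)]_i = 0$. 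Combining these kills the $\nabla_x$-terms and leaves the desired identity $\langle \nabla_y L_g, D_d(y_g^*(x))\rangle = 0$, while the active-constraint derivative conditions simultaneously place $D_d(y_g^*(x))$ into the directional critical cone $\tilde{\mathcal{C}}_\mathcal{Y}(y_g^*(x))$ defined in \eqref{eq: directional critical cone}.

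For Steps (ii)--(iii), I would feed perturbations $y = y_g^*(x+rd) + r w'$ with $w' \to w \in \tilde{\mathcal{C}}_\mathcal{Y}(y_g^*(x))$ into the first-order variational inequality $\langle \nabla_y L_g(x+rd, y_g^*(x+rd), \lambda_g^*(x+rd)), y - y_g^*(x+rd)\rangle \geq 0$ on $\mathcal{Y}$, Taylor-expand in $r$, and divide by $r$. The $D_d(\lambda_g^*(x))$-terms drop out because $\langle \nabla_y c_i, w - D_d(y_g^*(x))\rangle = 0$ for $i \in \mathcal{I}^0$ (since both $w$ and $D_d(y_g^*(x))$ lie in $\tilde{\mathcal{C}}_\mathcal{Y}(y_g^*(x))$), while $D_d([\lambda_g^*(x)]_i) = 0$ for $i \notin \mathcal{I}^0$ by persistence of the inactive set. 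This leaves $\langle \nabla_{yy} L_g\, D_d(y_g^*(x)) + \nabla_{yx} L_g\, d,\; w - D_d(y_g^*(x))\rangle \geq 0$ for every $w \in \tilde{\mathcal{C}}_\mathcal{Y}(y_g^*(x))$. Since $\nabla_{yy} L_g$ is positive definite (strong convexity of $g(x,\cdot)$ plus convexity of $c(x,\cdot)$) and $\tilde{\mathcal{C}}_\mathcal{Y}(y_g^*(x))$ is closed convex, this is precisely the optimality condition of the strongly convex QP whose unique minimizer is the $\nabla_{yy} L_g$-norm projection of $-(\nabla_{yy} L_g)^{-1}\nabla_{yx} L_g\, d$ onto $\tilde{\mathcal{C}}_\mathcal{Y}(y_g^*(x))$, i.e.\ formula \eqref{eq: implicit directional derivative CC}.

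The principal obstacle, absent in the uncoupled case, is the feasibility bookkeeping of the perturbations under the now-$x$-dependent set $\mathcal{Y}(x+rd)$: the perturbed points must respect, to first order, both the curvilinear active constraints $c_i = 0$ and the smooth boundary of $\mathcal{Y}$, and Assumption~\ref{assumption: LL boundary assumption} ($\nabla^2 c$ Lipschitz, $\mathrm{bd}(\mathcal{Y})$ smooth, strict complementarity) is used precisely to control the $O(r^2)$ slack and to prevent the active set from changing as $r$ varies. Without strict complementarity, active constraints could drop or new ones enter and $D_d(y_g^*(x))$ might not even exist; without boundary smoothness of $\mathcal{Y}$, the tangent-cone decomposition behind $\tilde{\mathcal{C}}_\mathcal{Y}(y_g^*(x))$ breaks down. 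Carrying out this bookkeeping rigorously, including showing the existence of a feasible $w'\to w$ recovery inside $\mathcal{Y}(x+rd)$, is what makes the coupled case substantively harder than the uncoupled one.
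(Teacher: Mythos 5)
Your proposal is correct and follows essentially the same route as the paper's proof: (i) place $D_d(y_g^*(x))$ in the directional critical cone $\tilde{\mathcal{C}}_\mathcal{Y}(y_g^*(x))$ using the differentiated active constraints and the vanishing derivatives of the inactive multipliers (the paper's Lemmas~\ref{lemma: gradient equals zero generalization CC}--\ref{lemma: convexity of tilde C}); (ii) perturb the first-order variational inequality for $\min_{y\in\mathcal{Y}} L_g(x,y,\lambda_g^*(x))$ and Taylor-expand; (iii) read off the projection from the resulting strongly convex QP. The one genuine divergence is in pillar (i): you derive $\langle \nabla_y L_g(x,y_g^*(x),\lambda_g^*(x)), D_d(y_g^*(x))\rangle = 0$ by differentiating $v(x)=g(x,y_g^*(x))$ along $d$ and matching against $\nabla v(x)=\nabla_x L_g$ from Lemma~\ref{lemma: gradient of v}, whereas the paper proves this via a geometric argument (Lemma~\ref{lemma: gradient equals zero generalization analytic}) tracking $y_g^*(x+rd)$ along the smooth boundary of a perturbed feasible set; your value-function route is more elementary and mirrors the paper's own uncoupled Lemma~\ref{lemma: gradient equals zero generalization}, while the paper's route is where the boundary-smoothness hypothesis is used explicitly. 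One slip to fix: you take test points $y = y_g^*(x+rd) + rw'$, which puts $rw'$ rather than $r\bigl(w - D_d(y_g^*(x))\bigr)$ in the second slot of the variational inequality; that would yield $\langle \nabla_{yy}L_g\,D_d + \nabla_{yx}L_g\,d,\, w\rangle \ge 0$ instead of the $w - D_d$ form you correctly state, and the $D_d(\lambda_g^*(x))$-terms would not cancel, since individual $w\in\tilde{\mathcal{C}}_\mathcal{Y}(y_g^*(x))$ satisfy $\langle \nabla_y c_i, w\rangle = -\langle \nabla_x c_i, d\rangle \neq 0$ in general and only differences of elements annihilate $\nabla_y c_i$. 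Take $y = y_g^*(x) + rw'$ and compare against $y_g^*(x+rd)\approx y_g^*(x)+rD_d(y_g^*(x))$, as the paper does; with that correction your argument goes through.
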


Before proceeding to the proof of Lemma~\ref{lemma: Hessian of v CC}, we present several useful lemmas. We first extend Lemma~\ref{lemma: gradient equals zero generalization} to the setting of bilevel optimization with coupled constraints.

\begin{lemma}[Generalization to Lemma~\ref{lemma: gradient equals zero generalization}]
\label{lemma: gradient equals zero generalization CC}
Suppose $\mathcal{Y}$ is smooth on its boundary, $\|\lambda_g^*(x) \|<B_\lambda$ for all $x\in \mathcal{X}$, and Assumption \ref{assumption: LL}, \ref{assumption: constraint} hold. 
Then,
\begin{align*}
    \Big\langle \nabla_y g(x,y_g^*(x)) + 
 \langle \lambda_g^*(x),\nabla_x c(x,y_g^*(x))\rangle, \lim_{r\downarrow 0} \frac{y_g^*(x+rd)-y_g^*(x)}{r}\Big\rangle = 0
\end{align*}
\begin{remark}
    This implies the directional derivative $D_d(y_g^*(x))=\lim_{r\downarrow 0} \frac{y_g^*(x+rd)-y_g^*(x)}{r}$ lies in the critical cone $\mathcal{C}_{\mathcal{Y}}(y_g^*(x))$ of $g^\lambda(x,\cdot)$.
\end{remark}
\end{lemma}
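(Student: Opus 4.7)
The plan is to follow the two-expression strategy used for Lemma~\ref{lemma: gradient equals zero generalization}, namely compute the directional derivative $D_d v(x)$ in two different ways and equate them, then exploit the KKT multiplier/complementary-slackness structure to absorb the extra term introduced by the coupled constraint $c$. (I read the inner ``$\nabla_x c$'' in the lemma statement as a typo for $\nabla_y c$: otherwise the dimensions of the outer inner product do not match, and the remark identifying the limit with an element of the critical cone of $g^\lambda(x,\cdot)$ would be meaningless.) First, Lemma~\ref{lemma: gradient of v} yields
\begin{align*}
D_d v(x) = \langle \nabla_x g(x,y_g^*(x)), d\rangle + \langle \lambda_g^*(x), \nabla_x c(x,y_g^*(x))\, d\rangle.
\end{align*}
Second, since $v(x) = g(x, y_g^*(x))$ and $y_g^*$ is Lipschitz in $x$ under Assumptions~\ref{assumption: LL} and \ref{assumption: constraint} (directionally differentiable along $d$ under Assumption~\ref{assumption: LL boundary assumption}), a first-order Taylor expansion of $g$ gives
\begin{align*}
D_d v(x) = \langle \nabla_x g(x,y_g^*(x)), d\rangle + \langle \nabla_y g(x,y_g^*(x)), D_d y_g^*(x)\rangle.
\end{align*}
Equating and canceling the $\nabla_x g$ terms leaves the intermediate identity $\langle \nabla_y g, D_d y_g^*\rangle = \langle \lambda_g^*, \nabla_x c\cdot d\rangle$.

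The second step converts the right-hand side into an inner product against $\nabla_y c\, \lambda_g^*$ through active-set differentiation. Under strict complementarity (Assumption~\ref{assumption: LL boundary assumption}) together with LICQ (Assumption~\ref{assumption: constraint}.\ref{ass: licq}), every active index $i \in \mathcal{I}^0$ stays active along the perturbed trajectory, so $c_i(x+rd, y_g^*(x+rd)) = 0$ for all small $r>0$. Taylor expanding along $d$ and letting $r \downarrow 0$ yields
\begin{align*}
\langle \nabla_x c_i(x, y_g^*(x)), d\rangle + \langle \nabla_y c_i(x, y_g^*(x)), D_d y_g^*(x)\rangle = 0, \qquad \forall\, i \in \mathcal{I}^0.
\end{align*}
Weighting by $[\lambda_g^*(x)]_i \geq 0$ and summing over all $i$ (using $[\lambda_g^*(x)]_i = 0$ for $i \notin \mathcal{I}^0$ by complementary slackness) yields $\langle \lambda_g^*, \nabla_x c\cdot d\rangle = -\langle \nabla_y c\, \lambda_g^*, D_d y_g^*\rangle$. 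Substituting this into the identity from the previous step gives
\begin{align*}
\langle \nabla_y g(x,y_g^*(x)) + \nabla_y c(x,y_g^*(x))\, \lambda_g^*(x), D_d y_g^*(x)\rangle = 0,
\end{align*}
which is exactly $\langle \nabla_y L_g(x,y_g^*(x),\lambda_g^*(x)), D_d y_g^*(x)\rangle = 0$ and therefore also places $D_d y_g^*(x)$ in $\mathcal{C}_{\mathcal{Y}}(y_g^*(x))$ as claimed in the remark.

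The main obstacle is rigorously justifying the local invariance of the active set $\mathcal{I}^0$ under the perturbation $x \mapsto x + rd$, since this is precisely what enables the active-set Taylor differentiation of $c_i$. Without strict complementarity, an inactive constraint could become binding (or an active one could slacken) for arbitrarily small $r$, so the identity $c_i(x+rd, y_g^*(x+rd)) = 0$ would fail and the corresponding sum would pick up uncontrolled first-order contributions from the transitioning indices. Strict complementarity keeps $[\lambda_g^*(x+rd)]_i$ strictly positive for $i \in \mathcal{I}^0$ and zero off of it for small $r$, while LICQ combined with smoothness of $c$ delivers Robinson's strong regularity, which is what gives the Lipschitz/differentiability of $y_g^*$ and $\lambda_g^*$ underlying both the Taylor expansion of $v$ in the first step and the passage to the limit in the second. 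This is precisely why Assumption~\ref{assumption: LL boundary assumption} appears in the hypotheses.
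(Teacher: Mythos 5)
Your proof is essentially correct (and your reading of the inner $\nabla_x c$ as $\nabla_y c$ matches the paper's intent, as the remark and the subsequent use with $\nabla_y L_g$ confirm), but it takes a genuinely different route from the paper. The paper first rewrites the lower level as an uncoupled problem $y_g^*(x)=\arg\min_{y\in\mathcal{Y}} g^\lambda(x,y)$ with $g^\lambda=g+\langle\lambda_g^*(x),c\rangle$ (Lemma~\ref{lemma: equiv LL objective}) and then invokes a purely geometric argument (Lemma~\ref{lemma: gradient equals zero generalization analytic}): if $\nabla_y g^\lambda(x,y_g^*(x))\neq 0$, then $y_g^*(x)$ and $y_g^*(x+rd)$ for all small $r$ must lie on $\operatorname{bd}(\mathcal{Y})$, so the difference quotients converge into the tangent space of the smooth boundary, which coincides with the critical cone, and orthogonality to $\nabla_y g^\lambda$ follows. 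You instead equate two expressions for $D_d v(x)$ (the closed form of $\nabla v$ versus a Taylor expansion of $g(x,y_g^*(x))$) and then eliminate $\langle\lambda_g^*(x),\nabla_x c(x,y_g^*(x))\,d\rangle$ via the active-set identity $\langle\nabla_x c_i,d\rangle+\langle\nabla_y c_i,D_d y_g^*(x)\rangle=0$; that identity is exactly the paper's separate Lemma~\ref{lemma: ddcs}, and the two-expression device is the one the paper deploys later in Lemma~\ref{lemma: lambda gradient equals zero}, there taking the present lemma as an input rather than deriving it. The trade-off is in the hypotheses: the paper's geometric route uses only what the lemma literally assumes, whereas yours additionally needs strict complementarity and continuity/directional differentiability of $\lambda_g^*$ (via Fiacco-type sensitivity) --- i.e., effectively Assumption~\ref{assumption: LL boundary assumption}, which is not listed among the lemma's hypotheses, though it is assumed everywhere the lemma is applied downstream. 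In exchange, your argument stays entirely at the KKT level, avoids the perturbed-domain construction $\mathcal{Y}_\delta$, and yields the active-constraint identity (needed anyway for the directional critical cone in Lemma~\ref{lemma: Hessian of v CC}) as a by-product.
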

The proof of this lemma is provided in Appendix~\ref{appendix: proof of Lemma Hessian of v CC}. We then look into the lemmas associated with constructing $\tilde{\mathcal{C}}_{\mathcal{Y}}(y_g^*(x))$.
\begin{lemma}
\label{lemma: ddcs}
Suppose $\mathcal{Y}$ is smooth on its boundary, $\|\lambda_g^*(x) \|<B_\lambda$ for all $x\in \mathcal{X}$, and Assumption \ref{assumption: LL}, \ref{assumption: constraint} hold. 
Denote the inactive index set $\mathcal{I}^- = \{ i \in [d_c] : c_i(x, y_g^*(x)) < 0 \}$ and the active index set $\mathcal{I}^0 =\{i\in [d_c]: c_i(x,y_g^*(x))=0\}$.
Let $d \in \mathbb{R}^{d_x}$ be a unit direction and denote $D_d (\lambda_g^*(x))=\lim_{r\downarrow 0}\frac{\lambda_g^*(x+rd)-\lambda_g^*(x)}{r}$, $D_d (y_g^*(x))=\lim_{r\downarrow 0}\frac{y_g^*(x+rd)-y_g^*(x)}{r}$. Then,
\begin{align*}
    \langle \nabla_y c_i(x,y_g^*(x)),D_d (y_g^*(x))\rangle  +\langle  \nabla_x c_i(x,y_g^*(x)),d\rangle = & 0, \quad \forall i \in \mathcal{I}^0,\quad \\
    \text{and} \quad [D_d(\lambda_g^*(x))]_i = & 0,\quad \forall i \in \mathcal{I}^- .
\end{align*}
\end{lemma}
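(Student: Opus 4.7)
The plan is to exploit the KKT conditions associated with the lower-level problem together with strict complementarity (Assumption~\ref{assumption: LL boundary assumption}) and the continuity of the primal-dual solution map $x \mapsto (y_g^*(x), \lambda_g^*(x))$. The two claims will be handled separately: the claim on inactive indices follows from complementary slackness being preserved under small perturbations, while the claim on active indices follows from differentiating $c_i(x, y_g^*(x)) = 0$ along the direction $d$.

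For the inactive indices $i \in \mathcal{I}^-$, I would first argue that for sufficiently small $r > 0$, we still have $c_i(x + rd, y_g^*(x+rd)) < 0$. This uses the strict inequality $c_i(x, y_g^*(x)) < 0$, the continuity of $c_i$ (Assumption~\ref{assumption: constraint}.\ref{ass: constraint inequality}), and the Hausdorff-Lipschitz continuity of $S_g(x)$ (Remark~\ref{remark: Lipschitz of S(x)}), which, under the uniqueness of $y_g^*(x)$ guaranteed by Assumption~\ref{assumption: LL}.\ref{ass: LL SC}, makes $y_g^*(x)$ continuous in $x$. Complementary slackness then forces $[\lambda_g^*(x+rd)]_i = 0$ for all small $r$. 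Since $[\lambda_g^*(x)]_i = 0$ as well, the difference quotient $r^{-1}([\lambda_g^*(x+rd)]_i - [\lambda_g^*(x)]_i)$ is identically zero, yielding $[D_d(\lambda_g^*(x))]_i = 0$.

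For the active indices $i \in \mathcal{I}^0$, I would use the strict complementarity hypothesis $[\lambda_g^*(x)]_i > 0$ together with the continuity of $\lambda_g^*(\cdot)$ (which can be established using LICQ plus uniqueness of the multiplier, cf.\ Lemma~\ref{lemma: equiv LL objective}) to conclude that $[\lambda_g^*(x+rd)]_i > 0$ for all sufficiently small $r$. Complementary slackness then forces the active constraint to remain active: $c_i(x+rd, y_g^*(x+rd)) = 0$ for all small $r$. Taking the limit
\begin{align*}
0 &= \lim_{r \downarrow 0} \frac{c_i(x+rd, y_g^*(x+rd)) - c_i(x, y_g^*(x))}{r} \\
&= \langle \nabla_x c_i(x, y_g^*(x)), d\rangle + \langle \nabla_y c_i(x, y_g^*(x)), D_d(y_g^*(x))\rangle,
\end{align*}
where the second equality uses differentiability of $c_i$ together with the existence of the directional derivative $D_d(y_g^*(x))$ (guaranteed by the extension of Lemma~\ref{lemma: implicit gradient} to the coupled setting, since $y_g^*$ is locally Lipschitz by Hausdorff-Lipschitzness of $S_g$).

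The main obstacle will be rigorously justifying the continuity of the multiplier mapping $\lambda_g^*(\cdot)$ at $x$ so that strict complementarity persists in a neighborhood. This is where LICQ (Assumption~\ref{assumption: constraint}.\ref{ass: licq}) together with the uniqueness of the Lagrange multiplier from Lemma~\ref{lemma: equiv LL objective} is essential: standard sensitivity results \citep{bonnans2013perturbation} give continuity (in fact differentiability under full LICQ + strict complementarity) of $\lambda_g^*$ with respect to $x$, which then guarantees the preservation of the active/inactive index partition for small $r$. Once these set-identification arguments are in place, the rest of the proof reduces to a straightforward first-order Taylor expansion.
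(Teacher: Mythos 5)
Your proposal is correct, and for the inactive indices it is identical to the paper's argument: strict negativity of $c_i$ persists under small perturbations by continuity of $c_i$ and of $y_g^*(\cdot)$, complementary slackness then forces $[\lambda_g^*(x+rd)]_i=0$ for all small $r$, and the difference quotient vanishes identically.

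For the active indices your route differs slightly from the paper's. The paper differentiates the complementarity identity $[\lambda_g^*(x)]_i\,c_i(x,y_g^*(x))\equiv 0$ as a product along the direction $d$ (using the Lipschitzness of both $y_g^*$ and $\lambda_g^*$ to justify the Taylor expansion), then uses $c_i(x,y_g^*(x))=0$ to kill the term containing $D_d(\lambda_g^*(x))$ and strict complementarity $[\lambda_g^*(x)]_i>0$ to divide out the multiplier. You instead first establish \emph{active-set persistence}: strict complementarity plus continuity of $\lambda_g^*(\cdot)$ gives $[\lambda_g^*(x+rd)]_i>0$ for small $r$, hence $c_i(x+rd,y_g^*(x+rd))=0$ along the trajectory, and you differentiate this scalar identity directly by the chain rule. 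The two arguments rest on the same ingredients (complementary slackness, strict complementarity, and regularity of the primal--dual solution map, which the paper imports from the cited sensitivity results), so neither requires anything beyond the stated assumptions. Your version avoids the product rule but needs the extra persistence step; the paper's version avoids persistence but needs the joint Taylor expansion of the product. Both are valid, and your justification of the multiplier's continuity via LICQ, uniqueness, and standard sensitivity theory matches what the paper itself invokes.
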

\begin{proof}
By complementary slackness, which holds under LICQ and convexity, we have $ [\lambda_g^*(x)]_i c_i(x,y_g^*(x))=0$ for any $i\in [d_c]$.
In this way, we obtain
\begin{align*}
    0 = & \lim_{r\downarrow 0} \frac{ [\lambda_g^*(x+rd)]_i c_i(x+rd,y_g^*(x+rd))-[\lambda_g^*(x)]_i c_i(x,y_g^*(x))}{r}\\
    =& \lim_{r\downarrow 0} \{ D_d([\lambda_g^*(x)]_i)c_i(x,y_g^*(x)) +  [\lambda_g^*(x)]_i (\nabla_x  c_i(x,y_g^*(x))^\top d+ \nabla_y  c_i(x,y_g^*(x))^\top D_d(y_g^*(x)) \},
\end{align*}
where the second equality follows Taylor's expansion and the Lipschitzness of $y_g^*(x)$ and $\lambda_g^*(x)$, which holds according to~\citet[Lemma 2]{jiang2024primal}.

For any $i$ in the active index set $\mathcal{I}^0$, we have $ D_d([\lambda_g^*(x)]_i)c_i(x,y_g^*(x)) =0$ as $c_i(x,y_g^*(x)=0$. 
Strict complementary assumption implies that $[\lambda_g^*(x)]_i >0$. 
In this way, cancellation gives
 \begin{align*}
     \nabla_x  c_i(x,y_g^*(x))^\top d+ \nabla_y  c_i(x,y_g^*(x))^\top D_d(y_g^*(x)) =0.
\end{align*}

For any $i$ in the inactive index set $\mathcal{I}^-$, by complementary slackness, we have $[\lambda_g^*(x)]_i = 0$ for all $i \in \mathcal{I}^-$.  
By continuity of $c_i(x, y_g^*(x))$ with respect to $x$, for any direction $d$ there exists $r_0>0$ such that $c_i(x + r d, y_g^*(x + r d)) < 0$ for all $0 < r \leq r_0$. i.e. $[\lambda_g^*(x+rd)]_i=0$ for all $0 < r \leq r_0$.
Hence, we obtain
\begin{align*}
    [D_d(\lambda_g^*(x))]_i = \lim_{r\downarrow 0} \frac{[\lambda_g^*(x+rd)]_i-[\lambda_g^*(x)]_i}{r}= 0, \quad \forall i \in \mathcal{I}^-.
\end{align*} 
\end{proof}

\begin{lemma}
\label{lemma: convexity of tilde C}
    Under all assumptions in Lemma~\ref{lemma: ddcs}, 
    for arbitrary $d\in \mathbb{R}^n$, $\tilde{\mathcal{C}}_{\mathcal{Y}}(y_g^*(x))$ defined in \eqref{eq: directional critical cone} is nonempty with $D_d(y_g^*(x))\in \tilde{\mathcal{C}}_{\mathcal{Y}}(y_g^*(x))$, convex, and closed.
\end{lemma}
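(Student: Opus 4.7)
The plan is to dispatch each of the three properties — nonemptyness, convexity, and closedness — in sequence, leveraging the algebraic structure of $\tilde{\mathcal{C}}_{\mathcal{Y}}(y_g^*(x))$ as the intersection of a critical cone with a finite family of affine hyperplanes indexed by the active constraints.

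For nonemptyness, the natural candidate is the directional derivative $D_d(y_g^*(x))$ itself. First I would verify $D_d(y_g^*(x)) \in \mathcal{C}_{\mathcal{Y}}(y_g^*(x))$: the tangent-cone membership $D_d(y_g^*(x)) \in \mathcal{T}_{\mathcal{Y}}(y_g^*(x))$ follows from the definition of the tangent cone together with the Lipschitz continuity of $y_g^*(\cdot)$ (cf.~\cite[Lemma~2]{jiang2024primal}), since $y_g^*(x+rd) \in \mathcal{Y}$ for all small $r > 0$ and the sequence of normalized differences converges to $D_d(y_g^*(x))$; and the orthogonality $\langle \nabla_y L_g(x,y_g^*(x),\lambda_g^*(x)), D_d(y_g^*(x))\rangle = 0$ is exactly the content of Lemma~\ref{lemma: gradient equals zero generalization CC}. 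Then Lemma~\ref{lemma: ddcs} supplies the required active-constraint equalities $\langle \nabla_y c_i, D_d(y_g^*(x))\rangle + \langle \nabla_x c_i, d\rangle = 0$ for every $i \in \mathcal{I}^0$. Combining these, $D_d(y_g^*(x)) \in \tilde{\mathcal{C}}_{\mathcal{Y}}(y_g^*(x))$, which witnesses nonemptyness.

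For convexity and closedness, I would rewrite
\begin{align*}
\tilde{\mathcal{C}}_{\mathcal{Y}}(y_g^*(x)) \;=\; \mathcal{C}_{\mathcal{Y}}(y_g^*(x)) \,\cap\, \bigcap_{i\in \mathcal{I}^0} H_i,
\end{align*}
where each affine hyperplane $H_i := \{\, w \in \mathbb{R}^{d_y} : \langle \nabla_y c_i(x,y_g^*(x)), w\rangle = -\langle \nabla_x c_i(x,y_g^*(x)), d\rangle \,\}$ is closed and convex. Because $\mathcal{Y}$ is a closed convex set by Assumption~\ref{assumption: constraint}.\ref{ass: constraint domain}, its tangent cone $\mathcal{T}_{\mathcal{Y}}(y_g^*(x))$ is closed and convex, and intersecting it with the closed hyperplane $\{w : \langle \nabla_y L_g(x,y_g^*(x),\lambda_g^*(x)), w\rangle = 0\}$ preserves both properties, yielding that $\mathcal{C}_{\mathcal{Y}}(y_g^*(x))$ is closed and convex. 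A finite intersection of closed convex sets is itself closed and convex, so $\tilde{\mathcal{C}}_{\mathcal{Y}}(y_g^*(x))$ inherits both properties.

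The main obstacle I anticipate is the rigorous justification that $D_d(y_g^*(x))$ is well-defined and lies in $\mathcal{T}_{\mathcal{Y}}(y_g^*(x))$: this requires carefully invoking both the Lipschitzness of $y_g^*$ (so that the limit exists along the sequence $r\downarrow 0$) and the boundary-smoothness of $\mathcal{Y}$ from Assumption~\ref{assumption: LL boundary assumption} (so that the tangent cone admits a clean description near $y_g^*(x)$). Once this membership is in hand, the rest amounts to standard convex-analysis bookkeeping.
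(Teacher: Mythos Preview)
Your proposal is correct and follows essentially the same approach as the paper: nonemptyness is witnessed by $D_d(y_g^*(x))$ via Lemmas~\ref{lemma: gradient equals zero generalization CC} and~\ref{lemma: ddcs}, while convexity and closedness follow from recognizing $\tilde{\mathcal{C}}_{\mathcal{Y}}(y_g^*(x))$ as the intersection of the (closed, convex) critical cone with finitely many affine hyperplanes. Your treatment is arguably a bit more careful than the paper's, which asserts ``$\mathcal{C}_{\mathcal{Y}}(y_g^*(x))$ is convex'' and ``closed by definition'' without spelling out why; your explicit decomposition $\mathcal{T}_{\mathcal{Y}}(y_g^*(x)) \cap \{w : \langle \nabla_y L_g, w\rangle = 0\}$ and appeal to the closed-convexity of tangent cones of closed convex sets fills that in.
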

\begin{proof}\allowdisplaybreaks
    By Lemma~\ref{lemma: gradient equals zero generalization CC} and Lemma~\ref{lemma: ddcs}, we know $D_d(y_g^*(x))\in \tilde{\mathcal{C}}_{\mathcal{Y}}(y_g^*(x))$. This proves the non-emptyness.
    Let $w_1, w_2 \in \tilde{\mathcal{C}}_{\mathcal{Y}}(x;d)$ and $\alpha \in [0,1]$. Since $\mathcal{C}_{\mathcal{Y}}(y_g^*(x))$ is convex, $w_\alpha := \alpha w_1 + (1-\alpha) w_2 \in \mathcal{C}_{\mathcal{Y}}(y_g^*(x))$.  
Moreover, for all $i \in \mathcal{I}^0$,
\begin{align*}
  0= &  \langle \nabla_y c_i(x,y_g^*(x)), w_\alpha \rangle + \langle \nabla_x c_i(x,y_g^*(x)), d \rangle \nonumber \\
= & \alpha \big(\langle \nabla_y c_i(x,y_g^*(x)), w_1 \rangle + \langle \nabla_x c_i(x,y_g^*(x)), d \rangle \big) + (1-\alpha) \big(\langle \nabla_y c_i(x,y_g^*(x)), w_2 \rangle \nonumber \\
& + \langle \nabla_x c_i(x,y_g^*(x)), d \rangle \big)  \nonumber \\
= & \alpha 0 +(1-\alpha)0,
\end{align*}
so $w_\alpha \in \tilde{\mathcal{C}}_{\mathcal{Y}}(y_g^*(x))$, proving convexity.

Furthermore, $\mathcal{C}_{\mathcal{Y}}(y_g^*(x))$ is closed by definition and the linear equalities
\[
\langle \nabla_y c_i(x,y_g^*(x)), w \rangle + \langle \nabla_x c_i(x,y_g^*(x)), d \rangle = 0, \quad \forall i \in \mathcal{I}^0,
\]
is a finite system of linear equations in $w$ whose the solution set is closed.
Hence, $\tilde{\mathcal{C}}_{\mathcal{Y}}(y_g^*(x))$ is closed as it is an interaction of two close sets.
\end{proof}

With Lemma~\ref{lemma: gradient equals zero generalization CC},~\ref{lemma: ddcs},~\ref{lemma: convexity of tilde C} prepared, we can now proceed to prove Lemma~\ref{lemma: Hessian of v CC}.

\begin{proof}[Proof of Lemma~\ref{lemma: Hessian of v CC}] 
As $g^\lambda(x,y)$ in \eqref{eq: g lambda x y} is differentiable with respect to $y$.
According to the first-order variational inequality in Lemma~\ref{lem:first_order_VI}, there is
\begin{align*}
    \langle \nabla_y g^\lambda(x,y), y- y^*_g(x)\rangle \geq 0, \quad \forall y \in \mathcal{Y}.
\end{align*}

For arbitrary $w\in \tilde{\mathcal{C}}_{\mathcal{Y}}(y_g^*(x))$, choose $y =y_g^*(x)+r w'$, where $w'\rightarrow w$ as $r\downarrow 0$. There exists sufficiently small $r$ such that $y =g^*(x)+r w'\in \tilde{\mathcal{Y}}$ following the definition of tangent cone and the fact that $\tilde{\mathcal{C}}_{\mathcal{Y}}(y_g^*(x))\subseteq \mathcal{C}_{\mathcal{Y}}(y_g^*(x))\subseteq \mathcal{T}_{\mathcal{T}}(y_g^*(x))$. In this way, for sufficiently small $r$, there is 
\begin{align*}
    \langle \nabla_y g^\lambda(x+rd,y^*_g(x)+rD_d(y_g^*(x))), (y_g^*(x)+rw)- (y^*_g(x)+rD_d(y_g^*(x)))\rangle & \geq 0.
\end{align*}
Here, by Taylor's expansion,
\begin{align*}
    & \nabla_y g^\lambda(x+rd,y^*_g(x)+rD_d(y_g^*(x)))\\
    = & \nabla_y g(x+rd,y^*_g(x)+rD_d(y_g^*(x))) + \big \langle \lambda_g^*(x+rd), \nabla_y c(x+rd,y^*_g(x)+rD_d(y_g^*(x))) \big \rangle \\
    = & \nabla_y g(x+rd,y^*_g(x)+rD_d(y_g^*(x))) + \big\langle \lambda_g^*(x), \nabla_y c(x+rd,y^*_g(x)+rD_d(y_g^*(x))) \big \rangle \\
    & +  \big\langle \lambda_g^*(x+rdl)-\lambda_g^*(x), \nabla_y c(x+rd,y^*_g(x)+rD_d(y_g^*(x))) \big \rangle \\
    = & \nabla_y g(x+rd,y^*_g(x)+rD_d(y_g^*(x))) + \big \langle \lambda_g^*(x), \nabla_y c(x+rd,y^*_g(x)+rD_d(y_g^*(x))) \big \rangle \\
    & +  \big\langle \lambda_g^*(x+rd)-\lambda_g^*(x), \nabla_y c(x+rd,y^*_g(x)+rD_d(y_g^*(x))) \big \rangle\\
    = & \nabla_y L_g \left(x,y_g^*(x),\lambda_g^*(x) \right) \nonumber \\
    &+ \nabla_{yx} L_g \left(x,y_g^*(x),\lambda_g^*(x) \right) rd+ \nabla_{yy} L_g \left(x,y_g^*(x),\lambda_g^*(x) \right) D_d(y_g^*(x)) + \Oc(r^2)\\
    & +  \big\langle \lambda_g^*(x+rd)-\lambda_g^*(x), \nabla_y c(x+rd,y^*_g(x)+rD_d(y_g^*(x))) \big \rangle.
\end{align*}
Therefore, we have
\begin{align*}
    0\leq & \lim_{r\downarrow 0} \frac{1}{r} \big \langle \nabla_y g^\lambda(x+rd,y^*_g(x)+rD_d(y_g^*(x))),w- D_d(y_g^*(x)) \big\rangle \nonumber \\
    = & \lim_{r\downarrow 0} \frac{1}{r} \big \langle \nabla_y L_g \left(x,y_g^*(x),\lambda_g^*(x) \right),w- D_d(y_g^*(x)) \big\rangle \nonumber \\
    &+\big \langle \nabla_{yx} L_g \left(x,y_g^*(x),\lambda_g^*(x) \right) d+ \nabla_{yy} L_g \left(x,y_g^*(x),\lambda_g^*(x) \right) D_d(y_g^*(x)) ,w- D_d(y_g^*(x)) \big\rangle \nonumber \\
    &+  \big \langle D_d(\lambda_g^*(x))^\top \nabla_y c(x,y^*_g(x)),w- D_d(y_g^*(x)) \big\rangle  \nonumber \\
    = & \big \langle \nabla_{yx} L_g \left(x,y_g^*(x),\lambda_g^*(x) \right) d\nonumber \\
    &+ \nabla_{yy} L_g \left(x,y_g^*(x),\lambda_g^*(x) \right) D_d(y_g^*(x)) ,w- D_d(y_g^*(x)) \big\rangle   \nonumber \\
    &+  \sum_{i\in \mathcal{I}^0} [\langle D_d(\lambda_g^*(x))]_i \langle \nabla_y c_i(x,y^*_g(x)),w- D_d(y_g^*(x)) \rangle. \nonumber\\
    = & \big \langle \nabla_{yx} L_g \left(x,y_g^*(x),\lambda_g^*(x) \right) d+ \nabla_{yy} L_g \left(x,y_g^*(x),\lambda_g^*(x) \right) D_d(y_g^*(x)) ,w- D_d(y_g^*(x)) \big\rangle.
\end{align*}
Here, the second inequality follows Lemma~\ref{lemma: gradient equals zero generalization CC},~\ref{lemma: ddcs}. and the third uses Lemma~\ref{lemma: convexity of tilde C}.
In this way, directional derivative $D_d(y^*_g(x))$ is characterized by the variational inequality:
\begin{align*}
    \langle \nabla_{yy}  L_g \left(x,y_g^*(x),\lambda_g^*(x) \right) + \nabla_{yx}  L_g \left(x,y_g^*(x),\lambda_g^*(x) \right) d, ~ w- D_d(y^*_g(x)) \rangle \geq 0 \quad\forall w\in \tilde{\mathcal{C}}_{\mathcal{Y}}(y^*_g(x)).
\end{align*}
As $D_d(y^*_g(x))\in  \tilde{\mathcal{C}}_{\mathcal{Y}}(y^*_g(x))$ according to Lemma~\ref{lemma: convexity of tilde C}, the variational inequality is equivalent to
\begin{align*}
    D_d(y_g^*(x)) = \arg\min_{\eta \in \tilde{\mathcal{C}}_{\mathcal{Y}}(y^*_g(x))} \frac{1}{2} \eta^\top \nabla_{yy} L_g \left(x,y_g^*(x),\lambda_g^*(x) \right) \eta + (\nabla_{yx} L_g \left(x,y_g^*(x),\lambda_g^*(x) \right) d)^\top \eta. 
\end{align*}
This gives a unique solution as the objective is strongly convex in $\eta$ and the constraint $\tilde{\mathcal{C}}_{\mathcal{Y}}(y^*_g(x))$ is closed and convex (cf. Lemma~\ref{lemma: convexity of tilde C}).
\end{proof}

Lemma~\ref{lemma: Hessian of v CC} facilitates the inference of $F_\gamma(x)$ being $\mathcal{O}(1)$-smooth in the coupled-constrained setting. In fact, \textsf{both $y_g^*(x)$ and $\lambda_g^*(x)$ are differentiable} under the assumptions in Lemma~\ref{lemma: Hessian of v CC} \citep[Theorem 2.1]{fiacco1976sensitivity}. However, their gradients depend on the specific local characterization $\varphi(y) \le 0$ of $\mathcal{Y}$, which is typically unavailable in practice. Therefore, we provide only the directional derivative form in Lemma~\ref{lemma: Hessian of v CC}. Similar to the uncoupled case, this \textsf{directional derivative is required solely for theoretical analysis of smoothness and does not need to be computed in algorithmic implementation}. We provide the summary of differentiability in Table~\ref{tab:differentiability-summary}.

\begin{theorem}[Generalization to Theorem~\ref{theorem: smoothness}]
\label{theorem: smoothness in CC setting}
    Consider $\mathcal{Y}(x)=\{y\in\mathcal{Y}:c(x,y)\leq 0 \}$. Suppose Assumption \ref{assumption: UL}, \ref{assumption: LL}, \ref{assumption: constraint}, \ref{assumption: Hessian Lipschitz}, \ref{assumption: LL boundary assumption} hold. Then, there exists finite $\gamma^*>0$ such that 
    $F_\gamma(x)$ is $l_{F_\gamma,1}=\Oc(1)$-smooth for all $\gamma>\gamma^*$.
\end{theorem}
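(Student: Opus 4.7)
The plan is to extend the proof technique of Theorem~\ref{theorem: smoothness} to the coupled setting, using Lemma~\ref{lemma: Hessian of v CC} as the replacement for Lemma~\ref{lemma: implicit gradient}. The starting point is the same identity
\begin{align*}
D_{dd}^2(F_\gamma(x)) = \gamma\bigl(D_{dd}^2(v_\gamma(x)) - D_{dd}^2(v(x))\bigr),
\end{align*}
which reduces the task to proving $\|D_{dd}^2(v_\gamma(x)) - D_{dd}^2(v(x))\| = \mathcal{O}(\gamma^{-1})$. I would obtain a closed form for $D_{dd}^2(v(x))$ by taking a second directional derivative of the gradient formula $\nabla v(x) = \nabla_x g(x,y_g^*(x)) + \langle \lambda_g^*(x), \nabla_x c(x,y_g^*(x))\rangle$ from Lemma~\ref{lemma: gradient of v}, and an analogous form for $D_{dd}^2(v_\gamma(x))$. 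The directional derivative $D_d(y_g^*(x))$ needed inside is given by the projection formula in Lemma~\ref{lemma: Hessian of v CC}, while $D_d(\lambda_g^*(x))$ is constrained by Lemma~\ref{lemma: ddcs}: its inactive components vanish, and its active components are the unique solution of the linearized KKT system obtained by differentiating $\nabla_y L_g(x,y_g^*(x),\lambda_g^*(x))=0$ and $c_i(x,y_g^*(x))=0$ for $i\in\mathcal{I}^0$, which is solvable by LICQ.

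Next I would compare the two resulting expressions term by term. The key input is that, by Lemma~\ref{lemma: distance of yg ygam}, $\|y_g^*(x)-y_\gamma^*(x)\| = \mathcal{O}(\gamma^{-1})$, and by standard KKT sensitivity for strongly convex programs under LICQ (e.g.\ the arguments in \citep{jiang2024primal}), the perturbation in the dual satisfies $\|\lambda_g^*(x)-\lambda_\gamma^*(x)\| = \mathcal{O}(\gamma^{-1})$ as well, since the pair $(y_\gamma^*(x),\lambda_\gamma^*(x))$ is the KKT pair of a smoothly $\gamma^{-1}$-perturbed problem. Combining these estimates with the Lipschitz continuity of $\nabla^2 g$, $\nabla^2 c$, $\nabla f$ from Assumptions~\ref{assumption: UL}, \ref{assumption: LL}, \ref{assumption: constraint}, and \ref{assumption: Hessian Lipschitz}, and with the Lipschitz dependence of the projection in Lemma~\ref{lemma: Hessian of v CC} on its defining data, the difference between corresponding terms of $D_{dd}^2(v(x))$ and $D_{dd}^2(v_\gamma(x))$ is $\mathcal{O}(\gamma^{-1})$. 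Multiplying by $\gamma$ gives $\|D_{dd}^2(F_\gamma(x))\|=\mathcal{O}(1)$ uniformly in the direction $d$ and in $x$, which is the claimed $\mathcal{O}(1)$ smoothness.

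The main obstacle is a discontinuity that has no counterpart in the uncoupled case: the directional critical cone $\tilde{\mathcal{C}}_{\mathcal{Y}}(y_g^*(x))$ in Lemma~\ref{lemma: Hessian of v CC} depends on the active index set $\mathcal{I}^0(x)$, and a priori the active sets at $y_g^*(x)$ and at $y_\gamma^*(x)$ can differ, which would prevent a direct term-by-term comparison. This is where the threshold $\gamma^*$ enters. By Assumption~\ref{assumption: LL boundary assumption}, strict complementarity holds at $y_g^*(x)$, so there exists a uniform gap $\min_{i\in\mathcal{I}^0}[\lambda_g^*(x)]_i>0$ and $\min_{i\in\mathcal{I}^-} |c_i(x,y_g^*(x))|>0$. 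Using again the $\mathcal{O}(\gamma^{-1})$ perturbation bounds on $(y_\gamma^*,\lambda_\gamma^*)$, one can choose $\gamma^*$ large enough that, for all $\gamma>\gamma^*$, the sign and activity patterns at $(y_\gamma^*(x),\lambda_\gamma^*(x))$ coincide with those at $(y_g^*(x),\lambda_g^*(x))$, so that both directional critical cones are defined by the same active gradients and only the evaluation points differ. Once the active sets align, the perturbation argument above goes through unchanged, and the comparison of the two projection-based directional derivatives is purely quantitative and yields the $\mathcal{O}(\gamma^{-1})$ bound, completing the proof.
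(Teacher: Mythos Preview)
Your proposal is correct and follows the paper's overall architecture---the same identity $D_{dd}^2(F_\gamma)=\gamma(D_{dd}^2(v_\gamma)-D_{dd}^2(v))$, the same term-by-term comparison using Lemma~\ref{lemma: Hessian of v CC} for $D_d(y^*)$, and the same active-set stabilization argument to justify the threshold $\gamma^*$. The one genuine methodological difference is in how the second directional derivative of $v(x)$ is computed. You propose to differentiate the Lagrangian gradient formula $\nabla v(x)=\nabla_x g(x,y_g^*(x))+\langle\lambda_g^*(x),\nabla_x c(x,y_g^*(x))\rangle$, which forces $D_d(\lambda_g^*(x))$ into the expression and then requires you to bound $\|D_d(\lambda_g^*(x))-D_d(\lambda_\gamma^*(x))\|$ via the linearized KKT system. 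The paper instead expands the \emph{primal} identity $v(x)=g(x,y_g^*(x))$ directly to second order in $x$; the first-order remainder matches $\langle\nabla v(x),rd\rangle$ by Lemma~\ref{lemma: gradient of v} and cancels, leaving a formula for $D_{dd}^2(v(x))$ that involves only $\nabla^2 g$ and $D_d(y_g^*(x))$, with no dual derivative. The multiplier then enters only implicitly through the projection formula for $D_d(y_g^*(x))$, and the paper disposes of that by invoking the Lipschitz argument of Lemma~\ref{lemma: implicit directional derivative lipschitz}. Your route is valid and arguably more systematic (it makes the KKT sensitivity fully explicit), but the paper's primal expansion is shorter because it sidesteps the dual-derivative comparison altogether.
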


\begin{proof}
According to \citep{bonnans2013perturbation}, when $v(x)$ is differentiable, the second order directional derivative of $v(x)$ in direction unit direction $d$ can be written as
\begin{align}
    D^2_{dd}(v(x))
    = & \lim_{r,r'\downarrow 0} \frac{1}{\frac{1}{2}r r'}\Big( g(x+rd+\frac{1}{2}r^2 d,y_g^*(x+rd)) -g(x,y_g^*(x)) -\langle \nabla v(x),rd \rangle \Big) \nonumber\\
    \stackrel{(a)}{=}  & \lim_{r,r'\downarrow 0} \frac{1}{\frac{1}{2}r r'}\bigg( 
    \langle \nabla_x g(x,y_g^*(x)), rd \rangle \nonumber + \langle \nabla_y g(x,y_g^*(x)), y_g^*(x+rd)-y_g^*(x)\rangle \nonumber \\
    &  + \frac{1}{2}( y_g^*(x+rd)-y_g^*(x))^\top \nabla_{yy} g(x,y_g^*(x))( y_g^*(x+rd)-y_g^*(x)) \nonumber \\
    &+ \frac{rr'}{2}d^\top \nabla_{xx} g(x,y_g^*(x)) d  +  \frac{r'}{2}d^\top \nabla_{xy} g(x,y_g^*(x))( y_g^*(x+rd)-y_g^*(x)) \nonumber \\
    &+  \frac{r}{2}( y_g^*(x+rd)-y_g^*(x))^\top \nabla_{yx}g(x,y_g^*(x)) d\nonumber\\
    & -\langle \nabla_x g(x,y_g^*(x)) + {\lambda_g^*(x)}^\top \nabla_x c(x,y_g^*(x)),rd \rangle 
    \bigg) \nonumber\\
    \stackrel{(b)}{=} & \Big(\lim_{r'\downarrow 0} \frac{y_g^*(x+r'd)-y_g^*(x)}{r'} \Big)^\top \nabla_{yy} g(x,y_g^*(x))  \Big(\lim_{r\downarrow 0} \frac{y_g^*(x+rd)-y_g^*(x)}{r} \Big) \nonumber \\
    & + d^\top \nabla_{xx} g(x,y_g^*(x)) d + \Big(\lim_{r'\downarrow 0} \frac{y_g^*(x+r'd)-y_g^*(x)}{r} \Big)^\top \nabla_{xy} g(x,y_g^*(x))d \nonumber \\
    & + d^\top \nabla_{xy} g(x,y_g^*(x))\Big(\lim_{r\downarrow 0} \frac{y_g^*(x+rd)-y_g^*(x)}{r} \Big). \nonumber\\
    = & D_d(y_g^*(x))^\top \nabla_{yy} g(x,y_g^*(x)) D_d(y_g^*(x)) + d^\top \nabla_{xx} g(x,y_g^*(x)) d \nonumber\\
    & + D_d(y_g^*(x))^\top \nabla_{xy} g(x,y_g^*(x))d + d^\top \nabla_{xy} g(x,y_g^*(x)) D_d(y_g^*(x)). \label{eq: directional Hessian intermediate step}
\end{align}
where the (a) is by 2nd order of Taylor expansion; (b) is from $\langle \nabla_x g(x,y_g^*(x)), rd \rangle + \langle \nabla_y g(x,y_g^*(x)), y_g^*(x+rd)-y_g^*(x)\rangle =\langle \nabla_x g(x,y_g^*(x)) + {\lambda_g^*(x)}^\top \nabla_x c(x,y_g^*(x)),rd \rangle$ as they are two equivalent expression of the directional $\nabla v(x)$. Here, we have prepared $D_d(y^*_g(x))$ in Lemma~\ref{lemma: Hessian of v CC}.
Similarly, denote $\tilde{g}_\gamma(x,y) =\gamma^{-1}f(x,y)+g(x,y)$, we have
\begin{align*}
    D^2_{dd}(v_\gamma(x) )= & D_d(y_\gamma^*(x))^\top \nabla_{yy} \tilde{g}_\gamma(x,y_\gamma^*(x)) D_d(y_\gamma^*(x)) + d^\top \nabla_{xy} \tilde{g}_\gamma(x,y_\gamma^*(x)) D_d(y_\gamma^*(x)) \nonumber \\
    &+ d^\top \nabla_{xx} \tilde{g}_\gamma(x,y_\gamma^*(x)) d + D_d(y_\gamma^*(x))^\top \nabla_{xy} \tilde{g}_\gamma(x,y_\gamma^*(x))d 
\end{align*}
where $D_d(y^*_\gamma(x))=$
\begin{align*}
    \Proj_{\mathcal{C}_{\mathcal{Y}}(y^*_\gamma(x))}^{\nabla_{yy} L_\gamma \left(x,y_\gamma^*(x),\lambda_\gamma^*(x) \right)} \Big( - (\nabla_{yy} L_\gamma \left(x,y_\gamma^*(x),\lambda_\gamma^*(x) \right))^{-1} \nabla_{yx}L_\gamma \left(x,y_\gamma^*(x),\lambda_\gamma^*(x) \right) d \Big).
\end{align*}
Following directly from the same arguments as in Lemma~\ref{lemma: implicit directional derivative lipschitz}, we have
\begin{align}
    \| D_d(y^*_\gamma(x))- D_d(y^*_g(x))\|=\Oc(\gamma^{-1}), \label{eq: lipschitzness of directional derive}
\end{align}
noting that all steps generalize to the coupled constraint case. Similarly, as $\tilde{\mathcal{C}}_{\mathcal{Y}}(y^*_g(x))$ is closed and convex (cf. Lemma~\ref{lemma: convexity of tilde C}), $L_g(x,y,\lambda)$ is $\mu_g$-strongly convex in $y$ and $l_{g,1}+B l_{c,1}$-smooth, we obtain similar arguments as in Lemma~\ref{lemma: directional derivative bound}:
\begin{align}
    \| D_d(y^*_g(x))\|\leq \frac{l_{g,1}+B l_{c,1}}{\mu_g}. \label{eq: boundness of Ddy}
\end{align}

In this way, following the triangle inequality and Cauchy-Schwarz inequality, there is
\begin{align*}
    \|  D_{dd}^2( F_\gamma(x)) \|= & \gamma \|   D_{dd}^2( v_\gamma(x))- D_{dd}^2( v(x)) \| \nonumber \\
    \leq &\gamma \Big(\underbrace{\|D_d(y_g^*(x))\|^2}_{\eqref{eq: boundness of Ddy}} \underbrace{\big\|\nabla_{yy} g(x,y_g^*(x)) - \nabla_{yy} \tilde{g}_\gamma(x,y_g^*(x)) \big\|}_{\eqref{eq: nabla xx diff}} \nonumber\\
    &+ 2\underbrace{\|D_d(y_g^*(x))\|}_{\eqref{eq: boundness of Ddy}}\underbrace{\|\nabla_{yy} g(x,y_g^*(x))\| }_{l_{g,1}-\text{smoothness of }g}\underbrace{\big\| D_d(y_g^*(x)) - D_d(y_\gamma^*(x))\big\|}_{\eqref{eq: lipschitzness of directional derive}}  \nonumber\\
    & + \underbrace{\big \|\nabla_{xx} g(x,y_g^*(x)) - \nabla_{xx} \tilde{g}_\gamma(x,y_g^*(x))  \big \| }_{\eqref{eq: nabla xx diff}}  \nonumber\\
    &+ \underbrace{\|D_d(y_g^*(x))\|}_{\eqref{eq: boundness of Ddy}} \underbrace{\big\| \nabla_{xy} g(x,y_g^*(x))- \nabla_{xy} \tilde{g}_\gamma(x,y_g^*(x))  \big\|}_{\eqref{eq: nabla xx diff}}   \nonumber\\
    &+ \underbrace{\| \nabla_{xy} g(x,y_g^*(x))\| }_{l_{g,1}-\text{smoothness of }g}\underbrace{\big\| D_d(y_g^*(x)) - D_d(y_\gamma^*(x))\big\| }_{\eqref{eq: lipschitzness of directional derive}}
    \Big) \nonumber\\
    \leq & \gamma \Oc(\gamma^{-1}) = \Oc(1).
\end{align*}

\end{proof}

Theorem~\ref{theorem: smoothness in CC setting} allows for $\eta =\Oc(1)$ stepsize choice of running PBGD-BLOCC~\citep{jiang2024primal} in the coupled constraint setting and results in a reduced complexity. As corroborated in Fig.~\ref{fig:BLOCC-comparison}, increasing $\gamma$ does not require decrease in $\eta$.
\begin{proposition}
\label{prop: PBGD-BLOCC}
    Suppose all assumptions in Theorem~\ref{theorem: smoothness in CC setting} hold.
    For $\gamma \geq \frac{l_{F_\gamma,1}}{\mu_g}$, Algorithm \ref{alg: PBGD-BLOCC} with $\eta =\Oc(1) \leq l_{F_\gamma,1}^{-1}$ is achieved for $T={\Oc}(\epsilon^{-1})$ outer-loop complexity to obtain $\frac{1}{T} \sum_{t=0}^{T-1} \|G_{F_\gamma,\mathcal{X}}(x_t) \|^2 <\epsilon$.
\end{proposition}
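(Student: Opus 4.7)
The plan is to mirror the proof of Proposition~\ref{prop: ALT-PBGD}, adapting each step to the coupled-constraint setting. The critical enabling fact is the $\mathcal{O}(1)$-smoothness of $F_\gamma(x)$ established in Theorem~\ref{theorem: smoothness in CC setting}, which permits a step size $\eta=\mathcal{O}(1)$ independent of $\gamma$. Together with the closed-form expression $\nabla F_\gamma(x)=\gamma(\nabla v_\gamma(x)-\nabla v(x))$ from Lemma~\ref{lemma: gradient of v}, one can recast Algorithm~\ref{alg: PBGD-BLOCC} as biased projected gradient descent on $F_\gamma$.

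First, I would introduce the bias $b(x_t):=\nabla F_\gamma(x_t)-g_t$, where $g_t$ is defined in \eqref{eq: gt CC}. Applying the smoothness descent inequality and Lemma~\ref{lemma: projection gradient update inquality} exactly as in \eqref{eq: smooth step 1}--\eqref{eq: descent intermediate step 1}, using $\eta\leq l_{F_\gamma,1}^{-1}$, yields
\begin{align*}
    F_\gamma(x_{t+1})\leq F_\gamma(x_t)-\frac{1}{4\eta}\|x_{t+1}-x_t\|^2+\eta\|b(x_t)\|^2.
\end{align*}
Then, mirroring \eqref{eq: G eta and x update}, the non-expansiveness of the projection yields
\begin{align*}
    \|G_{F_\gamma,\mathcal{X}}(x_t)\|^2\leq \frac{4}{\eta}\bigl(F_\gamma(x_t)-F_\gamma(x_{t+1})\bigr)+5\|b(x_t)\|^2,
\end{align*}
and telescoping over $t=0,\ldots,T-1$ gives $\tfrac{1}{T}\sum_t\|G_{F_\gamma,\mathcal{X}}(x_t)\|^2\leq \mathcal{O}(\eta^{-1}T^{-1})+\mathcal{O}(\tfrac1T\sum_t\|b(x_t)\|^2)$. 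Since $\eta=\mathcal{O}(1)$ by Theorem~\ref{theorem: smoothness in CC setting}, the outer-loop term decays as $\mathcal{O}(T^{-1})$.

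The main technical step, and the key obstacle compared with the uncoupled case, is bounding $\|b(x_t)\|$ in the coupled setting. Using Lemma~\ref{lemma: gradient of v},
\begin{align*}
    b(x_t)=\gamma\bigl(\nabla_x L_\gamma(x_t,y_\gamma^*(x_t),\lambda_\gamma^*(x_t))-\nabla_x L_\gamma(x_t,y_{t+1}^\gamma,\lambda_{t+1}^\gamma)\bigr)-\gamma\bigl(\nabla_x L_g(x_t,y_g^*(x_t),\lambda_g^*(x_t))-\nabla_x L_g(x_t,y_{t+1}^g,\lambda_{t+1}^g)\bigr).
\end{align*}
Both differences involve inexact primal and dual iterates, so one must exploit the Lipschitzness of $\nabla_x L_g$ (and $\nabla_x L_\gamma$) jointly in $y$ and $\lambda$. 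The former follows from Assumption~\ref{assumption: LL smooth} together with Assumption~\ref{assumption: constraint}.\ref{ass: constraint inequality}, while the latter uses the Lipschitzness of $c(\cdot,y)$ (Assumption~\ref{assumption: constraint}.\ref{ass: constraint x local Lipschitz}) together with the uniform bound $\|\lambda_g^*(x)\|\leq B_\lambda$ (Assumption~\ref{assumption: LL boundary assumption}), which also propagates to $\lambda_\gamma^*(x)$ for $\gamma\geq l_{F_\gamma,1}/\mu_g$ by Lemma~\ref{lemma: equiv LL objective}. Assuming the \textsf{MaxMin Solver} returns $\epsilon^2$-accurate primal--dual iterates, i.e. $\|(y_{t+1}^g,\lambda_{t+1}^g)-(y_g^*(x_t),\lambda_g^*(x_t))\|^2,\,\|(y_{t+1}^\gamma,\lambda_{t+1}^\gamma)-(y_\gamma^*(x_t),\lambda_\gamma^*(x_t))\|^2\leq \epsilon^2$, the bound becomes
\begin{align*}
    \|b(x_t)\|^2\leq \mathcal{O}(\gamma^2\epsilon^2)=\mathcal{O}(\epsilon),
\end{align*}
where the last equality uses $\gamma=\mathcal{O}(\epsilon^{-0.5})\geq l_{F_\gamma,1}/\mu_g$ from Lemma~\ref{lemma: distance of yg ygam}.

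Combining the bias bound with the telescoped descent inequality gives $\tfrac{1}{T}\sum_t\|G_{F_\gamma,\mathcal{X}}(x_t)\|^2\leq \mathcal{O}(T^{-1})+\mathcal{O}(\epsilon)$, so $T=\mathcal{O}(\epsilon^{-1})$ iterations suffice, as claimed. The principal hurdle is thus twofold: ensuring that the primal--dual solution mapping $x\mapsto(y_\gamma^*(x),\lambda_\gamma^*(x))$ is well-defined and Lipschitz for large $\gamma$ (used to carry over the inner-loop accuracy bound), and verifying that the $x$-Lipschitz property of $L_g,L_\gamma$ yields a bias that grows at most linearly in $\gamma$ times the inner error. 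Both points are resolved by the strong convexity in $y$ of $\gamma^{-1}f+g$ for $\gamma\geq l_{F_\gamma,1}/\mu_g$, uniqueness of $\lambda_\gamma^*(x)$ from Lemma~\ref{lemma: equiv LL objective}, and the smoothness/Lipschitzness in Assumptions~\ref{assumption: UL}--\ref{assumption: constraint}, after which the argument reduces to the template of Proposition~\ref{prop: ALT-PBGD}.
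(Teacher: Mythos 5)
Your proposal is correct and follows essentially the same route as the paper: recast Algorithm~\ref{alg: PBGD-BLOCC} as biased projected gradient descent on $F_\gamma$, bound the bias via the Lipschitzness of $\nabla_x L_g$ and $\nabla_x L_\gamma$ in $(y,\lambda)$ together with the uniform multiplier bound, and invoke the $\Oc(1)$-smoothness from Theorem~\ref{theorem: smoothness in CC setting} to telescope with $\eta=\Oc(1)$. The only cosmetic difference is the inner-solver accuracy you posit ($\epsilon$ in point distance, giving $\|b(x_t)\|^2=\Oc(\gamma^2\epsilon^2)=\Oc(\epsilon)$) versus the paper's $\Oc(\gamma^{-1}\epsilon)$ (giving $\|b(x_t)\|=\Oc(\epsilon)$); both suffice for the stated $T=\Oc(\epsilon^{-1})$ outer-loop complexity.
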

\begin{remark}
    The \textsf{MaxMin Solver} can be any efficient algorithm, such as the accelerated version of the projected gradient descent ascent algorithm \citep[Algorithm 2]{jiang2024primal}, therefore achieving $\Oc(\epsilon^{-2})$ overall complexity. For $\mathcal{Y}=\mathbb{R}^{d_y}$ and $c(x,y)=A(x)y+B(x)$ linear in $y$, the \textsf{MaxMin Solver} can be the fully single-loop version of the projected gradient descent ascent algorithm \citep[Algorithm 2]{jiang2024primal}, achieving $\tilde{\Oc}(\epsilon^{-1})$ complexity.
\end{remark}
\begin{proof}

     Assume the inner \textsf{MaxMin Solver} finds the $\Oc(\gamma^{-1}\epsilon)$ optimal value in metrics of optimal point distance. Then, the bias term $b(x_t)= \nabla F_\gamma (x_t) - g_t $ is bounded as 
    \begin{align*}
        \|b(x_t)\|
        = & \| \gamma \big( \nabla_x L_\gamma (x_t, y_{t+1}^\gamma, \lambda_{t+1}^\gamma) - \nabla_x L_g(x_t, y_{t+1}^g, \lambda_{t+1}^g)
        \big) \nonumber\\
        & -\gamma \big( \nabla_x L_\gamma (x_t, y_\gamma^*(x_t), \lambda_\gamma^*(x_t)) - \nabla_x L_g(x_t, y_g^*(x_t), \lambda_g^*(x_t))\|
        \big)\\
        \leq & \gamma (\| \nabla_{xy} L_\gamma (x_t, y_\gamma^*(x_t),\lambda_\gamma^*(x_t))\| \| y_\gamma^*(x_t)- y_{t+1}^\gamma\| + \| \nabla_{x} c(x_t, y_\gamma^*(x_t))\| \| \lambda_\gamma^*(x_t)- \lambda_{t+1}^\gamma\| )\\
        &+ \gamma (\| \nabla_{xy} L_g (x_t, y_g^*(x_t),\lambda_g^*(x_t))\| \| y_g^*(x_t)- y_{t+1}^g\| + \| \nabla_{x} c(x_t, y_g^*(x_t))\| \| \lambda_g^*(x_t)- \lambda_{t+1}^g\| )\\
        \leq &\gamma \big( (\gamma^{-1} l_{F_\gamma,1}+ l_{g,1}+B l_{c,1})\| y_\gamma^*(x_t)- y_{t+1}^\gamma\| + l_{c,0}\| \lambda_\gamma^*(x_t)- \lambda_{t+1}^\gamma\| \\
        &+ (l_{g,1}+B l_{c,1})\| y_g^*(x_t)- y_{t+1}^g\| + l_{c,0}\| \lambda_g^*(x_t)- \lambda_{t+1}^g\| 
        \big)
        = \Oc(\epsilon).
    \end{align*}
    where the first inequality uses triangle inequality, the second relies on the smoothness of $f$, $g$, and $c$, and the local Lipschitzness of $c(\cdot,y)$, and upper bounds for $\|\lambda_g^*(x)\|$, and the Cauchy-Schwartz inequality.
    
    According to Theorem~\ref{theorem: smoothness in CC setting}, $F_{\gamma}(x)$ is $l_{F_\gamma,1}=\Oc(1)$-smooth in $\mathcal{X}$. In this way, choose $\eta \leq \frac{1}{l_{F_\gamma,1}}=\Oc(1)$, we obtain
    \begin{align*}
        F_\gamma(x_{t+1}) \leq & F_\gamma(x_t) - \frac{1}{4 \eta }\| x_{t+1}-x_t  \|^2  + \eta \|  b(x_t)\|^2, 
    \end{align*}
    following similar analysis as in \eqref{eq: descent intermediate step 1}. Telescoping therefore gives
    \begin{align*}
        \frac{1}{T} \sum_{t=0}^{T-1} \| G_{F_\gamma,\mathcal{X}}(x_t)\|^2 \leq & \frac{4}{\eta T} (F_\gamma(x_0)-F_\gamma(x_T) ) + \frac{4}{T} \sum_{t=0}^{T-1}\|  b(x_t)\|^2
        \\
        = & {\cal O}(\eta^{-1}T^{-1}) + O(\epsilon)\\
        = &  {\cal O}(T^{-1} + \epsilon)
    \end{align*}
    To achieve $\frac{1}{T} \sum_{t=0}^{T-1} \|G_{F_\gamma,\mathcal{X}}(x_t) \|^2 <\epsilon$, we requires complicity $T=\Oc(\epsilon^{-1})$.
\end{proof}
The proof of Proposition \ref{prop: PBGD-BLOCC} follows directly from \citep{jiang2024primal}.  Our new analysis enables the stepsize choice of 
$\eta =\Oc(1)$, which improves the convergence rate from $T = (\epsilon^{-1.5})$ in \citep{jiang2024primal} to $T = \Oc(\epsilon^{-1})$. 

\begin{figure}[t]
    \centering
    \begin{minipage}{0.48\linewidth}
        \centering
        \includegraphics[width=0.98\linewidth]{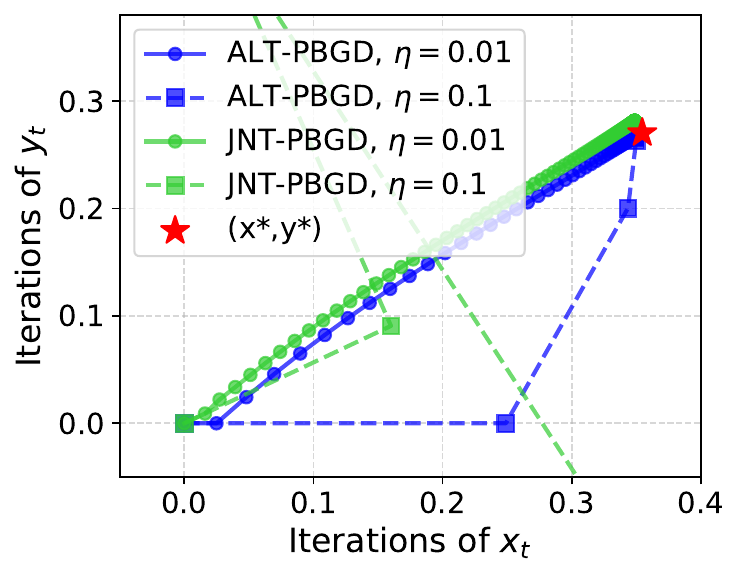}
        \caption{Iterations of solving Example \ref{example:toy_example_1} via ALT-PBGD and JNT-PBGD on $\min_{x, y} \tilde{F}_\gamma(x, y)$.}
    \label{fig:stepsize-comparison}
    \end{minipage}%
    \hfill
    \begin{minipage}{0.48\linewidth}
        \centering
        \includegraphics[width=0.98\linewidth]{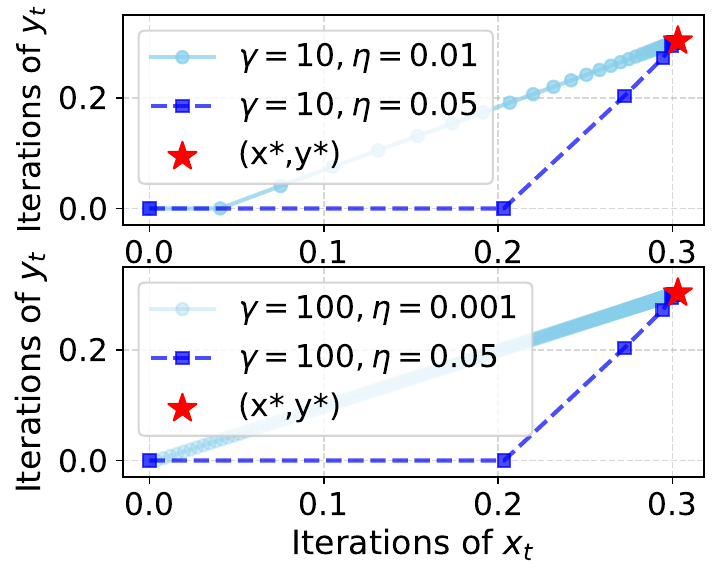}
        \caption{Trajectories of solving Example \ref{example:toy_example_2} via PBGD-BLOCC on $F_\gamma(x)$ with $\gamma = 10, 100$ and varying step-sizes.}
    \label{fig:BLOCC-comparison}
    \end{minipage}
\end{figure}

\subsection{PBGD-Free: efficient value-function-free algorithm for coupled constrained BLO}
\label{sec: BLOCC-VaFF}

By Lemma~\ref{lemma: equiv LL objective}, solving $y_g^*(x)=\arg\min_{y\in \mathcal{Y}(x)} g^\lambda(x,y)$ is equivalent to solving $y_g^*(x)=\arg\min_{y\in \mathcal{Y}} g^\lambda(x,y)$. Building on Lemma~\ref{lemma: distance of yg ygam}, we obtain the following corollary. 
\begin{corollary}
    Suppose the assumptions of Lemma~\ref{lemma: distance of yg ygam} hold. Then the conclusions of Lemma~\ref{lemma: distance of yg ygam} remain valid with $g(x,y)$ replaced by $g^\lambda(x,y)$. In particular, the $\epsilon$-suboptimal solutions of
    \begin{align}
        \min_{x\in \mathcal{X}}\Bigl\{F_\gamma^\lambda(x):= \min_{y\in \mathcal{Y}} f(x,y)
        +\gamma \bigl(g^\lambda(x,y)-\min_{y\in \mathcal{Y}}g^\lambda(x,y)\bigr)\Bigr\},
    \end{align} 
    are $\epsilon$-suboptimal local solutions with respect to the squared-distance metric for the $\epsilon$-approximation problem \eqref{eq: original problem 1}, as defined in \eqref{eq: epsilon app prob}, with $\gamma=\Oc(\epsilon^{-0.5})$ and $\gamma \geq \tfrac{l_{F_\gamma,1}}{\mu_g}$. 
    Moreover, suppose the assumptions in Lemma~\ref{lemma: tighter bound for y gam yg} hold. Then the conclusions of Lemma~\ref{lemma: distance of yg ygam} also remain valid with $g(x,y)$ replaced by $g^\lambda(x,y)$. i.e.
    \begin{align}
    \| \phi(x)-F^\lambda_\gamma(x) \| =& \Oc(\gamma^{-\frac{\alpha}{2-\alpha}}+\delta), ~~ \text{and}~~ 
    \|y_g^*(x)-y_\gamma^\lambda(x)\| = \Oc\big(\gamma^{-\frac{1}{2-\alpha}}+\delta^{\frac{1}{2}}\gamma^{-\frac{1}{2}}\big),
    \end{align}
    where $y_g^*(x)=\arg\min_{y\in \mathcal{Y}(x)} g^\lambda(x,y)$ coincides with the one in \eqref{eq: lower-level problem}, and 
    $y_\gamma^\lambda(x):= \arg\min_{y\in \mathcal{Y}(x)} \gamma^{-1}f(x,y)+g^\lambda(x,y)$.
\end{corollary}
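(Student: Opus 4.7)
The plan is to reduce the coupled-constraint corollary directly to the uncoupled-constraint results in Lemma~\ref{lemma: distance of yg ygam} and Lemma~\ref{lemma: tighter bound for y gam yg} by exploiting the Lagrangian reformulation furnished in Lemma~\ref{lemma: equiv LL objective}. That lemma shows $y_g^*(x) = \arg\min_{y\in\mathcal{Y}} g^\lambda(x,y)$ and $v(x) = \min_{y\in\mathcal{Y}(x)} g(x,y) = \min_{y\in\mathcal{Y}} g^\lambda(x,y)$, so the coupled inner problem is equivalent to an \emph{uncoupled} one on $\mathcal{Y}$ with surrogate objective $g^\lambda(x,y) = g(x,y) + \langle \lambda_g^*(x), c(x,y)\rangle$. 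The penalty object $F_\gamma^\lambda(x)$ therefore has exactly the same structural form as the uncoupled $F_\gamma(x)$ analyzed in Sec.~\ref{sec: Improved Convergence Rate under Uncoupled Constraints}, with $g$ replaced by $g^\lambda$ and $\mathcal{Y}(x)$ replaced by $\mathcal{Y}$.

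Before invoking the uncoupled lemmas, I would verify that $g^\lambda(x,\cdot)$ inherits the regularity used in their proofs. Strong convexity of $g^\lambda(x,\cdot)$ with modulus $\mu_g$ follows from Assumption~\ref{assumption: LL}.\ref{ass: LL SC} together with convexity of each $c_i(x,\cdot)$ and non-negativity of $\lambda_g^*(x)$ (Assumption~\ref{assumption: constraint}.\ref{ass: constraint inequality}). Smoothness with constant at most $l_{g,1}+B_\lambda l_{c,1}$ follows from Assumption~\ref{assumption: LL}.\ref{ass: LL smooth}, the smoothness of $c$, and the uniform bound $\|\lambda_g^*(x)\| \le B_\lambda$ from Assumption~\ref{assumption: LL boundary assumption}. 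The feasible set $\mathcal{Y}$ is closed, convex, and non-empty by Assumption~\ref{assumption: constraint}.\ref{ass: constraint domain}, so all hypotheses of the two uncoupled lemmas are met for the surrogate problem.

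With these verifications, both conclusions follow by a direct replay of the uncoupled arguments. For the first part, choosing $\gamma = \Oc(\epsilon^{-1/2})$ with $\gamma \ge l_{F_\gamma,1}/\mu_g$ makes $\gamma^{-1}f+g^\lambda$ strongly convex in $y$, so the estimates $\|\phi(x)-F_\gamma^\lambda(x)\| = \Oc(\gamma^{-1})$ and $\|y_g^*(x)-y_\gamma^\lambda(x)\| = \Theta(\gamma^{-1})$ transfer verbatim, delivering the desired $\epsilon$-suboptimality equivalence with \eqref{eq: epsilon app prob}. For the second part, the proof of Lemma~\ref{lemma: tighter bound for y gam yg} invokes only the PL/QG property of the lower-level objective and the $(\delta,\alpha)$-flatness of $f(x,\cdot)$ at $y_g^*(x)$; neither is disturbed by the reformulation (the former because $g^\lambda(x,\cdot)$ remains $\mu_g$-strongly convex and hence PL, the latter because $y_g^*(x)$ itself is unchanged), so the same chain of inequalities yields $\Oc(\gamma^{-\alpha/(2-\alpha)}+\delta)$ and $\Oc(\gamma^{-1/(2-\alpha)}+\delta^{1/2}\gamma^{-1/2})$.

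The main subtlety will be handling the difference between minimizing $g^\lambda(x,\cdot)$ over $\mathcal{Y}$ versus over $\mathcal{Y}(x)$: for $y\in\mathcal{Y}\setminus\mathcal{Y}(x)$, the penalization $\langle \lambda_g^*(x), c(x,y)\rangle$ need not be nonpositive, so the two inner problems genuinely differ away from $y_g^*(x)$. However, the saddle-point guarantee in Lemma~\ref{lemma: equiv LL objective} pins down $y_g^*(x)$ and the value $v(x)$ identically under both formulations, and the $\Oc(\gamma^{-1})$ proximity of $y_\gamma^\lambda(x)$ to $y_g^*(x)$, combined with strict complementarity and LICQ from Assumption~\ref{assumption: LL boundary assumption}, confines the perturbed solution to a neighborhood where the Lagrangian surrogate is a faithful proxy for the original constrained problem. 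Beyond this localization argument, no new estimates are needed relative to the uncoupled proofs.
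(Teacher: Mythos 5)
Your proposal is correct and follows exactly the route the paper intends: the paper gives no explicit proof of this corollary, simply asserting that it follows from the Lagrangian equivalence in Lemma~\ref{lemma: equiv LL objective} combined with Lemmas~\ref{lemma: distance of yg ygam} and~\ref{lemma: tighter bound for y gam yg}, and your reduction — verifying that $g^\lambda(x,\cdot)$ inherits $\mu_g$-strong convexity and $(l_{g,1}+B_\lambda l_{c,1})$-smoothness, noting that $y_g^*(x)$ and $v(x)$ are unchanged so the flatness hypothesis transfers, and then replaying the uncoupled arguments — is precisely that argument spelled out.
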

In other words, the coupled-constraint BLO problem \eqref{eq: original problem 1} can be reformulated as an equivalent non-coupled problem with a strongly convex lower-level objective $g^\lambda(x,y)$. When $f(x,\cdot)$ is $(\delta,\alpha)$-flat, Lemma~\ref{lemma: tighter bound for y gam yg} shows $\nabla_x f(x,y_\gamma^\lambda(x))$ with $y_\gamma^\lambda (x)$ can be a good estimate for $\nabla F_\gamma^\lambda(x)$,  
with bias bounded with $\Oc(\gamma^{-\frac{\alpha-1}{2-\alpha}} + \delta^{\frac{1}{2}}\gamma^{\frac{1}{2}})$. However, finding $y_\gamma^\lambda(x)$ requires access to $\lambda_g^*(x)$, which involves another round of iteration to find the value function. In the following, we show that another surrogate can bypass this.

\begin{lemma}
\label{lemma: constrained minimizer bound}
Consider $\mathcal{Y}(x)=\{y\in\mathcal{Y}:c(x,y)\leq 0 \}$. Suppose Assumption \ref{assumption: UL}, \ref{assumption: LL}, \ref{assumption: constraint}, \ref{assumption: Hessian Lipschitz}, \ref{assumption: LL boundary assumption} hold.
For any fixed $x\in \mathcal{X}$, let us denote $y_\gamma^*(x)\in \arg\min_{y\in \mathcal{Y}(x)}\frac{1}{\gamma}f(x,y)+g(x,y)$. Then, its distance to $y_\gamma^\lambda(x)$ is bounded with $\| y_\gamma^*(x)-y_\gamma^\lambda(x)\|=\Oc(\frac{1}{\gamma})$.
\end{lemma}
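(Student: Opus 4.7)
The plan is to reduce $\|y_\gamma^*(x) - y_\gamma^\lambda(x)\|$ to a sensitivity bound on the Lagrange multipliers, and then invoke classical KKT sensitivity. Abbreviate $y_1 := y_\gamma^*(x)$, $y_2 := y_\gamma^\lambda(x)$, $\lambda_1 := \lambda_\gamma^*(x)$, $\lambda_2 := \lambda_g^*(x)$, and introduce the family of Lagrangian-type functions
\begin{align*}
H_\lambda(y) := \gamma^{-1} f(x,y) + g(x,y) + \langle \lambda,\, c(x,y)\rangle .
\end{align*}
For $\gamma \geq l_{F_\gamma,1}/\mu_g$, Assumption~\ref{assumption: LL}.\ref{ass: LL SC} together with the convexity of $c(x,\cdot)$ in Assumption~\ref{assumption: constraint}.\ref{ass: constraint inequality} ensures that $H_\lambda(\cdot)$ is $\mu_g$-strongly convex in $y$ uniformly in $\lambda \geq 0$. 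By Lemma~\ref{lemma: equiv LL objective} applied to the $\gamma^{-1}f+g$ problem, $y_1$ is the unique minimizer of $H_{\lambda_1}$, and by construction $y_2$ is the unique minimizer of $H_{\lambda_2}$, both on their respective feasible sets (note $y_1,y_2\in\mathcal{Y}(x)\subseteq \mathcal{Y}$).

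\textbf{Step 1 (strong-convexity reduction).} Applying the strongly convex minimizer inequality twice — once with $H_{\lambda_2}$ at its minimizer $y_2$ tested against the admissible $y_1$, and once with $H_{\lambda_1}$ at $y_1$ tested against $y_2$ — and then summing, the first-order variational-inequality terms are nonnegative and drop out, leaving
\begin{align*}
\bigl(H_{\lambda_2}(y_1) - H_{\lambda_2}(y_2)\bigr) + \bigl(H_{\lambda_1}(y_2) - H_{\lambda_1}(y_1)\bigr) \geq \mu_g \|y_1 - y_2\|^2 .
\end{align*}
The $\gamma^{-1}f + g$ contributions cancel across the two parentheses, so the left-hand side collapses to $\langle \lambda_2 - \lambda_1,\; c(x,y_1) - c(x,y_2)\rangle$. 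Using Cauchy–Schwarz together with the Lipschitzness of $c(x,\cdot)$ in $y$ (inherited from smoothness in Assumption~\ref{assumption: constraint}.\ref{ass: constraint inequality}) on the bounded neighborhood of interest, this yields
\begin{align*}
\|y_\gamma^*(x) - y_\gamma^\lambda(x)\| \leq \frac{l_{c,0}}{\mu_g}\,\|\lambda_\gamma^*(x) - \lambda_g^*(x)\| .
\end{align*}

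\textbf{Step 2 (multiplier perturbation).} The final step is to show $\|\lambda_\gamma^*(x) - \lambda_g^*(x)\| = \Oc(\gamma^{-1})$. The KKT systems satisfied by $(y_g^*(x),\lambda_g^*(x))$ and $(y_\gamma^*(x),\lambda_\gamma^*(x))$ coincide except for the extra term $\gamma^{-1}\nabla_y f(x, y_\gamma^*(x))$ in the stationarity row of the latter, which is $\Oc(\gamma^{-1})$ by Assumption~\ref{assumption: UL}.\ref{ass: UL y Lipschitz}. Since LICQ at $y_g^*(x)$ (Assumption~\ref{assumption: constraint}.\ref{ass: licq}), strict complementarity (Assumption~\ref{assumption: LL boundary assumption}), and nonsingularity of $\nabla_{yy} L_g$ (from $\mu_g$-strong convexity) are precisely the hypotheses of Fiacco's classical sensitivity theorem, the KKT solution map $\gamma^{-1}\mapsto (y_\gamma^*(x),\lambda_\gamma^*(x))$ is locally Lipschitz. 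This delivers $\|\lambda_\gamma^*(x) - \lambda_g^*(x)\| = \Oc(\gamma^{-1})$ (with the analogous $\Oc(\gamma^{-1})$ bound on $\|y_g^*(x)-y_\gamma^*(x)\|$ falling out as a byproduct), and combining with Step~1 completes the proof.

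\textbf{Main obstacle.} The delicate point is Step~2: one has to verify that additional active constraints contributed by the boundary of $\mathcal{Y}$ through the local chart $\varphi(y)\leq 0$ do not break LICQ or strict complementarity at the perturbed solution $y_\gamma^*(x)$. This is resolved by observing that $y_\gamma^*(x)\to y_g^*(x)$ as $\gamma\to\infty$, that under strict complementarity the active set stabilizes in a neighborhood of $y_g^*(x)$, and that the merged constraint family $\{\varphi_j\}\cup\{c_i\}$ inherits LICQ locally from Assumption~\ref{assumption: constraint}.\ref{ass: licq}, so Fiacco's theorem applies uniformly once $\gamma$ is sufficiently large.
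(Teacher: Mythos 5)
Your proof is correct in substance, but it takes a genuinely different route from the paper's. The paper's argument is a two-step triangle inequality through the unperturbed solution $y_g^*(x)$: since $y_g^*(x)$ minimizes both $g$ over $\mathcal{Y}(x)$ and $g^\lambda$ over $\mathcal{Y}$ (Lemma~\ref{lemma: equiv LL objective}), the standard penalty-perturbation bound of Lemma~\ref{lemma: distance of yg ygam}/Remark~\ref{remark: generalizaed dist} (which needs only quadratic growth of the lower-level objective and Lipschitzness of $f(x,\cdot)$) is applied once to the pair $(g,\ \gamma^{-1}f+g)$ and once to the pair $(g^\lambda,\ \gamma^{-1}f+g^\lambda)$, giving $\|y_\gamma^*(x)-y_g^*(x)\|=\Oc(\gamma^{-1})$ and $\|y_\gamma^\lambda(x)-y_g^*(x)\|=\Oc(\gamma^{-1})$ separately. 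You instead compare $y_\gamma^*(x)$ and $y_\gamma^\lambda(x)$ directly by viewing them as minimizers of two Lagrangians differing only in the multiplier, reducing the distance to $\|\lambda_\gamma^*(x)-\lambda_g^*(x)\|$ via strong monotonicity, and then invoking Fiacco's sensitivity theorem in the parameter $\gamma^{-1}$. Both arguments are valid under the lemma's hypotheses; yours is heavier machinery (it genuinely needs LICQ, strict complementarity, and twice differentiability, which the paper's proof does not), but it yields the multiplier bound $\|\lambda_\gamma^*(x)-\lambda_g^*(x)\|=\Oc(\gamma^{-1})$ as a useful byproduct, whereas the paper's route is more elementary and avoids any discussion of active-set stability. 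Two small points of hygiene: your Step~1 needs both variational inequalities to be posed over the same feasible set (take both Lagrangian minimizations over $\mathcal{Y}$, which is what Lemma~\ref{lemma: equiv LL objective} provides), and $\gamma\geq l_{f,1}/\mu_g$ only makes $H_\lambda$ convex rather than $\mu_g$-strongly convex — one should take $\gamma\geq 2 l_{f,1}/\mu_g$, say, to retain a strong-convexity modulus $\mu_g/2$; neither affects the $\Oc(\gamma^{-1})$ conclusion.
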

\begin{proof}
Under Assumption \ref{assumption: UL}.\ref{ass: UL smooth} and \ref{assumption: LL}.\ref{ass: LL SC}, when choosing $\gamma\geq\frac{l_{F_\gamma,1}}{\mu_g}$, 
$g(x,y)-\min_{y\in \mathcal{Y}(x)}g(x,y)$ satisfies $g(x,y)-\min_{y\in \mathcal{Y}(x)}g(x,y) \geq \mu_g d^2_{S_g(x)}(y)$ and $g(x,y)-\min_{y\in \mathcal{Y}(x)}g(x,y)=0$ iff $d^2_{S_g(x)}(y)=0$, following \citep[Lemma 1]{jiang2024primal}. In this way, the conditions for Remark~\ref{remark: generalizaed dist} is satisfied. In this way, $\| y_\gamma^*(x)-y_g^*(x)\|=\Oc(\frac{1}{\gamma})$, where $y_g^*(x)$ is a singleton by the strongly convexity of $g$ and the convexity and closeness of the domain $\mathcal{Y}(x)$.
Similarly, $\| y_\gamma^\lambda(x)-y_g^*(x)\|=\Oc(\frac{1}{\gamma})$. The lemma is therefore proved by triangle inequality.
\end{proof}
In this way, we can show that Algorithm \ref{alg: PBGD-Free} works well even in the case of BLO problems with coupled constraints.

\begin{theorem}
\label{thm: no value function for BLO with CC}
    Consider $\mathcal{Y}(x)=\{y\in\mathcal{Y}:c(x,y)\leq 0 \}$. Suppose Assumption \ref{assumption: UL}, \ref{assumption: LL}, \ref{assumption: constraint}, \ref{assumption: Hessian Lipschitz}, \ref{assumption: LL boundary assumption} hold,
    $\lambda_g^*(x)$ is uniformly bounded, and for all $x\in \mathcal{X}$, $f(x,\cdot)$ is $(\delta(x),\alpha)$-flat at $y_g$ with the same $\alpha \in (1,1.5)$ and modulus $c=\Oc(1)$. For iterations using the {\color{blue}naive version of Algorithm \ref{alg: PBGD-Free}} with $\eta \leq  l_{F_\gamma,1}^{-1}$, suppose there exists $\delta$ such that $\frac{1}{T}\sum_{t=0}^{T-1}  \delta(x) \leq \delta$ along the trajectory, then by choosing $\gamma = \Oc(\delta^{-\frac{2-\alpha}{2}})$, it holds that
    \begin{align}
    \frac{1}{T} \sum_{t=0}^{T-1}\|G_{F_\gamma^\lambda,\mathcal{X}}(x_t)\|^2 \leq \Oc(T^{-1} + \delta^{\frac{2(\alpha-1)}{\alpha}}).
    \end{align}
\end{theorem}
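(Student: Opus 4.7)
The plan is to adapt the PBGD-Free analysis of Theorem~\ref{thm: no value algorithm convergence} to the coupled-constrained setting by exploiting the Lagrangian reduction of Lemma~\ref{lemma: equiv LL objective}. Since $\min_{y\in\mathcal{Y}(x)}g(x,y)$ is equivalent to $\min_{y\in\mathcal{Y}}g^\lambda(x,y)$, with $g^\lambda$ strongly convex in $y$ over the uncoupled set $\mathcal{Y}$, the reformulation $F_\gamma^\lambda(x)$ inherits the structural properties needed for the uncoupled analysis. The corollary preceding Lemma~\ref{lemma: constrained minimizer bound} extends Lemma~\ref{lemma: tighter bound for y gam yg} to this reformulation, giving the sharper bound $\|y_g^*(x)-y_\gamma^\lambda(x)\|=\Oc(\gamma^{-1/(2-\alpha)}+\delta^{1/2}\gamma^{-1/2})$ under $(\delta,\alpha)$-flatness, which is the key quantity that drives the rate.

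First, I would establish $\Oc(1)$-smoothness of $F_\gamma^\lambda(x)$ via Theorem~\ref{theorem: smoothness in CC setting} together with the equivalence between $F_\gamma$ and $F_\gamma^\lambda$, so that a constant step size $\eta=\Oc(1)\leq l_{F_\gamma^\lambda,1}^{-1}$ is admissible and the standard biased descent inequality
\begin{align*}
F_\gamma^\lambda(x_{t+1})-F_\gamma^\lambda(x_t)\leq -\tfrac{1}{4\eta}\|x_{t+1}-x_t\|^2+\eta\|b(x_t)\|^2
\end{align*}
holds, with $b(x_t)=\nabla F_\gamma^\lambda(x_t)-\nabla_x f(x_t,y_{t+1}^\gamma)$ and $y_{t+1}^\gamma$ the inner-loop approximation of $y_\gamma^*(x_t)=\arg\min_{y\in\mathcal{Y}(x_t)}\gamma^{-1}f(x_t,y)+g(x_t,y)$.

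The main technical step is bounding $\|b(x_t)\|^2$. Differentiating $F_\gamma^\lambda$ yields three groups of terms. Group (i) is $\nabla_x f(x_t,y_\gamma^\lambda(x_t))-\nabla_x f(x_t,y_{t+1}^\gamma)$, which I would bound by $l_{F_\gamma,1}\|y_\gamma^\lambda-y_{t+1}^\gamma\|$, then split via triangle inequality into $\|y_\gamma^\lambda-y_\gamma^*\|=\Oc(\gamma^{-1})$ (Lemma~\ref{lemma: constrained minimizer bound}) and $\|y_\gamma^*-y_{t+1}^\gamma\|$, which is driven to $\Oc(\gamma^{-1})$ by $\Oc(\log\epsilon^{-1})$ inner projected-gradient iterations under strong convexity of $\gamma^{-1}f+g$. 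Group (ii) is the omitted penalty-gradient $\gamma(\nabla_x g(x_t,y_\gamma^\lambda)-\nabla_x g(x_t,y_g^*))+\gamma\langle\lambda_g^*(x_t),\nabla_x c(x_t,y_\gamma^\lambda)-\nabla_x c(x_t,y_g^*)\rangle$, bounded by $\gamma(l_{g,1}+B_\lambda l_{c,1})\|y_\gamma^\lambda-y_g^*\|$, which after substituting the flatness estimate becomes $\Oc(\gamma^{-(\alpha-1)/(2-\alpha)}+\delta^{1/2}\gamma^{1/2})$. Group (iii) is the Danskin-type residual $\gamma\nabla\lambda_g^*(x_t)^{\top}c(x_t,y_\gamma^\lambda(x_t))$ arising from differentiating $g^\lambda$ in $x$: invoking strict complementarity (Assumption~\ref{assumption: LL boundary assumption}), $[\nabla\lambda_g^*(x_t)]_i=0$ on a neighborhood for inactive $i$, while for active $i$ we have $c_i(x_t,y_g^*(x_t))=0$ so that $c_i(x_t,y_\gamma^\lambda(x_t))=\Oc(\|y_\gamma^\lambda-y_g^*\|)$ by Taylor expansion, reducing (iii) to the same order as (ii). Squaring produces $\|b(x_t)\|^2=\Oc(\gamma^{-2(\alpha-1)/(2-\alpha)}+\delta\gamma)$.

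To conclude, I would telescope the descent inequality over $t=0,\ldots,T-1$, convert $\|x_{t+1}-x_t\|^2$ into $\|G_{F_\gamma^\lambda,\mathcal{X}}(x_t)\|^2$ via non-expansiveness of the projection as in \eqref{eq: G eta and x update}, and then set $\gamma=\Theta(\delta^{-(2-\alpha)/\alpha})$ to balance the two bias terms; both collapse to $\delta^{2(\alpha-1)/\alpha}$, giving the claimed rate $\Oc(T^{-1}+\delta^{2(\alpha-1)/\alpha})$. The hard part will be the rigorous treatment of group (iii): although $\nabla\lambda_g^*(x_t)$ is not computationally accessible, it is analytically well-defined near $y_g^*(x_t)$ under LICQ-based sensitivity, and its effect on the bias must be controlled purely through complementarity (so that $c(x_t,\cdot)$ only ``sees'' active indices) and the flatness-driven proximity $\|y_\gamma^\lambda-y_g^*\|$, rather than through any explicit formula for the multiplier derivative.
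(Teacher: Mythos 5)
Your overall architecture matches the paper's: reduce to the Lagrangian reformulation $F_\gamma^\lambda$, bound the bias of replacing the true derivative by $\nabla_x f(x_t,y_{t+1}^\gamma)$, control the multiplier term through strict complementarity, telescope, and balance with $\gamma=\Theta(\delta^{-\frac{2-\alpha}{\alpha}})$. Your three bias groups correspond exactly to the terms in the paper's expansion \eqref{eq: F lambda directional derivative}: group (i) is handled there via Lemma~\ref{lemma: constrained minimizer bound}, group (ii) via Lemma~\ref{lemma: gradient equals zero generalization CC} and Lemma~\ref{lemma: tighter bound for y gam yg}, and your group (iii) is precisely the content of Lemma~\ref{lemma: lambda gradient equals zero} combined with the complementarity argument of Lemma~\ref{lemma: ddcs}. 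The balancing at the end is also identical.

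There is, however, one step that is not justified as you state it: the claim that $F_\gamma^\lambda$ is $\Oc(1)$-smooth ``via Theorem~\ref{theorem: smoothness in CC setting} together with the equivalence between $F_\gamma$ and $F_\gamma^\lambda$.'' That theorem establishes $\Oc(1)$-smoothness of $F_\gamma$, which is a \emph{different function} from $F_\gamma^\lambda$: their inner minimizers $y_\gamma^*(x)$ and $y_\gamma^\lambda(x)$ only agree up to $\Oc(\gamma^{-1})$ (Lemma~\ref{lemma: constrained minimizer bound}), so smoothness does not transfer by ``equivalence.'' Moreover, $\nabla F_\gamma^\lambda(x)$ depends on $\nabla\lambda_g^*(x)$, which is differentiable but has no accessible form; the paper therefore never invokes a standard descent lemma for $F_\gamma^\lambda$. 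Instead it proves an \emph{approximate} Lipschitz property of the directional derivative, $\|D_d(F_\gamma^\lambda(x_1))-D_d(F_\gamma^\lambda(x_2))\|\leq l_{F_\gamma,1}(1+L_y^\lambda)\|x_1-x_2\|+\Oc(\gamma^{-\frac{\alpha-1}{2-\alpha}}+\delta^{\frac{1}{2}}\gamma^{\frac{1}{2}})$ as in \eqref{eq: non-smooth F Lipschitz}, and runs the descent step through a first-order Taylor expansion with integral remainder; the extra additive error is then absorbed because it is of the same order as the bias. Your group decomposition already contains the ingredients to establish this surrogate (the explicit derivative decomposition with the multiplier term killed by complementarity), so the gap is repairable, but as written the quadratic upper bound $F_\gamma^\lambda(x_{t+1})-F_\gamma^\lambda(x_t)\leq-\tfrac{1}{4\eta}\|x_{t+1}-x_t\|^2+\eta\|b(x_t)\|^2$ rests on a smoothness claim you have not proved. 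A similar remark applies to converting $\|x_{t+1}-x_t\|^2$ into $\|G_{F_\gamma^\lambda,\mathcal{X}}(x_t)\|^2$: since $\nabla F_\gamma^\lambda$ is unavailable, the paper passes through the characterization $G_{F_\gamma^\lambda,\mathcal{X}}(x_t)=d_{\partial F_\gamma^\lambda(x_t)+N_{\mathcal{X}}(x_t)}(0)$ and bounds the distance from the used update direction to the subdifferential, rather than applying the projection non-expansiveness to $\nabla F_\gamma^\lambda(x_t)$ directly.
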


Before proving Theorem~\ref{thm: no value function for BLO with CC}, we present a useful lemma.
\begin{lemma}
\label{lemma: lambda gradient equals zero}
Under all assumptions and notations in Lemma~\ref{lemma: gradient equals zero generalization CC}, and suppose $g(x,y)$, $c(x,y)$ is locally Lipschitz at $(x,y_g^*(x))$, there is
\begin{align}
    \Big\langle \lim_{r\downarrow 0}\frac{\lambda_g^*(x+rd)-\lambda_g^*(x)}{r},c(x,y_g^*(x))\Big\rangle = 0.
\end{align}
\end{lemma}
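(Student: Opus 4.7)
The plan is to exploit complementary slackness together with Lemma~\ref{lemma: ddcs}, splitting the inner product index-wise into the active and inactive parts of the constraints.

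First, I would write the inner product explicitly as a sum over $i\in [d_c]$:
\begin{align*}
\Big\langle D_d(\lambda_g^*(x)),\,c(x,y_g^*(x))\Big\rangle
= \sum_{i\in \mathcal{I}^0}[D_d(\lambda_g^*(x))]_i\, c_i(x,y_g^*(x)) + \sum_{i\in \mathcal{I}^-}[D_d(\lambda_g^*(x))]_i\, c_i(x,y_g^*(x)),
\end{align*}
where $D_d(\lambda_g^*(x)):=\lim_{r\downarrow 0} r^{-1}(\lambda_g^*(x+rd)-\lambda_g^*(x))$. The existence of this directional derivative is guaranteed by the local Lipschitzness of $\lambda_g^*(x)$, which follows under our assumptions from \citep[Lemma 2]{jiang2024primal}.

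Next, for $i\in \mathcal{I}^0$, by definition of the active set we have $c_i(x,y_g^*(x))=0$, so every such summand vanishes regardless of the value of $[D_d(\lambda_g^*(x))]_i$. For $i\in \mathcal{I}^-$, Lemma~\ref{lemma: ddcs} already established $[D_d(\lambda_g^*(x))]_i = 0$, since continuity of $c_i$ in $(x,y)$ combined with Lipschitzness of $y_g^*(x)$ ensures $c_i(x+rd,y_g^*(x+rd))<0$ for all sufficiently small $r>0$, which forces $[\lambda_g^*(x+rd)]_i=0=[\lambda_g^*(x)]_i$ by complementary slackness. Hence both summands are zero and the claim follows.

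The argument is essentially a restatement of complementary slackness in directional-derivative form, so I do not anticipate a genuine obstacle; the only subtle point is justifying that the directional derivative $D_d(\lambda_g^*(x))$ is well defined, which is precisely where the local Lipschitzness of $c(x,y)$ and $g(x,y)$ (together with the strict complementarity and LICQ in Assumption~\ref{assumption: LL boundary assumption}) are invoked. Once those standing assumptions are in place, the index-splitting argument above yields the identity immediately.
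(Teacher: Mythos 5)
Your proof is correct, but it follows a genuinely different route from the paper's. The paper derives the identity indirectly: it writes the directional derivative $D_d(v(x))$ of the value function from the definition of the Lagrangian $L_g$, Taylor-expands, uses Lemma~\ref{lemma: gradient equals zero generalization CC} to annihilate the term pairing $\nabla_y g + \langle\lambda_g^*,\nabla_y c\rangle$ with $D_d(y_g^*(x))$, and then compares the result with the known closed form $\nabla v(x)=\nabla_x g(x,y_g^*(x))+\langle\lambda_g^*(x),\nabla_x c(x,y_g^*(x))\rangle$ from \eqref{eq: value function gradient generalized form}; the leftover term is exactly $\langle D_d(\lambda_g^*(x)),c(x,y_g^*(x))\rangle$, which must therefore vanish. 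You instead argue directly by splitting the inner product over the active and inactive index sets: active terms vanish because $c_i(x,y_g^*(x))=0$, and inactive terms vanish because Lemma~\ref{lemma: ddcs} gives $[D_d(\lambda_g^*(x))]_i=0$ there. Your argument is more elementary and self-contained; in fact it shows the difference quotient $\langle r^{-1}(\lambda_g^*(x+rd)-\lambda_g^*(x)),c(x,y_g^*(x))\rangle$ is identically zero for all sufficiently small $r$ (since the inactive multipliers are zero at both $x$ and $x+rd$), so the only limit whose existence you truly need is trivial. What the paper's route buys is independence from the index decomposition: it leans only on the already-established differentiability of $v(x)$ and the first variational lemma, at the cost of a longer Taylor-expansion computation. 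Both proofs correctly flag the same subtle point, namely that existence of the full directional derivative $D_d(\lambda_g^*(x))$ rests on the sensitivity result under LICQ and strict complementarity rather than on Lipschitzness alone.
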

\begin{proof}
    By definition, the directional derivative of $v(x)$ in unit direction $d\in \mathbb{R}^{d_x}$ is 
\begin{align*}
    D_d(v) = & \lim_{r \downarrow 0} \frac{1}{r} \Big(g(x+rd,y_g^*(x+rd)) + \langle \lambda_g^*(x+rd),c(x+rd,y_g^*(x+rd)) \rangle \\
    & - \big( g(x,y_g^*(x)) + \langle \lambda_g^*(x),c(x,y_g^*(x)) \rangle \big)
    \Big)\\
    = & \lim_{r \downarrow 0} \frac{1}{r} \Big(g(x+rd,y_g^*(x+rd)) + \langle \lambda_g^*(x),c(x+rd,y_g^*(x+rd)) \rangle \\
    & + \langle \lambda_g^*(x+rd)-\lambda_g^*(x),c(x+rd,y_g^*(x+rd)) \rangle  - \big( g(x,y_g^*(x)) + \langle \lambda_g^*(x),c(x,y_g^*(x)) \rangle \big)
    \Big)\\
    \stackrel{(a)}{=} & \langle \nabla_x g(x,y_g^*(x))+\langle \lambda_g^*(x),\nabla_x c(x,y_g^*(x)) \rangle,d \rangle \\
    & + \langle \nabla_y g(x,y_g^*(x)) +\langle \lambda_g^*(x),\nabla_y c(x,y_g^*(x)) \rangle, \lim_{r\downarrow 0}\frac{y_{g}^*(x+rd)-y_g^*(x)}{r} \rangle \\
    & + \langle \lim_{r\downarrow 0}\frac{\lambda_g^*(x+rd)-\lambda_g^*(x)}{r},c(x,y_g^*(x))\rangle \\
    \stackrel{(b)}{=} & \langle \nabla_x g(x,y_g^*(x))+\langle \lambda_g^*(x),\nabla_x c(x,y_g^*(x)) \rangle,d \rangle + \langle \lim_{r\downarrow 0}\frac{\lambda_g^*(x+rd)-\lambda_g^*(x)}{r},c(x,y_g^*(x))\rangle ,
\end{align*}
where (a) follows Taylor expansion and the Lipschitzness of $y_g^*(x)$ (cf. Remark~\ref{remark: Lipschitz of S(x)}); (b) is by Lemma~\ref{lemma: gradient equals zero generalization CC}. Moreover, $v(x)$ is differentiable with its gradient in \eqref{eq: value function gradient generalized form}. We can thus conclude the proof.

\end{proof}

\begin{proof}[Proof of Theorem~\ref{thm: no value function for BLO with CC}]



Under the boundary smoothness condition, $\lambda_g^*(x)$ is differentiable~\citep[Theorem 2.1]{fiacco1976sensitivity}. However, we do not know its explicit form and therefore, the gradient of
\begin{align}
    v_\gamma^\lambda(x)=\min_{y\in \mathcal{Y}}\tilde{g}_\gamma^\lambda(x,y)= \min_{y\in \mathcal{Y}} \{ \gamma^{-1}f(x,y)+g(x,y) + \langle\lambda_g^*(x),c(x,y)\rangle \},
\end{align}
is not known. 
Denote $y_g$, $y_{g,r}$, $y_\lambda$, $y_{\lambda,r}$ respectively in short for $y_g^*(x)$, $y_g^*(x+rd)$, $y_\gamma^\lambda(x)$, $y_\gamma^\lambda(x+rd)$. 
By definition of directional derivative, there is
\begin{align}
    D_d(F_\gamma^\lambda(x))=& \lim_{r\downarrow 0}\frac{1}{r} \Big(\big(f(x+rd,y_{\lambda,r})+
    \gamma g(x+rd,y_{\lambda,r})+\gamma \langle \lambda_g^*(x+rd),c(x+rd,y_{\lambda,r}) \rangle
    \big) \nonumber \\
    & -\big(f(x,y_{\lambda})+
    \gamma g(x,y_{\lambda})+ \gamma \langle \lambda_g^*(x),c(x,y_{\lambda}) \rangle
    \big) \Big) - \gamma \langle \nabla v(x),d\rangle \nonumber\\
    = & 
    \lim_{r\downarrow 0} \frac{1}{r} \Big(\big(f(x+rd,y_{\lambda,r})+
    \gamma g(x+rd,y_{\lambda,r})+\gamma \langle \lambda_g^*(x), c(x+rd,y_{\lambda,r}) \rangle
    \big) \nonumber \\
    &~~~~~~~~~~~~ + \gamma \langle \lambda_g^*(x+rd)-\lambda_g^*(x),c(x+rd,y_{\lambda,r}) \rangle
    \big) \nonumber \\
    &~~~~~~~~~~~~ -\big(f(x,y_{\lambda})+
    \gamma g(x,y_{\lambda})+\gamma \langle \lambda_g^*(x),c(x,y_{\lambda}) \rangle
    \big) \Big)- \gamma \langle \nabla v(x),d\rangle \nonumber \\
    \stackrel{(a)}{=} & \langle \nabla_x f(x,y_{\lambda})+
    \gamma \nabla_x g(x,y_{\lambda})+\gamma \langle \lambda_g^*(x),\nabla_x c(x,y_{\lambda}) ,d \rangle \nonumber \\
    & + \langle \nabla_y f(x,y_{\lambda}) +  \gamma  \nabla_y g(x,y_{\lambda})+ \gamma \langle \lambda_g^*(x),\nabla_y c(x,y_{\lambda}) \rangle, \lim_{r\downarrow 0 } \frac{y_{\lambda,r}-y_\lambda}{r} \rangle  \nonumber \\
    & +\gamma \langle \lim_{r\downarrow 0} \frac{\lambda_g^*(x+rd)-\lambda_g^*(x)}{r}, c(x,y_{\lambda}) - c(x,y_{g}) \rangle \nonumber \\
    & -\langle 
    \gamma \nabla_x g(x,y_g^*(x))+\gamma \langle \lambda_g^*(x),\nabla_x g(x,y_g^*(x))  ,d\rangle \nonumber \\
    \stackrel{(b)}{=} & \langle \nabla_x f(x,y_{\lambda}),d\rangle +\gamma \langle D_t^C(\lambda_g^*(x)), c(x,y_{\lambda}) - c(x,y_{g}) \rangle \nonumber \\
    &+\gamma \langle \nabla_x g(x,y_{\lambda}) + \gamma \langle \lambda_g^*(x),\nabla_x c(x,y_{\lambda}) \rangle ,d \rangle  - \nabla_x g(x,y_g^*(x)) - \langle \lambda_g^*(x),\nabla_x g(x,y_g^*(x))  ,d\rangle \nonumber \\
    \stackrel{(c)}{=} & \langle \nabla_x f(x,y_{\lambda}) ,d \rangle + \Oc\big(\gamma \| y_\lambda - y_g\| \big).  \nonumber\\
    \stackrel{(d)}{=} & \langle \nabla_x f(x,y_{\lambda}) ,d \rangle + \Oc\big(\gamma^{-\frac{\alpha-1}{2-\alpha}}+\delta^{\frac{1}{2}} \gamma^{\frac{1}{2}} \big).  \label{eq: F lambda directional derivative}
\end{align}
Here, (a) follows from Taylor's expansion and the Lipschitz continuity of $y_g^*(x)$. (b) applies Lemma~\ref{lemma: gradient equals zero generalization CC} and \ref{lemma: lambda gradient equals zero}, along with the Lipschitz continuity of $g$, $\nabla g$, and $\nabla c$ in $y$ near $y_g$. The Lipschitz continuity of $g$ near $y_g$ is implied by its convexity, ensuring $\nabla_y g(x,y_g^*(x))$ is bounded. For (c), 
strict complementarity holds, which together with the strong convexity of $g$ in $y$ and the LICQ of $c$, and the Lipschitzness conditions, guarantees that the directional derivative $\lim_{r\downarrow 0} \frac{\lambda_g^*(x+rd)-\lambda_g^*(x)}{r}$ exists and is finite \citep[Theorem~2.1]{fiacco1976sensitivity}. The result in (d) then directly follows from Lemma~\ref{lemma: tighter bound for y gam yg}. 

The key takeaway in \eqref{eq: F lambda directional derivative} is that, for any directional $d$, $\langle \nabla_x f(x,y_{\lambda}) ,d \rangle$ is in $\Oc\big(\gamma^{-\frac{\alpha-1}{2-\alpha}}+\delta^{\frac{1}{2}} \gamma^{\frac{1}{2}} \big)$ distance to $D_d^C(F_\gamma^\lambda(x))$. 
Following the triangle inequality and the Lipschitz of $\nabla f(x,y)$ and $y^*_\lambda(x)$, with later guaranteed by Lemma~\ref{lemma: SC PL}, there is
\begin{align}
    \|D_d(F_\gamma^\lambda(x_1))-D_d(F_\gamma^\lambda(x_2))\| \leq&   \|\nabla_x f(x_1,y_\gamma^\lambda(x_1)) - \nabla_x f(x_2,y_\gamma^\lambda(x_2)\| + \Oc\big(\gamma^{-\frac{\alpha-1}{2-\alpha}}+\delta^{\frac{1}{2}} \gamma^{\frac{1}{2}} \big) \nonumber\\
    \leq & l_{F_\gamma,1}(1+L_y^\lambda)\|x_1-x_2\| + \Oc\big(\gamma^{-\frac{\alpha-1}{2-\alpha}}+\delta^{\frac{1}{2}} \gamma^{\frac{1}{2}} \big). \label{eq: non-smooth F Lipschitz}
\end{align}

Moreover, $F_\gamma^\lambda(x)$ is continuous as $\lambda_g^*(x)$ is Lipschitz continuous according to \citep[Theorem 2.16]{ito2008lagrange}. In this way, choosing $d_t = (x_{t+1}-x_t)/\|x_{t+1}-x_t\|$, there is
\begin{align*}
    F_\gamma^\lambda(x_{t+1})-F_\gamma^\lambda(x_t) \stackrel{(a)}{\leq} & \| x_{t+1}-x_t\|D_{d_t}(F_\gamma^\lambda(x_t)) \\
    & +  \| x_{t+1}-x_t\| \int_0^1 D_{d_t}(F_\gamma^\lambda(x_t+q(x_{t+1}-x_t)))-D_{d_t}(F_\gamma^\lambda(x_t)) dq\\
    \stackrel{(b)}{\leq} & \langle \nabla_x f(x_t,y_{\lambda}^*(x_t)) ,x_{t+1}-x_t \rangle + \Oc\big(\|x_{t+1}-x_t\|\gamma^{-\frac{2(\alpha-1)}{2-\alpha}}+\delta \gamma \big)\\
    & + \| x_{t+1}-x_t\| \int_0^1 l_{F_\gamma,1}(1+L_y^\lambda)q\|x_{t+1}-x_{t}\| + \Oc\big(\gamma^{-\frac{\alpha-1}{2-\alpha}}+\delta^{\frac{1}{2}} \gamma^{\frac{1}{2}} \big)dq\\
    \stackrel{(c)}{=} & \langle \nabla_x f(x_t, y_{t+1}^\gamma ,x_{t+1}-x_t \rangle +\frac{l_{F_\gamma,1}(1+L_y^\lambda)}{2}\| x_{t+1}-x_t\|^2 \\
    & + \Oc\big(\|x_{t+1}-x_t\|\gamma^{-\frac{\alpha-1}{2-\alpha}}+\delta^{\frac{1}{2}} \gamma^{\frac{1}{2}} \big)\\
    & +  \langle \nabla_x f(x_t, y_{\lambda}^*(x_t) ) -\nabla_x f(x_t, y_{t+1}^\gamma) ,x_{t+1}-x_t \rangle \\
    \stackrel{(d)}{\leq} & \langle \nabla_x f(x_t, y_{t+1}^\gamma ),x_{t+1}-x_t \rangle +\frac{l_{F_\gamma,1}(1+L_y^\lambda)}{2}\| x_{t+1}-x_t\|^2 \\
    & + \Oc\big(\|x_{t+1}-x_t\|\gamma^{-\frac{\alpha-1}{2-\alpha}}+\delta^{\frac{1}{2}} \gamma^{\frac{1}{2}} \big)\\
    \stackrel{(e)}{\leq}  & -\frac{1}{\eta} \| x_{t+1}-x_t\|^2 + \frac{1}{2\eta} \| x_{t+1}-x_t\|^2 + \frac{1}{4\eta}\| x_{t+1}-x_t\|^2 \\
    & + \Oc\big(\gamma^{-\frac{\alpha-1}{2-\alpha}}+\delta^{\frac{1}{2}} \gamma^{\frac{1}{2}} +\gamma^{-1}\big)\\
    = & -\frac{\eta}{4} \Big\| \frac{x_{t+1}-x_t}{\eta}\Big\|^2+ \Oc\big(\eta(\gamma^{-\frac{2(\alpha-1)}{2-\alpha}}+\delta \gamma ) \big).
\end{align*}
Here, (a) follows from first-order Taylor expansion with an integral remainder in finite-dimensional Euclidean spaces; (b) is by \eqref{eq: non-smooth F Lipschitz}; (c) is the results of calculating integral over $q$; (d) uses Cauchy-Schwartz inequality and the Lipschitz of $\nabla f$; (e) follows similar analysis as in \eqref{eq: VaFF fully single-loop intermediate step}, it follows $\eta \leq (l_{F_\gamma,1}(1+L_y^\lambda))^{-1}$, $\langle \nabla_x f(x,y_{t+1}^\gamma),x_{t+1}-x_t\rangle \leq -\frac{1}{\eta}\| x_{t+1}-x_t\|^2$ by \citep[Lemma 3.1]{bubeck2015convex} and Young's inequality.

In this way, by telescoping and rearranging, there is
\begin{align*}
    \frac{1}{T}\sum_{t=0}^{T-1}\Big\| \frac{x_{t+1}-x_t}{\eta}\Big\|^2 \leq \tfrac{4(F_\gamma^\lambda(x_0)-F_\gamma^\lambda(x_T))}{\eta T}+ \Oc\big(\gamma^{-\frac{2(\alpha-1)}{2-\alpha}}+\delta \gamma \big)=\Oc(T^{-1}+\gamma^{-\frac{2(\alpha-1)}{2-\alpha}}+\delta \gamma ).
\end{align*}

According to \citet{bonnans2013perturbation}, the first order stationarity condition gives
\begin{align*}
    0 \in \partial F_\gamma^\lambda(x)+N_{\mathcal{X}}(x)
\end{align*}
where $N_{\mathcal{X}}(x)$ is the normal cone to $\mathcal{X}$ at $x$, and $\partial F(x)=\{g\in \mathbb{R}^{d_x}: \langle g,d\rangle \leq D_d(F(x)),~ \forall d\}$ is the subdifferential. Here, the subdifferential $\partial F(x)$ is non-empty due to the differentiability of $\lambda_g^*(x)$ \citep{fiacco1976sensitivity}, even though its explicit form is not known. Additionally, as $F_\gamma^\lambda (x)$ is differentiable, we know that $G_{F_\gamma^\lambda,\mathcal{X}}(x_t)=d_{\partial F_\gamma^\lambda(x_t)+N_{\mathcal{X}}(x_t)}(0)$.

Denote $g_t= \nabla_x g(x_t,y_t^\gamma)$ and $\tilde{g_t}$ as some elements in $\partial F(x_t)$, for any unit direction $d$, 
\begin{align*}
    \langle g_t+\tilde{g_t}-g_t,d \rangle \leq & D_d(F(x))= \langle g_t,d \rangle + \Oc\big(\gamma^{-\frac{\alpha-1}{2-\alpha}}+\delta^{\frac{1}{2}} \gamma^{\frac{1}{2}} \big).
\end{align*}

In this way, choose $d=\frac{\tilde{g_t}-g_t}{\|\tilde{g_t}-g_t\|}$, we obtain
\begin{align*}
    \|\tilde{g_t}-g_t\|\leq & \Oc\big(\gamma^{-\frac{\alpha-1}{2-\alpha}}+\delta^{\frac{1}{2}} \gamma^{\frac{1}{2}} \big).
\end{align*}

Additionally, by Lemma~\ref{lemma: projection gradient update inquality}, we know that $\left(\frac{x_t-x_{t+1}}{\eta}-g_t\right)^\top (x'-x_{t+1})\leq 0$ for any $x'\in \mathcal{X}$.
Hence, 
\begin{align*}
    n_{t}:=\frac{x_t-x_{t+1}}{\eta}-g_t\in N_{\mathcal{X}}
\end{align*}

by the definition of normal cone. In this way, we know that 
\begin{align*}
    \frac{1}{T} \sum_{t=0}^{T-1}\|G_{F_\gamma^\lambda,\mathcal{X}}(x_t)\|^2=&\frac{1}{T} \sum_{t=0}^{T-1} d_{\partial F_\gamma^\lambda(x_t)+N_{\mathcal{X}}(x_t)}^2(0)  \nonumber\\
    \leq & \frac{1}{T} \sum_{t=0}^{T-1} \|\tilde{g}_t+n_t\|^2 \nonumber\\
    = & \frac{1}{T} \sum_{t=0}^{T-1} \|\tilde{g}_t+n_t -g_t+g_t\|^2\nonumber\\
    \leq & \frac{1}{T}\sum_{t=0}^{T-1}\Big\| \frac{x_{t+1}-x_t}{\eta}\Big\|^2 + \frac{1}{T} \sum_{t=0}^{T-1} \|g_t-\tilde{g}_t\|^2 \nonumber\\
    \leq & \Oc(T^{-1}+\gamma^{-\frac{2(\alpha-1)}{2-\alpha}}+\delta \gamma ).
\end{align*}
In this way, we can conclude the proof via choosing $\gamma = \Oc(\delta^{-\frac{2-\alpha}{\alpha}})$.
\end{proof}

Notice that only \textsf{Min Solver}, rather than \textsf{MaxMin Solver}, is required in PBGD-Free (Algorithm~\ref{alg: PBGD-Free}). In this way, even for BLO with CCs, its inner-loop converges linearly when PBG is applied under the LL strongly-convexity assumption. This improves upon the general $\Oc(\epsilon^{-1})$ inner complexity in \citet{jiang2024primal}. A detailed comparison of algorithms is provided in Table~\ref{tab:table1-comparison}.

\section{Numerical Experiments}
\label{sec: exp}

In this section, we first empirically validate our improved smoothness analysis for ALT-PBGD and PBGD-BLOCC, demonstrating the $\mathcal{O}(1)$-smoothness of $F_\gamma(x)$ using illustrative toy examples in Sec.~\ref{sec: toy example for O1 smoothness}. We then evaluate the effectiveness of our proposed PBGD-Free algorithm on real-world tasks. Specifically, we consider the LLM PEFT problem~\eqref{eq: bilevel repre LLM} as a representative case of the uncoupled constraint setting in Sec.~\ref{sec: representation PEFT}, and the SVM hyperparameter optimization problem in Sec.~\ref{sec: SVM exp} as a representative of the coupled constraint setting. 

\subsection{Toy examples verifying improved convergence analysis}
\label{sec: toy example for O1 smoothness}
\subsubsection{Comparison of ALT-PBGD and JNT-PBGD on fixed step sizes}
In the experiments, we set $\gamma = 10$ and run Algorithm \ref{alg: ALT-PBGD}, \ref{alg: JNT-PBGD} both with an initial point of $(x_0 = 0, y_0 = 0)$ to solve Example \ref{example:toy_example_1}. 
\begin{example}
\label{example:toy_example_1}
    Consider the BLO problem in \eqref{eq: original problem 1} with $\mathcal{X}=\mathcal{Y}(x)=[0,3]$ and the objectives:
    \begin{align*}
    f(x,y) = & \frac{e^{-y+1}}{2+\cos(4x)} +\frac{1}{2} \ln \Big( (4x-2)^2 +1 \Big) + x^2,\\ 
    g(x,y) = & 2(y-x)^2  +\frac{x}{2} \sin^2(x+y).
    \end{align*}
\end{example}
Both ALT-PBGD and JNT-PBGD are applied with a smaller step-size choice of $\eta = 0.01$, and a larger step-size, $\eta = 0.1$. As shown in Fig.~\ref{fig:stepsize-comparison}, both algorithms exhibit similar convergence rates for the smaller step-size of $\eta = 0.01$. However, when using $\eta = 0.1$, JNT-PBGD fails to converge, while ALT-PBGD performs well, achieving faster convergence within $5$ steps. Since the upper bound of the stepsize is inversely proportional to the smoothness constant, this observation aligns with the theoretical results in Lemma~\ref{lemma: implicit gradient} and Proposition \ref{prop: ALT-PBGD}, which indicate that ALT-PBGD can tolerate a larger $\gamma$ without requiring a smaller stepsize, as the smoothness constant is not scaled with $\gamma$. 

\subsubsection{Comparison of BLOCC with different \texorpdfstring{$\gamma$}{gamma}}

We illustrate Proposition \ref{prop: PBGD-BLOCC} using Example \ref{example:toy_example_2} by applying BLOCC from Algorithm \ref{alg: PBGD-BLOCC} \citep{jiang2024primal} to solve $ F_\gamma(x)$ with $\gamma = 10$ and $\gamma = 100$. The step-size choices include a safe option $\eta = \frac{1}{10\gamma}$ for both $\gamma$ choices compared to a larger value $\eta = 0.05$. 

\begin{example}
\label{example:toy_example_2}
    Consider the coupled constrained BLO problem in \eqref{eq: original problem 1} with $\mathcal{X}=[0,3]$, $\mathcal{Y}(x)=\{y\in [0,3]: y-x\leq 0 \}$ and the objectives:
    \begin{align*}
        f(x,y)= \frac{\exp(-y+2)}{(2+\cos(4x)} + \frac{1}{2}\ln((4x-2)^2+1) + x^2,~~~~~~~~
        g(x,y)=   (y-2x)^2.
    \end{align*}
\end{example}
As shown in Fig.~\ref{fig:BLOCC-comparison}, the PBGD-BLOCC converges for both $\gamma = 10$ and $\gamma = 100$ using either the large step-size $\eta = 0.05$ or the safe step-size $\eta = \frac{1}{10\gamma}$ for each $\gamma = 10$ and $\gamma = 100$. Notably, using the larger step-size $\eta = 0.05$ results in faster convergence to the optimal solution, requiring fewer iterations compared to the literature. This supports the conclusion that $F_\gamma(x)$ exhibits constant-level smoothness, leading to improved complexity as stated in Proposition \ref{prop: PBGD-BLOCC}.

\begin{figure}[t]
    \centering
    \small
    \begin{minipage}{0.44\linewidth}
        \centering
        \includegraphics[width=0.97\textwidth]{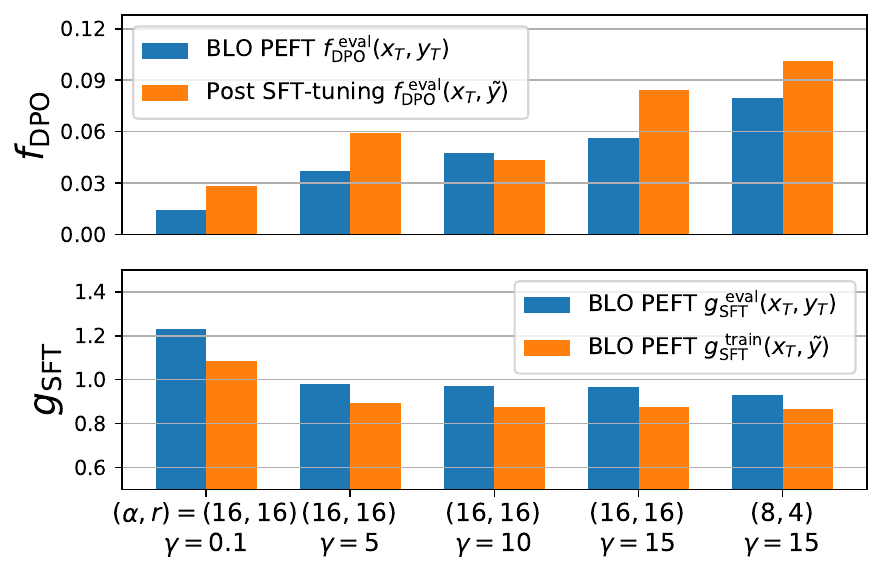}  
        \caption{Ablation study on penalty $\gamma$ and LoRA configuration \citep{hu2022lora} for \textsc{Pythia}-1b \citep{biderman2023pythia}. 
        }
        \label{fig: ablation_study}
    \end{minipage}
    \hfill
    \begin{minipage}{0.53\linewidth}
    \centering
    \includegraphics[width=0.89\linewidth]{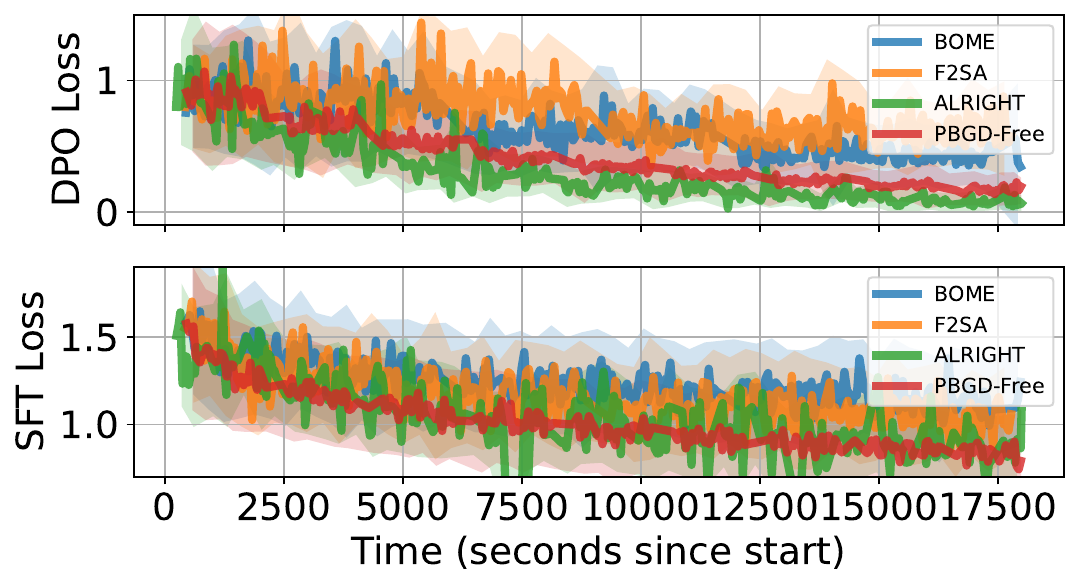}
    \caption{Train losses vs. time for different algorithms in solving \eqref{eq: bilevel repre LLM} (or bi-objective learning for ALRIGHT) on \textsc{Llama-3-3b} \citep{grattafiori2024llama}.}
    \label{fig:DPO_SFT_loss_time}
\end{minipage}
\vspace{-0.08cm}
\end{figure}

\begin{table}[t]
    \centering
    \begin{tabular}{c||c|c|c|c}
    \hline\hline
    Methods & $f_{\text{DPO}}^{\text{eval}}(x_T,y_T)$ & $g_{\text{SFT}}^{\text{eval}}(x_T,y_T)$ & $f_{\text{DPO}}^{\text{eval}}(x_T,\tilde{y})$& $g_{\text{SFT}}^{\text{train}}(x_T,\tilde{y})$\\
    \hline\hline
    \centering {V-PBGD \citep{kwon2023penalty}}  & 0.818 & 1.0309 & 0.8423 & 0.9533\\
    \hline
    \centering {BOME \citep{ye2022bome}} & 0.8332 & 1.1552 & 0.8402 & 0.9842\\
    \hline
    \centering {ALRIGHT \citep{fernando2024mitigating}}  & 0.8055 &  0.8656 & 0.8201 & 0.7855\\
    \hline
    \centering {\textbf{PBGD-Free}}   & \textbf{0.7837} & \textbf{0.8516} &\textbf{0.8088} & \textbf{0.6688}\\
    \hline\hline
    \end{tabular}
    \caption{Comparison of different algorithms for PEFT \textsc{Llama-3-3b} \citep{grattafiori2024llama}. Results show the DPO Loss $\downarrow$, SFT Loss $\downarrow$ for both the outcome $(x_T,y_T)$ trained on solving \eqref{eq: bilevel repre LLM} for different methods or and the outcome $(x_T,\tilde{y})$ from post-SFT-tuning on another dataset with fixed-backbone, using the same dataset fixed time of training for each. } \label{tab: LLM Training Results}
\end{table}

\subsection{Representation learning based problem}
\label{sec: representation PEFT}

In this section, we first compare our PBGD-Free methods with the fully single-loop version of ALT-PBGD (F$^2$SA~\cite{kwon2023fully}) on a representation learning problem using the NLSY dataset (cf. Sec.~\ref{sec: Representation learning problem on NLSY dataset}). Next, we reexamine the PEFT problem in \eqref{eq: bilevel repre LLM}. We provide additional details for the toy example in Fig.~\ref{fig:toy_example_convergence_results} in Sec.~\ref{app:toy_example} and present further experimental results on larger-scale problems in Sec.~\ref{exp:detail_peft_repre}.

\subsubsection{Representation learning problem on NLSY dataset \texorpdfstring{\citep{rothstein2019cohort}}{}}
\label{sec: Representation learning problem on NLSY dataset}

BLO has proven effective in representation learning for obtaining a joint backbone model $x$ that captures unified task features and generalizes well to downstream tasks by only tuning the head $y$ \citep{arora2020provable,xu2021deep,stock2003retrospectives,hu2023contextual,shui2022fair}.
We test our algorithm on a representation learning problem on the National Longitudinal Survey of Youth (NLSY) dataset~\citep{rothstein2019cohort}, following the experimental setup in \citep{shui2022fair}. This problem aims to learn representations to predict normalized income via 
\begin{align}
    \min_{x,y}f_{\text{MSE}}(x,y;D_1)\quad  \text{s.t.} \quad y\in \arg\min_{y}f_{\text{MSE}}(x,y;D_2)
\end{align}
where $D_1,D_2$ are datasets partitioned by gender. The representation model, parameterized $x$, consists of two fully connected layers (hidden size 200, ReLU activation), and the predictor, parameterized $y$, is a linear classification head.

We compare our PBGD-Free with 1 step inner iteration against ALT-PBGD (F$^2$SA \citep{kwon2023fully}) with 2 step inner iteration and the ITD algorithm from~\citep{shui2022fair}, following the experimental setup in~\citep{shui2022fair}. As shown in Table~\ref{tab: NLSY Training Results}, 
the performance gap is particularly notable in efficiency, where PBGD-Free is over twice as fast as F$^2$SA \citep{kwon2023penalty} and more than $30$ times faster than the ITD-based approach~\citep{shui2022fair}, primarily because it omits the value-function part and the inner loop of $y_g^*(x)$. Moreover, PBGD-Free achieves lower MSE than ALT-PBGD \citep{kwon2023fully}. This improvement stems from PBGD-Free's ability to avoid the bias $\gamma$-propagation inherent in the design for ALT-PBGD. When both algorithms are single-loop (or nearly single-loop for 2-step inner iteration), ALT-PBGD's reliance on a fixed penalty parameter $\gamma$ amplifies initial inner update biases throughout training, slowing convergence, detailed in Theorem~\ref{thm: ALT-PBGD}, while PBGD-Free eliminates these $\gamma$-dependent value function terms.

\begin{table}[t]
    \centering
    \begin{tabular}{c||c|c}
    \hline\hline
    Methods & MSE & Time (s) \\
    \hline\hline
    \centering {ALT-PBGD (F$^2$SA~\citep{kwon2023fully})} & $1.9331 \pm 0.0794$ & $12.33 \pm 0.34$ \\
    \hline
    \centering {Implicit \citep{shui2022fair}} & $2.1530 \pm 0.0455$ & $169.69 \pm 0.36$ \\
    \hline
    \centering {\textbf{PBGD-Free}} & $\bf{1.8916 \pm 0.1245}$ & $\bf{5.15 \pm 0.06}$ \\
    \hline\hline
    \end{tabular}
    \vspace{0.2cm}
    \caption{Performance results for different training methods on representation learning problem on NLSY-7k Dataset \citep{rothstein2019cohort}. The mean $\pm$ standard deviation is reported for both the mean MSE and the mean time over 5 random experiments on the test dataset.} \label{tab: NLSY Training Results}
\end{table}

\subsubsection{Additional details for toy example in Fig.~\ref{fig:toy_example_convergence_results}}
\label{app:toy_example}

In this section, we provide details for the toy example of PEFT BLO problem \eqref{eq: bilevel repre LLM} in Fig.~\ref{fig:toy_example_convergence_results}. We consider a binary classification setting where the model parameters $\theta = (x, y)$ consist of the UL variable $x$ and LL variable $y$, with $\theta \in \mathbb{R}^2$.
The model implements a 1D convolutional network with softmax activation:
\begin{verbatim}
class SoftmaxNN(nn.Module):
    def __init__(self):
        super().__init__()
        self.hidden = nn.Conv1d(in_channels=1, out_channels=1, 
                      kernel_size=2, stride=2, bias=False)
        self.activation = nn.Softmax(dim=1)
        self._init_weight() 
\end{verbatim}

\begin{wraptable}{r}{0.5\textwidth} 
    \centering
    \vspace{-0.7cm}
    \begin{tabular}{c|c|c}
    \hline\hline
    Input & Output & Feature \\ \hline\hline
    $X_1$ & $y = 0$ & $[1.0, 1.0, 0.5, 0.5]^\top$ \\
    $X_1$ & $y' = 1$ & $[1.0, 0.5, 0, 0.5]^\top$ \\
    $X_2$ & $y_w = 1$ & $[1.0, 0.5, 0.5, 0.5]^\top$ \\
    $X_2$ & $y_\ell = 0$ & $[0.5, 1.0, 1.0, 1.0]^\top$ \\
    \hline\hline
    \end{tabular}
    \caption{Dataset specification for toy example in Fig.~\ref{fig:toy_example_convergence_results}.}
    \label{tab: Toy example dataset}
    \vspace{-0.6cm}
\end{wraptable}

 We specify the SFT datasets $\mathcal{D}_{\text{SFT}} = \{(X_1, y)\}$, and the DPO dataset $\mathcal{D}_{\text{DPO}} = \{(X_2, y_w, y_\ell)\}$ in Table \ref{tab: Toy example dataset}.
The BLO problem is specified in \eqref{eq: bilevel repre LLM}, where $f_{\text{DPO}}$ consists of a DPO loss with $\beta=1$~\citep{rafailov2023direct} plus an $\ell_2$ regularization term (weight 0.01) and $g_{\text{SFT}}$ consists of a negative log-likelihood loss and the same regulariztion. The reference model is obtained via learning on $g_{\text{SFT}}(x,y)$ (parameterized with $(x=-5.34 ,y=-9.94)$). 

We apply our PBGD-Free algorithm in Algorithm \ref{alg: PBGD-Free} in comparison with ALT-PBGD (F$^2$SA~\citep{kwon2023fully}) with $\gamma = 15$, and 10-step inner loop to solve \eqref{eq: nabla F gam}, and $T=5000$ outer loop for both algorithms. The algorithm performance is presented in Fig.~\ref{fig:toy_example_convergence_results}.

\subsubsection{LLM PEFT problem} 
\label{exp:detail_peft_repre} 

In this section, we consider the PEFT problem in \eqref{eq: bilevel repre LLM} experimented on open-source large language models. 

\paragraph{Problem setting and its flatness}
\label{sec: problem setting and its flatness}

SFT for pre-trained LLMs enhances their adaptation to downstream tasks, while DPO is commonly used to align the model with human preferences. To achieve both goals, a straightforward approach is to sequentially optimize both objectives. However, this often leads to a forgetting issue \citep{fernando2024mitigating}. In practical applications, it is common to fine-tune only a lightweight head on SFT while keeping the backbone fixed or lightly tuned \citep{pfeiffer2020adapterfusion,zaken2021bitfit,ren2023prepare}. This naturally leads to a decomposition of the model into a backbone $x$ (e.g., attention weights) and an output head $y$.

In this way, the bilevel PEFT formulation \eqref{eq: bilevel repre LLM} addresses the catastrophic forgetting issue by prioritizing SFT at the LL to create a more reliable base model and use DPO at the UL to guide the human preference alignment. This hierarchical structure preserves task-specific knowledge in $y$, while guiding $x$ updates toward better preference alignment. 

This formulation operates in a standard post-SFT setting, where DPO fine-tunes a pretrained backbone after SFT has adapted the model to downstream tasks. Moreover, given that user preferences are typically consistent across tasks, this approach trains a representation model that captures these preferences, allowing only the head to be fine-tuned for future tasks and achieving parameter-efficient adaptation. In this way, the head can specialize for SFT tasks, while the backbone is optimized via DPO for both alignment and generalization.

As preliminarily demonstrated in Fig.~\ref{fig:toy_example_lipschitz}, this \textit{BLO PEFT problem} in \eqref{eq: bilevel repre LLM} features flatness (small $\delta$), which is further corroborated by the observation in experiment that
the LL solution $y_g^*(x)$ and $y_\gamma^*(x)$ have $\ell_2$-distance significantly greater than target $\epsilon$, suggesting a negligible flatness constant $\delta(x)$ by \eqref{eq: delta x}. 

In our experiments, we adopt the low rank adaptation (LoRA) \citep{hu2022lora} to the backbone $x$ for PEFT on \textsc{Llama-3-3b} \citep{grattafiori2024llama} and \textsc{Pythia-1b} \citep{biderman2023pythia}, using the \texttt{Dahoas/rm-hh-rlhf} dataset for DPO and the \texttt{OpenOrca} dataset \citep{longpre2023flan} for SFT. Our code is adapted from the bilevel LLM post-training library 
\url{https://github.com/Post-LLM/BIPOST}. The detailed setup is presented as follows.

\emph{General Setup.} We evaluate our PEFT framework \eqref{eq: bilevel repre LLM} using the \texttt{Dahoas/rm-hh-rlhf} dataset for DPO loss and the \texttt{OpenOrca} dataset for SFT loss. For training, we test one \textsc{Pythia}-1b \citep{biderman2023pythia} model with $1800$ samples for each dataset (batch size $16$) and the \textsc{Llama-3-3b} \citep{fang2024llama} model with $4800$ samples (batch size $32$).
Both models are adapted with LoRA (\textsc{alpha} $16$, \textsc{rank} $16$) and we treat LoRA PEFT weights on the attention layers as $x$, the last layer linear head as $y$. The learning rate is set to $1\times 10^{-5}$, using Adam \citep{kingma2014adam} as the optimizer. All experiments were conducted on a cluster of NVIDIA A100 GPUs, each with 40 GB of memory. Training was performed using PyTorch with the DeepSpeed library \url{https://github.com/deepspeedai/DeepSpeed} to optimize memory usage and distributed training efficiency. We consider a time-limited experiment under a consistent computational budget, reflecting real-world constraints where training time is often a critical factor.

\emph{Algorithm hyperparameter.} We use a penalty constant of $\gamma=10$ for our proposed PBGD-Free algorithm (Algorithm \ref{alg: PBGD-Free}) with a single inner loop ($K=1$). For the baseline F$^2$SA algorithm \citep{chen2024finding,kwon2023penalty}, we set $\gamma=10$ with $K=3$ inner updates for training \textsc{Llama-3-3b} \citep{grattafiori2024llama}, and $K=5$ for \textsc{Pythia}-1b \citep{biderman2023pythia}. 
For the BOME algorithm, we similarly use $K=3$ and $K=5$ inner loops, adopting its hyperparameter $\eta=0.5$ for calculating the penalty constant, as suggested in \citep{ye2022bome}. For the ALRIGHT algorithm \citep{fernando2024mitigating}, we use its default setting of $\lambda = 0.5$ as suggested in literature \citep{fernando2024mitigating}. Since the ALRIGHT algorithm in \citep{fernando2024mitigating} is a bi-objective learning algorithm that does not have the representation learning capability, we examine it on an alternative formulation $\min_{x,y}[f_{\text{DPO}(x,y)},g_{\text{SFT}}(y)]$.

\paragraph{Ablation study and main experimental results}

\begin{wrapfigure}{r}{0.5\textwidth} 
        \vspace{-0.34cm}
        \centering
        \includegraphics[width=0.5\textwidth]{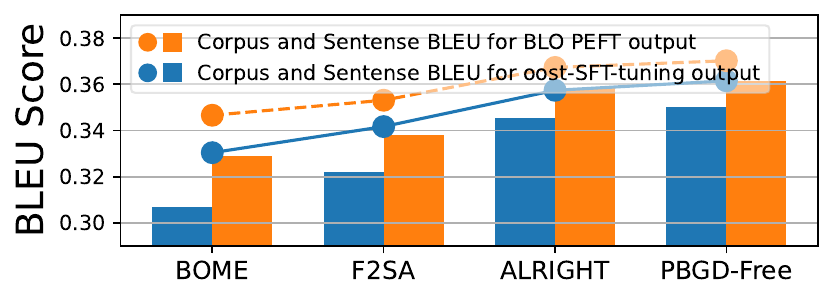}  
        \caption{BLEU-4 Corpus and BLEU-4 Sentence Score ($\uparrow$) for different algorithms for PEFT on \textsc{Llama-3-3b} \citep{grattafiori2024llama}.}
        \label{fig: bleu}
\vspace{-0.4cm}
\end{wrapfigure}

In this experiment, we consider evaluating methods on both \textbf{S1)} \textit{BLO PEFT learning phase via \eqref{eq: bilevel repre LLM}} to obtain a preference backbone $x$, and \textbf{S2)} \textit{post-SFT tuning on a new dataset} with the obtained preference backbone model $x$, to verify the representation quality and transferability of the backbone. We reports the ablation study and main experimental results as follows.

\textbf{Trade-off between DPO and SFT under different $\gamma$.} We conduct an ablation study on the \textsc{Pythia}-1b to test the impact of the penalty constant $\gamma$ and LoRA configuration on the PBGD-Free method. We report the DPO and SFT loss under different settings for both $(x_T,y_T)$ learned from \textbf{S1)} and $(x_T,\tilde{y})$ from \textbf{S2)} in Fig.~\ref{fig: ablation_study}. According to Fig.~\ref{fig: ablation_study}, increasing $\gamma$ degrades DPO performance while improving SFT for \textbf{S1)}, indicating that a larger $\gamma$ provides better LL optimality. 
Notably, the SFT improvement beyond $\gamma=10$ is marginal for \textbf{S1)}, while the DPO performance significantly deteriorates, suggesting that $\gamma \approx 10$ offers the best balance as our theory predicts. 

\textbf{Faster convergence over BLO baselines and stable training over bi-objective.} Since second-order BLO algorithms are inefficient in large-scale LLM training, we consider first-order methods F$^2$SA \citep{kwon2023penalty} and BOME \citep{ye2022bome} as BLO baselines. 
As shown in Fig.~\ref{fig:DPO_SFT_loss_time}, PBGD-Free converges faster than the BLO baselines. We additionally compare with ALRIGHT \citep{fernando2024mitigating}, an effective bi-objective algorithm, to validate the representation capability of \textit{BLO PEFT \eqref{eq: bilevel repre LLM}} formulation. ALRIGHT \citep{fernando2024mitigating} exhibits less stability during training (Fig.~\ref{fig:DPO_SFT_loss_time}), likely due to alternating between DPO and SFT objectives. %

\textbf{Transferability of preference backbone and strong SFT performance.} Compared with \textbf{S1)}, PBGD-Free in Fig.~\ref{fig: ablation_study} shows enhanced SFT with comparable DPO performance on \textbf{S2)}, suggesting it learns a transferable preference backbone $x$ through BLO \eqref{eq: bilevel repre LLM}.  Table \ref{tab: LLM Training Results} further quantifies these findings for other baselines, demonstrating that PBGD-Free achieves superior DPO and SFT performance. 
Notably, the backbone model $x$ obtained by PBGD-Free attains the lowest SFT and DPO loss on \textbf{S2)}, verifying the transferability of PBGD-Free. To further evaluate the quality of generated output, Fig.~\ref{fig: bleu} corroborates the SFT performance using the evaluation metrics BLEU score \citep{papineni2002bleu}, where our method outperforms all baselines, further justifying its superiority in learning a good representation. 

Fig.~\ref{fig:Post_SFT_tunin_DPO_SFT_loss_time_1b} further illustrates the performance of the outputs from BLO PEFT learning \eqref{eq: bilevel repre LLM} in the subsequent post-SFT-tuning phase \textbf{S2)}. The BLO baselines (F$^2$SA \citep{chen2024finding,kwon2023penalty} and BOME \citep{ye2022bome}), which did not achieve convergence in the initial PEFT phase due to their higher time complexity, tend to sacrifice DPO performance when improving SFT performance during post-SFT tuning \textbf{S2)}. 
In contrast, PBGD-Free algorithm and its bi-objective counterpart ALRIGHT \citep{fernando2024mitigating} demonstrate the ability to preserve strong preference alignment (DPO) while conducting SFT training in \textbf{S2)}. This shows that the preference backbone $x$ learned by both of them can be adapted to new task by fine-tuning only the linear head to achieve strong SFT performance. 
Notably, the BLO PEFT outputs trained by PBGD-Free achieves better SFT performance with substantially lower SFT loss, highlighting the advantage of the prioritization of SFT in our BLO formulation \eqref{eq: bilevel repre LLM}. This structure allows for a more powerful SFT tuning head, whereas bi-objective training methods tend to oscillate between potentially conflicting objectives, thereby limiting their post-SFT performance. 

\textbf{Better SFT performance while maintaining preference learning.} As illustrated in Table \ref{tab:sft-eval-examples}, the outputs generated by PBGD-Free demonstrate more precise and semantically accurate extraction, highlighting its superior SFT performance. In Fig.~\ref{fig:Post_SFT_tunin_DPO_SFT_loss_time_3b}, we present the loss metrics performance throughout post-SFT-tuning phase for \textsc{Llama-3-3b} \citep{grattafiori2024llama} in addition to the results presented in Sec.~\ref{sec: exp}. We observe that our backbone $x$ trained on BLO PEFT \eqref{eq: bilevel repre LLM} via PBGD-Free retains its lowest DPO rates throughout post-SFT-tuning

Fig.~\ref{fig: semantic_DPO} shows the quantitative results of the preference alignment using average reward gap and win rate. Together with Fig.~\ref{fig: bleu}, they indicate that the backbone preference model $x$ by the PBGD-Free maintains the first-tier DPO performance for preference alignment while enhancing the SFT performance by only fine-tuning the linear head. The slight DPO drop in Fig.~\ref{fig: semantic_DPO} of PBGD-Free compared with ALRIGHT is because it prioritizes better SFT performance, which restricts the feasible search space of representation model optimizing at the UL. However, since the representation evaluation criterion prioritizes strong SFT performance achieved by fine‑tuning only the linear head, and treats preference alignment as a secondary goal, PBGD‑Free remains the top‑performing method. Moreover, according to Fig.~\ref{fig:DPO_SFT_loss_time}, PBGD-Free is more stable during the training compared with ALRIGHT. Additionally, Table \ref{tab:sft-eval-examples} provides the SFT output comparison given by PBGD-Free and F$^2$SA on \textsc{Pythia}-1b \citep{biderman2023pythia}, \textsc{Llama-3-3b} \citep{grattafiori2024llama}, from which we can see that both methods improve the response quality over the pre-trained model through BLO PEFT \eqref{eq: bilevel repre LLM}, while PBGD-Free generates better responses and follows the human instructions well.

\textbf{Higher-rank LoRA enables finding better preference backbone via PBGD-Free. } The last $2$ columns in Fig.~\ref{fig: ablation_study} show that a higher-rank LoRA better preserves DPO with comparable SFT performance. It is likely because a higher-rank LoRA provides more over-parameterization, which ensures a more benign optimization landscape for the representation parameter $x$ \citep{yaras2024compressible,ward2023convergence,liuoptimization} and thus enables globally finding a better representation model $x$ \citep{xiao2024unlocking}.

\begin{table}[t]
    
    \begin{minipage}{0.96\linewidth}
    \renewcommand{\arraystretch}{1.6} 
    \begin{tabular}{p{0.31\linewidth} p{0.31\linewidth} p{0.31\linewidth}}
    \hline\hline
    \multicolumn{3}{c}{\textbf{Example of SFT Evaluation Performance}} \\
    \hline\hline
    \multicolumn{3}{p{\linewidth}}{\textbf{Human}: Generate an approximately fifteen-word sentence that describes all this data: Midsummer House eatType restaurant; Midsummer House food Chinese; Midsummer House priceRange moderate; Midsummer House customer rating 3 out of 5; Midsummer House near All Bar One} \\
    \textbf{Pythia-1b \citep{biderman2023pythia}}:\newline
    Midsummer House \textcolor{red}{staff} a restaurant \textcolor{red}{priced restaurant} restaurant. a \textcolor{red}{good-5 star} rating. and \textcolor{red}{in} All Bar One.
    & \textbf{BLO-PEFT (F$^2$SA \citep{kwon2023penalty})}:\newline
    Midsummer House is a \textcolor{red}{restaurant priced restaurant} restaurant with a \textcolor{orange}{3-5 customer rating}. located \textcolor{green}{near} All Bar One.  
    & \textbf{BLO-PEFT (PBGD-Free)}:\newline
    Midsummer House is a \textcolor{green}{moderately priced} \textcolor{green}{Chinese restaurant} with a \textcolor{green}{3/5 customer rating}. located \textcolor{green}{near} All Bar One. 
    \\
    \hline
    \multicolumn{3}{p{\linewidth}}{\textbf{Human}: You will be given a definition of a task first, then some input of the task. This task is about using the specified sentence and converting the sentence to Resource Description Framework (RDF) triplets of the form (subject, predicate, object). 
    The RDF triplets generated must be such that the triplets accurately capture the structure and semantics of the input sentence. The input is a sentence and the output is a list of triplets of the form [subject, predicate, object] that capture the relationships present in the sentence. When a sentence has more than 1 RDF triplet possible, the output must contain all of them. AFC Ajax (amateurs)'s ground is Sportpark De Toekomst where Ajax Youth Academy also play.} \\
    \textbf{Llama-3-3b~\citep{grattafiori2024llama}}:\newline
    [["\textcolor{red}{AjaxFC Ajax (amateurs)}", \textcolor{red}{"playsGround"}, "Sportpark De Toekomst"], 
    ["Ajax Youth Academy",  \textcolor{orange}{"has at"}, "Sportpark De Toekomst"]] 
    & \textbf{BLO-PEFT (F$^2$SA \citep{kwon2023penalty})}:\newline
    [["\textcolor{red}{AjaxFC Ajax (amateurs)}", \textcolor{orange}{"plays ground"}, "Sportpark De Toekomst"], 
    ["Ajax Youth Academy", \textcolor{orange}{"has at"}, "Sportpark De Toekomst"]] 
    & \textbf{BLO-PEFT (PBGD-Free)}:\newline
    [["\textcolor{green}{AFC Ajax (amateurs)}", \textcolor{green}{"has ground"}, "Sportpark De Toekomst"], 
    ["Ajax Youth Academy", \textcolor{green}{"plays at"}, "Sportpark De Toekomst"]] 
    \\
    \hline
    \hline
    \end{tabular}
    \vspace{0.1cm}
    \end{minipage}
    \caption{Examples of SFT evaluation performance for \textsc{Pythia}-1b \citep{biderman2023pythia}, \textsc{Llama-3-3b} \citep{grattafiori2024llama} and their corresponding BLO-PEFT \eqref{eq: bilevel repre LLM} results via our PBGD-Free Algorithm \ref{alg: PBGD-Free} and basline F$^2$SA \citep{kwon2023penalty}. Text marked in \textcolor{red}{red} indicates incorrect outputs, \textcolor{orange}{orange} indicates partially correct outputs that follow some of the instructions, and \textcolor{green}{green} indicates fully correct outputs that match the expected instructions.}
    \label{tab:sft-eval-examples}
\end{table}

\begin{figure}[t]
    \centering
    \small
    \begin{minipage}{0.5\linewidth}
        \centering
        \vspace{-0.2cm}
        \centering
        \includegraphics[width=0.9\textwidth]{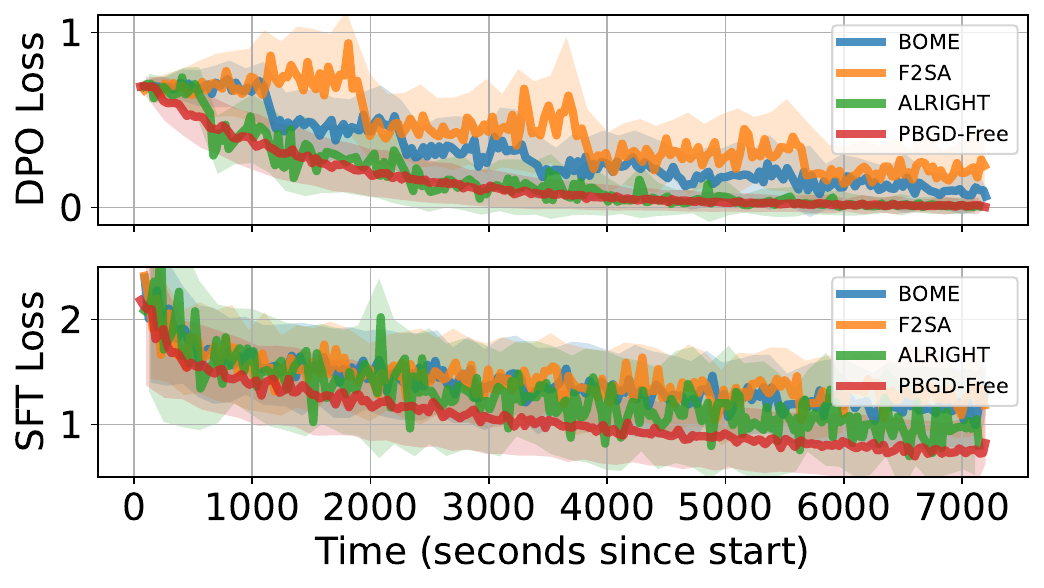}  
        \caption{Train losses vs. time with \textsc{stride = 50} for different algorithms in solving \eqref{eq: bilevel repre LLM} (or biobjective learning for ALRIGHT \citep{fernando2024mitigating}) on \textsc{Pythia}-1b \citep{biderman2023pythia}.}
        \label{fig: DPO_SFT_loss_time_1b}
    \end{minipage}
    \hfill
    \begin{minipage}{0.46\linewidth}
        \centering
        \vspace{-0.4cm}
        \includegraphics[width=0.99\linewidth]{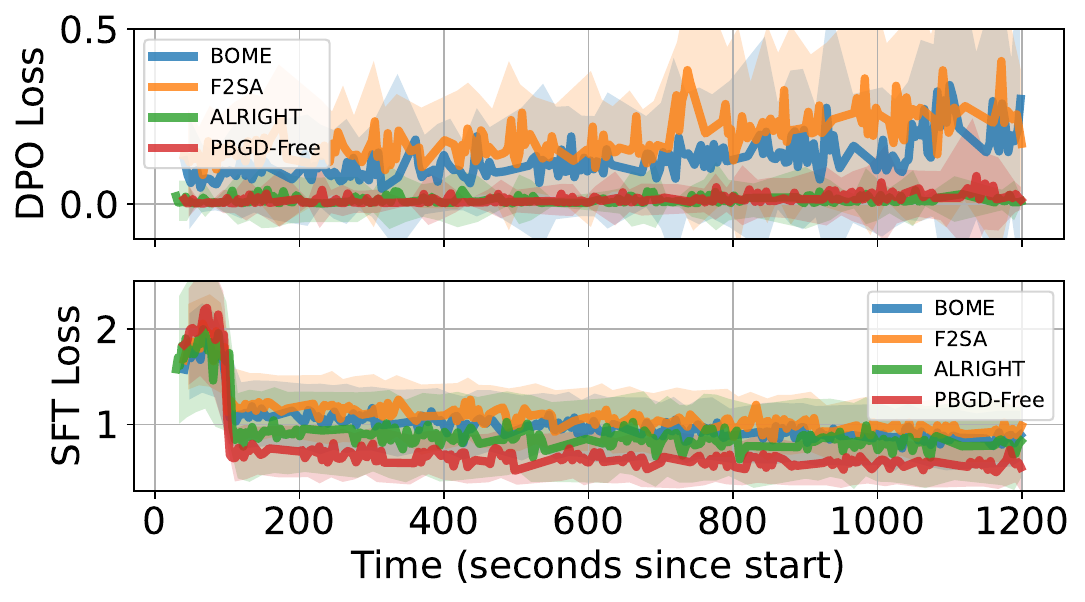}
        \vspace{-.2cm}
        \caption{Train losses vs. time with \textsc{stride = 50} for different algorithms in Post SFT-tuning phase on \textsc{Pythia}-1b \citep{biderman2023pythia}.}
        \label{fig:Post_SFT_tunin_DPO_SFT_loss_time_1b}
    \end{minipage}
    \vspace{-0.1cm}
\end{figure}

\begin{figure}[t]
    \centering
    \small
    \begin{minipage}{0.5\linewidth}
        \centering
        \centering
        \includegraphics[width=0.9\textwidth]{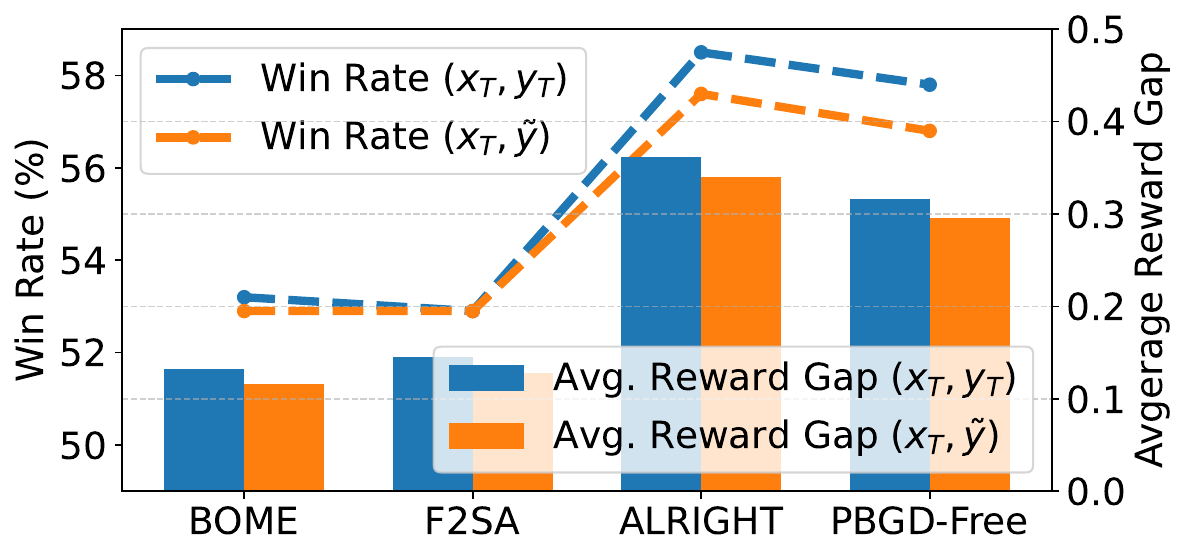}  
        \vspace{-0.2cm}
        \caption{Average Reward Gap ($\uparrow$) and Win Rate ($\uparrow$) for different algorithms for PEFT \textsc{Llamma-3-3b} on \citep{grattafiori2024llama} with the output $(x_T,y_T)$ via each method in \textbf{S1)} and the outcome $(x_T,\tilde{y})$ from post-SFT-tuning on another dataset with fixed-backbone in \textbf{S2)}. }
        \label{fig: semantic_DPO}
    \end{minipage}
    \hfill
    \begin{minipage}{0.46\linewidth}
        \centering
        \includegraphics[width=0.98\linewidth]{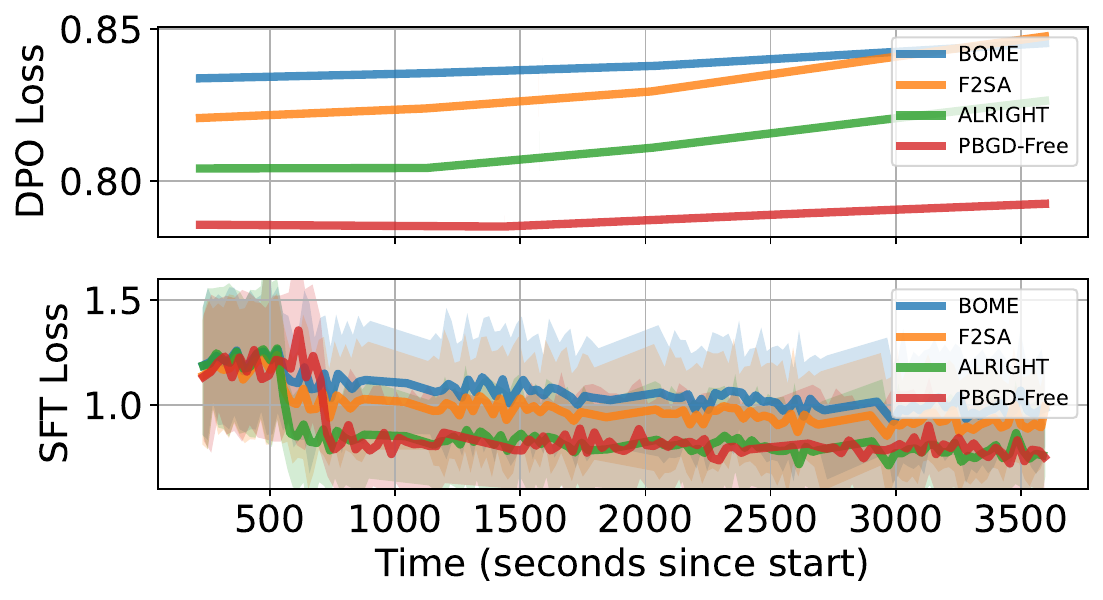}
        \vspace{-.2cm}
        \caption{Evaluation of DPO losses and train SFT loss vs. time with \textsc{stride = 50} for different algorithms in Post SFT-tuning phase on for PEFT \textsc{Llamma-3-3b} \citep{grattafiori2024llama}.}
        \label{fig:Post_SFT_tunin_DPO_SFT_loss_time_3b}
    \end{minipage}
    \vspace{-0.1cm}
\end{figure}

\subsection{SVM hyper-parameter training}
\label{sec: SVM exp}

\begin{table}[tb]
\centering
\begin{tabular}{lcc}
\hline\hline
\textbf{Method} & \textbf{Test Accuracy} & \textbf{Time (s)} \\
\hline\hline
LVHBA      & $0.7593 \pm 0.0397$ & ($2.47 \pm 0.23$) \\
GAM        & $0.7182 \pm 0.0360$ & ($4.48 \pm 0.22$) \\
BIC-GAFFA  & $0.7568 \pm 0.0366$ & ($0.31 \pm 0.01$) \\
PBGD-BLOCC & $\bf{0.7758 \pm 0.0292}$ & ($2.64 \pm 0.59$) \\
\textbf{PBGD-Free}  & $0.7699 \pm 0.0313$ & ($\bf{0.17 \pm 0.09}$) \\
\hline\hline
\end{tabular}
\vspace{0.2cm}
\caption{Test accuracy and training time cost for the SVM problem on the diabetes dataset of PBGD-Free (Algorithm~\ref{alg: PBGD-Free}) in comparison with BLOCC~\citep{jiang2024primal}, LV-HBA~\citep{yao2024constrained}, Bic-GAFFA~\citep{yao2025overcoming}, and GAM~\citep{xu2023efficient}. The first row shows the test accuracy (mean $\pm$ std), and the second row (in brackets) shows the running time until the validation loss change is smaller than $10^{-5}$ or a maximum of 50 epochs.}
\label{tab:final_results_valstop}
\end{table}

\begin{figure}[tb]
\centering
    \begin{minipage}{0.98\textwidth}
        \centering
        \begin{minipage}{0.3\textwidth}
            \centering
            \includegraphics[width=\textwidth]{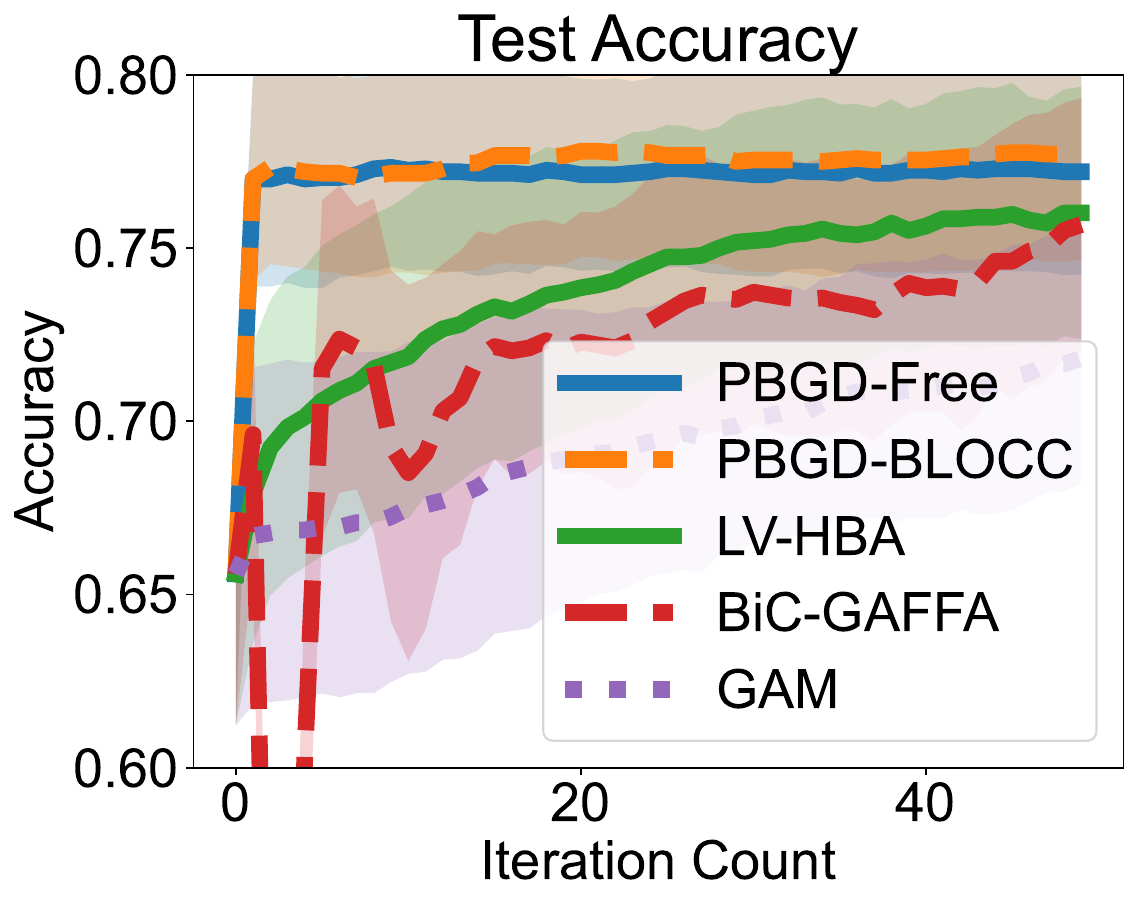}
        \end{minipage}%
        \hfill
        \begin{minipage}{0.33\textwidth}
            \centering
            \includegraphics[width=\textwidth]{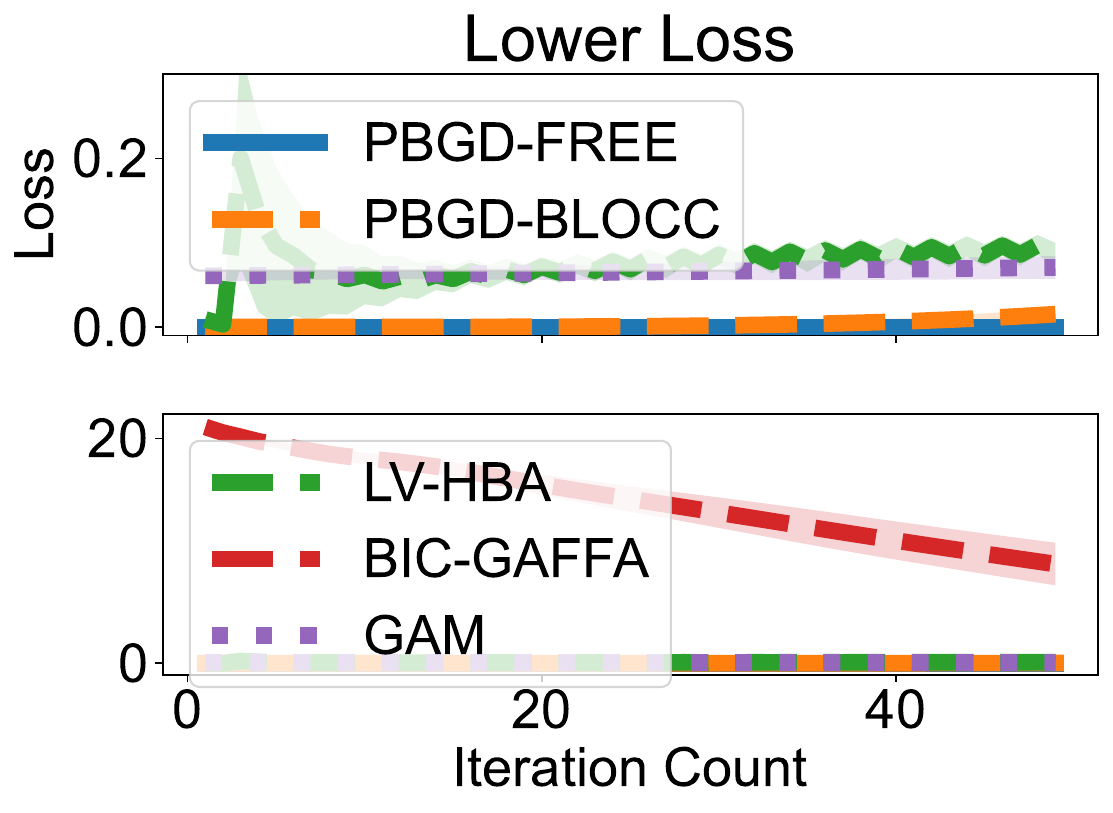}
        \end{minipage}%
        \hfill
        \begin{minipage}{0.33\textwidth}
            \centering
            \includegraphics[width=\textwidth]{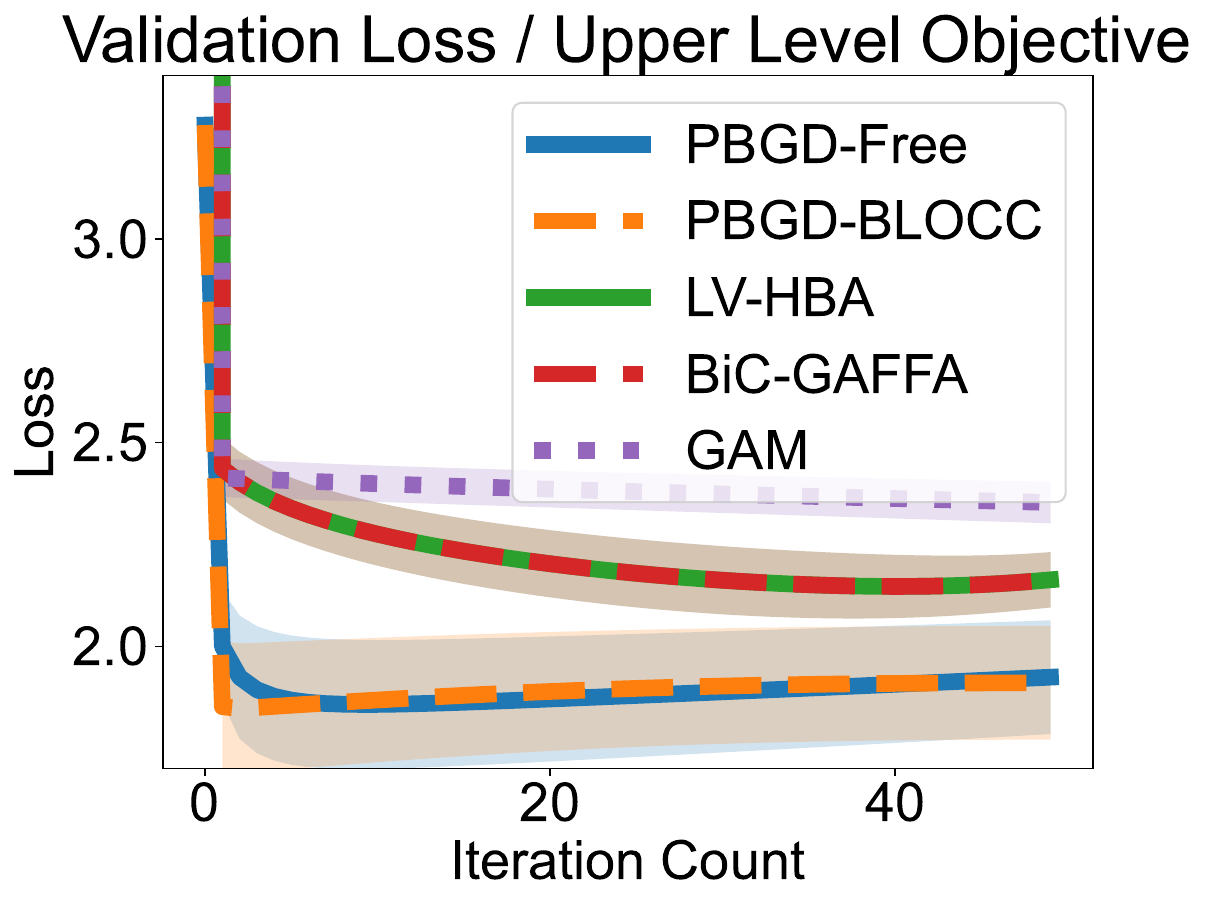}
        \end{minipage}
            \vspace{0.2cm}
        \caption{Iteration performance of PBGD-Free (Algorithm~\ref{alg: PBGD-Free}) in comparison with BLOCC~\citep{jiang2024primal}, LV-HBA~\citep{yao2024constrained}, Bic-GAFFA~\citep{yao2025overcoming}, and GAM~\citep{xu2023efficient}. Test accuracy (left), lower-level loss $g(x,y)$ (middle), and upper-level loss $f(x,y)$ (right) are reported for the SVM problem on the diabetes dataset, averaged over 20 random train–validation–test splits. Bold lines denote the mean, and shaded regions represent the standard deviation.}
        \label{fig:hyperparam_opt}
    \end{minipage}%
    \hfill
\end{figure}

Selecting the violation upper bound for the soft-margin SVM is naturally formulated as a bi-level optimization problem with coupled constraints \citep{xu2023efficient, yao2024constrained, jiang2024primal}. In this section, we present numerical experiments on solving the following problem.
\begin{align} 
    & \min_c \frac{1}{|\mathcal{D}_{val}|} \sum_{(z_{{\rm val}}, l_{{\rm val}}) \in \mathcal{D}_{\rm val}} \exp\left({1-l_{{\rm val}} \left(z_{{\rm val}}^\top w^*+b^* \right)} \right) +  \frac{1}{2}\| c \|^2 \nonumber\\
    &\text{with}~   w^* ,b^* = \arg\min_{w ,b} \frac{1}{2}\|w\|^2  \quad\text{s.t.} \quad 1-l_{{\rm tr}, i}(z_{{\rm tr}, i}^\top w +b) \leq c_i, ~ \forall  (z_{tr,u},l_{tr,i})\in \mathcal{D}_{\rm tr}.  \label{eq: svm_reform_llcons}\\\nonumber
\end{align}
The upper-level objective is the validation loss evaluated on the validation set $\mathcal{D}_{\text{val}}$, regulated by the violation magnitude $\|c\|^2$. Here, we empirically verify that $f(x,\cdot)$ satisfies $(\alpha = 1.25, \delta = 10^{-10})$-flatness around $(w^*,b^*)$. The lower-level trains hyperplane parameters on the training set $\mathcal{D}_{\text{tr}}$, where the soft margin violation $c_i$ is upper-bounded by a hyperparameter $c_i$.

We evaluate our value-function-free algorithm (Algorithm~\ref{alg: PBGD-Free}) with $\gamma = 20$ on training a linear SVM using the diabetes dataset \citep{Dua:2017}, and compare its performance with methods designed for BLO with coupled constraints, namely PBGD-BLOCC \citep{jiang2024primal}, GAM \citep{xu2023efficient}, LV-HBA \citep{yao2024constrained}, and BiC-GAFFA \citep{yao2025overcoming}.

Table~\ref{tab:final_results_valstop} shows the test accuracy and training time for different methods on the diabetes dataset: PBGD-BLOCC achieves the highest accuracy, PBGD-Free reaches similar accuracy with much lower training time, and both PBGD-BLOCC and PBGD-Free significantly outperform the other methods in accuracy.
Figure~\ref{fig:hyperparam_opt} shows that PBGD-Free achieves performance comparable to PBGD-BLOCC. Both PBGD-Free and PBGD-BLOCC attain strong test performance, minimal lower-level violation, and the lowest upper-level objective, demonstrating their effectiveness in solving BLO problems with coupled constraints. Moreover, PBGD-Free has an advantage over PBGD-BLOCC in terms of per-iteration cost, since BLOCC requires double inner loops while PBGD-Free uses only one. These results are consistent with the theoretical insights established in Lemma~\ref{lemma: tighter bound for y gam yg} and Theorem~\ref{thm: no value function for BLO with CC}.

\section{Conclusions}

In summary, we address the challenges of BLO with coupled constraints by providing a tighter smoothness anlysis of value-function-based penalty reformulations. Building upon this, we propose the alternating PBGD scheme and develop an efficient PBGD-Free algorithm, which eliminates the costly value-function computation while maintaining practical efficiency. To establish the theoretical guarantees of the PBGD-Free algorithm, we introduce a novel flatness condition that relaxes the traditional Lipschitz continuity assumption, particularly when the Lipschitz constant is small. We provide both theoretical and empirical insights into this flatness condition and demonstrate its validity. Under this condition, we further prove that PBGD-Free achieves a convergence rate comparable to that of standard gradient descent. Our theoretical insights, complemented by empirical validation on both toy and large-scale tasks, demonstrate that efficient and scalable BLO methods are achievable even in the presence of complex constraint structures. 









\section*{Declarations}

\paragraph{Funding}

The work was supported by the National Science Foundation Projects 2401297, 2532349 and 2532653, and by the Cisco Research Award.

\paragraph{Competing interests}
The authors declare that they have no competing interests.

\paragraph{Ethics approval and consent to participate}
Not applicable.

\paragraph{Consent for publication}
Not applicable.

\paragraph{Data availability}
The datasets and models analyzed during the current study are fully described in the main manuscript. No additional datasets or code are required to reproduce the results reported in this paper.

\paragraph{Materials availability}
Not applicable.

\paragraph{Code availability}
No additional code is required to reproduce the results; all algorithms and procedures are fully described in the main manuscript. The implementations are based on publicly available repositories, which are cited in the manuscript.

\paragraph{Author contribution}
Liuyuan Jiang (first author) developed the algorithms, performed the theoretical and empirical analysis, and wrote the majority of the manuscript.  
Quan Xiao (second author) provided guidance on the research approach and discussed theoretical ideas throughout the project.  
Lisha Chen (third author) supervised the research, edited the manuscript, and verified the correctness of the results.  
Tianyi Chen (fourth author, corresponding author) provided overall supervision, guidance on the research direction, and oversaw the manuscript preparation.  
All authors read and approved the final manuscript.

\paragraph{Previous versions}


A previous version of this work was presented at the Thirty-Ninth Annual Conference on Neural Information Processing Systems (NeurIPS), 2025, titled ``Beyond Value Functions: Single-Loop Bilevel Optimization under Flatness Conditions'' \cite{jiang2025beyond}. That version focused on the analysis of the PBGD-Free algorithm for unconstrained bilevel optimization problems under the lower-level PL condition.  
The current manuscript is an extended journal version that:  
i) extends the algorithm from the NeurIPS paper to the coupled constraint setting;  
ii) provides a more general proof for the development of the second-order directional derivative of the penalty problem; and,  
iii) includes additional theoretical results, empirical experiments, and discussions not present in the previous conference versions.









\bibliography{reference,bilevel}

\begin{thebibliography}{101}
\providecommand{\natexlab}[1]{#1}
\providecommand{\url}[1]{{#1}}
\providecommand{\urlprefix}{URL }
\expandafter\ifx\csname urlstyle\endcsname\relax
  \providecommand{\doi}[1]{DOI~\discretionary{}{}{}#1}\else
  \providecommand{\doi}{DOI~\discretionary{}{}{}\begingroup \urlstyle{rm}\Url}\fi
\providecommand{\eprint}[2][]{\url{#2}}

\bibitem[{Abbas et~al.(2022)Abbas, Xiao, Chen, Chen, and Chen}]{abbas2022sharp}
Abbas M, Xiao Q, Chen L, Chen PY, Chen T (2022) Sharp-maml: Sharpness-aware model-agnostic meta learning. In: International conference on machine learning, pp 10--32

\bibitem[{Aghajanyan et~al.(2021)Aghajanyan, Gupta, Shrivastava, Chen, Zettlemoyer, and Gupta}]{aghajanyan2021muppet}
Aghajanyan A, Gupta A, Shrivastava A, Chen X, Zettlemoyer L, Gupta S (2021) Muppet: Massive multi-task representations with pre-finetuning. In: Proc. of the Conference on Empirical Methods in Natural Language Processing

\bibitem[{Arbel and Mairal(2022)}]{arbel2022nonconvex}
Arbel M, Mairal J (2022) Non-convex bilevel games with critical point selection maps. In: Proc. Advances in Neural Information Processing Systems, New Orleans, LA

\bibitem[{Arora et~al.(2020)Arora, Du, Kakade, Luo, and Saunshi}]{arora2020provable}
Arora S, Du S, Kakade S, Luo Y, Saunshi N (2020) Provable representation learning for imitation learning via bi-level optimization. In: International Conference on Machine Learning, PMLR, pp 367--376

\bibitem[{Berger et~al.(2020)Berger, Absil, Jungers, and Nesterov}]{berger2020quality}
Berger GO, Absil PA, Jungers RM, Nesterov Y (2020) On the quality of first-order approximation of functions with h{\"o}lder continuous gradient. Journal of Optimization Theory and Applications 185:17--33

\bibitem[{Biderman et~al.(2023)Biderman, Schoelkopf, Anthony, Bradley, O’Brien, Hallahan, Khan, Purohit, Prashanth, Raff et~al.}]{biderman2023pythia}
Biderman S, Schoelkopf H, Anthony QG, Bradley H, O’Brien K, Hallahan E, Khan MA, Purohit S, Prashanth US, Raff E, et~al. (2023) Pythia: A suite for analyzing large language models across training and scaling. In: International Conference on Machine Learning

\bibitem[{Bonnans and Shapiro(2013)}]{bonnans2013perturbation}
Bonnans JF, Shapiro A (2013) Perturbation Analysis of Optimization Problems. Springer Series in Operations Research, Springer, New York

\bibitem[{Bracken and McGill(1973)}]{bracken1973mathematical}
Bracken J, McGill JT (1973) Mathematical programs with optimization problems in the constraints. Operations Research 21(1):37--44

\bibitem[{Bubeck(2015)}]{bubeck2015convex}
Bubeck S (2015) Convex Optimization: Algorithms and Complexity. Foundations and Trends\textsuperscript{®} in Machine Learning, Boston, MA, USA, \urlprefix\url{http://sbubeck.com/Bubeck15.pdf}

\bibitem[{Chen et~al.(2023{\natexlab{a}})Chen, Jose, Nikoloska, Park, Chen, and Simeone}]{chen2023_fnt_meta}
Chen L, Jose ST, Nikoloska I, Park S, Chen T, Simeone O (2023{\natexlab{a}}) Learning with limited samples -- meta-learning and applications to communication systems. Foundations and Trends in Signal Processing

\bibitem[{Chen et~al.(2023{\natexlab{b}})Chen, Ma, and Zhang}]{chen2023near}
Chen L, Ma Y, Zhang J (2023{\natexlab{b}}) Near-optimal nonconvex-strongly-convex bilevel optimization with fully first-order oracles. arXiv preprint arXiv:230614853

\bibitem[{Chen et~al.(2023{\natexlab{c}})Chen, Xu, and Zhang}]{chen2023bilevel}
Chen L, Xu J, Zhang J (2023{\natexlab{c}}) On bilevel optimization without lower-level strong convexity. arXiv preprint arXiv:230100712

\bibitem[{Chen et~al.(2024)Chen, Xu, and Zhang}]{chen2024finding}
Chen L, Xu J, Zhang J (2024) On finding small hyper-gradients in bilevel optimization: Hardness results and improved analysis. In: The Thirty Seventh Annual Conference on Learning Theory, PMLR, pp 947--980

\bibitem[{Chen et~al.(2025)Chen, Xiao, Fukuda, Chen, Yuan, and Chen}]{chen2025foops}
Chen L, Xiao Q, Fukuda EH, Chen X, Yuan K, Chen T (2025) Efficient first-order optimization on the pareto set for multi-objective learning under preference guidance. arXiv preprint arXiv:250402854

\bibitem[{Chen et~al.(2021)Chen, Sun, and Yin}]{chen2021closing}
Chen T, Sun Y, Yin W (2021) Closing the gap: Tighter analysis of alternating stochastic gradient methods for bilevel problems. In: Advances in Neural Information Processing Systems, Virtual

\bibitem[{Chen et~al.(2022)Chen, Sun, Xiao, and Yin}]{chen2022single}
Chen T, Sun Y, Xiao Q, Yin W (2022) A single-timescale method for stochastic bilevel optimization. In: International Conference on Artificial Intelligence and Statistics, virtual

\bibitem[{Clarke(1990)}]{clarke1990optimization}
Clarke FH (1990) Optimization and nonsmooth analysis. SIAM, Philadelphia, PA

\bibitem[{Dagr{\'e}ou et~al.(2022)Dagr{\'e}ou, Ablin, Vaiter, and Moreau}]{dagreou2022framework}
Dagr{\'e}ou M, Ablin P, Vaiter S, Moreau T (2022) A framework for bilevel optimization that enables stochastic and global variance reduction algorithms. In: Advances in Neural Information Processing Systems

\bibitem[{Dong et~al.(2025)Dong, Yang, Yao, and Zhang}]{dong2025efficient}
Dong Y, Yang J, Yao W, Zhang J (2025) Efficient curvature-aware hypergradient approximation for bilevel optimization. arXiv preprint arXiv:250502101

\bibitem[{Dua and Graff(2017)}]{Dua:2017}
Dua D, Graff C (2017) Uci machine learning repository. \urlprefix\url{https://archive.ics.uci.edu}, accessed: 2024-05-21

\bibitem[{Facchinei and Pang(2003)}]{facchinei2003finite}
Facchinei F, Pang JS (2003) Finite-dimensional variational inequalities and complementarity problems. Springer, New York

\bibitem[{Fang et~al.(2024)Fang, Guo, Zhou, Ma, Zhang, and Feng}]{fang2024llama}
Fang Q, Guo S, Zhou Y, Ma Z, Zhang S, Feng Y (2024) Llama-omni: Seamless speech interaction with large language models. arXiv preprint arXiv:240906666

\bibitem[{Fang et~al.(2025)Fang, Liu, Yao, Yu, and Zhang}]{fang2025qnbo}
Fang S, Liu YJ, Yao W, Yu C, Zhang J (2025) qnbo: quasi-newton meets bilevel optimization. In: Proc. International Conference on Learning Representations

\bibitem[{Fern{\'a}ndez-Blanco et~al.(2015)Fern{\'a}ndez-Blanco, Arroyo, and Alguacil}]{fernandez2015bilevel}
Fern{\'a}ndez-Blanco R, Arroyo JM, Alguacil N (2015) Bilevel programming for price-based electricity auctions: A revenue-constrained case. EURO Journal on Computational Optimization 3:163--195

\bibitem[{Fernando et~al.(2024)Fernando, Shen, Ram, Zhou, Samulowitz, Baracaldo, and Chen}]{fernando2024mitigating}
Fernando H, Shen H, Ram P, Zhou Y, Samulowitz H, Baracaldo N, Chen T (2024) Mitigating forgetting in llm supervised fine-tuning and preference learning. arXiv preprint arXiv:241015483

\bibitem[{Fiacco(1976)}]{fiacco1976sensitivity}
Fiacco AV (1976) Sensitivity analysis for nonlinear programming using penalty methods. Mathematical programming 10(1):287--311

\bibitem[{Foret et~al.(2021)Foret, Kleiner, Mobahi, and Neyshabur}]{foretsharpness}
Foret P, Kleiner A, Mobahi H, Neyshabur B (2021) Sharpness-aware minimization for efficiently improving generalization. In: Proc. International Conference on Learning Representations, virtual

\bibitem[{Franceschi et~al.(2018)Franceschi, Frasconi, Salzo, Grazzi, and Pontil}]{franceschi2018bilevel}
Franceschi L, Frasconi P, Salzo S, Grazzi R, Pontil M (2018) Bilevel programming for hyperparameter optimization and meta-learning. In: International conference on machine learning, PMLR, pp 1568--1577

\bibitem[{Gao et~al.(2023)Gao, Gu, and Thai}]{gao2023convergence}
Gao H, Gu B, Thai MT (2023) On the convergence of distributed stochastic bilevel optimization algorithms over a network. In: International conference on artificial intelligence and statistics, PMLR, pp 9238--9281

\bibitem[{Gao et~al.(2022)Gao, Ye, Yin, Zeng, and Zhang}]{gao2022value}
Gao L, Ye J, Yin H, Zeng S, Zhang J (2022) Value function based difference-of-convex algorithm for bilevel hyperparameter selection problems. In: Proc. International Conference on Machine Learning

\bibitem[{Ghadimi and Wang(2018)}]{ghadimi2018approximation}
Ghadimi S, Wang M (2018) Approximation methods for bilevel programming. arXiv preprint arXiv:180202246

\bibitem[{Ghadimi et~al.(2016)Ghadimi, Lan, and Zhang}]{ghadimi2016mini}
Ghadimi S, Lan G, Zhang H (2016) Mini-batch stochastic approximation methods for nonconvex stochastic composite optimization. Mathematical Programming 155(1):267--305

\bibitem[{Ghorbani et~al.(2019)Ghorbani, Krishnan, and Xiao}]{ghorbani2019investigation}
Ghorbani B, Krishnan S, Xiao Y (2019) An investigation into neural net optimization via hessian eigenvalue density. In: Proc. International Conference on Machine Learning, pp 2232--2241

\bibitem[{Grattafiori et~al.(2024)Grattafiori, Dubey, Jauhri, Pandey, Kadian, Al-Dahle, Letman, Mathur, Schelten, Vaughan et~al.}]{grattafiori2024llama}
Grattafiori A, Dubey A, Jauhri A, Pandey A, Kadian A, Al-Dahle A, Letman A, Mathur A, Schelten A, Vaughan A, et~al. (2024) The llama 3 herd of models. arXiv preprint arXiv:240721783

\bibitem[{Hong et~al.(2023)Hong, Wai, Wang, and Yang}]{hong2020two}
Hong M, Wai HT, Wang Z, Yang Z (2023) A two-timescale stochastic algorithm framework for bilevel optimization: Complexity analysis and application to actor-critic. SIAM Journal on Optimization 33(1):147--180

\bibitem[{Hu et~al.(2022)Hu, Shen, Wallis, Allen-Zhu, Li, Wang, Wang, Chen et~al.}]{hu2022lora}
Hu EJ, Shen Y, Wallis P, Allen-Zhu Z, Li Y, Wang S, Wang L, Chen W, et~al. (2022) Lora: Low-rank adaptation of large language models. ICLR 1(2):3

\bibitem[{Hu et~al.(2023)Hu, Wang, Xie, Krause, and Kuhn}]{hu2023contextual}
Hu Y, Wang J, Xie Y, Krause A, Kuhn D (2023) Contextual stochastic bilevel optimization. In: Proc. Advances in Neural Information Processing Systems

\bibitem[{Huang et~al.(2022)Huang, Li, Gao, and Huang}]{huang2021enhanced}
Huang F, Li J, Gao S, Huang H (2022) Enhanced bilevel optimization via bregman distance. In: Advances in Neural Information Processing Systems, New Orleans, LA

\bibitem[{Ito and Kunisch(2008)}]{ito2008lagrange}
Ito K, Kunisch K (2008) Lagrange multiplier approach to variational problems and applications. SIAM, Philadelphia, PA

\bibitem[{Ji et~al.(2021{\natexlab{a}})Ji, Yang, and Liang}]{ji2021bilevel}
Ji K, Yang J, Liang Y (2021{\natexlab{a}}) Bilevel optimization: Convergence analysis and enhanced design. In: International conference on machine learning, PMLR, pp 4882--4892

\bibitem[{Ji et~al.(2021{\natexlab{b}})Ji, Yang, and Liang}]{ji2020provably}
Ji K, Yang J, Liang Y (2021{\natexlab{b}}) Provably faster algorithms for bilevel optimization and applications to meta-learning. In: Advances in Neural Information Processing Systems

\bibitem[{Jiang et~al.(2024)Jiang, Xiao, Tenorio, Real-Rojas, Marques, and Chen}]{jiang2024primal}
Jiang L, Xiao Q, Tenorio VM, Real-Rojas F, Marques A, Chen T (2024) A primal-dual-assisted penalty approach to bilevel optimization with coupled constraints. In: Advances in Neural Information Processing Systems

\bibitem[{Jiang et~al.(2025)Jiang, Xiao, Chen, and Chen}]{jiang2025beyond}
Jiang L, Xiao Q, Chen L, Chen T (2025) Beyond value functions: Single-loop bilevel optimization under flatness conditions. In: Proc. Advances in Neural Information Processing Systems, San Diego, CA, USA

\bibitem[{Karimi et~al.(2016)Karimi, Nutini, and Schmidt}]{karimi2016linear}
Karimi H, Nutini J, Schmidt M (2016) Linear convergence of gradient and proximal-gradient methods under the polyak-{\l}ojasiewicz condition. In: Machine Learning and Knowledge Discovery in Databases: European Conference, ECML PKDD 2016, Riva del Garda, Italy, September 19-23, 2016, Proceedings, Part I 16, pp 795--811

\bibitem[{Khanduri et~al.(2021)Khanduri, Zeng, Hong, Wai, Wang, and Yang}]{khanduri2021near}
Khanduri P, Zeng S, Hong M, Wai HT, Wang Z, Yang Z (2021) A near-optimal algorithm for stochastic bilevel optimization via double-momentum. In: Advances in Neural Information Processing Systems, Virtual

\bibitem[{Khanduri et~al.(2023)Khanduri, Tsaknakis, Zhang, Liu, Liu, Zhang, and Hong}]{khanduri2023linearly}
Khanduri P, Tsaknakis I, Zhang Y, Liu J, Liu S, Zhang J, Hong M (2023) Linearly constrained bilevel optimization: A smoothed implicit gradient approach. In: International Conference on Machine Learning, pp 16291--16325

\bibitem[{Kingma and Ba(2014)}]{kingma2014adam}
Kingma DP, Ba J (2014) Adam: {A} method for stochastic optimization. arXiv preprint:14126980

\bibitem[{Kwon et~al.(2023)Kwon, Kwon, Wright, and Nowak}]{kwon2023fully}
Kwon J, Kwon D, Wright S, Nowak RD (2023) A fully first-order method for stochastic bilevel optimization. In: International Conference on Machine Learning, pp 18083--18113

\bibitem[{Kwon et~al.(2024)Kwon, Kwon, Wright, and Nowak}]{kwon2023penalty}
Kwon J, Kwon D, Wright S, Nowak R (2024) On penalty methods for nonconvex bilevel optimization and first-order stochastic approximation. In: International Conference on Learning Representations

\bibitem[{Lee(2003)}]{lee2003smooth}
Lee JM (2003) Smooth manifolds. In: Introduction to smooth manifolds, Springer, New York, pp 1--29

\bibitem[{Li et~al.(2023)Li, Rakhlin, and Jadbabaie}]{li2023convergence}
Li H, Rakhlin A, Jadbabaie A (2023) Convergence of adam under relaxed assumptions. In: Proc. Advances in Neural Information Processing Systems, New Orleans, LA

\bibitem[{Li et~al.(2022)Li, Gu, and Huang}]{Li2022fully}
Li J, Gu B, Huang H (2022) A fully single loop algorithm for bilevel optimization without hessian inverse. In: Association for the Advancement of Artificial Intelligence, virtual, pp 7426--7434

\bibitem[{Liao et~al.(2024)Liao, Ding, and Zheng}]{liao2024error}
Liao FY, Ding L, Zheng Y (2024) Error bounds, pl condition, and quadratic growth for weakly convex functions, and linear convergences of proximal point methods. In: 6th Annual Learning for Dynamics \& Control Conference, PMLR, pp 993--1005

\bibitem[{Liu et~al.(2021)Liu, Liu, Yuan, Zeng, and Zhang}]{liu2021value}
Liu R, Liu X, Yuan X, Zeng S, Zhang J (2021) A value-function-based interior-point method for non-convex bi-level optimization. In: International conference on machine learning, pp 6882--6892

\bibitem[{Liu et~al.(2025)Liu, Du, Wang, and Yu}]{liuoptimization}
Liu XH, Du Y, Wang J, Yu Y (2025) On the optimization landscape of low rank adaptation methods for large language models. In: Proc. International Conference on Learning Representations

\bibitem[{Longpre et~al.(2023)Longpre, Hou, Vu, Webson, Chung, Tay, Zhou, Le, Zoph, Wei et~al.}]{longpre2023flan}
Longpre S, Hou L, Vu T, Webson A, Chung HW, Tay Y, Zhou D, Le QV, Zoph B, Wei J, et~al. (2023) The flan collection: Designing data and methods for effective instruction tuning. In: Proc. International Conference on Machine Learning

\bibitem[{Lu(2023)}]{lu2024slm}
Lu S (2023) Slm: A smoothed first-order lagrangian method for structured constrained nonconvex optimization. Advances in Neural Information Processing Systems 36

\bibitem[{Lu(2025)}]{tsp2025}
Lu S (2025) Tsp: A two-sided smoothed primal-dual method for nonconvex bilevel optimization. In: Forty-second International Conference on Machine Learning

\bibitem[{Lu and Mei(2024)}]{lu2024first}
Lu Z, Mei S (2024) First-order penalty methods for bilevel optimization. SIAM Journal on Optimization 34(2):1937--1969

\bibitem[{Mehra and Hamm(2021)}]{mehra2021penalty}
Mehra A, Hamm J (2021) Penalty method for inversion-free deep bilevel optimization. In: Asian conference on machine learning, pp 347--362

\bibitem[{Nesterov(2013)}]{nesterov2013introductory}
Nesterov Y (2013) Introductory lectures on convex optimization: A basic course, vol~87. Springer Science \& Business Media, Boston, MA

\bibitem[{Papineni et~al.(2002)Papineni, Roukos, Ward, and Zhu}]{papineni2002bleu}
Papineni K, Roukos S, Ward T, Zhu WJ (2002) Bleu: a method for automatic evaluation of machine translation. In: Proceedings of the 40th annual meeting of the Association for Computational Linguistics, pp 311--318

\bibitem[{Pedregosa(2016)}]{pedregosa2016hyperparameter}
Pedregosa F (2016) Hyperparameter optimization with approximate gradient. In: Proc. International Conference on Machine Learning, New York City, NY

\bibitem[{Pfeiffer et~al.(2020)Pfeiffer, Kamath, R{\"u}ckl{\'e}, Cho, and Gurevych}]{pfeiffer2020adapterfusion}
Pfeiffer J, Kamath A, R{\"u}ckl{\'e} A, Cho K, Gurevych I (2020) Adapterfusion: Non-destructive task composition for transfer learning. arXiv preprint arXiv:200500247

\bibitem[{Qin et~al.(2025)Qin, Liu, Lu, Liang, and Liu}]{qin2025duet}
Qin Z, Liu Z, Lu S, Liang Y, Liu J (2025) Duet: Decentralized bilevel optimization without lower-level strong convexity. In: The Thirteenth International Conference on Learning Representations

\bibitem[{Rafailov et~al.(2023)Rafailov, Sharma, Mitchell, Manning, Ermon, and Finn}]{rafailov2023direct}
Rafailov R, Sharma A, Mitchell E, Manning CD, Ermon S, Finn C (2023) Direct preference optimization: Your language model is secretly a reward model. Advances in Neural Information Processing Systems 36:53728--53741

\bibitem[{Ramzi et~al.(2022)Ramzi, Mannel, Bai, Starck, Ciuciu, and Moreau}]{ramzi2021shine}
Ramzi Z, Mannel F, Bai S, Starck JL, Ciuciu P, Moreau T (2022) Shine: Sharing the inverse estimate from the forward pass for bi-level optimization and implicit models. In: Proc. International Conference on Learning Representations

\bibitem[{Ran et~al.(2024)Ran, Song, Zhou, Yang, Liu, and Tian}]{ran2024bi}
Ran J, Song Y, Zhou S, Yang K, Liu J, Tian Z (2024) A bi-level optimization method for regional integrated energy system considering uncertainty and load prediction under climate change. Journal of Building Engineering 84:108527

\bibitem[{Ren et~al.(2023)Ren, Guo, Bae, and Sutherland}]{ren2023prepare}
Ren Y, Guo S, Bae W, Sutherland DJ (2023) How to prepare your task head for finetuning. In: International Conference on Learning Representations

\bibitem[{Rockafellar and Wets(1998)}]{rockafellar1998variational}
Rockafellar RT, Wets RJ (1998) Variational analysis. Springer, New York, NY

\bibitem[{Rothstein et~al.(2019)Rothstein, Carr, and Cooksey}]{rothstein2019cohort}
Rothstein DS, Carr D, Cooksey E (2019) Cohort profile: The national longitudinal survey of youth 1979 (nlsy79). International journal of epidemiology 48(1):22--22e

\bibitem[{Ruszczynski(2006)}]{ruszczynski2006nonlinear}
Ruszczynski A (2006) Nonlinear Optimization. Princeton University Press, Princeton, NJ

\bibitem[{Santos et~al.(2021)Santos, Curcio, Amorim, Carvalho, and Marques}]{santos2021bilevel}
Santos MJ, Curcio E, Amorim P, Carvalho M, Marques A (2021) A bilevel approach for the collaborative transportation planning problem. International Journal of Production Economics 233:108004

\bibitem[{Scellier and Bengio(2017)}]{scellier2017equilibrium}
Scellier B, Bengio Y (2017) Equilibrium propagation: Bridging the gap between energy-based models and backpropagation. Frontiers in computational neuroscience 11:24

\bibitem[{Shaban et~al.(2019)Shaban, Cheng, Hatch, and Boots}]{shaban2019truncated}
Shaban A, Cheng CA, Hatch N, Boots B (2019) Truncated back-propagation for bilevel optimization. In: The 22nd International Conference on Artificial Intelligence and Statistics, PMLR, pp 1723--1732

\bibitem[{Shen and Chen(2022)}]{shen2022single}
Shen H, Chen T (2022) A single-timescale analysis for stochastic approximation with multiple coupled sequences. Advances in Neural Information Processing Systems 35:17415--17429

\bibitem[{Shen et~al.(2023)Shen, Xiao, and Chen}]{shen2023penalty}
Shen H, Xiao Q, Chen T (2023) On penalty-based bilevel gradient descent method. In: International Conference on Machine Learning, Honolulu, HI

\bibitem[{Shen et~al.(2025)Shen, Chen, Das, and Chen}]{shen2024seal}
Shen H, Chen PY, Das P, Chen T (2025) Seal: Safety-enhanced aligned llm fine-tuning via bilevel data selection. In: International Conference on Learning Representations

\bibitem[{Shui et~al.(2022)Shui, Chen, Li, Wang, and Gagn{\'e}}]{shui2022fair}
Shui C, Chen Q, Li J, Wang B, Gagn{\'e} C (2022) Fair representation learning through implicit path alignment. In: International Conference on Machine Learning, PMLR, pp 20156--20175

\bibitem[{Sow et~al.(2022)Sow, Ji, and Liang}]{sow2022convergence}
Sow D, Ji K, Liang Y (2022) On the convergence theory for hessian-free bilevel algorithms. In: Advances in Neural Information Processing Systems, vol~35, pp 4136--4149

\bibitem[{Stock and Trebbi(2003)}]{stock2003retrospectives}
Stock JH, Trebbi F (2003) Retrospectives: Who invented instrumental variable regression? Journal of Economic Perspectives 17(3):177--194

\bibitem[{Tan et~al.(2023)Tan, Dai, Su, Chen, Wang, and Chen}]{tan2023bi}
Tan M, Dai Z, Su Y, Chen C, Wang L, Chen J (2023) Bi-level optimization of charging scheduling of a battery swap station based on deep reinforcement learning. Engineering Applications of Artificial Intelligence 118:105557

\bibitem[{Wachsmuth(2013)}]{wachsmuth2013licq}
Wachsmuth G (2013) On licq and the uniqueness of lagrange multipliers. Operations Research Letters 41(1):78--80

\bibitem[{Ward and Kolda(2023)}]{ward2023convergence}
Ward R, Kolda T (2023) Convergence of alternating gradient descent for matrix factorization. In: Proc. Advances in Neural Information Processing Systems, New Orleans, LA

\bibitem[{Wei et~al.(2022)Wei, Wu, Xu, Wang, and Sun}]{wei2022bi}
Wei C, Wu Q, Xu J, Wang Y, Sun Y (2022) Bi-level retail pricing scheme considering price-based demand response of multi-energy buildings. International Journal of Electrical Power \& Energy Systems 139:108007

\bibitem[{Xiao and Chen(2025)}]{xiao2024unlocking}
Xiao Q, Chen T (2025) Unlocking global optimality in bilevel optimization: A pilot study. In: Proc. International Conference on Learning Representations

\bibitem[{Xiao et~al.(2023{\natexlab{a}})Xiao, Lu, and Chen}]{xiao2023}
Xiao Q, Lu S, Chen T (2023{\natexlab{a}}) A generalized alternating method for bilevel learning under the polyak-{\l}ojasiewicz condition. In: Advances in Neural Information Processing Systems

\bibitem[{Xiao et~al.(2023{\natexlab{b}})Xiao, Shen, Yin, and Chen}]{xiao2022alternating}
Xiao Q, Shen H, Yin W, Chen T (2023{\natexlab{b}}) Alternating implicit projected sgd and its efficient variants for equality-constrained bilevel optimization. In: International Conference on Artificial Intelligence and Statistics

\bibitem[{Xu et~al.(2021)Xu, Kanagawa, and Gretton}]{xu2021deep}
Xu L, Kanagawa H, Gretton A (2021) Deep proxy causal learning and its application to confounded bandit policy evaluation. Advances in Neural Information Processing Systems 34:26264--26275

\bibitem[{Xu and Zhu(2023)}]{xu2023efficient}
Xu S, Zhu M (2023) Efficient gradient approximation method for constrained bilevel optimization. In: Proceedings of the AAAI Conference on Artificial Intelligence

\bibitem[{Yang et~al.(2023)Yang, Luo, Li, Jordan, and Fazel}]{yang2023accelerating}
Yang H, Luo L, Li CJ, Jordan M, Fazel M (2023) Accelerating inexact hypergradient descent for bilevel optimization. In: OPT 2023: Optimization for Machine Learning

\bibitem[{Yao et~al.(2024)Yao, Yu, Zeng, and Zhang}]{yao2024constrained}
Yao W, Yu C, Zeng S, Zhang J (2024) Constrained bi-level optimization: Proximal lagrangian value function approach and hessian-free algorithm. arXiv preprint arXiv:240116164

\bibitem[{Yao et~al.(2025)Yao, Yin, Zeng, and Zhang}]{yao2025overcoming}
Yao W, Yin H, Zeng S, Zhang J (2025) Overcoming lower-level constraints in bilevel optimization: A novel approach with regularized gap functions. In: The Thirteenth International Conference on Learning Representations

\bibitem[{Yaras et~al.(2024)Yaras, Wang, Balzano, and Qu}]{yaras2024compressible}
Yaras C, Wang P, Balzano L, Qu Q (2024) Compressible dynamics in deep overparameterized low-rank learning \& adaptation. In: Proc. International Conference on Machine Learning, Vienna, Austria

\bibitem[{Ye et~al.(1997)Ye, Zhu, and Zhu}]{ye1997exact}
Ye JJ, Zhu D, Zhu QJ (1997) Exact penalization and necessary optimality conditions for generalized bilevel programming problems. SIAM Journal on optimization 7(2):481--507

\bibitem[{Ye et~al.(2022)Ye, Liu, Wright, Stone, and Liu}]{ye2022bome}
Ye M, Liu B, Wright S, Stone P, Liu Q (2022) Bome! bilevel optimization made easy: A simple first-order approach. In: Proc. Advances in Neural Information Processing Systems, New Orleans, LA

\bibitem[{Zaken et~al.(2021)Zaken, Ravfogel, and Goldberg}]{zaken2021bitfit}
Zaken EB, Ravfogel S, Goldberg Y (2021) Bitfit: Simple parameter-efficient fine-tuning for transformer-based masked language-models. arXiv preprint arXiv:210610199

\bibitem[{Zangrando et~al.(2025)Zangrando, Venturini, Rinaldi, and Tudisco}]{zangrando2025debora}
Zangrando E, Venturini S, Rinaldi F, Tudisco F (2025) debora: Efficient bilevel optimization-based low-rank adaptation. In: The Thirteenth International Conference on Learning Representations

\bibitem[{Zhang et~al.(2020)Zhang, He, Sra, and Jadbabaie}]{zhanggradient}
Zhang J, He T, Sra S, Jadbabaie A (2020) Why gradient clipping accelerates training: A theoretical justification for adaptivity. In: Proc. International Conference on Learning Representations, virtual

\bibitem[{Zhang et~al.(2024)Zhang, Chen, Ding, Li, Sun, and Luo}]{zhang2024transformers}
Zhang Y, Chen C, Ding T, Li Z, Sun R, Luo Z (2024) Why transformers need adam: A hessian perspective. In: Proc. Advances in Neural Information Processing Systems, Vancouver, Canada

\bibitem[{Zucchet and Sacramento(2022)}]{zucchet2022beyond}
Zucchet N, Sacramento J (2022) Beyond backpropagation: bilevel optimization through implicit differentiation and equilibrium propagation. Neural Computation 34(12):2309--2346

\end{thebibliography}
\bibliographystyle{spbasic} 

\appendix
\section{Proof of Theorem~\ref{theorem: smoothness}}
\label{appendix: Theorem smoothness}

Before proceeding to the full proof of Theorem~\ref{theorem: smoothness}, we present two useful lemmas.

\begin{lemma}
\label{lemma: implicit directional derivative lipschitz}
    Suppose all assumptions in Theorem~\ref{theorem: smoothness} hold. Denote $\tilde{g}(x,y;\delta)=\delta f(x,y)+g(x,y)$. For arbitrary $d$, the directional derivative
    \begin{align*}
    D_d(y_{1/\delta}^*(x);\delta) = \arg\min_{\eta \in \mathcal{C}_{\tilde{\mathcal{Y}}}(y_{1/\delta}^*(x))} \frac{1}{2} \eta^\top \nabla_{yy} \tilde{g}_\delta(x,y_{1/\delta}^*(x)) \eta + (\nabla_{yx} \tilde{g}_\delta(x,y_{1/\delta}^*(x)) d)^\top \eta
    \end{align*}
    is Lipschitz in $\delta$.
\end{lemma}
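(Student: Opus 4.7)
The plan is to show Lipschitzness in $\delta$ by decomposing the formula for $D_d(y^*_{1/\delta}(x);\delta)$ into four parametric ingredients and bounding each in turn: (a) the solution mapping $\delta \mapsto y^*_{1/\delta}(x)$, (b) the Hessian $H(\delta):=\nabla_{yy}\tilde{g}_\delta(x, y^*_{1/\delta}(x))$, (c) the cross-derivative $M(\delta):=\nabla_{yx}\tilde{g}_\delta(x, y^*_{1/\delta}(x))\,d$, and (d) the critical cone $\mathcal{C}_\delta:=\mathcal{C}_{\tilde{\mathcal{Y}}}(y^*_{1/\delta}(x))$. After establishing Lipschitz continuity of each ingredient in $\delta$, the desired bound follows from standard sensitivity of strongly convex parametric projections over set-valued cone constraints.

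For step (a), I would use that $\tilde{g}_\delta(x,\cdot) = \delta f(x,\cdot) + g(x,\cdot)$ is $\mu$-strongly convex on $\tilde{\mathcal{Y}}$ uniformly for $\delta\in[0,1/\gamma^*]$ with $\mu \geq \mu_g - \delta l_{f,1}$ under Assumptions~\ref{assumption: UL}.\ref{ass: UL smooth} and \ref{assumption: LL}.\ref{ass: LL SC}. Writing the first-order variational inequalities for $\delta_1,\delta_2$, subtracting, and using the Lipschitzness of $\nabla_y f(x,\cdot)$ gives $\|y^*_{1/\delta_1}(x)-y^*_{1/\delta_2}(x)\| \leq (l_{f,0}/\mu)|\delta_1-\delta_2|$. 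For (b) and (c), I would combine (a) with the $l_{f,2}$- and $l_{g,2}$-Lipschitz Hessian hypotheses of Assumption~\ref{assumption: Hessian Lipschitz}, together with the explicit $\delta$-dependence of $\nabla^2\tilde{g}_\delta = \delta \nabla^2 f + \nabla^2 g$, to conclude $\|H(\delta_1)-H(\delta_2)\|, \|M(\delta_1)-M(\delta_2)\| \leq C|\delta_1-\delta_2|$.

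The hard part will be (d), the stability of the critical cone $\mathcal{C}_\delta$ as $y^*_{1/\delta}(x)$ drifts with $\delta$. My plan is to exploit the local characterization $\tilde{\mathcal{Y}}\cap U_x = \{y\in U_x:\varphi(y)\le 0\}$ from Assumption~\ref{assumption: constraint}.\ref{ass: licq} and LICQ. For small perturbations of $\delta$, $y^*_{1/\delta}(x)$ stays in $U_x$ by step (a), so the tangent cone can be represented via active constraints of $\varphi$; strict complementarity (which can be assumed WLOG on an open dense set, or handled by a limiting argument in its absence) keeps the active index set locally constant, so $\mathcal{C}_\delta$ is represented by a fixed system of linear equalities/inequalities in $d$. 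Then the directional derivative becomes the unique solution of a strongly convex parametric QP on a fixed polyhedral cone with Lipschitz data, for which Lipschitz dependence on $\delta$ is classical (e.g., Robinson's strong regularity or Lemma~\ref{lem:first_order_VI}-based sensitivity).

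Putting these together, $D_d(y^*_{1/\delta}(x);\delta)$ is the argmin of a strongly convex QP whose quadratic form $H(\delta)$, linear term $M(\delta)$, and feasible cone $\mathcal{C}_\delta$ are all Lipschitz in $\delta$; a standard perturbation bound for projection in weighted norms (using the uniform strong convexity modulus $\mu$) then yields $\|D_d(y^*_{1/\delta_1}(x);\delta_1) - D_d(y^*_{1/\delta_2}(x);\delta_2)\| \leq L|\delta_1-\delta_2|$, completing the proof. The constants can be tracked explicitly and will depend on $l_{f,0},l_{f,1},l_{f,2},l_{g,1},l_{g,2},\mu_g$, which will be useful in the downstream bound $\|D_d(y_g^*(x))-D_d(y_\gamma^*(x))\|=\mathcal{O}(\gamma^{-1})$ used inside the proof of Theorem~\ref{theorem: smoothness}.
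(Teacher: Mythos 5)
Your overall strategy is sound and matches the paper's in spirit: both view $D_d(y_{1/\delta}^*(x);\delta)$ as the solution of a parametric strongly convex QP and reduce the claim to Lipschitz dependence of the data (Hessian, linear term) and of the feasible cone on $\delta$. Your steps (a)--(c) are correct and essentially what the paper implicitly relies on: uniform strong convexity of $\tilde g_\delta(x,\cdot)$ gives $\|y^*_{1/\delta_1}(x)-y^*_{1/\delta_2}(x)\|=\Oc(|\delta_1-\delta_2|)$, and Assumption~\ref{assumption: Hessian Lipschitz} transfers this to the Hessian and cross-derivative terms.

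The genuine gap is in step (d). You propose to fix the active index set by assuming strict complementarity ``WLOG on an open dense set, or handled by a limiting argument in its absence.'' Neither is valid here. Strict complementarity is \emph{not} among the hypotheses of Theorem~\ref{theorem: smoothness} (it is only introduced later, in Assumption~\ref{assumption: LL boundary assumption}, for the coupled-constraint case), and it cannot be assumed without loss of generality: at degenerate parameters the active set of $\varphi$ at $y_{1/\delta}^*(x)$ can change as $\delta$ varies, and the critical cone $\mathcal{C}_{\tilde{\mathcal{Y}}}(y_{1/\delta}^*(x))$ can then jump as a set. The ``limiting argument in its absence'' is precisely the hard content of the lemma and is not supplied. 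The paper closes this step differently: it observes that LICQ implies Robinson's CQ, which by \citet[Theorem 2.87]{bonnans2013perturbation} makes the critical-cone mapping $y\mapsto \mathcal{C}_{\tilde{\mathcal{Y}}}(y)$ graphically regular; combined with \citet[Theorem 9.40]{rockafellar1998variational} this yields Hausdorff-Lipschitz continuity of the cone mapping along the (Lipschitz) trajectory $\delta\mapsto y_{1/\delta}^*(x)$, without any complementarity assumption. With the cone moving in a Hausdorff-Lipschitz way and the QP data Lipschitz, the sensitivity theorem for parametric variational inequalities over moving convex sets \citep[Theorem 5.4.4]{facchinei2003finite} then delivers the Lipschitz dependence of the unique minimizer. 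To repair your proof, replace the fixed-active-set argument with this set-valued stability chain (or an equivalent strong-regularity argument that does not presuppose strict complementarity).
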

\begin{corollary}
Under the same assumptions as in Lemma~\ref{lemma: implicit directional derivative lipschitz}, for $\gamma > 0$, the directional derivatives satisfy
\begin{align*}
    \| D_d(y_{\gamma}^*(x)) - D_d(y_g^*(x)) \| = \mathcal{O}(\gamma^{-1}).
\end{align*}
\end{corollary}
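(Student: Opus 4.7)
The plan is to view $D_d(y_{1/\delta}^*(x);\delta)$ as the unique minimizer of a strongly convex QP whose data---Hessian, linear term, and feasible cone---depend on $\delta$ through two channels: an explicit linear dependence, and an implicit dependence through the solution mapping $y_{1/\delta}^*(x)$. I would then establish Lipschitzness of each piece of data in $\delta$ and invoke a standard stability result for strongly convex QPs.

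First, I would show that $\delta \mapsto y_{1/\delta}^*(x)$ is itself Lipschitz. Since $\tilde{g}_\delta(x,\cdot) = \delta f(x,\cdot) + g(x,\cdot)$ is $\mu_g$-strongly convex for $\delta \leq \mu_g/l_{f,1}$ (by smoothness of $f$ and strong convexity of $g$), the standard first-order-variational-inequality comparison (Lemma~\ref{lem:first_order_VI}) applied at $\delta_1$ and $\delta_2$ gives $\|y_{1/\delta_1}^*(x) - y_{1/\delta_2}^*(x)\| \leq (l_{f,0}/\mu_g)|\delta_1-\delta_2|$, generalizing Lemma~\ref{lemma: distance of yg ygam} to two nonzero parameters. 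Consequently, using Hessian-Lipschitzness of $f$ and $g$ in $y$ (Assumption~\ref{assumption: Hessian Lipschitz}) and smoothness, the Hessian $H_\delta := \nabla_{yy}\tilde{g}_\delta(x,y_{1/\delta}^*(x)) = \delta\nabla_{yy}f(x,y_{1/\delta}^*(x)) + \nabla_{yy}g(x,y_{1/\delta}^*(x))$ and the cross term $B_\delta := \nabla_{yx}\tilde{g}_\delta(x,y_{1/\delta}^*(x))$ both vary Lipschitz-continuously in $\delta$: the explicit linear part contributes $O(|\delta_1-\delta_2|)$ directly, and the implicit part contributes $O(|\delta_1-\delta_2|)$ via step~1 composed with $(l_{f,2},l_{g,2})$-Hessian-Lipschitzness. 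Moreover, $H_\delta \succeq \mu_g I$ uniformly.

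Third, I would control the feasible cone. Using the local characterization of $\tilde{\mathcal{Y}}$ by a smooth $\varphi \leq 0$ together with LICQ-type regularity (Assumption~\ref{assumption: constraint}.\ref{ass: licq}), the active set at $y_{1/\delta}^*(x)$ is locally constant in a neighborhood of $y_g^*(x)$, so the tangent cone varies Hausdorff-Lipschitz in $y$; combined with uniqueness of KKT multipliers for $g(x,\cdot)$, the critical cone $\mathcal{C}_{\tilde{\mathcal{Y}}}(y_{1/\delta}^*(x))$ inherits Hausdorff-Lipschitz dependence on $\delta$ through step~1. Writing
\begin{equation*}
D_d(y_{1/\delta}^*(x);\delta) = \Proj^{H_\delta}_{\mathcal{C}_{\tilde{\mathcal{Y}}}(y_{1/\delta}^*(x))}\bigl(-H_\delta^{-1} B_\delta d\bigr)
\end{equation*}
and applying the standard perturbation bound for the argmin of a uniformly strongly convex QP over a Hausdorff-Lipschitz cone (via the first-order variational inequality of Lemma~\ref{lem:first_order_VI} and $H_\delta \succeq \mu_g I$) delivers Lipschitz continuity in $\delta$. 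The corollary then follows by choosing $\delta_1 = 0$ and $\delta_2 = 1/\gamma$, so that $D_d(y_g^*(x)) = D_d(y_{1/\delta_1}^*(x);0)$ and $D_d(y_\gamma^*(x)) = D_d(y_{1/\delta_2}^*(x);1/\gamma)$ differ by $O(\gamma^{-1})$.

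The main obstacle is step~3: the critical cone can in principle jump as the active set changes with $y$. I expect this to be resolvable by shrinking $\delta$ so that $y_{1/\delta}^*(x)$ stays in a neighborhood of $y_g^*(x)$ on which the active index set and LICQ structure are fixed, thereby forcing the tangent cone to depend on $y$ only through the smooth boundary $\varphi$; away from this regime, a uniform bound $\|D_d(y_{1/\delta}^*(x);\delta)\| \leq (l_{g,1}/\mu_g)\|d\|$ (cf.\ the boundedness used elsewhere in the paper) suffices to absorb the remaining gap into the Lipschitz constant.
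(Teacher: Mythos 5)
Your proposal takes essentially the same route as the paper: both arguments view $D_d(y_{1/\delta}^*(x);\delta)$ as the solution of a parametric, uniformly strongly convex QP (equivalently, a variational inequality) over the critical cone, establish that the problem data vary Lipschitz-continuously in $\delta$, and then invoke a stability result to conclude; the corollary follows by comparing the parameter values $\delta=0$ and $\delta=1/\gamma$. The paper's proof is terser than yours on the Hessian/linear-term perturbations (which you work out explicitly and correctly from Lemma~\ref{lemma: distance of yg ygam} and Assumption~\ref{assumption: Hessian Lipschitz}), but it is more careful exactly where you identify the main obstacle: the continuity of the cone. The paper does not claim the active set is locally constant; instead it argues that LICQ implies Robinson CQ, which by \citet[Theorem 2.87]{bonnans2013perturbation} gives graphical regularity of the critical cone mapping, hence Hausdorff-Lipschitz behavior via \citet[Theorem 9.40]{rockafellar1998variational}, and then applies the parametric VI stability theorem \citet[Theorem 5.4.4]{facchinei2003finite}. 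Your step~3 as stated would fail in general: without strict complementarity (which is not assumed in the uncoupled setting of Lemma~\ref{lemma: implicit directional derivative lipschitz}), a constraint that is active at $y_g^*(x)$ with zero multiplier can become inactive at $y_{1/\delta}^*(x)$ for arbitrarily small $\delta$, so the active index set is not locally constant and the critical cone can change set-theoretically. Your fallback of shrinking $\delta$ and absorbing the rest into a uniform bound on $\|D_d\|$ does not by itself repair this, since the cone change happens for all small $\delta>0$; the graphical-regularity route (or an explicit strict-complementarity hypothesis) is what is actually needed to close that step.
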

\begin{proof}
    The formulation of $D_d(y_{1/\delta}^*(x);\delta)$ follows directly from Lemma~\ref{lemma: implicit gradient}. 
    By \cite[Theorem 2.87]{bonnans2013perturbation}, Robinson CQ, which is guaranteed by LICQ for convex optimization, ensures that
    the critical cone mapping $y \rightarrow \mathcal{C}_{\tilde{\mathcal{Y}}}(y)$ is graphically regular.
    This ensures Hausdorff-Lipschitz of $\mathcal{C}_{\tilde{\mathcal{Y}}}(y)$ \cite[Theorem 9.40]{rockafellar1998variational}. 
    Therefore, all conditions in \cite[Theorem 5.4.4]{facchinei2003finite} are checked and it directly gives the Lipschitzness result.
\end{proof}

\begin{lemma}
\label{lemma: directional derivative bound}

Suppose all assumptions in Theorem~\ref{theorem: smoothness} hold. The norm of implicit directional derivative $D_d(y_g^*(x))$ for arbitray unit direction $d\in \mathbb{R}^{d_x}$ in \eqref{eq: implicit directional derivative} is bounded by
\begin{align*}
    \|D_d(y_g^*(x))\| \leq \frac{l_{g,1}}{\mu_g}.
\end{align*}
\end{lemma}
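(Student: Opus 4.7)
My proof plan is to apply the variational-inequality characterization of $D_d(y_g^*(x))$ established in Lemma~\ref{lemma: implicit gradient}, then exploit the fact that $0 \in \mathcal{C}_{\tilde{\mathcal{Y}}}(y_g^*(x))$ to eliminate the non-trivial geometry of the critical cone. Since $\mathcal{C}_{\tilde{\mathcal{Y}}}(y_g^*(x))$ is a convex cone, the zero vector is always a feasible test point, which is the key observation that reduces the bound to a standard strong-convexity-vs-smoothness argument.

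Concretely, I would first invoke the variational inequality characterizing $D_d(y_g^*(x))$ derived inside the proof of Lemma~\ref{lemma: implicit gradient}:
\begin{align*}
\langle \nabla_{yy} g(x,y_g^*(x))\, D_d(y_g^*(x)) + \nabla_{yx} g(x,y_g^*(x))\, d,\ w - D_d(y_g^*(x)) \rangle \geq 0, \quad \forall w \in \mathcal{C}_{\tilde{\mathcal{Y}}}(y_g^*(x)).
\end{align*}
Then I would set the test point $w = 0$, which is admissible because the critical cone contains the origin. Rearranging gives
\begin{align*}
D_d(y_g^*(x))^\top \nabla_{yy} g(x,y_g^*(x))\, D_d(y_g^*(x)) \leq -\, D_d(y_g^*(x))^\top \nabla_{yx} g(x,y_g^*(x))\, d.
\end{align*}

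Next, I would apply the $\mu_g$-strong convexity of $g(x,\cdot)$ (Assumption~\ref{assumption: LL}.\ref{ass: LL SC}) to lower-bound the left-hand side by $\mu_g \|D_d(y_g^*(x))\|^2$, and use Cauchy--Schwarz together with the $l_{g,1}$-smoothness of $g$ (Assumption~\ref{assumption: LL}.\ref{ass: LL smooth}) to upper-bound the right-hand side by $l_{g,1}\|D_d(y_g^*(x))\|\,\|d\| = l_{g,1}\|D_d(y_g^*(x))\|$, since $d$ is a unit direction. Dividing both sides by $\|D_d(y_g^*(x))\|$ (the case $D_d(y_g^*(x)) = 0$ being trivial) yields the desired bound $\|D_d(y_g^*(x))\| \leq l_{g,1}/\mu_g$.

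I do not anticipate a genuine obstacle here: the argument is a clean application of the variational inequality, and the only subtlety is noting that $0 \in \mathcal{C}_{\tilde{\mathcal{Y}}}(y_g^*(x))$ so that we can avoid analyzing the critical cone's structure. If one wanted a sharper constant, one could instead project $-\nabla_{yy} g(x,y_g^*(x))^{-1} \nabla_{yx} g(x,y_g^*(x)) d$ onto $\mathcal{C}_{\tilde{\mathcal{Y}}}(y_g^*(x))$ under the $\nabla_{yy} g$-norm (per the closed form in Lemma~\ref{lemma: implicit gradient}) and use non-expansiveness of the projection together with $0$ being in the cone, giving the same bound via the condition number $l_{g,1}/\mu_g$.
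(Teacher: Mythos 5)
Your proof is correct and takes essentially the same route as the paper: both arguments rest on the characterization of $D_d(y_g^*(x))$ from Lemma~\ref{lemma: implicit gradient} together with the fact that $0$ lies in the critical cone, then combine $\nabla_{yy}g \succeq \mu_g I$ with $\|\nabla_{yx}g\,d\|\leq l_{g,1}$. The paper phrases this via non-expansiveness of the $\nabla_{yy}g$-norm projection (yielding $\mu_g\|D_d\|^2 \leq l_{g,1}^2/\mu_g$) while you test the variational inequality at $w=0$ (yielding $\mu_g\|D_d\|^2 \leq l_{g,1}\|D_d\|$); these are equivalent and give the identical constant $l_{g,1}/\mu_g$, so your closing remark about a "sharper constant" is moot.
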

\begin{proof}

From Lemma~\ref{lemma: implicit gradient}, we know
\begin{align*}
    D_d(y^*_g(x)) = \Proj_{\mathcal{C}_{\tilde{\mathcal{Y}}}(y^*_g(x))}^{\nabla_{yy} g(x,y^*_g(x))} \Big( - (\nabla_{yy} g(x,y^*_g(x)))^{-1} \nabla_{yx}g(x,y^*_g(x)) d \Big).
\end{align*}
Here, $\nabla_{yy} g(x,y_g^*(x))$-norm, which is given by $\|v\|_{\nabla_{yy} g(x,y_g^*(x))}^2 := v^\top \nabla_{yy} g(x,y_g^*(x)) v$, is well defined as $\nabla_{yy} g(x,y_g^*(x))\succeq \mu_g I$ is positive definite as $\mu_g>0$ from strong convexity. In this way, we obtain
\begin{align*}
    \mu_g \|D_d(y^*_g(x))\|^2 \leq & \|D_d(y^*_g(x))\|_{\nabla_{yy} g(x,y_g^*(x))}^2\\
    \leq &\| - (\nabla_{yy} g(x,y^*_g(x)))^{-1} \nabla_{yx}g(x,y^*_g(x)) d \|_{\nabla_{yy} g(x,y_g^*(x))}^2\\
    = & \| d^\top \nabla_{xy}g(x,y^*_g(x))(\nabla_{yy} g(x,y^*_g(x)))^{-1} \nabla_{yx}g(x,y^*_g(x)) d \|^2
    \leq \frac{l_{g,1}^2}{\mu_g},
\end{align*}
where the first inequality follows $\nabla_{yy} g(x,y_g^*(x))\succeq \mu_g I$, the second uses the non-expansiveness as $\mathcal{C}_{\tilde{Y}(y_g^*(x))}$ is closed and convex, and the last uses the $l_{g,1}$-smoothness of $g$ and $\mu_g$-strongly-convexity. 
Rearranging thus proves the Lemma.

\end{proof}

In this way, we can proceed to prove Theorem~\ref{theorem: smoothness}.
\begin{proof}[Proof of Theorem~\ref{theorem: smoothness}]
    As $v(x)$ is differentiable (cf. Lemma~\ref{lemma: gradient of v}), the directional derivative for $v(x)$ is 
    \begin{align*}
        D_d(v(x))=\nabla v(x)^\top d = \nabla_x g(x,y_g^*(x))^\top d,
    \end{align*}
    for all unit direction $d$.
    By definition, the second-order directional derivative for $v(x)$ is
    \begin{align}
        D_{dd}^2(v(x)) =  & \lim_{r\downarrow 0}\frac{1}{r} \Big( D_d(v(x+rd))-D_d(v(x)) \Big) \nonumber\\ 
        = & \lim_{r\downarrow 0}\frac{1}{r} d^\top \Big( \nabla_x g\Big(x+rd,y_g^*(x+rd) \Big)- \nabla_x g\Big(x,y_g^*(x)\Big) \Big) \nonumber\\
        =& d^\top \nabla_{xx} g(x,y_g^*(x)) d  +d^\top \nabla_{xy} g(x,y_g^*(x)) D_d(y_g^*(x)). \nonumber 
    \end{align}
    where the first equality is the definition of second-order directional derivative; the second is by plugging the first-order directional derivative; the third follows Taylor's expansion.
    Denote $\tilde{g}(x,y;\delta)=\delta f(x,y)+g(x,y)$, similarly,
    \begin{align*}
        D_{dd}^2( v_\gamma(x))= & d^\top \nabla_{xx} \tilde{g}(x,y_\gamma^*(x);\gamma^{-1}) d +d^\top \nabla_{xy} \tilde{g}(x,y_\gamma^*(x);\gamma^{-1})D_d(y_\gamma^*(x))
    \end{align*}
    Additionally, we know
    \begin{align}
    \|\nabla^2 g(x,y_g^*(x) )-\nabla^2 \tilde{g}(x,y_\gamma^*(x);\gamma^{-1})\|=&\|\nabla^2 g(x,y_g^*(x) )-(\gamma^{-1}\nabla^2 f(x,y_\gamma^*(x))+\nabla^2 g(x,y_\gamma^*(x) ))\| \nonumber \\
    \leq & \|\nabla^2 g(x,y_g^*(x) )-\nabla^2 g(x,y_\gamma^*(x))\| + \gamma^{-1}\|\nabla^2 f(x,y_\gamma^*(x))\| \nonumber  \\
    \leq & l_{g,2} \| y_g^*(x)-y_\gamma^*(x)\|+\gamma^{-1} l_{F_\gamma,1} \nonumber\\
    \leq & \left(\frac{l_{f,0}l_{g,2}}{\mu_g}+ l_{F_\gamma,1} \right)\gamma^{-1} \label{eq: nabla xx diff},
    \end{align}
    where the first inequality follows the triangle inequality, the second uses the $l_{g,2}$-Lipschitzness of $\nabla^2 g$ and the $l_{F_\gamma,1}$-smoothness of $f$, and the third follows directly from Lemma \ref{lemma: distance of yg ygam}. 

In this way, knowing that the second order directional derivative of $F_\gamma(x)$ is in the form of \eqref{eq: 2nd order directional derivative of F}, we can bound its norm following the triangle inequality, Cauchy-Schwarz inequality, and the prepared lemmas:
\begin{align*}
    & \|  D_{dd}^2( F_\gamma(x)) \|=  \gamma \|   D_{dd}^2( v_\gamma(x))- D_{dd}^2( v(x)) \| \nonumber \\
    \leq &  \gamma \big(\underbrace{\|\nabla_{xx} g(x,y_g^*(x) )-\nabla_{xx} \tilde{g}(x,y_\gamma^*(x);\gamma^{-1})\|}_{\eqref{eq: nabla xx diff}}+\underbrace{\|\nabla_{xy} g(x,y_g^*(x))\|}_{l_{g,1}\text{-smoothness of }g}\underbrace{\| D_d(y_g^*(x))-D_d(y_\gamma^*(x))\| }_{\text{Lemma}~\ref{lemma: implicit directional derivative lipschitz}}\nonumber\\
    &+\underbrace{\|D_d(y_g^*(x))\|}_{\text{Lemma}~\ref{lemma: directional derivative bound}}\underbrace{\|\nabla_{xy} g(x,y_g^*(x))-\nabla_{xy} \tilde{g}(x,y_\gamma^*(x);\gamma^{-1})\|}_{\eqref{eq: nabla xx diff}}\big) \nonumber\\
    \leq & \gamma \left(
    \left(\frac{l_{f,0}l_{g,2}}{\mu_g}+ l_{F_\gamma,1} \right)\gamma^{-1} + l_{g,1}\Oc(\gamma^{-1}) +\frac{2 l_{g,1}}{\mu_g} \left(\frac{l_{f,0}l_{g,2}}{\mu_g}+ l_{F_\gamma,1} \right)\gamma^{-1}
    \right) = \Oc(1).
\end{align*}

\end{proof}

\section{Proof of Theorem~\ref{thm: ALT-PBGD}}
\label{appendix: proof of ALT-PBGD FSL}

Denote the gradient estimate $g_t = \nabla_x f(x_t,y_{t}^\gamma) +\gamma\nabla_x g(x_t,y_{t}^\gamma)-\gamma\nabla_x g(x_t,y_{t}^g)$. 
Similar to \eqref{eq: VaFF bias bound intermediate step}, we prepare
\begin{align}
    \|b_t\|^2 = \| \nabla F_\gamma(x_t)-g_t\|^2 
    \leq & 2\gamma^2 l_{\tilde{g},1}^2 \frac{2}{\mu_g} (1-\eta^y \mu_g)(\tilde{g}_\gamma(x_t,y_{t-1}^\gamma)-v_\gamma(x_t)) \nonumber\\
    & + 2 \gamma^2 l_{g,1}^2  \frac{2}{\mu_g}  (1-\eta^y \mu_g)  (g(x_t,y_{t-1}^g)-v(x_t))
    \label{eq: bias bound intermediate step}
\end{align}
where $l_{\tilde{g},1}$ is the smoothness modulus for $\tilde{g}(x,y)=\gamma^{-1}f(x,y)+g(x,y)$.
Following \eqref{eq: VaFF fully single-loop intermediate step} and plugging in \eqref{eq: bias bound intermediate step}, 
there is
\begin{align}
    & F_\gamma(x_{t+1})- F_\gamma(x_t) \nonumber\\
    \leq & -\frac{\eta}{4} \|\frac{x_{t+1}-x_t}{\eta}\|^2  + \eta\frac{4}{\mu_\gamma^*}\gamma^2 l_{\tilde{g},1}^2 (1-\eta^\gamma \mu_g)(\tilde{g}_\gamma(x_t,y_{t-1}^\gamma)-v_\gamma(x_t)) \nonumber \\
    & +\eta \frac{4}{\mu_g}\gamma^2 ( l_{g,1})^2 (1-\eta^g \mu_g)(\tilde{g}_\gamma(x_t,y_{t-1}^g)-v_\gamma(x_t)). \label{eq: bias bound intermediate step 2}
\end{align}
Moreover, similar to \eqref{eq: h -vh update}, we obtain
\begin{align}
    \tilde{g}_\gamma(x_{t+1},y_{t}^\gamma)-v_\gamma(x_{t+1}) 
    \leq & (1+\frac{\eta l_{\tilde{g},1}z}{2})(1-\eta^\gamma \mu_g)(\tilde{g}_\gamma(x_t,y_{t-1}^\gamma)-v_\gamma(x_t)) \nonumber\\
    &+ (\frac{\eta l_{\tilde{g},1}}{2z}+ \frac{\eta^2(l_{\tilde{g},1}+l_{v_\gamma,1})}{2})\|\frac{ x_{t+1}-x_t}{\eta}\|^2, \quad \forall z>0. \label{eq: h -vh update 2} 
\end{align}
and
\begin{align}
    g(x_{t+1},y_{t}^g)-v(x_{t+1})\leq & (1+\frac{\eta l_{g,1}z'}{2})(1-\eta^g \mu_{g^*})(g(x_t,y_{t-1}^g)-v (x_t))  \nonumber \\
    & + (\frac{\eta l_{g,1}}{2z'}+ \frac{\eta^2(l_{g,1}+l_{v,1})}{2})\|\frac{ x_{t+1}-x_t}{\eta}\|^2. \label{eq: g - v update} 
\end{align}

In this way, adding $c (\tilde{g}_\gamma(x_{t+1},y_{t}^\gamma)-v_\gamma(x_{t+1}))$ and $c'(g(x_{t+1},y_{t}^g)-v(x_{t+1}))$ 
to both sides of \eqref{eq: bias bound intermediate step 2}, there is
\begin{align*}
    & F_\gamma(x_{t+1})- F_\gamma(x_t) + c (\tilde{g}_\gamma(x_{t+1},y_{t}^\gamma)-v_\gamma(x_{t+1})) + c' (g(x_{t+1},y_{t}^g)-v(x_{t+1}))\\
    \leq & \left( -\frac{\eta}{4} +c \Big(\frac{\eta l_{\tilde{g},1}}{2z}+ \frac{\eta^2(l_{\tilde{g},1}+l_{v_\gamma,1})}{2} \Big) +c'  \Big(\frac{\eta l_{g,1}}{2z'}+ \frac{\eta^2(l_{g,1}+l_{v,1})}{2} \Big) \right)\Big\|\frac{x_{t+1}-x_t}{\eta}\Big\|^2 \\
    &+ c \Big( \big(1+\eta \big(\frac{ l_{\tilde{g},1}z}{2} + \gamma^2 l_{\tilde{g},1}^2 \frac{4}{\mu_\gamma^* c} \big) \big) (1-\eta^\gamma \mu_g)(\tilde{g}_\gamma(x_t,y_{t-1}^\gamma)-v_\gamma(x_t))  
    \Big) \\
    &+ c' \Big( \big(1+\eta \big(\frac{ l_{g,1}z'}{2} +  \gamma^2 l_{g,1}^2  \frac{4}{\mu_g c'} \big) \big) (1-\eta^g \mu_{g})(g(x_t,y_{t-1}^g-v(x_t))  
    \Big).
\end{align*}

Choose the following hyper-parameter,
\begin{align*}
    \begin{cases}
        c =\gamma \mu_g^{-\frac{1}{2}}\\
        c' =\gamma \mu_g^{-\frac{1}{2}}\\
        z  = 16 c l_{\tilde{g},1} \\
        z' = 16 c' l_{g,1}\\
        \eta^g \leq l_{g,1}^{-1}\\
        \eta^\gamma \leq l_{\tilde{g},1}^{-1}\\
        \eta \leq \min \left\{\frac{1}{16  c(l_{\tilde{g},1}+l_{v_\gamma,1})},\frac{1}{16  c' (l_{g,1}+l_{v,1})},\frac{\eta^\gamma \mu_g/(1-\eta^\gamma \mu_g)}{\frac{l_{\tilde{g},1}z}{2}+ \frac{4\gamma^2 l_{\tilde{g},1}^2}{\mu_g c}},\frac{\eta^g \mu_{g}/(1-\eta^g \mu_{g})}{\frac{l_{g,1}z'}{2}+ \frac{4\gamma^2 l_{g,1}^2}{\mu_{g} c}} \right\}
    \end{cases}
\end{align*}
i.e. $c,c' = \Oc(\gamma)$, $\eta= \Oc(\gamma^{-1})$, there is
\begin{align*}
    & F_\gamma(x_{t+1})- F_\gamma(x_t) + c (\tilde{g}_\gamma(x_{t+1},y_{t}^\gamma)-v_\gamma(x_{t+1})) + c' (g(x_{t+1},y_{t}^g)-v(x_{t+1}))\\
    \leq &  -\frac{\eta}{8} \|\frac{x_{t+1}-x_t}{\eta}\|^2 +c (\tilde{g}_\gamma(x_t,y_{t-1}^\gamma)-v_\gamma(x_t))  
    + c' l_{g,1}^2 (g(x_t,y_{t-1}^g-v(x_t)).  
\end{align*}
Denote $D_1 = F_\gamma(x_0)- F_\gamma(x_T)$, $D_2 = (\tilde{g}_\gamma(x_0,y_{-1}^\gamma)-v_\gamma(x_0)) - (\tilde{g}_\gamma(x_T,y_{T-1}^\gamma)-v_\gamma(x_T))$, and $D_3 = (g(x_0,y_{-1}^g)-v(x_0)) - (g(x_T,y_{T-1}^g)-v(x_T))$ Rearranging and telescoping gives
\begin{align*}
    \frac{1}{T}\sum_{t=0}^{T-1} \Big\|\frac{x_{t+1}-x_t}{\eta} \Big\|^2 \leq  \frac{8 \left(  D_1 + c D_2 + c' D_3\right)}{\eta T} = \Oc(\gamma^2 T^{-1})
\end{align*}
where the last equality is because $c,c' = \Oc(\gamma)$, and $\eta= \Oc(\gamma^{-1})$.
By plugging this into \eqref{eq: h -vh update 2} and \eqref{eq: g - v update}, and following a similar analysis as used when deriving \eqref{eq: bias update bound}, we obtain
\begin{align*}
    \frac{1}{T}\sum_{t=0}^{T-1}(g(x_t,y_{t-1}^g)-v (x_t))
    \leq &  \Oc \left( \left(\tfrac{\eta l_{g,1}}{2z'}+ \tfrac{\eta^2(l_{g,1}+l_{v,1})}{2} \right) \frac{1}{T}\sum_{t=0}^{T-1}\Big\|\tfrac{ x_{t+1}-x_t}{\eta}\Big\|^2 \right)
    = \Oc( T^{-1}), \nonumber \\
    \frac{1}{T}\sum_{t=0}^{T-1}(\tilde{g}_\gamma(x_t,y_{t-1}^\gamma)-v_\gamma(x_t)) \leq & \Oc \left( \left(\tfrac{\eta l_{\tilde{g},1}}{2z}+ \tfrac{\eta^2(l_{\tilde{g},1}+l_{v_\gamma,1})}{2} \right) \frac{1}{T}\sum_{t=0}^{T-1}\Big\|\tfrac{ x_{t+1}-x_t}{\eta}\Big\|^2 \right)
    = \Oc(T^{-1}),
\end{align*}
as $\left(\frac{\eta l_{\tilde{g},1}}{2z}+ \frac{\eta^2(l_{\tilde{g},1}+l_{v_\gamma,1})}{2}\right), \left( \frac{\eta l_{g,1}}{2z'}+ \frac{\eta^2(l_{g,1}+l_{v,1})}{2} \right)= \Oc(1/\gamma^2)$.
Moreover, plugging these back to \eqref{eq: bias bound intermediate step}, we have
$\frac{1}{T}\sum_{t=0}^{T-1}\| b_t\|^2\leq \Oc \big(\gamma^2 T^{-1} \big)$. Therefore, following a similar analysis as in \eqref{eq: G eta and x update}, there is
\begin{align*}
   \frac{1}{T}\sum_{t=0}^{T-1}  \| G_{F_\gamma,\mathcal{X}}(x_t)\|^2 \leq & \frac{1}{T}\sum_{t=0}^{T-1}  \Big\|\frac{x_t-x_{t+1}}{\eta} \Big\|^2 + \frac{1}{T}\sum_{t=0}^{T-1} \| b_t\|^2 \leq  \Oc\big(\gamma^2 T^{-1} \big).
\end{align*}

\section{Proof of Lemma~\ref{lemma: gradient equals zero generalization CC}}
\label{appendix: proof of Lemma Hessian of v CC}
Before proceeding to the proof of Lemma~\ref{lemma: gradient equals zero generalization CC}, we first present a more general lemma. 

\begin{lemma}
\label{lemma: gradient equals zero generalization analytic}
Suppose $\mathcal{Y}\subseteq \mathbb{R}^{d_x}$ is a closed and convex set with smooth boundary, $\nabla_y g(x,y)$ is continuous in $y$, and Lipschitz in $x$, and $g(x,y)$ satisfies $\mu_g$-Proximal PL in $y$ on $\mathcal{Y}$ and on a disturbed domain $\mathcal{Y}_{\delta}$ such that (i) $\mathcal{Y}\subset \mathcal{Y}_\delta$, and (ii) $d_{\mathcal{Y}}(y)=\delta$ for any $y\in \text{bd}(\mathcal{Y}_\delta)$. 
Fix any $y_g^*(x)\in S_g(x)$, and for arbitrary unit direction $d\in \mathbb{R}^{d_x}$, there exists a trajectory of $y_g^*(x+rd)$ such that
\begin{align*}
    \langle \nabla_y g(x,y_g^*(x)),\lim_{r\downarrow 0} \frac{y_g^*(x+rd)-y_g^*(x)}{r} \rangle = 0
\end{align*}
\end{lemma}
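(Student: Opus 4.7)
The strategy parallels the proof of Lemma~\ref{lemma: gradient equals zero generalization}, but replaces strong convexity with the proximal PL structure and leverages the disturbed-domain device to handle the set-valued solution map. First, I would establish that the value function $v(x) := \min_{y \in \mathcal{Y}} g(x,y)$ is differentiable with
$$\nabla v(x) = \nabla_x g(x, y_g^*(x)), \qquad \forall\, y_g^*(x) \in S_g(x),$$
by invoking Remark~\ref{remark: gradient of v}, whose hypotheses (closed convex $\mathcal{Y}$, smoothness of $g$, proximal PL in $y$) are exactly the ones in force here. In particular, for every unit direction $d$ the directional derivative satisfies $D_d v(x) = \langle \nabla_x g(x, y_g^*(x)), d\rangle$.

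Second, I would construct the trajectory $r \mapsto y_g^*(x+rd)$. The proximal PL condition of $g$ on the enlarged domain $\mathcal{Y}_\delta$, combined with Lipschitzness of $\nabla_y g$ in $x$, lets me invoke Remark~\ref{remark: Lipschitz of S(x)} locally, so $S_g$ is Hausdorff--Lipschitz near $(x, y_g^*(x))$. This is precisely the role of the enlargement $\mathcal{Y} \subset \mathcal{Y}_\delta$: it guarantees the PL-type stability estimate survives even when $y_g^*(x)$ lies on $\mathrm{bd}(\mathcal{Y})$, using the smooth-boundary assumption. Hence, for every sufficiently small $r > 0$, I may select $y_g^*(x+rd) \in S_g(x+rd)$ with $\|y_g^*(x+rd) - y_g^*(x)\| \leq L r$, so that $u_r := (y_g^*(x+rd) - y_g^*(x))/r$ is uniformly bounded in $\mathbb{R}^{d_y}$. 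A Bolzano--Weierstrass extraction yields a subsequence $r_k \downarrow 0$ along which $u_{r_k} \to u^*$, and the trajectory in the statement is defined by this selection (with a consistent refinement for $r$ outside the extracted sequence so that the full limit exists).

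Third, I would apply a first-order Taylor expansion of $g$ at $(x, y_g^*(x))$ along the chosen trajectory,
\begin{align*}
g(x + r_k d,\, y_g^*(x+r_k d))
= \,& g(x, y_g^*(x)) + r_k \langle \nabla_x g(x, y_g^*(x)), d\rangle \\
& + r_k \langle \nabla_y g(x, y_g^*(x)),\, u_{r_k}\rangle + o(r_k),
\end{align*}
where the remainder is controlled by continuity of $\nabla g$ and the uniform bound $\|u_{r_k}\| \le L$. Since the left-hand side equals $v(x + r_k d)$, dividing by $r_k$ and passing to the limit gives
$$\langle \nabla_x g(x, y_g^*(x)), d\rangle = D_d v(x) = \langle \nabla_x g(x, y_g^*(x)), d\rangle + \langle \nabla_y g(x, y_g^*(x)),\, u^*\rangle,$$
so the cross term must vanish, which is exactly the claimed identity. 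The main obstacle is Step~2: because $S_g$ may be set-valued under proximal PL, one needs a measurable/consistent selection so that the limit in the statement exists as a full $r \downarrow 0$ limit and not merely along a subsequence, and one must confirm that Remark~\ref{remark: Lipschitz of S(x)}, stated for a globally PL setting, carries over to a local neighborhood of $y_g^*(x)$ via the $\mathcal{Y}_\delta$-enlargement, which is the precise purpose of the disturbed-domain hypothesis.
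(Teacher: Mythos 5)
Your plan is the Danskin-type argument that the paper uses for Lemma~\ref{lemma: gradient equals zero generalization} (the uncoupled, strongly convex case), but it does not go through under the hypotheses of this lemma, and the mismatch is essentially the reason this lemma exists as a separate statement. Both of your key steps --- the identity $\nabla v(x)=\nabla_x g(x,y_g^*(x))$ via Remark~\ref{remark: gradient of v}, and the first-order Taylor expansion of $g$ in the $x$-slot --- require $g$ to be differentiable (indeed jointly smooth) in $x$. You assert these hypotheses are ``exactly the ones in force here,'' but the lemma assumes only that $\nabla_y g(x,y)$ exists, is continuous in $y$, and is Lipschitz in $x$; no $x$-derivative of $g$ is granted. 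This is not a technicality: the lemma is invoked in Lemma~\ref{lemma: gradient equals zero generalization CC} with $g$ replaced by $g^\lambda(x,y)=g(x,y)+\langle\lambda_g^*(x),c(x,y)\rangle$, whose $x$-derivative would involve the unknown $\nabla_x\lambda_g^*(x)$; avoiding the value-function route is precisely the point. Relatedly, you invoke the disturbed domain $\mathcal{Y}_\delta$ only to get local Lipschitzness of $S_g$, which already follows from proximal PL on $\mathcal{Y}$ alone (Remark~\ref{remark: Lipschitz of S(x)}); the enlargement plays a different and essential role in the actual proof.

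The paper's proof is purely geometric and never differentiates $g$ in $x$. It splits on whether $\nabla_y g(x,y_g^*(x))=0$, in which case the claim is trivial. Otherwise $y_g^*(x)\in\operatorname{bd}(\mathcal{Y})$, and $\mathcal{Y}_\delta$ is used to show that $y_g^*(x+rd)$ remains on $\operatorname{bd}(\mathcal{Y})$ for all small $r$: the minimizer over $\mathcal{Y}_\delta$ stays boundedly away from unconstrained critical points, hence lies on $\operatorname{bd}(\mathcal{Y}_\delta)$, and by Lipschitz stability in $x$ it moves into $\mathcal{Y}_\delta\setminus\mathcal{Y}$ under small perturbations, forcing the constrained minimizer onto $\operatorname{bd}(\mathcal{Y})$. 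The difference-quotient limit therefore lies in the tangent space of the smooth boundary, which coincides with the critical cone, and orthogonality to $\nabla_y g(x,y_g^*(x))$ follows by definition of that cone. Finally, you flag but do not resolve the existence of the full limit $\lim_{r\downarrow 0}(y_g^*(x+rd)-y_g^*(x))/r$ (your extraction yields only subsequential limits), whereas the statement asserts a trajectory for which the limit exists; this would also need to be closed before the argument is complete.
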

\begin{proof}
    We prove this by considering two cases.
    \textbf{Case 1) $\nabla_y g(x,y_g^*(x)) = 0$.} In this case, the equality automatically hold. We consider in the following the case $\nabla_y g(x,y_g^*(x)) \neq 0$.
    
    \textbf{Case 2) $\nabla_y g(x,y_g^*(x)) \neq 0$.} In this case, $y_g$ is on the boundary of $\mathcal{Y}$, i.e. $y_g^*(x)\in \text{bd}(\mathcal{Y})$, as the (local) optimum of a differentiable function happens either at stationary point or at extreme (i.e. boundary).

    Additionally, the continuity of $\nabla_y g(x,\cdot)$ gives $\{y^* \in \mathbb{R}^{d_y}:\nabla_y g(x,y_g^*(x))=0\}$ being a closed set, as $\{0\}$ is closed and the closeness is preserved by continuous mapping. As $\nabla_y g(x,y_g^*(x)) \neq 0$, there exists $\delta_0>0$ such that $\| y_g^*(x) -y^*\|\geq \delta_0$ for any $y_g^*(x)$ and any $y^* \in \{y^* \in \mathbb{R}^{d_y}:\nabla_y g(x,y_g^*(x))=0\}$. Moreover, since $g(x,\cdot)$ is PL and $\nabla_y g(x,y_g^*(x))\neq0$, no unconstrained stationary point lies in $\mathcal Y$; i.e. $\{y^* \in \mathbb{R}^{d_y}:\nabla_y g(x,y_g^*(x))=0\}\subseteq\mathcal Y^{c}$.

    Denote by $y_g^\delta(x) \in \arg\min_{y\in \mathcal{Y}_\delta} g(x,y)$.  
    By \citep[Theorem 2.87]{bonnans2013perturbation}, we have $\|y_g^\delta(x) - y_g^*(x)\| = \mathcal{O}(\delta)$.
    Recall that $y_g^*(x)$ has positive distance $\delta_0$ to any unconstrained stationary point $y^*$, i.e., $\|y_g^*(x)-y^*\|\geq \delta_0$.  
    Hence, for sufficiently small $\delta < \delta_1 \sim \delta_0$, we have $\|y_g^\delta(x) - y^*\| \geq \|y_g^*(x)-y^*\| - \|y_g^\delta(x) - y_g^*(x)\| > 0$, which implies $\nabla_y g(x,y_g^\delta(x)) \neq 0$. Therefore, $y_g^\delta(x)$ must lie on the boundary of $\mathcal{Y}_\delta$, i.e. $y_g^\delta(x) \in \operatorname{bd}(\mathcal{Y}_\delta)$.

    Moreover, under the condition that Proximal PL condition for $g(x,y)$ also holds for $y\in \mathcal{Y}_\delta$, there exist $y_g^\delta(x+rd) \in \arg \min_{y\in \mathcal{Y}_\delta}g(x+rd,y)$ such that $\| y_g^\delta(x)-y_g^\delta(x+rd)\|\leq L_y^g r$ for unit direction $d$, according to Remark~\ref{remark: Lipschitz of S(x)}. In this way, there exists some $r_0>0$ such that for all $0<r<r_0$, $\| y_g^\delta(x)-y_g^\delta(x+rd)\|<\delta$, i.e. $y_g^\delta(x+rd) \in \mathcal{Y}_\delta \backslash \mathcal{Y}$ as $y_g^\delta(x)$ is on the boundary of $\mathcal{Y}_\delta$, which is in $\delta$ distance to $\mathcal{Y}$. This directly implies $\nabla_y g(x,y_g^*(x+rd))\neq 0$ as otherwise $y_g^\delta(x+rd) =y_g^*(x+rd)\in \mathcal{Y}$. This gives $y_g^*(x+rd)\in \text{bd}(\mathcal{Y})$, for all $0<r<r_0$.

    In this way, we know that $y_g^*(x)$ and $y_g^*(x+rd)$ are on $\operatorname{bd}(\mathcal{Y})$ and $y_g^*(x+rd)$ approaches to $ y_g^*(x)$ in a Lipschitz way.
    Additionally, by the smoothness of $\mathcal{Y}$ on its boundary, $\mathcal{T}_{\partial \mathcal{Y}}(y_g^*(x))$ coincides with the critical cone $C_\mathcal{Y}(y_g^*(x))$. Hence, according to \citet[Theorem 2.5.6]{clarke1990optimization},
    \begin{align*}
        \lim_{r\downarrow 0} \frac{y_g^*(x+rd)-y_g^*(x)}{r} \in C_\mathcal{Y}(y_g^*(x)).
    \end{align*}
\end{proof}

In this way, we are ready to proceed to the proof of Lemma~\ref{lemma: gradient equals zero generalization CC}.
\begin{proof}[Proof of Lemma~\ref{lemma: gradient equals zero generalization CC}]
The proof follows directly from Lemma~\ref{lemma: gradient equals zero generalization analytic}.
Here, $g^\lambda(x,y)$ in \eqref{eq: g lambda x y} is proximal-PL in $y\in \mathcal{Y}$ and any disturbed domain $\mathcal{Y}_{\delta}$ defined in Lemma~\ref{lemma: gradient equals zero generalization analytic}, as $g(x,y)$ is strongly convex in $y$ and $c(x,y)$ is convex in $y$, according to \citep{karimi2016linear}. Moreover, as $\nabla_y g^\lambda(x,y)=\nabla_y g(x,y)+\langle \lambda_g^*(x),\nabla_y c(x,y)\rangle$, and as assuming $\|\lambda_g^*(x) \|<B_\lambda$ for all $x\in \mathcal{X}$, there is, for any $x_1,x_2\in \mathcal{X}$, 
\begin{align*}
    \|\nabla_y g^\lambda(x_1,y)-\nabla_y g^\lambda(x_2,y)\|\leq (l_{g,1}+ l_{c,1}\|\lambda_g^*(x) \|)\|x_1-x_2 \| \leq (l_{g,1}+ l_{c,1}B_\lambda)\|x_1-x_2 \|.
\end{align*}
i.e. $\nabla_y g^\lambda(x,y)$ is Lipschitz in $x$. So all conditions in Lemma~\ref{lemma: gradient equals zero generalization analytic} are checked and therefore proved.
\end{proof}


\end{document}